\documentclass[11pt,reqno,a4paper]{amsart}
\usepackage[T1]{fontenc}
\usepackage{lmodern}
\usepackage[margin=22mm]{geometry}
\usepackage{amsmath,amssymb,amsthm,mathtools}
\usepackage{microtype,graphicx,xcolor,booktabs,array}
\usepackage{tikz,float,needspace}
\usetikzlibrary{arrows.meta,decorations.markings,decorations.pathreplacing,calc}
\usepackage[hidelinks]{hyperref}
\usepackage{xurl}
\allowdisplaybreaks[2]
\numberwithin{equation}{section}
\newtheorem{theorem}{Theorem}[section]
\newtheorem{proposition}[theorem]{Proposition}
\newtheorem{lemma}[theorem]{Lemma}
\newtheorem{corollary}[theorem]{Corollary}
\newtheorem{conjecture}[theorem]{Conjecture}
\theoremstyle{definition}
\newtheorem{definition}[theorem]{Definition}
\newtheorem{notation}[theorem]{Notation}

\newtheorem{remark}[theorem]{Remark}

\newtheorem*{ac}{Acknowledgements}
\newcommand{\cC}{\mathcal C}

\newcommand{\Hom}{\operatorname{Hom}}
\newcommand{\id}{\operatorname{id}}

\newcommand{\tpl}{\stackrel{\mathrm{TPL}}{=}}
\newcommand{\tpr}{\stackrel{\mathrm{TPR}}{=}}

\newcommand{\TPL}{\operatorname{TPL}}
\newcommand{\TPR}{\operatorname{TPR}}
\newif\ifabeliangreen
\abeliangreentrue
\definecolor{AbelianGreen}{RGB}{0,115,55}

\usepackage{longtable}
\usetikzlibrary{arrows.meta,decorations.markings,calc}
\tikzset{
 sd/base/.style={line width=.48pt,line cap=butt,line join=round,
   every node/.style={font=\normalfont\fontsize{8}{9.2}\selectfont,inner sep=0pt,outer sep=0pt}},
 sd/box/.style={draw,fill=white,inner xsep=2pt,inner ysep=1.6pt,outer sep=0pt,minimum height=4mm},
 sd/string/.style={line width=.48pt,line cap=butt},
 sd/label/.style={inner sep=0pt,outer sep=0pt},
 sd/edge-label/.style={inner sep=0pt,outer sep=0pt},
 sd/arrow/.style={postaction={decorate},decoration={markings,
    mark=at position .54 with {\arrow{Stealth[length=2.5pt,width=2.3pt]}}}},
 sd/reverse/.style={postaction={decorate},decoration={markings,
    mark=at position .54 with {\arrowreversed{Stealth[length=2.5pt,width=2.3pt]}}}}
}
\newcommand{\SD}[1]{\ensuremath{\vcenter{\hbox{\csname NGD@#1\endcsname}}}}

\tikzset{ng diagram/.style={line width=.45pt,line cap=round,line join=round,>={Stealth[length=3pt,width=2.6pt]},every node/.append style={font=\normalfont\fontsize{8}{9.5}\selectfont,inner sep=1.5pt}}}
\newcommand{\NGDiagram}[1]{\ensuremath{\vcenter{\hbox{\csname ngdiagram@#1\endcsname}}}}

\newcommand{\NGYBase}{\draw (.75,.75)--(0,0)--(-.75,.75);\draw (0,0)--(0,-.75);}

\pgfkeys{/ng/labels/.is family,/ng/labels,
1/.initial=,
2/.initial=,
3/.initial=,
4/.initial=,
5/.initial=,
6/.initial=,
7/.initial=,
8/.initial=,
9/.initial=,
10/.initial=,
11/.initial=,
12/.initial=,
13/.initial=,
14/.initial=,
15/.initial=}
\newcommand{\NGLabel}[1]{\pgfkeysvalueof{/ng/labels/#1}}
\newcommand{\NGShape}[2]{\begingroup\pgfqkeys{/ng/labels}{#2}%
  \csname ngshape@#1\endcsname\endgroup}

\expandafter\def\csname ngdiagram@D001\endcsname{%
  \NGShape{P001}{1={i_1},2={i_2},3={i_3},4={i_4},5={i_5},6={i_6},7={i_7},8={i_8},9={i_9},10={\alpha_1},11={\alpha_2},12={\alpha_3},13={\alpha_4},14={\alpha_5},15={\alpha_6}}%
}

\expandafter\def\csname ngdiagram@D002\endcsname{%
  \NGShape{O001}{1={\beta},2={\alpha},3={a},4={b},5={c}}%
}

\expandafter\def\csname ngdiagram@D003\endcsname{%
  \NGShape{O002}{1={a},2={b},3={c}}%
}

\expandafter\def\csname ngdiagram@D004\endcsname{%
  \NGShape{O003}{1={a},2={g}}%
}

\expandafter\def\csname ngdiagram@D005\endcsname{%
  \NGShape{O004}{1={g},2={g},3={a},4={ag},5={a}}%
}

\expandafter\def\csname ngdiagram@D006\endcsname{%
  \NGShape{O005}{1={g},2={h},3={k},4={},5={g{+}h},6={k{-}h},7={g'},8={h'},9={k'}}%
}

\expandafter\def\csname ngdiagram@D007\endcsname{%
  \NGShape{O006}{1={g{+}k},2={g},3={k},4={g'},5={k'}}%
}

\expandafter\def\csname ngdiagram@D008\endcsname{%
  \NGShape{O007}{1={g},2={h}}%
}

\expandafter\def\csname ngdiagram@D009\endcsname{%
  \NGShape{O008}{1={g{+}h}}%
}

\expandafter\def\csname ngdiagram@D010\endcsname{%
  \NGShape{T001}{1={g{+}h},2={g{+}k},3={g},4={h},5={k},6={k{-}h}}%
}

\expandafter\def\csname ngdiagram@D011\endcsname{%
  \NGShape{T002}{1={g{+}h},2={g},3={h}}%
}

\expandafter\def\csname ngdiagram@D012\endcsname{%
  \NGShape{O009}{1={g},2={h},3={g{+}h}}%
}

\expandafter\def\csname ngdiagram@D013\endcsname{%
  \NGShape{O010}{1={g{+}h},2={g{+}h}}%
}

\expandafter\def\csname ngdiagram@D014\endcsname{%
  \NGShape{O011}{1={g}}%
}

\expandafter\def\csname ngdiagram@D015\endcsname{%
  \NGShape{O012}{1={\varrho},2={\varrho},3={\varrho},4={g},5={h}}%
}

\expandafter\def\csname ngdiagram@D016\endcsname{%
  \NGShape{O013}{1={\varrho},2={\varrho},3={\varrho},4={g},5={h}}%
}

\expandafter\def\csname ngdiagram@D017\endcsname{%
  \NGShape{T003}{1={g},2={h}}%
}

\expandafter\def\csname ngdiagram@D018\endcsname{%
  \NGShape{P002}{1={g{+}h},2={h},3={g},4={k}}%
}

\expandafter\def\csname ngdiagram@D019\endcsname{%
  \NGShape{T004}{1={g{+}h},2={g},3={h}}%
}

\expandafter\def\csname ngdiagram@D020\endcsname{%
  \NGShape{T005}{1={g{+}h},2={k}}%
}

\expandafter\def\csname ngdiagram@D021\endcsname{%
  \NGShape{T006}{1={k},2={g}}%
}

\expandafter\def\csname ngdiagram@D022\endcsname{%
  \NGShape{T007}{1={h},2={k}}%
}

\expandafter\def\csname ngdiagram@D023\endcsname{%
  \NGShape{T008}{1={g{+}h},2={g},3={h}}%
}

\expandafter\def\csname ngdiagram@D024\endcsname{%
  \NGShape{T003}{1={h},2={g}}%
}

\expandafter\def\csname ngdiagram@D025\endcsname{%
  \NGShape{T009}{1={g},2={h}}%
}

\expandafter\def\csname ngdiagram@D026\endcsname{%
  \NGShape{O014}{1={\alpha}}%
}

\expandafter\def\csname ngdiagram@D027\endcsname{%
  \NGShape{O015}{1={g},2={\alpha}}%
}

\expandafter\def\csname ngdiagram@D028\endcsname{%
  \NGShape{O016}{1={g{+}h},2={g},3={h}}%
}

\expandafter\def\csname ngdiagram@D029\endcsname{%
  \NGShape{O017}{1={g{+}h}}%
}

\expandafter\def\csname ngdiagram@D030\endcsname{%
  \NGShape{O018}{1={g},2={h},3={\alpha}}%
}

\expandafter\def\csname ngdiagram@D031\endcsname{%
  \NGShape{O019}{1={\alpha},2={g{+}h},3={g},4={g},5={h},6={h}}%
}

\expandafter\def\csname ngdiagram@D032\endcsname{%
  \NGShape{O020}{1={g{+}h},2={\alpha}}%
}

\expandafter\def\csname ngdiagram@D033\endcsname{%
  \NGShape{O021}{1={\alpha},2={g}}%
}

\expandafter\def\csname ngdiagram@D034\endcsname{%
  \NGShape{O022}{1={\alpha},2={g}}%
}

\expandafter\def\csname ngdiagram@D035\endcsname{%
  \NGShape{O023}{1={h},2={g},3={\alpha}}%
}

\expandafter\def\csname ngdiagram@D036\endcsname{%
  \NGShape{O024}{1={h},2={g},3={\alpha}}%
}

\expandafter\def\csname ngdiagram@D037\endcsname{%
  \NGShape{O025}{1={g},2={h},3={\alpha}}%
}

\expandafter\def\csname ngdiagram@D038\endcsname{%
  \NGShape{O026}{1={g},2={h},3={\alpha}}%
}

\expandafter\def\csname ngdiagram@D039\endcsname{%
  \NGShape{O027}{1={h},2={g},3={\alpha}}%
}

\expandafter\def\csname ngdiagram@D040\endcsname{%
  \NGShape{O028}{1={h},2={g},3={\alpha}}%
}

\expandafter\def\csname ngdiagram@D041\endcsname{%
  \NGShape{O029}{1={h}}%
}

\expandafter\def\csname ngdiagram@D042\endcsname{%
  \NGShape{O030}{1={h},2={k}}%
}

\expandafter\def\csname ngdiagram@D043\endcsname{%
  \NGShape{O031}{1={l},2={h},3={k}}%
}

\expandafter\def\csname ngdiagram@D044\endcsname{%
  \NGShape{O032}{1={l},2={l},3={h},4={k}}%
}

\expandafter\def\csname ngdiagram@D045\endcsname{%
  \NGShape{O033}{1={h},2={k}}%
}

\expandafter\def\csname ngdiagram@D046\endcsname{%
  \NGShape{O030}{1={h},2={{-}h}}%
}

\expandafter\def\csname ngdiagram@D047\endcsname{%
  \NGShape{O034}{1={g},2={g},3={h},4={{-}h}}%
}

\expandafter\def\csname ngdiagram@D048\endcsname{%
  \NGShape{O035}{1={h{-}g},2={{-}h{+}g}}%
}

\expandafter\def\csname ngdiagram@D049\endcsname{%
  \NGShape{O036}{1={h},2={{-}h},3={1},4={1}}%
}

\expandafter\def\csname ngdiagram@D050\endcsname{%
  \NGShape{O033}{1={0},2={0}}%
}

\expandafter\def\csname ngdiagram@D051\endcsname{%
  \NGShape{O034}{1={h},2={h},3={h},4={{-}h}}%
}

\expandafter\def\csname ngdiagram@D052\endcsname{%
  \NGShape{O030}{1={0},2={0}}%
}

\expandafter\def\csname ngdiagram@D053\endcsname{%
  \NGShape{O037}{1={g}}%
}

\expandafter\def\csname ngdiagram@D054\endcsname{%
  \NGShape{O038}{1={h}}%
}

\expandafter\def\csname ngdiagram@D055\endcsname{%
  \NGShape{O038}{1={g}}%
}

\expandafter\def\csname ngdiagram@D056\endcsname{%
  \NGShape{O037}{1={0}}%
}

\expandafter\def\csname ngdiagram@D057\endcsname{%
  \NGShape{O039}{1={g},2={0}}%
}

\expandafter\def\csname ngdiagram@D058\endcsname{%
  \NGShape{O040}{1={g},2={h}}%
}

\expandafter\def\csname ngdiagram@D059\endcsname{%
  \NGShape{O038}{1={h{-}g}}%
}

\expandafter\def\csname ngdiagram@D060\endcsname{%
  \NGShape{O038}{1={g}}%
}

\expandafter\def\csname ngdiagram@D061\endcsname{%
  \NGShape{O041}{1={h}}%
}

\expandafter\def\csname ngdiagram@D062\endcsname{%
  \NGShape{O037}{1={k}}%
}

\expandafter\def\csname ngdiagram@D063\endcsname{%
  \NGShape{O042}{1={g},2={h},3={k}}%
}

\expandafter\def\csname ngdiagram@D064\endcsname{%
  \NGShape{O043}{1={g},2={h},3={k},4={l}}%
}

\expandafter\def\csname ngdiagram@D065\endcsname{%
  \NGShape{O044}{1={g},2={h},3={k},4={l},5={l},6={l}}%
}

\expandafter\def\csname ngdiagram@D066\endcsname{%
  \NGShape{O045}{1={0},2={g}}%
}

\expandafter\def\csname ngdiagram@D067\endcsname{%
  \NGShape{O046}{1={h},2={h'}}%
}

\expandafter\def\csname ngdiagram@D068\endcsname{%
  \NGShape{O047}{1={h}}%
}

\expandafter\def\csname ngdiagram@D069\endcsname{%
  \NGShape{O046}{1={k},2={h'}}%
}

\expandafter\def\csname ngdiagram@D070\endcsname{%
  \NGShape{O047}{1={l}}%
}

\expandafter\def\csname ngdiagram@D071\endcsname{%
  \NGShape{O046}{1={h},2={k'}}%
}

\expandafter\def\csname ngdiagram@D072\endcsname{%
  \NGShape{T010}{1={0},2={g},3={k'},4={h}}%
}

\expandafter\def\csname ngdiagram@D073\endcsname{%
  \NGShape{O048}{1={l}}%
}

\expandafter\def\csname ngdiagram@D074\endcsname{%
  \NGShape{O045}{1={0},2={g}}%
}

\expandafter\def\csname ngdiagram@D075\endcsname{%
  \NGShape{O046}{1={0},2={0}}%
}

\expandafter\def\csname ngdiagram@D076\endcsname{%
  \NGShape{O049}{1={0},2={g},3={h}}%
}

\expandafter\def\csname ngdiagram@D077\endcsname{%
  \NGShape{O050}{1={h},2={h'},3={h}}%
}

\expandafter\def\csname ngdiagram@D078\endcsname{%
  \NGShape{O051}{1={h},2={h'},3={h},4={h}}%
}

\expandafter\def\csname ngdiagram@D079\endcsname{%
  \NGShape{T011}{1={g},2={h},3={k'}}%
}

\expandafter\def\csname ngdiagram@D080\endcsname{%
  \NGShape{T012}{1={g},2={r},3={h'},4={p}}%
}

\expandafter\def\csname ngdiagram@D081\endcsname{%
  \NGShape{T013}{1={h},2={k'}}%
}

\expandafter\def\csname ngdiagram@D082\endcsname{%
  \NGShape{T014}{1={0},2={k},3={h'},4={h}}%
}

\expandafter\def\csname ngdiagram@D083\endcsname{%
  \NGShape{T015}{1={g},2={r},3={h'},4={p},5={g},6={g},7={g}}%
}

\expandafter\def\csname ngdiagram@D084\endcsname{%
  \NGShape{O052}{}%
}

\expandafter\def\csname ngdiagram@D085\endcsname{%
  \NGShape{O053}{}%
}

\expandafter\def\csname ngdiagram@D086\endcsname{%
  \NGShape{P003}{1={g},2={h},3={g{+}h},4={k},5={k{-}h},6={g{+}k},7={l},8={h{-}l},9={g{+}l}}%
}

\expandafter\def\csname ngdiagram@D087\endcsname{%
  \NGShape{T016}{1={g{+}h},2={g},3={g{+}k},4={k{-}h},5={k},6={h}}%
}

\expandafter\def\csname ngdiagram@D088\endcsname{%
  \NGShape{T017}{1={g{+}h},2={g{+}l},3={g},4={h},5={l},6={h{-}l}}%
}

\expandafter\def\csname ngdiagram@D089\endcsname{%
  \NGShape{T018}{1={g},2={l},3={k},4={g{+}k},5={k{-}l},6={g{+}l}}%
}

\expandafter\def\csname ngdiagram@D090\endcsname{%
  \NGShape{T019}{1={h},2={h{-}l},3={k{-}h},4={k},5={k{-}l},6={l}}%
}

\expandafter\def\csname ngdiagram@D091\endcsname{%
  \NGShape{T020}{1={g{+}h},2={g{+}l},3={g{+}k},4={k{-}h},5={h{-}l},6={k{-}l}}%
}

\expandafter\def\csname ngdiagram@D092\endcsname{%
  \NGShape{P004}{1={g},2={h},3={g{+}h},4={k},5={h{-}k},6={g{+}k}}%
}

\expandafter\def\csname ngdiagram@D093\endcsname{%
  \NGShape{T021}{1={g{+}h},2={g},3={h}}%
}

\expandafter\def\csname ngdiagram@D094\endcsname{%
  \NGShape{T022}{1={g{+}h},2={g{+}k},3={g},4={h},5={k},6={h{-}k}}%
}

\expandafter\def\csname ngdiagram@D095\endcsname{%
  \NGShape{T023}{1={g},2={k},3={g{+}k}}%
}

\expandafter\def\csname ngdiagram@D096\endcsname{%
  \NGShape{T024}{1={h},2={h{-}k},3={k}}%
}

\expandafter\def\csname ngdiagram@D097\endcsname{%
  \NGShape{T025}{1={g{+}h},2={h{-}k},3={g{+}k}}%
}

\expandafter\def\csname ngdiagram@D098\endcsname{%
  \NGShape{P005}{1={g{+}h},2={h},3={g},4={k},5={g{+}h{+}k}}%
}

\expandafter\def\csname ngdiagram@D099\endcsname{%
  \NGShape{T008}{1={g{+}h},2={g},3={h}}%
}

\expandafter\def\csname ngdiagram@D100\endcsname{%
  \NGShape{T026}{1={g{+}h},2={g{+}h{+}k},3={k}}%
}

\expandafter\def\csname ngdiagram@D101\endcsname{%
  \NGShape{T027}{1={g},2={g{+}h{+}k},3={h{+}k}}%
}

\expandafter\def\csname ngdiagram@D102\endcsname{%
  \NGShape{T028}{1={k},2={h},3={h{+}k}}%
}

\expandafter\def\csname ngdiagram@D103\endcsname{%
  \NGShape{T029}{1={g{+}h},2={g{+}h{+}k},3={g},4={h},5={k},6={k{+}h}}%
}

\expandafter\def\csname ngdiagram@D104\endcsname{%
  \NGShape{P006}{1={g{+}h},2={h},3={g},4={k}}%
}

\expandafter\def\csname ngdiagram@D105\endcsname{%
  \NGShape{P007}{1={g{+}h},2={h},3={g},4={k},5={l'}}%
}

\expandafter\def\csname ngdiagram@D106\endcsname{%
  \NGShape{P008}{1={h},2={g},3={k}}%
}

\expandafter\def\csname ngdiagram@D107\endcsname{%
  \NGShape{P009}{1={g},2={h},3={g{+}h}}%
}

\expandafter\def\csname ngdiagram@D108\endcsname{%
  \NGShape{P010}{1={g{+}h},2={h},3={g},4={k},5={l'}}%
}

\expandafter\def\csname ngdiagram@D109\endcsname{%
  \NGShape{T030}{1={g{+}h},2={l'},3={k}}%
}

\expandafter\def\csname ngdiagram@D110\endcsname{%
  \NGShape{T011}{1={g},2={s},3={l'}}%
}

\expandafter\def\csname ngdiagram@D111\endcsname{%
  \NGShape{T011}{1={h},2={k},3={s'}}%
}

\expandafter\def\csname ngdiagram@D112\endcsname{%
  \NGShape{P011}{1={h},2={g},3={k}}%
}

\expandafter\def\csname ngdiagram@D113\endcsname{%
  \NGShape{T005}{1={h},2={g}}%
}

\expandafter\def\csname ngdiagram@D114\endcsname{%
  \NGShape{T003}{1={h},2={k}}%
}

\expandafter\def\csname ngdiagram@D115\endcsname{%
  \NGShape{T031}{1={g{-}k},2={g},3={k}}%
}

\expandafter\def\csname ngdiagram@D116\endcsname{%
  \NGShape{T032}{1={h},2={k{-}g}}%
}

\expandafter\def\csname ngdiagram@D117\endcsname{%
  \NGShape{T033}{1={k},2={g},3={g{-}k}}%
}

\expandafter\def\csname ngdiagram@D118\endcsname{%
  \NGShape{P012}{1={g},2={h},3={k},4={l'}}%
}

\expandafter\def\csname ngdiagram@D119\endcsname{%
  \NGShape{P013}{1={g},2={h},3={k'},4={l}}%
}

\expandafter\def\csname ngdiagram@D120\endcsname{%
  \NGShape{P014}{1={g},2={h},3={k},4={l'}}%
}

\expandafter\def\csname ngdiagram@D121\endcsname{%
  \NGShape{P015}{1={g},2={h},3={k},4={l'}}%
}

\expandafter\def\csname ngdiagram@D122\endcsname{%
  \NGShape{T011}{1={g},2={k},3={s'}}%
}

\expandafter\def\csname ngdiagram@D123\endcsname{%
  \NGShape{T011}{1={h},2={s},3={l'}}%
}

\expandafter\def\csname ngdiagram@D124\endcsname{%
  \NGShape{T011}{1={g{+}h},2={k},3={l'}}%
}

\expandafter\def\csname ngdiagram@D125\endcsname{%
  \NGShape{T034}{1={h},2={g},3={g{+}h}}%
}

\expandafter\def\csname ngdiagram@D126\endcsname{%
  \NGShape{P016}{1={g},2={h},3={k'},4={l}}%
}

\expandafter\def\csname ngdiagram@D127\endcsname{%
  \NGShape{T035}{1={g},2={s},3={k'}}%
}

\expandafter\def\csname ngdiagram@D128\endcsname{%
  \NGShape{T011}{1={h},2={l},3={s'}}%
}

\expandafter\def\csname ngdiagram@D129\endcsname{%
  \NGShape{T011}{1={h},2={s},3={k'}}%
}

\expandafter\def\csname ngdiagram@D130\endcsname{%
  \NGShape{T035}{1={g},2={l},3={s'}}%
}

\expandafter\def\csname ngdiagram@D131\endcsname{%
  \NGShape{P017}{1={g},2={h},3={k},4={l'}}%
}

\expandafter\def\csname ngdiagram@D132\endcsname{%
  \NGShape{T035}{1={g},2={k},3={l'}}%
}

\expandafter\def\csname ngdiagram@D133\endcsname{%
  \NGShape{T036}{1={h},2={k},3={s'}}%
}

\expandafter\def\csname ngdiagram@D134\endcsname{%
  \NGShape{T035}{1={g},2={s},3={t'}}%
}

\expandafter\def\csname ngdiagram@D135\endcsname{%
  \NGShape{T037}{1={h},2={t},3={l'}}%
}

\expandafter\def\csname ngdiagram@D136\endcsname{%
  \NGShape{P018}{1={g},2={h_1},3={h_2},4={h_3},5={k'}}%
}

\expandafter\def\csname ngdiagram@D137\endcsname{%
  \NGShape{P019}{1={g},2={h_1},3={0},4={h_3{+}h_2},5={(k{+}h_2)'}}%
}

\expandafter\def\csname ngdiagram@D138\endcsname{%
  \NGShape{T038}{1={h_1},2={s'},3={h_2},4={h_3}}%
}

\expandafter\def\csname ngdiagram@D139\endcsname{%
  \NGShape{T039}{1={g},2={k'},3={s}}%
}

\expandafter\def\csname ngdiagram@D140\endcsname{%
  \NGShape{T040}{1={h_1},2={s'},3={h_2},4={h_3},5={h_2},6={h_2},7={h_2}}%
}

\expandafter\def\csname ngdiagram@D141\endcsname{%
  \NGShape{T041}{1={g},2={k'},3={s},4={h_2},5={h_2},6={h_2}}%
}

\expandafter\def\csname ngdiagram@D142\endcsname{%
  \NGShape{T042}{1={h_1},2={(s{+}h_2)'},3={0},4={v}}%
}

\expandafter\def\csname ngdiagram@D143\endcsname{%
  \NGShape{T043}{1={g},2={u'},3={s{+}h_2}}%
}

\expandafter\def\csname ngdiagram@D144\endcsname{%
  \NGShape{T044}{1={g},2={h_2},3={s'}}%
}

\expandafter\def\csname ngdiagram@D145\endcsname{%
  \NGShape{T045}{1={h_3},2={k'},3={t},4={s}}%
}

\expandafter\def\csname ngdiagram@D146\endcsname{%
  \NGShape{T046}{1={g},2={h_1},3={t'}}%
}

\expandafter\def\csname ngdiagram@D147\endcsname{%
  \NGShape{T047}{1={},2={g},3={h_2},4={s'},5={h_2},6={h_2},7={h_2},8={}}%
}

\expandafter\def\csname ngdiagram@D148\endcsname{%
  \NGShape{T048}{1={h_3},2={k'},3={t},4={s},5={h_2},6={h_2},7={h_2}}%
}

\expandafter\def\csname ngdiagram@D149\endcsname{%
  \NGShape{T049}{1={g},2={0},3={w'}}%
}

\expandafter\def\csname ngdiagram@D150\endcsname{%
  \NGShape{T050}{1={v},2={u'},3={t},4={w}}%
}

\expandafter\def\csname ngdiagram@D151\endcsname{%
  \NGShape{P018}{1={g},2={0},3={0},4={0},5={k'}}%
}

\expandafter\def\csname ngdiagram@D152\endcsname{%
  \NGShape{P018}{1={g},2={0},3={0},4={0},5={k}}%
}

\expandafter\def\csname ngdiagram@D153\endcsname{%
  \NGShape{P020}{1={g},2={h},3={g{+}h}}%
}

\expandafter\def\csname ngdiagram@D154\endcsname{%
  \NGShape{P021}{1={g},2={k_1},3={k_2},4={k_3},5={k_4}}%
}

\expandafter\def\csname ngdiagram@D155\endcsname{%
  \NGShape{T051}{1={g{+}h},2={g},3={h}}%
}

\expandafter\def\csname ngdiagram@D156\endcsname{%
  \NGShape{T052}{1={g{+}h},2={g},3={h}}%
}

\expandafter\def\csname ngdiagram@D157\endcsname{%
  \NGShape{T035}{1={g},2={k},3={s'}}%
}

\expandafter\def\csname ngdiagram@D158\endcsname{%
  \NGShape{T035}{1={h},2={s},3={t'}}%
}

\expandafter\def\csname ngdiagram@D159\endcsname{%
  \NGShape{T053}{1={g{+}h},2={t},3={k'}}%
}

\expandafter\def\csname ngdiagram@D160\endcsname{%
  \NGShape{T003}{1={g},2={s}}%
}

\expandafter\def\csname ngdiagram@D161\endcsname{%
  \NGShape{T003}{1={h},2={s}}%
}

\expandafter\def\csname ngdiagram@D162\endcsname{%
  \NGShape{T054}{1={g{+}h},2={s}}%
}

\expandafter\def\csname ngdiagram@D163\endcsname{%
  \NGShape{O045}{1={0},2={{-}g}}%
}

\expandafter\def\csname ngdiagram@D164\endcsname{%
  \NGShape{P022}{1={g_1},2={g_2},3={g_3},4={h_1},5={h_2},6={h_3}}%
}

\expandafter\def\csname ngdiagram@D165\endcsname{%
  \NGShape{T055}{1={0},2={u},3={0},4={0}}%
}

\expandafter\def\csname ngdiagram@D166\endcsname{%
  \NGShape{T056}{1={g},2={u'},3={{-}h},4={v}}%
}

\expandafter\def\csname ngdiagram@D167\endcsname{%
  \NGShape{O054}{1={a},2={b},3={c}}%
}

\expandafter\def\csname ngdiagram@D168\endcsname{%
  \NGShape{O055}{1={g}}%
}

\expandafter\def\csname ngdiagram@D169\endcsname{%
  \NGShape{O056}{1={g^*}}%
}

\expandafter\def\csname ngshape@P001\endcsname{%
\begin{tikzpicture}[scale=1.125,ng diagram]
\draw (0,2)--(3,2);
\draw (1,1)--(2,1);
\draw (0,0)--(3,0);
\draw (0,0) -- (0,2) -- (1,1) -- (0,0);
\draw (3,0)--(3,2)--(2,1)--(3,0);
\draw[->] (0,2) -- (1.5,2) node [above] {$\NGLabel{1}$};
\draw[->] (1,1) -- (1.5,1) node [above] {$\NGLabel{2}$};
\draw[->] (0,0) -- (1.5,0) node [above] {$\NGLabel{3}$};
\draw[->] (0,2) --++ (.5,-.5) node [right] {$\NGLabel{4}$};
\draw[->] (1,1) --++ (-.5,-.5) node [right] {$\NGLabel{5}$};
\draw[->] (0,0) -- (0,1) node [left] {$\NGLabel{6}$};
\draw[->] (3,2) --++ (-.5,-.5) node [left] {$\NGLabel{7}$};
\draw[->] (2,1) --++ (.5,-.5) node [left] {$\NGLabel{8}$};
\draw[->] (3,0) -- (3,1) node [right] {$\NGLabel{9}$};
\node at (0+.15,2-.3) {$\NGLabel{10}$};
\node at (1-.2,1) {$\NGLabel{11}$};
\node at (0+.15,0+.3) {$\NGLabel{12}$};
\node at (3-.15,2-.3) {$\NGLabel{13}$};
\node at (2+.2,1) {$\NGLabel{14}$};
\node at (3-.15,0+.3) {$\NGLabel{15}$};
\end{tikzpicture}%
}

\expandafter\def\csname ngshape@O001\endcsname{%
\begin{tikzpicture}[scale=1.4,ng diagram]
\draw (0,0) circle [radius=.5];
\draw [->](0,.5) arc[radius=.5, start angle=90, end angle=180];
\draw [->](0,.5) arc[radius=.5, start angle=90, end angle=0];
\draw [->](0,.5)--(0,0);
\draw (0,-.5)--(0,.5);
\node at (-.12,.33)[anchor=east] {$\NGLabel{1}$};
\node at (-.12,-.33)[anchor=east] {$\NGLabel{2}$};
\node at (-.5,0)[left] {$\NGLabel{3}$};
\node at (0,0)[right] {$\NGLabel{4}$};
\node at (.5,0)[right] {$\NGLabel{5}$};
\end{tikzpicture}%
}

\expandafter\def\csname ngshape@O002\endcsname{%
\begin{tikzpicture}[scale=1.4,ng diagram]
\draw (-.5,0) arc[radius=.5, start angle=180, end angle=0];
\draw (0,.5)--(0,0);
\draw [->](0,.5) arc[radius=.5, start angle=90, end angle=180];
\draw [->](0,.5) arc[radius=.5, start angle=90, end angle=0];
\draw [->](0,.5)--(0,0);
\draw (0,0)--(0,-.2);
\draw (.5,0)--(.5,-.2);
\draw (-.5,0)--(-.5,-.2);
\node at (-.5,-.12)[left] {$\NGLabel{1}$};
\node at (0,-.12)[right] {$\NGLabel{2}$};
\node at (.5,-.12)[right] {$\NGLabel{3}$};
\end{tikzpicture}%
}

\expandafter\def\csname ngshape@O003\endcsname{%
\begin{tikzpicture}[scale=1,ng diagram]
\draw (0,0)--(0,.9*1.5);
\draw [->](0,.9)--(0,.45*1.5) node [left] {$\NGLabel{1}$};
\draw (.5,0)--(.5,.9*1.5);
\draw [->](.5,.9)--(.5,.45*1.5) node [left] {$\NGLabel{2}$};
\end{tikzpicture}%
}

\expandafter\def\csname ngshape@O004\endcsname{%
\begin{tikzpicture}[scale=1,ng diagram]
\draw (0,0)--(0,.9*1.5);
\draw (.5,.9*1.5)--(0,.6*1.5)--(0,.3*1.5)--(.5,0);
\draw [->](.5,.9*1.5)--(.25,.75*1.5) node [above] {$\NGLabel{1}$};
\draw [->](0,.3*1.5)--(.25,.15*1.5) node[below] {$\NGLabel{2}$};
\draw [->] (0,.9*1.5)--(0,.75*1.5) node[left] {$\NGLabel{3}$};
\draw [->](0,.6*1.5)--(0,.45*1.5)node [left] {$\NGLabel{4}$};
\draw [->](0,.3*1.5)--(0,.15*1.5)node [left] {$\NGLabel{5}$};
\end{tikzpicture}%
}

\expandafter\def\csname ngshape@O005\endcsname{%
\begin{tikzpicture}[scale=1,ng diagram]
\draw (0,0)--(0,.75);
\draw (.5,-.5)--(0,0)--(-.5,-.5);
\draw (.5,1.25)--(0,.75)--(-.5,1.25);
\draw (.5,1.25) arc [radius=.5/1.414, start angle=135, end angle=45];
\draw (1,1.25)--(1.5,.75)--(2,1.25);
\draw (1.5,.75)--(1.5,0);
\draw (.5,-.5) arc [radius=.5/1.414, start angle=225, end angle=315];
\draw (1,-.5)--(1.5,0)--(2,-.5);
\draw [->](0,.75)--(0,.375);
\draw [->](1.5,.75)--(1.5,.375);
\draw [->] (-.5,1.25)--(-.25,1);
\draw [->] (2,1.25)--(1.75,1);
\draw [-<](.5,1.25) arc[radius=.5/1.414,start angle=135, end angle=90];
\draw [-<] (-.5,-.5)--(-.25,-.25);
\draw [-<] (2,-.5)--(1.75,-.25);
\draw [->](.5,-.5) arc[radius=.5/1.414,start angle=225, end angle=270];
\node at (-.25,1) [left] {$\NGLabel{1}$};
\node at (.75,1.1) {$\NGLabel{2}$};
\node at (1.75,1)[right] {$\NGLabel{3}$};
\node at (1.5,.375)[left] {$\NGLabel{4}$};
\node at (0,.375)[left]{$\NGLabel{5}$};
\node at (1.5,.375)[left] {$\NGLabel{6}$};
\node at (-.25,-.25) [left] {$\NGLabel{7}$};
\node at (.75,.75-1.1) {$\NGLabel{8}$};
\node at (1.75,-.25)[right] {$\NGLabel{9}$};
\end{tikzpicture}%
}

\expandafter\def\csname ngshape@O006\endcsname{%
\begin{tikzpicture}[scale=1,ng diagram]
\draw (0,0)--(0,.75);
\draw (.5,-.5)--(0,0)--(-.5,-.5);
\draw (.5,1.25)--(0,.75)--(-.5,1.25);
\draw [->](0,.75)--(0,.375);
\node at (0,.375)[left]{$\NGLabel{1}$};
\node at (-.25,1) [left] {$\NGLabel{2}$};
\node at (.25,1)[right] {$\NGLabel{3}$};
\node at (-.25,-.25) [left] {$\NGLabel{4}$};
\node at (.25,-.25)[right] {$\NGLabel{5}$};
\end{tikzpicture}%
}

\expandafter\def\csname ngshape@O007\endcsname{%
\begin{tikzpicture}[scale=1,ng diagram]
\draw (0,0)--(0,.75);
\draw (.5,-.5)--(0,0)--(-.5,-.5);
\draw (.5,1.25)--(0,.75)--(-.5,1.25);
\draw (.5,1.25) arc [radius=.5/1.414, start angle=135, end angle=45];
\draw (1,1.25)--(1.5,.75)--(2,1.25);
\draw (1.5,.75)--(1.5,0);
\draw (.5,-.5) arc [radius=.5/1.414, start angle=225, end angle=315];
\draw (1,-.5)--(1.5,0)--(2,-.5);
\draw [->](0,.75)--(0,.375);
\draw [->](1.5,.75)--(1.5,.375);
\node at (0,.375)[left] {$\NGLabel{1}$};
\node at (1.5,.375)[left] {$\NGLabel{2}$};
\end{tikzpicture}%
}

\expandafter\def\csname ngshape@O008\endcsname{%
\begin{tikzpicture}[scale=1,ng diagram]
\draw (0,0)--(0,.75);
\draw (.5,-.5)--(0,0)--(-.5,-.5);
\draw (.5,1.25)--(0,.75)--(-.5,1.25);
\draw [->](0,.75)--(0,.375);
\node at (0,.375)[left]{$\NGLabel{1}$};
\end{tikzpicture}%
}

\expandafter\def\csname ngshape@T001\endcsname{%
\begin{tikzpicture}[scale=1.125,ng diagram]
\draw[->] (0,0) 	node [below] {}	-- (1,0) 		node [below] 	{$\NGLabel{1}$}; \draw (1,0) 		-- (2,0);
\draw[->] (2,0) 	node [below] {} 	-- (1.5,.8660254) 	node [right] 	{$\NGLabel{2}$}; \draw (1.5,.8660254) 	-- (1,1.7320508);
\draw[->] (1,1.7320508)	node [above] {} 	-- (.5,.8660254) 	node [left] 	{$\NGLabel{3}$}; \draw (.5,.8660254) 	-- (0,0);
\draw[-<] (0,0) 	-- (.5,.2886751) 	node [above] 	{$\NGLabel{4}$}; \draw (.5,.2886751) 	-- (1,0.5773503);
\draw[-<] (1,0.5773503) node [below] {} -- (1,1.1547005) 	node [right]	{$\NGLabel{5}$}; \draw (1,1.1547005) 	-- (1,1.7320508);
\draw[->] (1,0.5773503) -- (1.5,.2886751) 	node [above] 	{$\NGLabel{6}$}; \draw (1.5,.2886751) 	-- (2,0);
\end{tikzpicture}%
}

\expandafter\def\csname ngshape@T002\endcsname{%
\begin{tikzpicture}[scale=1.125,ng diagram]
\draw[] (0,0) 	node [below] {}	-- (1,0) 		node [below] 	{}; \draw (1,0) 		-- (2,0);
\draw[-<] (2,0) 	node [below] {} 	-- (1.5,.8660254) 	node [right] 	{$\NGLabel{1}$}; \draw (1.5,.8660254) 	-- (1,1.7320508);
\draw[-<] (1,1.7320508)	node [above] {} 	-- (.5,.8660254) 	node [left] 	{$\NGLabel{2}$}; \draw (.5,.8660254) 	-- (0,0);
\draw[] (0,0) 	-- (.5,.2886751) 	node [above] 	{}; \draw (.5,.2886751) 	-- (1,0.5773503);
\draw[->] (1,0.5773503) node [below] {} -- (1,1.1547005) 	node [right]	{$\NGLabel{3}$}; \draw (1,1.1547005) 	-- (1,1.7320508);
\draw[] (1,0.5773503) -- (1.5,.2886751) 	node [above] 	{}; \draw (1.5,.2886751) 	-- (2,0);
\end{tikzpicture}%
}

\expandafter\def\csname ngshape@O009\endcsname{%
\begin{tikzpicture}[scale=1,ng diagram]
\draw (0,0) rectangle (2,1);
\draw (1,0)--(1,1);
\draw [->](0,1)--(0,.5) node [left] {$\NGLabel{1}$};
\draw [->](1,1)--(1,.5) node [left] {$\NGLabel{2}$};
\draw [-<](2,1)--(2,.5) node [left] {$\NGLabel{3}$};
\draw (0,1) arc[radius=1, start angle=180, end angle=0];
\draw (0,0) arc[radius=1, start angle=180, end angle=360];
\end{tikzpicture}%
}

\expandafter\def\csname ngshape@O010\endcsname{%
\begin{tikzpicture}[scale=1,ng diagram]
\draw (0,0) rectangle (2,1);
\draw [->](0,1)--(0,.5) node [left] {$\NGLabel{1}$};
\draw [-<](2,1)--(2,.5) node [left] {$\NGLabel{2}$};
\draw (0,1) arc[radius=1, start angle=180, end angle=0];
\draw (0,0) arc[radius=1, start angle=180, end angle=360];
\end{tikzpicture}%
}

\expandafter\def\csname ngshape@O011\endcsname{%
\begin{tikzpicture}[scale=1,ng diagram]
\draw (-.5,0) arc[radius=.5, start angle=180, end angle=0];
\draw (0,.5)--(0,0);
\draw [->](0,.5) arc[radius=.5, start angle=90, end angle=180];
\draw [](0,.5) arc[radius=.5, start angle=90, end angle=0];
\draw [](0,.5)--(0,0);
\draw (0,0)--(0,-.2);
\draw (.5,0)--(.5,-.2);
\draw (-.5,0)--(-.5,-.2);
\node at (-.4,0)[left] {$\NGLabel{1}$};
\end{tikzpicture}%
}

\expandafter\def\csname ngshape@O012\endcsname{%
\begin{tikzpicture}[scale=1,ng diagram]
\draw (0,0)--(0,2.1);
\draw (0,1.4)--(-.75,1.4);
\draw [-<](-.75,1.4)--(-.4,1.4);
\draw (0,.7)--(.75,.7);
\draw[-<](0,.7)--(.4,.7);
\node at (0,1.9)[right] {$\NGLabel{1}$};
\node at (0,1)[left] {$\NGLabel{2}$};
\node at (0,.4)[left] {$\NGLabel{3}$};
\node at (-.4,1.4)[above]{$\NGLabel{4}$};
\node at (.4,.7)[above]{$\NGLabel{5}$};
\end{tikzpicture}%
}

\expandafter\def\csname ngshape@O013\endcsname{%
\begin{tikzpicture}[scale=1,ng diagram]
\draw (0,0)--(0,2.1);
\draw (0,.7)--(-.75,.7);
\draw [-<](-.75,1.4/2)--(-.4,1.4/2);
\draw (0,1.4)--(.75,1.4);
\draw [-<](0,1.4)--(.4,1.4);
\node at (0,1.9)[left] {$\NGLabel{1}$};
\node at (0,1)[right] {$\NGLabel{2}$};
\node at (0,.4)[right] {$\NGLabel{3}$};
\node at (-.4,1.4/2)[above]{$\NGLabel{4}$};
\node at (.4,1.4)[above]{$\NGLabel{5}$};
\end{tikzpicture}%
}

\expandafter\def\csname ngshape@T003\endcsname{%
\begin{tikzpicture}[scale=1.125,ng diagram]
\draw[->] (0,0) 	node [below] {}	-- (1,0) 		node [below] 	{$\NGLabel{1}$}; \draw (1,0) 		-- (2,0);
\draw(2,0) 	node [below] {} 	-- (1.5,.8660254) 	node [right] 	{}; \draw (1.5,.8660254) 	-- (1,1.7320508);
\draw (1,1.7320508)	node [above] {} 	-- (.5,.8660254) 	node [left] 	{}; \draw (.5,.8660254) 	-- (0,0);
\draw (0,0) 	-- (.5,.2886751) 	node [above] 	{};
\draw (.5,.2886751) 	-- (1,0.5773503);
\draw[-<] (1,0.5773503) node [below] {} -- (1,1.1547005) 	node [right]	{$\NGLabel{2}$}; \draw (1,1.1547005) 	-- (1,1.7320508);
\draw(1,0.5773503) -- (1.5,.2886751) 	node [above] 	{}; \draw (1.5,.2886751) 	-- (2,0);
\end{tikzpicture}%
}

\expandafter\def\csname ngshape@P002\endcsname{%
\begin{tikzpicture}[scale=1.125,ng diagram]
\draw (0,2)--(3,2);
\draw (1,1)--(2,1);
\draw (0,0)--(3,0);
\draw (0,0) -- (0,2) -- (1,1) -- (0,0);
\draw (3,0)--(3,2)--(2,1)--(3,0);
\draw[] (0,2) -- (1.5,2) node [above] {};
\draw[] (1,1) -- (1.5,1) node [above] {};
\draw[->] (0,0) -- (1.5,0) node [above] {$\NGLabel{1}$};
\draw[] (0,2) --++ (.5,-.5) node [right] {};
\draw[->] (1,1) --++ (-.5,-.5) node [right] {$\NGLabel{2}$};
\draw[-<] (0,0) -- (0,1) node [left] {$\NGLabel{3}$};
\draw[-<] (3,2) --++ (-.5,-.5) node [left] {$\NGLabel{4}$};
\draw[] (2,1) --++ (.5,-.5) node [left] {};
\draw[] (3,0) -- (3,1) node [right] {};
\end{tikzpicture}%
}

\expandafter\def\csname ngshape@T004\endcsname{%
\begin{tikzpicture}[scale=1.125,ng diagram]
\draw[->] (0,0) 	node [below] {}	-- (1,0) 		node [below] 	{$\NGLabel{1}$}; \draw (1,0) 		-- (2,0);
\draw (2,0) 	node [below] {} 	-- (1.5,.8660254) 	node [right] 	{}; \draw (1.5,.8660254) 	-- (1,1.7320508);
\draw[->] (1,1.7320508)	node [above] {} 	-- (.5,.8660254) 	node [left] 	{$\NGLabel{2}$}; \draw (.5,.8660254) 	-- (0,0);
\draw[-<] (0,0) 	-- (.5,.2886751) 	node [above] 	{$\NGLabel{3}$};
\draw (.5,.2886751) 	-- (1,0.5773503);
\draw(1,0.5773503) node [below] {} -- (1,1.1547005) 	node [right]	{}; \draw (1,1.1547005) 	-- (1,1.7320508);
\draw (1,0.5773503) -- (1.5,.2886751) 	node [above] 	{}; \draw (1.5,.2886751) 	-- (2,0);
\end{tikzpicture}%
}

\expandafter\def\csname ngshape@T005\endcsname{%
\begin{tikzpicture}[scale=1.125,ng diagram]
\draw[->] (0,0) 	node [below] {}	-- (1,0) 		node [below] 	{$\NGLabel{1}$}; \draw (1,0) 		-- (2,0);
\draw(2,0) 	node [below] {} 	-- (1.5,.8660254) 	node [right] 	{}; \draw (1.5,.8660254) 	-- (1,1.7320508);
\draw (1,1.7320508)	node [above] {} 	-- (.5,.8660254) 	node [left] 	{}; \draw (.5,.8660254) 	-- (0,0);
\draw (0,0) 	-- (.5,.2886751) 	node [above] 	{};
\draw (.5,.2886751) 	-- (1,0.5773503);
\draw[->] (1,0.5773503) node [below] {} -- (1,1.1547005) 	node [right]	{$\NGLabel{2}$}; \draw (1,1.1547005) 	-- (1,1.7320508);
\draw(1,0.5773503) -- (1.5,.2886751) 	node [above] 	{}; \draw (1.5,.2886751) 	-- (2,0);
\end{tikzpicture}%
}

\expandafter\def\csname ngshape@T006\endcsname{%
\begin{tikzpicture}[scale=1.125,ng diagram]
\draw (0,0) 	node [below] {}	-- (1,0) node [below] 	{}; \draw (1,0) 		-- (2,0);
\draw[-<] (2,0) 	node [below] {} -- (1.5,.8660254) 	node [right] 	{$\NGLabel{1}$}; \draw (1.5,.8660254) 	-- (1,1.7320508);
\draw (1,1.7320508)	node [above] {} -- (.5,.8660254) 	node [left] 	{}; \draw (.5,.8660254) 	-- (0,0);
\draw[->] (0,0) 	-- (.5,.2886751) 	node [above] 	{$\NGLabel{2}$}; \draw (.5,.2886751) 	-- (1,0.5773503);
\draw (1,0.5773503) node [below] {} -- (1,1.1547005) 	node [right]	{}; \draw (1,1.1547005) 	-- (1,1.7320508);
\draw (1,0.5773503) -- (1.5,.2886751) 	node [above] 	{}; \draw (1.5,.2886751) 	-- (2,0);
\end{tikzpicture}%
}

\expandafter\def\csname ngshape@T007\endcsname{%
\begin{tikzpicture}[scale=1.125,ng diagram]
\draw (0,0) 	node [below] {}	-- (1,0) 		node [below] 	{}; \draw (1,0) 		-- (2,0);
\draw (2,0) 	node [below] {} 	-- (1.5,.8660254) 	node [right] 	{}; \draw (1.5,.8660254) 	-- (1,1.7320508);
\draw[-<] (1,1.7320508)	node [above] {} 	-- (.5,.8660254) 	node [left] 	{$\NGLabel{1}$}; \draw (.5,.8660254) 	-- (0,0);
\draw(0,0) 	-- (.5,.2886751) 	node [above] 	{}; \draw (.5,.2886751) 	-- (1,0.5773503);
\draw(1,0.5773503) node [below] {} -- (1,1.1547005) 	node [right]	{}; \draw (1,1.1547005) 	-- (1,1.7320508);
\draw[-<] (1,0.5773503) -- (1.5,.2886751) 	node [above] 	{$\NGLabel{2}$}; \draw (1.5,.2886751) 	-- (2,0);
\end{tikzpicture}%
}

\expandafter\def\csname ngshape@T008\endcsname{%
\begin{tikzpicture}[scale=1.125,ng diagram]
\draw[->] (0,0) 	node [below] {}	-- (1,0) 		node [below] 	{$\NGLabel{1}$}; \draw (1,0) 		-- (2,0);
\draw[] (2,0) 	node [below] {} 	-- (1.5,.8660254) 	node [right] 	{}; \draw (1.5,.8660254) 	-- (1,1.7320508);
\draw[->] (1,1.7320508)	node [above] {} 	-- (.5,.8660254) 	node [left] 	{$\NGLabel{2}$}; \draw (.5,.8660254) 	-- (0,0);
\draw[-<] (0,0) 	-- (.5,.2886751) 	node [above] 	{$\NGLabel{3}$}; \draw (.5,.2886751) 	-- (1,0.5773503);
\draw[] (1,0.5773503) node [below] {} -- (1,1.1547005) 	node [right]	{}; \draw (1,1.1547005) 	-- (1,1.7320508);
\draw[] (1,0.5773503) -- (1.5,.2886751) 	node [above] 	{}; \draw (1.5,.2886751) 	-- (2,0);
\end{tikzpicture}%
}

\expandafter\def\csname ngshape@T009\endcsname{%
\begin{tikzpicture}[scale=1.125,ng diagram]
\draw[-<] (0,0) 	node [below] {}	-- (1,0) 		node [below] 	{$\NGLabel{1}$}; \draw (1,0) 		-- (2,0);
\draw(2,0) 	node [below] {} 	-- (1.5,.8660254) 	node [right] 	{}; \draw (1.5,.8660254) 	-- (1,1.7320508);
\draw (1,1.7320508)	node [above] {} 	-- (.5,.8660254) 	node [left] 	{}; \draw (.5,.8660254) 	-- (0,0);
\draw (0,0) 	-- (.5,.2886751) 	node [above] 	{};
\draw (.5,.2886751) 	-- (1,0.5773503);
\draw[->] (1,0.5773503) node [below] {} -- (1,1.1547005) 	node [right]	{$\NGLabel{2}$}; \draw (1,1.1547005) 	-- (1,1.7320508);
\draw(1,0.5773503) -- (1.5,.2886751) 	node [above] 	{}; \draw (1.5,.2886751) 	-- (2,0);
\end{tikzpicture}%
}

\expandafter\def\csname ngshape@O014\endcsname{%
\begin{tikzpicture}[scale=1,xscale=1.3,ng diagram]
\NGYBase
\node at (0,0)[left] {$\NGLabel{1}$};
\end{tikzpicture}%
}

\expandafter\def\csname ngshape@O015\endcsname{%
\begin{tikzpicture}[scale=1,xscale=1.3,ng diagram]
\NGYBase
\draw (.4,.4)--(-.4,.4);
\draw [-<](-.4,.4)--(0,.4);
\node at (0,.4) [above] {$\NGLabel{1}$};
\node at (0,0)[left] {$\NGLabel{2}$};
\end{tikzpicture}%
}

\expandafter\def\csname ngshape@O016\endcsname{%
\begin{tikzpicture}[scale=0.75,xscale=1.3,ng diagram]
\draw (0,0)--(1.5,0)--(.75,1)--(0,0);
\node at (.75,1.25)[right] {$\NGLabel{1}$};
\node at (.375,.5)[left] {$\NGLabel{2}$};
\node at (1.175,.5)[right] {$\NGLabel{3}$};
\draw (.75,1)--(.75,1.5);
\draw (0,0)--(-.5,-.5);
\draw (1.5,0)--(2,-.5);
\draw [->] (.75,1.5)--(.75,1.25);
\draw [->](.75,1)--(.375,.5);
\draw [->](.75,1)--(1.125,.5);
\end{tikzpicture}%
}

\expandafter\def\csname ngshape@O017\endcsname{%
\begin{tikzpicture}[scale=0.75,xscale=1.3,ng diagram]
\draw (.75,1.5)--(.75,0)--(-.5,-.5)--(.75,0)--(2,-.5);
\node at (.75,.75)[right] {$\NGLabel{1}$};
\draw [->](.75,1.5)--(.75,.75);
\end{tikzpicture}%
}

\expandafter\def\csname ngshape@O018\endcsname{%
\begin{tikzpicture}[scale=1.5,xscale=1.3,ng diagram]
\NGYBase
\draw (.5,.5)--(-.5,.5);
\draw (.25,.25)--(-.25,.25);
\draw [-<](-.5,.5)--(0,.5);
\draw [-<](-.25,.25)--(0,.25);
\node at (0,.48) [above] {$\NGLabel{1}$};
\node at (0,.21) [above] {$\NGLabel{2}$};
\node at (0,0)[left] {$\NGLabel{3}$};
\end{tikzpicture}%
}

\expandafter\def\csname ngshape@O019\endcsname{%
\begin{tikzpicture}[scale=1.5,xscale=1.3,ng diagram]
\NGYBase
\draw (-.6,.6)--(-.3,.6)--(-.15,.45)--(.15,.45)--(.3,.6)--(.6,.6);
\draw (-.3,.3)--(-.15,.45)--(.15,.45)--(.3,.3);
\draw [->](.6,.6)--(.45,.6);
\draw [-<](-.6,.6)--(-.45,.6);
\draw [->](.3,.3)--(.225,.375);
\draw [-<](-.3,.3)--(-.225,.375);
\draw [->](.15,.45)--(0,.45);
\node at (0,0)[left] {$\NGLabel{1}$};
\node at (0,.42) [above] {$\NGLabel{2}$};
\node at (.45,.6)[above] {$\NGLabel{3}$};
\node at (-.45,.6)[above] {$\NGLabel{4}$};
\node at (.15,.28) {$\NGLabel{5}$};
\node at (-.15,.28) {$\NGLabel{6}$};
\end{tikzpicture}%
}

\expandafter\def\csname ngshape@O020\endcsname{%
\begin{tikzpicture}[scale=1.5,xscale=1.3,ng diagram]
\NGYBase
\draw (.45,.45)--(-.45,.45);
\draw[->] (.45,.45)--(0,.45);
\node at (0,.45) [above] {$\NGLabel{1}$};
\node at (0,0)[left] {$\NGLabel{2}$};
\end{tikzpicture}%
}

\expandafter\def\csname ngshape@O021\endcsname{%
\begin{tikzpicture}[scale=1,xscale=1.8,ng diagram]
\NGYBase
\draw (0,-.45)--(.45,.45);
\draw [->](0,-.45)--(.225,0);
\node at (0,0)[left] {$\NGLabel{1}$};
\node at (.225,0)[right] {$\NGLabel{2}$};
\end{tikzpicture}%
}

\expandafter\def\csname ngshape@O022\endcsname{%
\begin{tikzpicture}[scale=1,xscale=1.8,ng diagram]
\NGYBase
\draw (0,-.45)--(-.45,.45);
\draw [-<](0,-.45)--(-.225,0);
\node at (-.1,0) {$\NGLabel{1}$};
\node at (-.225,0)[left] {$\NGLabel{2}$};
\end{tikzpicture}%
}

\expandafter\def\csname ngshape@O023\endcsname{%
\begin{tikzpicture}[scale=1.5,xscale=1.4,ng diagram]
\NGYBase
\draw (.25,.25)--(-.25,.25);
\draw [-<](-.25,.25)--(0,.25);
\draw (0,-.45)--(-.6,.6);
\draw [-<](0,-.45)--(-.45*.6/1.05,0);
\node at (0,.25) [above] {$\NGLabel{1}$};
\node at (-.2,0) [left] {$\NGLabel{2}$};
\node at (-.1,0) {$\NGLabel{3}$};
\end{tikzpicture}%
}

\expandafter\def\csname ngshape@O024\endcsname{%
\begin{tikzpicture}[scale=1.5,xscale=1.4,ng diagram]
\NGYBase
\draw (.5,.5)--(-.5,.5);
\draw [-<](-.5,.5)--(0,.5);
\draw (0,-.45)--(-.35,.35);
\draw [-<](0,-.45)--(-.45*.35/.8,0);
\node at (0,.48) [above] {$\NGLabel{1}$};
\node at (-.2,0) [left] {$\NGLabel{2}$};
\node at (-.1,0) {$\NGLabel{3}$};
\end{tikzpicture}%
}

\expandafter\def\csname ngshape@O025\endcsname{%
\begin{tikzpicture}[scale=1.5,xscale=1.4,ng diagram]
\NGYBase
\draw (.5,.5)--(-.5,.5);
\draw [-<](-.5,.5)--(0,.5);
\draw (0,-.45)--(.35,.35);
\draw [->](0,-.45)--(.45*.35/.8,0);
\node at (0,.48) [above] {$\NGLabel{1}$};
\node at (.2,0) [right] {$\NGLabel{2}$};
\node at (-.1,0) {$\NGLabel{3}$};
\end{tikzpicture}%
}

\expandafter\def\csname ngshape@O026\endcsname{%
\begin{tikzpicture}[scale=1.5,xscale=1.4,ng diagram]
\NGYBase
\draw (.25,.25)--(-.25,.25);
\draw [-<](-.25,.25)--(0,.25);
\draw (0,-.45)--(.6,.6);
\draw [->](0,-.45)--(.45*.6/1.05,0);
\node at (0,.25) [above] {$\NGLabel{1}$};
\node at (.2,0) [right] {$\NGLabel{2}$};
\node at (-.1,0) {$\NGLabel{3}$};
\end{tikzpicture}%
}

\expandafter\def\csname ngshape@O027\endcsname{%
\begin{tikzpicture}[scale=1.5,xscale=1.4,ng diagram]
\NGYBase
\draw (0,-.3)--(-.45,.45);
\draw [-<](0,-.3)--(-.3*.45/.75,0);
\draw (0,-.6)--(.35,.35);
\draw [->](0,-.6)--(.6*.35/.95,0);
\node at (-0.3*.45/.75,0) [left] {$\NGLabel{1}$};
\node at (.2,0) [right] {$\NGLabel{2}$};
\node at (-.1,0) {$\NGLabel{3}$};
\end{tikzpicture}%
}

\expandafter\def\csname ngshape@O028\endcsname{%
\begin{tikzpicture}[scale=1.5,xscale=1.4,ng diagram]
\NGYBase
\draw (0,-.3)--(.45,.45);
\draw [->](0,-.3)--(.3*.45/.75,0);
\draw (0,-.6)--(-.35,.35);
\draw [-<](0,-.6)--(-.6*.35/.95,0);
\node at (-.6*.35/.95,0) [left] {$\NGLabel{1}$};
\node at (.2,0) [right] {$\NGLabel{2}$};
\node at (-.1,0) {$\NGLabel{3}$};
\end{tikzpicture}%
}

\expandafter\def\csname ngshape@O029\endcsname{%
\begin{tikzpicture}[scale=0.75,ng diagram]
\NGYBase
\node at (0,0)[left] {$\NGLabel{1}$};
\end{tikzpicture}%
}

\expandafter\def\csname ngshape@O030\endcsname{%
\begin{tikzpicture}[scale=1,ng diagram]
\draw (0,0) circle [radius=1];
\draw (0,1)--(0,-1);
\node at (-.06,.76)[anchor=east] {$\NGLabel{1}$};
\node at (-.06,-.76)[anchor=east] {$\NGLabel{2}$};
\end{tikzpicture}%
}

\expandafter\def\csname ngshape@O031\endcsname{%
\begin{tikzpicture}[scale=1,ng diagram]
\draw (0,0) circle [radius=1];
\draw (0,1)--(0,-1);
\draw[red] (-.2,0) arc[radius=.3, start angle=0, end angle=360];
\draw [red,-<](-.2,0) arc[radius=.3, start angle=0, end angle=90];
\node at (-.5,.4) {$\NGLabel{1}$};
\node at (-.06,.76)[anchor=east] {$\NGLabel{2}$};
\node at (-.06,-.76)[anchor=east] {$\NGLabel{3}$};
\end{tikzpicture}%
}

\expandafter\def\csname ngshape@O032\endcsname{%
\begin{tikzpicture}[scale=1,ng diagram]
\draw (0,0) circle [radius=1];
\draw (0,1)--(0,-1);
\draw [red] (0,.6)--(-.8,.6);
\draw [red,-<] (0,.6)--(-.4,.6);
\node at (-.4,.45){$\NGLabel{1}$};
\draw [red] (0,-.6)--(-.8,-.6);
\draw [red,->] (0,-.6)--(-.4,-.6);
\node at (-.4,-.45){$\NGLabel{2}$};
\node at (-.06,.76)[anchor=east] {$\NGLabel{3}$};
\node at (-.06,-.76)[anchor=east] {$\NGLabel{4}$};
\end{tikzpicture}%
}

\expandafter\def\csname ngshape@O033\endcsname{%
\begin{tikzpicture}[scale=1.3,ng diagram]
\draw (0,0) circle [radius=.5];
\draw (0,.5) arc[radius=.5, start angle=90, end angle=180];
\draw (0,.5) arc[radius=.5, start angle=90, end angle=0];
\draw (0,.5)--(0,0);
\draw (0,-.5)--(0,.5);
\node at (-.05,.32)[anchor=east] {$\NGLabel{1}$};
\node at (-.05,-.32)[anchor=east] {$\NGLabel{2}$};
\end{tikzpicture}%
}

\expandafter\def\csname ngshape@O034\endcsname{%
\begin{tikzpicture}[scale=1,ng diagram]
\draw (0,0) circle [radius=1];
\draw (0,1)--(0,-1);
\draw [red] (0,.6)--(.8,.6);
\draw [red,->] (0,.6)--(.4,.6);
\node at (.4,.45){$\NGLabel{1}$};
\draw [red] (0,-.6)--(.8,-.6);
\draw [red,-<] (0,-.6)--(.4,-.6);
\node at (.4,-.45){$\NGLabel{2}$};
\node at (-.06,.76)[anchor=east] {$\NGLabel{3}$};
\node at (-.06,-.76)[anchor=east] {$\NGLabel{4}$};
\end{tikzpicture}%
}

\expandafter\def\csname ngshape@O035\endcsname{%
\begin{tikzpicture}[scale=1,xscale=1.25,ng diagram]
\draw (0,0) circle [radius=1];
\draw (0,1)--(0,-1);
\node at (-.06,.60)[anchor=east] {$\NGLabel{1}$};
\node at (-.06,-.60)[anchor=east] {$\NGLabel{2}$};
\end{tikzpicture}%
}

\expandafter\def\csname ngshape@O036\endcsname{%
\begin{tikzpicture}[scale=1,ng diagram]
\draw (0,0) circle [radius=1];
\draw (0,1)--(0,-1);
\node at (-.06,.76)[anchor=east] {$\NGLabel{1}$};
\node at (-.06,-.76)[anchor=east] {$\NGLabel{2}$};
\draw [red] (-.6,.8)--(-.6,1)arc[radius=.2,start angle=180, end angle=90]--(.4,1.2) arc[radius=.2, start angle=90, end angle=0]--(.6,.8);
\draw [red,-<] (-.6,.8)--(-.6,1)arc[radius=.2,start angle=180, end angle=90]--(0,1.2);
\node at (0,1.2) [above] {$\NGLabel{3}$};
\draw [red] (-.6,-.8)--(-.6,-1)arc[radius=.2,start angle=180, end angle=270]--(.4,-1.2) arc[radius=.2, start angle=270, end angle=360]--(.6,-.8);
\draw [red,->] (-.6,-.8)--(-.6,-1)arc[radius=.2,start angle=180, end angle=270]--(0,-1.2);
\node at (0,-1.2) [below] {$\NGLabel{4}$};
\end{tikzpicture}%
}

\expandafter\def\csname ngshape@O037\endcsname{%
\begin{tikzpicture}[scale=1,ng diagram]
\NGYBase
\node at (0,0)[left] {$\NGLabel{1}$};
\end{tikzpicture}%
}

\expandafter\def\csname ngshape@O038\endcsname{%
\begin{tikzpicture}[scale=1,ng diagram]
\NGYBase
\node at (0,0)[right] {$\NGLabel{1}$};
\end{tikzpicture}%
}

\expandafter\def\csname ngshape@O039\endcsname{%
\begin{tikzpicture}[scale=1,ng diagram]
\NGYBase
\draw [red](.4,.4)--(-.4,.4);
\draw [red,->](.4,.4)--(0,.4) node [above] {$\NGLabel{1}$};
\node at (0,0)[left] {$\NGLabel{2}$};
\end{tikzpicture}%
}

\expandafter\def\csname ngshape@O040\endcsname{%
\begin{tikzpicture}[scale=1,ng diagram]
\NGYBase
\draw [red](.4,.4)--(-.4,.4);
\draw [red,->](.4,.4)--(0,.4) node [above] {$\NGLabel{1}$};
\node at (0,0)[right] {$\NGLabel{2}$};
\end{tikzpicture}%
}

\expandafter\def\csname ngshape@O041\endcsname{%
\begin{tikzpicture}[scale=1,ng diagram]
\NGYBase
\node at (0,0)[above] {$\NGLabel{1}$};
\end{tikzpicture}%
}

\expandafter\def\csname ngshape@O042\endcsname{%
\begin{tikzpicture}[scale=1.1,ng diagram]
\draw (0,0)--(1.5,0)--(.75,1)--(0,0);
\node at (.75,1)[below] {$\NGLabel{1}$};
\node at (1.4,.14)[left] {$\NGLabel{2}$};
\node at (.1,.14)[right] {$\NGLabel{3}$};
\draw (.75,1)--(.75,1.5);
\draw (0,0)--(-.5,-.5);
\draw (1.5,0)--(2,-.5);
\end{tikzpicture}%
}

\expandafter\def\csname ngshape@O043\endcsname{%
\begin{tikzpicture}[scale=1.1,ng diagram]
\draw (0,0)--(1.5,0)--(.75,1)--(0,0);
\node at (.75,1)[below] {$\NGLabel{1}$};
\node at (1.4,.14)[left] {$\NGLabel{2}$};
\node at (.1,.14)[right] {$\NGLabel{3}$};
\draw (.75,1)--(.75,1.5);
\draw (0,0)--(-.5,-.5);
\draw (1.5,0)--(2,-.5);
\draw [red](.75,.4) circle [radius=.2];
\node at (.75,.1)[] {$\NGLabel{4}$};
\draw [->,red] (.55,.4) arc[radius=.2, start angle=180, end angle=0];
\end{tikzpicture}%
}

\expandafter\def\csname ngshape@O044\endcsname{%
\begin{tikzpicture}[scale=1.1,ng diagram]
\draw (0,0)--(1.5,0)--(.75,1)--(0,0);
\node at (.75,1)[below] {$\NGLabel{1}$};
\node at (1.4,.14)[left] {$\NGLabel{2}$};
\node at (.1,.14)[right] {$\NGLabel{3}$};
\draw (.75,1)--(.75,1.5);
\draw (0,0)--(-.5,-.5);
\draw (1.5,0)--(2,-.5);
\draw [red](.6,0)--(.75*.4,.4);
\draw [->,red](.6,0)--(.3+.75*.2,.2);
\draw [red](1.5-.6,0)--(1.5-.75*.4,.4);
\draw [-<,red](1.5-.6,0)--(1.5-.3-.75*.2,.2);
\draw [red](.75*.6,.6)--(1.5-.75*.6,.6);
\draw [->,red](.75*.6,.6)--(.75,.6);
\node at (.75,.45){$\NGLabel{4}$};
\node at (.45+.75*.2,.2){$\NGLabel{5}$};
\node at (1.5-.45-.75*.2,.2){$\NGLabel{6}$};
\end{tikzpicture}%
}

\expandafter\def\csname ngshape@O045\endcsname{%
\begin{tikzpicture}[scale=1,ng diagram]
\draw (0,0)--(.75,0);
\draw (-.5,.5)--(0,0)--(-.5,-.5);
\draw (1.25,.5)--(.75,0)--(1.25,-.5);
\node at (0,0) [left] {$\NGLabel{1}$};
\node at (.75,0) [right] {$\NGLabel{2}$};
\end{tikzpicture}%
}

\expandafter\def\csname ngshape@O046\endcsname{%
\begin{tikzpicture}[scale=1,ng diagram]
\draw (0,0)--(0,.75);
\draw (.5,-.5)--(0,0)--(-.5,-.5);
\draw (.5,1.25)--(0,.75)--(-.5,1.25);
\node at (0,0)[left] {$\NGLabel{1}$};
\node at (0,.75)[left] {$\NGLabel{2}$};
\end{tikzpicture}%
}

\expandafter\def\csname ngshape@O047\endcsname{%
\begin{tikzpicture}[scale=1,ng diagram]
\draw (0,0)--(0,.75);
\draw [->](0,.75)--(0,.375);
\draw (.5,-.5)--(0,0)--(-.5,-.5);
\draw (.5,1.25)--(0,.75)--(-.5,1.25);
\node at (0,.375)[left] {$\NGLabel{1}$};
\end{tikzpicture}%
}

\expandafter\def\csname ngshape@T010\endcsname{%
\begin{tikzpicture}[scale=1.125,ng diagram]
\draw[] (0,0) 	node [below] {}	-- (1,0) 		node [below] 	{}; \draw (1,0) 		-- (2,0);
\node at (.32,.30) {$\NGLabel{1}$};
\draw[] (2,0) 	node [below] {} 	-- (1.5,.8660254) 	node [right] 	{}; \draw (1.5,.8660254) 	-- (1,1.7320508);
\node at (1.60,.36) {$\NGLabel{2}$};
\draw[] (1,1.7320508)-- (.5,.8660254) 	node [left] 	{};
\node at (.87,1.25) {$\NGLabel{3}$};
\draw (.5,.8660254) 	-- (0,0);
\draw[] (0,0) 	-- (.5,.2886751) 	node [above] 	{}; \draw (.5,.2886751) 	-- (1,0.5773503);
\draw[] (1,0.5773503) node [below] {} -- (1,1.1547005) 	node [right]	{}; \draw (1,1.1547005) 	-- (1,1.7320508);
\node at (.84,.76) {$\NGLabel{4}$};
\draw[] (1,0.5773503) -- (1.5,.2886751) 	node [above] 	{}; \draw (1.5,.2886751) 	-- (2,0);
\end{tikzpicture}%
}

\expandafter\def\csname ngshape@O048\endcsname{%
\begin{tikzpicture}[scale=0.75,ng diagram]
\draw (0,0)--(0,.75);
\draw [->](0,.75)--(0,.375);
\draw (.5,-.5)--(0,0)--(-.5,-.5);
\draw (.5,1.25)--(0,.75)--(-.5,1.25);
\node at (0,.375)[left] {$\NGLabel{1}$};
\end{tikzpicture}%
}

\expandafter\def\csname ngshape@O049\endcsname{%
\begin{tikzpicture}[scale=1,xscale=1.3,ng diagram]
\draw (0,0)--(.75,0);
\draw (-.5,.5)--(0,0)--(-.5,-.5);
\draw (1.25,.5)--(.75,0)--(1.25,-.5);
\node at (0,0) [left] {$\NGLabel{1}$};
\node at (.75,0) [right] {$\NGLabel{2}$};
\draw (1.15,.4)--(1.15,-.4);
\draw [->](1.15,.4)--(1.15,0);
\node at (1.15,0)[right] {$\NGLabel{3}$};
\end{tikzpicture}%
}

\expandafter\def\csname ngshape@O050\endcsname{%
\begin{tikzpicture}[scale=1,xscale=1.3,ng diagram]
\draw (0,0)--(0,.75);
\draw (.5,-.5)--(0,0)--(-.5,-.5);
\draw (.5,1.25)--(0,.75)--(-.5,1.25);
\node at (0,0)[left] {$\NGLabel{1}$};
\node at (0,.75)[left] {$\NGLabel{2}$};
\draw (.4,1.15)--(.4,-.4);
\draw [->](.4,1.15)--(.4,.375);
\node at (.4,.375)[right] {$\NGLabel{3}$};
\end{tikzpicture}%
}

\expandafter\def\csname ngshape@O051\endcsname{%
\begin{tikzpicture}[scale=1,xscale=1.3,ng diagram]
\draw (0,0)--(0,.75);
\draw (.5,-.5)--(0,0)--(-.5,-.5);
\draw (.5,1.25)--(0,.75)--(-.5,1.25);
\node at (0,0)[left] {$\NGLabel{1}$};
\node at (0,.75)[left] {$\NGLabel{2}$};
\draw (.4,1.15)--(0,.425);
\draw [-<](0,.425)--(.2,.7875);
\draw (0,.325)--(.4,-.4);
\draw [->](0,.325)--(.2,-0.0375);
\node at (.2,.7875) [right] {$\NGLabel{3}$};
\node at (.2,-0.0375)[right] {$\NGLabel{4}$};
\end{tikzpicture}%
}

\expandafter\def\csname ngshape@T011\endcsname{%
\begin{tikzpicture}[scale=1.125,ng diagram]
\draw[->] (0,0) 	node [below] {}	-- (1,0) 		node [below] 	{$\NGLabel{1}$}; \draw (1,0) 		-- (2,0);
\draw(2,0) 	node [below] {} 	-- (1.5,.8660254) 	node [right] 	{}; \draw (1.5,.8660254) 	-- (1,1.7320508);
\draw (1,1.7320508)	node [above] {} 	-- (.5,.8660254) 	node [left] 	{}; \draw (.5,.8660254) 	-- (0,0);
\draw (0,0) 	-- (.5,.2886751) 	node [above] 	{};
\draw (.5,.2886751) 	-- (1,0.5773503);
\draw[] (1,0.5773503) node [below] {} -- (1,1.1547005) 	node [right]	{}; \draw (1,1.1547005) 	-- (1,1.7320508);
\draw(1,0.5773503) -- (1.5,.2886751) 	node [above] 	{}; \draw (1.5,.2886751) 	-- (2,0);
\node at (1,1.7320508*.25+.1) [below] {$\NGLabel{2}$};
\node at (1,1*1.7320508) [above] {$\NGLabel{3}$};
\end{tikzpicture}%
}

\expandafter\def\csname ngshape@T012\endcsname{%
\begin{tikzpicture}[scale=1.125,ng diagram]
\draw[] (0,0) 	node [below] {}	-- (1,0) 		node [below] 	{}; \draw (1,0) 		-- (2,0);
\node at (.32,.30) {$\NGLabel{1}$};
\draw[] (2,0) 	node [below] {} 	-- (1.5,.8660254) 	node [right] 	{}; \draw (1.5,.8660254) 	-- (1,1.7320508);
\node at (1.55,.40) {$\NGLabel{2}$};
\draw[] (1,1.7320508)-- (.5,.8660254) 	node [left] 	{};
\node at (.87,1.25) {$\NGLabel{3}$};
\draw (.5,.8660254) 	-- (0,0);
\draw[] (0,0) 	-- (.5,.2886751) 	node [above] 	{}; \draw (.5,.2886751) 	-- (1,0.5773503);
\draw[] (1,0.5773503) node [below] {} -- (1,1.1547005) 	node [right]	{}; \draw (1,1.1547005) 	-- (1,1.7320508);
\node at (1,.627) [left]{$\NGLabel{4}$};
\draw[] (1,0.5773503) -- (1.5,.2886751) 	node [above] 	{}; \draw (1.5,.2886751) 	-- (2,0);
\end{tikzpicture}%
}

\expandafter\def\csname ngshape@T013\endcsname{%
\begin{tikzpicture}[scale=1.125,ng diagram]
\draw(2,0) 	node [below] {} 	-- (1.5,.8660254) 	node [right] 	{}; \draw (1.5,.8660254) 	-- (1,1.7320508);
\draw (1,1.7320508)	node [above] {} 	-- (.5,.8660254) 	node [left] 	{}; \draw (.5,.8660254) 	-- (0,0);
\draw (0,0) 	-- (.5,.2886751) 	node [above] 	{};
\draw (.5,.2886751) 	-- (1,0.5773503);
\draw[] (1,0.5773503) node [below] {} -- (1,1.1547005) 	node [right]	{}; \draw (1,1.1547005) 	-- (1,1.7320508);
\draw(1,0.5773503) -- (1.5,.2886751) 	node [above] 	{}; \draw (1.5,.2886751) 	-- (2,0);
\node at (1,1.7320508*.25+.1) [below] {$\NGLabel{1}$};
\node at (1,1*1.7320508) [above] {$\NGLabel{2}$};
\end{tikzpicture}%
}

\expandafter\def\csname ngshape@T014\endcsname{%
\begin{tikzpicture}[scale=1.125,ng diagram]
\draw[] (0,0) 	node [below] {}	-- (1,0) 		node [below] 	{}; \draw (1,0) 		-- (2,0);
\node at (.32,.30) {$\NGLabel{1}$};
\draw[] (2,0) 	node [below] {} 	-- (1.5,.8660254) 	node [right] 	{}; \draw (1.5,.8660254) 	-- (1,1.7320508);
\node at (1.55,.40) {$\NGLabel{2}$};
\draw[] (1,1.7320508)-- (.5,.8660254) 	node [left] 	{};
\node at (.87,1.25) {$\NGLabel{3}$};
\draw (.5,.8660254) 	-- (0,0);
\draw[] (0,0) 	-- (.5,.2886751) 	node [above] 	{}; \draw (.5,.2886751) 	-- (1,0.5773503);
\draw[] (1,0.5773503) node [below] {} -- (1,1.1547005) 	node [right]	{}; \draw (1,1.1547005) 	-- (1,1.7320508);
\node at (.78,.73) {$\NGLabel{4}$};
\draw[] (1,0.5773503) -- (1.5,.2886751) 	node [above] 	{}; \draw (1.5,.2886751) 	-- (2,0);
\end{tikzpicture}%
}

\expandafter\def\csname ngshape@T015\endcsname{%
\begin{tikzpicture}[scale=1.125,ng diagram]
\draw[] (0,0) 	node [below] {}	-- (1,0) 		node [below] 	{}; \draw (1,0) 		-- (2,0);
\node at (.32,.30) {$\NGLabel{1}$};
\draw[] (2,0) 	node [below] {} 	-- (1.5,.8660254) 	node [right] 	{}; \draw (1.5,.8660254) 	-- (1,1.7320508);
\node at (1.55,.40) {$\NGLabel{2}$};
\draw[] (1,1.7320508)-- (.5,.8660254) 	node [left] 	{};
\node at (.87,1.25) {$\NGLabel{3}$};
\draw (.5,.8660254) 	-- (0,0);
\draw[] (0,0) 	-- (.5,.2886751) 	node [above] 	{}; \draw (.5,.2886751) 	-- (1,0.5773503);
\draw[] (1,0.5773503) node [below] {} -- (1,1.1547005) 	node [right]	{}; \draw (1,1.1547005) 	-- (1,1.7320508);
\node at (1,.627)[left] {$\NGLabel{4}$};
\draw[] (1,0.5773503) -- (1.5,.2886751) 	node [above] 	{}; \draw (1.5,.2886751) 	-- (2,0);
\draw [-<,red](.5,0)--(.5,.25/1.7320508);
\draw [-<,red](1.5,.5/1.7320508)--(1.5,.25/1.7320508);
\draw [-<,red] (.75,.75/1.7320508)--(1,.75/1.7320508);
\draw [red](.5,0)--(.5,.5/1.7320508);
\draw [red](1.5,0)--(1.5,.5/1.7320508);
\draw [red](.75,.75/1.7320508)--(1.25,.75/1.7320508);
\node at  (.5,.25/1.7320508) [right] {$\NGLabel{5}$};
\node at  (1.5,.25/1.7320508) [left] {$\NGLabel{6}$};
\node at (1,.75/1.7320508)[below] {$\NGLabel{7}$};
\end{tikzpicture}%
}

\expandafter\def\csname ngshape@O052\endcsname{%
\begin{tikzpicture}[scale=1,ng diagram]
\draw (0,0)--(0,1.5);
\draw (.5,0)--(.5,1.5);
\draw (1,0)--(1,1.5);
\end{tikzpicture}%
}

\expandafter\def\csname ngshape@O053\endcsname{%
\begin{tikzpicture}[scale=1,ng diagram]
\draw (0,0) arc[radius=.5, start angle=180, end angle=0];
\draw (0,1.5) arc[radius=.5, start angle=180, end angle=360];
\end{tikzpicture}%
}

\expandafter\def\csname ngshape@P003\endcsname{%
\begin{tikzpicture}[scale=0.75,ng diagram]
\draw (0,2)--(3,2);
\draw (1,1)--(2,1);
\draw (0,0)--(3,0);
\draw (0,0) -- (0,2) -- (1,1) -- (0,0);
\draw (3,0)--(3,2)--(2,1)--(3,0);
\draw[->] (0,2) -- (1.5,2) node [above] {$\NGLabel{1}$};
\draw[->] (1,1) -- (1.5,1) node [above] {$\NGLabel{2}$};
\draw[-<] (0,0) -- (1.5,0) node [above] {$\NGLabel{3}$};
\draw[->] (0,2) --++ (.5,-.5) node [right] {$\NGLabel{4}$};
\draw[->] (1,1) --++ (-.5,-.5) node [right] {$\NGLabel{5}$};
\draw[->] (0,0) -- (0,1) node [left] {$\NGLabel{6}$};
\draw[-<] (3,2) --++ (-.5,-.5) node [left] {$\NGLabel{7}$};
\draw[->] (2,1) --++ (.5,-.5) node [left] {$\NGLabel{8}$};
\draw[-<] (3,0) -- (3,1) node [right] {$\NGLabel{9}$};
\end{tikzpicture}%
}

\expandafter\def\csname ngshape@T016\endcsname{%
\begin{tikzpicture}[scale=1.125,ng diagram]
\draw[-<] (0,0) 	node [below] {}	-- (1,0) 		node [below] 	{$\NGLabel{1}$}; \draw (1,0) 		-- (2,0);
\draw[-<] (2,0) 	node [below] {} 	-- (1.5,.8660254) 	node [right] 	{$\NGLabel{2}$}; \draw (1.5,.8660254) 	-- (1,1.7320508);
\draw[-<] (1,1.7320508)	node [above] {} 	-- (.5,.8660254) 	node [left] 	{$\NGLabel{3}$}; \draw (.5,.8660254) 	-- (0,0);
\draw[-<] (0,0) 	-- (.5,.2886751) 	node [above] 	{$\NGLabel{4}$}; \draw (.5,.2886751) 	-- (1,0.5773503);
\draw[-<] (1,0.5773503) node [below] {} -- (1,1.1547005) 	node [right]	{$\NGLabel{5}$}; \draw (1,1.1547005) 	-- (1,1.7320508);
\draw[->] (1,0.5773503) -- (1.5,.2886751) 	node [above] 	{$\NGLabel{6}$}; \draw (1.5,.2886751) 	-- (2,0);
\end{tikzpicture}%
}

\expandafter\def\csname ngshape@T017\endcsname{%
\begin{tikzpicture}[scale=1.125,ng diagram]
\draw[-<] (0,0) 	node [below] {}	-- (1,0) 		node [below] 	{$\NGLabel{1}$}; \draw (1,0) 		-- (2,0);
\draw[-<] (2,0) 	node [below] {} 	-- (1.5,.8660254) 	node [right] 	{$\NGLabel{2}$}; \draw (1.5,.8660254) 	-- (1,1.7320508);
\draw[-<] (1,1.7320508)	node [above] {} 	-- (.5,.8660254) 	node [left] 	{$\NGLabel{3}$}; \draw (.5,.8660254) 	-- (0,0);
\draw[->] (0,0) 	-- (.5,.2886751) 	node [above] 	{$\NGLabel{4}$}; \draw (.5,.2886751) 	-- (1,0.5773503);
\draw[->] (1,0.5773503) node [below] {} -- (1,1.1547005) 	node [right]	{$\NGLabel{5}$}; \draw (1,1.1547005) 	-- (1,1.7320508);
\draw[->] (1,0.5773503) -- (1.5,.2886751) 	node [above] 	{$\NGLabel{6}$}; \draw (1.5,.2886751) 	-- (2,0);
\end{tikzpicture}%
}

\expandafter\def\csname ngshape@T018\endcsname{%
\begin{tikzpicture}[scale=1.125,ng diagram]
\draw[->] (0,0) 	node [below] {}	-- (1,0) 		node [below] 	{$\NGLabel{1}$}; \draw (1,0) 		-- (2,0);
\draw[-<] (2,0) 	node [below] {} 	-- (1.5,.8660254) 	node [right] 	{$\NGLabel{2}$}; \draw (1.5,.8660254) 	-- (1,1.7320508);
\draw[-<] (1,1.7320508)	node [above] {} 	-- (.5,.8660254) 	node [left] 	{$\NGLabel{3}$}; \draw (.5,.8660254) 	-- (0,0);
\draw[-<] (0,0) 	-- (.5,.2886751) 	node [above] 	{$\NGLabel{4}$}; \draw (.5,.2886751) 	-- (1,0.5773503);
\draw[-<] (1,0.5773503) node [below] {} -- (1,1.1547005) 	node [right]	{}; \draw (1,1.1547005) 	-- (1,1.7320508);
\draw[-<] (1,0.5773503) -- (1.5,.2886751) 	node [above] 	{$\NGLabel{6}$}; \draw (1.5,.2886751) 	-- (2,0);
\node at (1,.69)[right=1.5pt] {$\NGLabel{5}$};
\end{tikzpicture}%
}

\expandafter\def\csname ngshape@T019\endcsname{%
\begin{tikzpicture}[scale=1.125,ng diagram]
\draw[->] (0,0) 	node [below] {}	-- (1,0) 		node [below] 	{$\NGLabel{1}$}; \draw (1,0) 		-- (2,0);
\draw[->] (2,0) 	node [below] {} 	-- (1.5,.8660254) 	node [right] 	{$\NGLabel{2}$}; \draw (1.5,.8660254) 	-- (1,1.7320508);
\draw[-<] (1,1.7320508)	node [above] {} 	-- (.5,.8660254) 	node [left] 	{$\NGLabel{3}$}; \draw (.5,.8660254) 	-- (0,0);
\draw[-<] (0,0) 	-- (.5,.2886751) 	node [above] 	{$\NGLabel{4}$}; \draw (.5,.2886751) 	-- (1,0.5773503);
\draw[-<] (1,0.5773503) node [below] {} -- (1,1.1547005) 	node [right]	{}; \draw (1,1.1547005) 	-- (1,1.7320508);
\draw[-<] (1,0.5773503) -- (1.5,.2886751) 	node [above] 	{$\NGLabel{6}$}; \draw (1.5,.2886751) 	-- (2,0);
\node at (1,.69)[right=1.5pt] {$\NGLabel{5}$};
\end{tikzpicture}%
}

\expandafter\def\csname ngshape@T020\endcsname{%
\begin{tikzpicture}[scale=1.125,ng diagram]
\draw[-<] (0,0) 	node [below] {}	-- (1,0) 		node [below] 	{$\NGLabel{1}$}; \draw (1,0) 		-- (2,0);
\draw[-<] (2,0) 	node [below] {} 	-- (1.5,.8660254) 	node [right] 	{$\NGLabel{2}$}; \draw (1.5,.8660254) 	-- (1,1.7320508);
\draw[-<] (1,1.7320508)	node [above] {} 	-- (.5,.8660254) 	node [left] 	{$\NGLabel{3}$}; \draw (.5,.8660254) 	-- (0,0);
\draw[-<] (0,0) 	-- (.5,.2886751) 	node[above] {}; \draw (.5,.2886751) 	-- (1,0.5773503);
\draw[-<] (1,0.5773503) node [below] {} -- (1,1.1547005) 	node[right] {}; \draw (1,1.1547005) 	-- (1,1.7320508);
\draw[->] (1,0.5773503) -- (1.5,.2886751) 	node[above] {}; \draw (1.5,.2886751) 	-- (2,0);
\node at (.65,.16) {$\NGLabel{4}$};
\node at (1.35,.16) {$\NGLabel{5}$};
\node at (1.24,.72) {$\NGLabel{6}$};
\end{tikzpicture}%
}

\expandafter\def\csname ngshape@P004\endcsname{%
\begin{tikzpicture}[scale=0.75,ng diagram]
\draw (0,2)--(3,2);
\draw (1,1)--(2,1);
\draw (0,0)--(3,0);
\draw (0,0) -- (0,2) -- (1,1) -- (0,0);
\draw (3,0)--(3,2)--(2,1)--(3,0);
\draw[->] (0,2) -- (1.5,2) node [above] {$\NGLabel{1}$};
\draw[->] (1,1) -- (1.5,1) node [above] {$\NGLabel{2}$};
\draw[-<] (0,0) -- (1.5,0) node [above] {$\NGLabel{3}$};
\draw[] (0,2) --++ (.5,-.5) node [right] {};
\draw[] (1,1) --++ (-.5,-.5) node [right] {};
\draw[] (0,0) -- (0,1) node [left] {};
\draw[-<] (3,2) --++ (-.5,-.5) node [left] {$\NGLabel{4}$};
\draw[->] (2,1) --++ (.5,-.5) node [left] {$\NGLabel{5}$};
\draw[-<] (3,0) -- (3,1) node [right] {$\NGLabel{6}$};
\end{tikzpicture}%
}

\expandafter\def\csname ngshape@T021\endcsname{%
\begin{tikzpicture}[scale=1.125,ng diagram]
\draw[-<] (0,0) 	node [below] {}	-- (1,0) 		node [below] 	{$\NGLabel{1}$}; \draw (1,0) 		-- (2,0);
\draw[-<] (2,0) 	node [below] {} 	-- (1.5,.8660254) 	node [right] 	{$\NGLabel{2}$}; \draw (1.5,.8660254) 	-- (1,1.7320508);
\draw[] (1,1.7320508)	node [above] {} 	-- (.5,.8660254) 	node [left] 	{}; \draw (.5,.8660254) 	-- (0,0);
\draw[] (0,0) 	-- (.5,.2886751) 	node [above] 	{}; \draw (.5,.2886751) 	-- (1,0.5773503);
\draw[] (1,0.5773503) node [below] {} -- (1,1.1547005) 	node [right]	{}; \draw (1,1.1547005) 	-- (1,1.7320508);
\draw[->] (1,0.5773503) -- (1.5,.2886751) 	node [above] 	{$\NGLabel{3}$}; \draw (1.5,.2886751) 	-- (2,0);
\end{tikzpicture}%
}

\expandafter\def\csname ngshape@T022\endcsname{%
\begin{tikzpicture}[scale=1.125,ng diagram]
\draw[-<] (0,0) 	node [below] {}	-- (1,0) 		node [below] 	{$\NGLabel{1}$}; \draw (1,0) 		-- (2,0);
\draw[-<] (2,0) 	node [below] {} 	-- (1.5,.8660254) 	node [right] 	{$\NGLabel{2}$}; \draw (1.5,.8660254) 	-- (1,1.7320508);
\draw[-<] (1,1.7320508)	node [above] {} 	-- (.5,.8660254) 	node [left] 	{$\NGLabel{3}$}; \draw (.5,.8660254) 	-- (0,0);
\draw[->] (0,0) 	-- (.5,.2886751) 	node [above] 	{$\NGLabel{4}$}; \draw (.5,.2886751) 	-- (1,0.5773503);
\draw[->] (1,0.5773503) node [below] {} -- (1,1.1547005) 	node [right]	{$\NGLabel{5}$}; \draw (1,1.1547005) 	-- (1,1.7320508);
\draw[->] (1,0.5773503) -- (1.5,.2886751) 	node[above] {}; \draw (1.5,.2886751) 	-- (2,0);
\node at (1.35,.16) {$\NGLabel{6}$};
\end{tikzpicture}%
}

\expandafter\def\csname ngshape@T023\endcsname{%
\begin{tikzpicture}[scale=1.125,ng diagram]
\draw[->] (0,0) 	node [below] {}	-- (1,0) 		node [below] 	{$\NGLabel{1}$}; \draw (1,0) 		-- (2,0);
\draw[-<] (2,0) 	node [below] {} 	-- (1.5,.8660254) 	node [right] 	{$\NGLabel{2}$}; \draw (1.5,.8660254) 	-- (1,1.7320508);
\draw[] (1,1.7320508)	node [above] {} 	-- (.5,.8660254) 	node [left] 	{}; \draw (.5,.8660254) 	-- (0,0);
\draw[] (0,0) 	-- (.5,.2886751) 	node [above] 	{}; \draw (.5,.2886751) 	-- (1,0.5773503);
\draw[] (1,0.5773503) node [below] {} -- (1,1.1547005) 	node [right]	{}; \draw (1,1.1547005) 	-- (1,1.7320508);
\draw[-<] (1,0.5773503) -- (1.5,.2886751) 	node[above] {}; \draw (1.5,.2886751) 	-- (2,0);
\node at (1.35,.16) {$\NGLabel{3}$};
\end{tikzpicture}%
}

\expandafter\def\csname ngshape@T024\endcsname{%
\begin{tikzpicture}[scale=1.125,ng diagram]
\draw[->] (0,0) 	node [below] {}	-- (1,0) 		node [below] 	{$\NGLabel{1}$}; \draw (1,0) 		-- (2,0);
\draw[->] (2,0) 	node [below] {} 	-- (1.5,.8660254) 	node [right] 	{$\NGLabel{2}$}; \draw (1.5,.8660254) 	-- (1,1.7320508);
\draw[] (1,1.7320508)	node [above] {} 	-- (.5,.8660254) 	node [left] 	{}; \draw (.5,.8660254) 	-- (0,0);
\draw[] (0,0) 	-- (.5,.2886751) 	node [above] 	{}; \draw (.5,.2886751) 	-- (1,0.5773503);
\draw[] (1,0.5773503) node [below] {} -- (1,1.1547005) 	node [right]	{}; \draw (1,1.1547005) 	-- (1,1.7320508);
\draw[-<] (1,0.5773503) -- (1.5,.2886751) 	node [above] 	{$\NGLabel{3}$}; \draw (1.5,.2886751) 	-- (2,0);
\end{tikzpicture}%
}

\expandafter\def\csname ngshape@T025\endcsname{%
\begin{tikzpicture}[scale=1.125,ng diagram]
\draw[-<] (0,0) 	node [below] {}	-- (1,0) 		node [below] 	{$\NGLabel{1}$}; \draw (1,0) 		-- (2,0);
\draw[-<] (2,0) 	node [below] {} 	-- (1.5,.8660254) 	node [right] 	{$\NGLabel{2}$}; \draw (1.5,.8660254) 	-- (1,1.7320508);
\draw[] (1,1.7320508)	node [above] {} 	-- (.5,.8660254) 	node [left] 	{}; \draw (.5,.8660254) 	-- (0,0);
\draw[] (0,0) 	-- (.5,.2886751) 	node [above] 	{}; \draw (.5,.2886751) 	-- (1,0.5773503);
\draw[] (1,0.5773503) node [below] {} -- (1,1.1547005) 	node [right]	{}; \draw (1,1.1547005) 	-- (1,1.7320508);
\draw[->] (1,0.5773503) -- (1.5,.2886751) 	node[above] {}; \draw (1.5,.2886751) 	-- (2,0);
\node at (1.35,.16) {$\NGLabel{3}$};
\end{tikzpicture}%
}

\expandafter\def\csname ngshape@P005\endcsname{%
\begin{tikzpicture}[scale=0.75,ng diagram]
\draw (0,2)--(3,2);
\draw (1,1)--(2,1);
\draw (0,0)--(3,0);
\draw (0,0) -- (0,2) -- (1,1) -- (0,0);
\draw (3,0)--(3,2)--(2,1)--(3,0);
\draw[] (0,2) -- (1.5,2) node [above] {};
\draw[] (1,1) -- (1.5,1) node [above] {};
\draw[->] (0,0) -- (1.5,0) node [above] {$\NGLabel{1}$};
\draw[] (0,2) --++ (.5,-.5) node [right] {};
\draw[->] (1,1) --++ (-.5,-.5) node [right] {$\NGLabel{2}$};
\draw[-<] (0,0) -- (0,1) node [left] {$\NGLabel{3}$};
\draw[] (3,2) --++ (-.5,-.5) node [left] {};
\draw[->] (2,1) --++ (.5,-.5) node [left] {$\NGLabel{4}$};
\draw[->] (3,0) -- (3,1) node [right] {$\NGLabel{5}$};
\end{tikzpicture}%
}

\expandafter\def\csname ngshape@T026\endcsname{%
\begin{tikzpicture}[scale=1.125,ng diagram]
\draw[->] (0,0) 	node [below] {}	-- (1,0) 		node [below] 	{$\NGLabel{1}$}; \draw (1,0) 		-- (2,0);
\draw[->] (2,0) 	node [below] {} 	-- (1.5,.8660254) 	node [right] 	{$\NGLabel{2}$}; \draw (1.5,.8660254) 	-- (1,1.7320508);
\draw[] (1,1.7320508)	node [above] {} 	-- (.5,.8660254) 	node [left] 	{}; \draw (.5,.8660254) 	-- (0,0);
\draw[] (0,0) 	-- (.5,.2886751) 	node [above] 	{}; \draw (.5,.2886751) 	-- (1,0.5773503);
\draw[] (1,0.5773503) node [below] {} -- (1,1.1547005) 	node [right]	{}; \draw (1,1.1547005) 	-- (1,1.7320508);
\draw[->] (1,0.5773503) -- (1.5,.2886751) 	node [above] 	{$\NGLabel{3}$}; \draw (1.5,.2886751) 	-- (2,0);
\end{tikzpicture}%
}

\expandafter\def\csname ngshape@T027\endcsname{%
\begin{tikzpicture}[scale=1.125,ng diagram]
\draw[] (0,0) 	node [below] {}	-- (1,0) 		node [below] 	{}; \draw (1,0) 		-- (2,0);
\draw[] (2,0) 	node [below] {} 	-- (1.5,.8660254) 	node [right] 	{}; \draw (1.5,.8660254) 	-- (1,1.7320508);
\draw[] (1,1.7320508)	node [above] {} 	-- (.5,.8660254) 	node [left] 	{}; \draw (.5,.8660254) 	-- (0,0);
\draw[->] (0,0) 	-- (.5,.2886751) 	node [above] 	{$\NGLabel{1}$}; \draw (.5,.2886751) 	-- (1,0.5773503);
\draw[-<] (1,0.5773503) node [below] {} -- (1,1.1547005) 	node[right] {}; \draw (1,1.1547005) 	-- (1,1.7320508);
\draw[->] (1,0.5773503) -- (1.5,.2886751) 	node[above] {}; \draw (1.5,.2886751) 	-- (2,0);
\node at (1.08,.16) {$\NGLabel{2}$};
\node at (1,.69)[right=1.5pt] {$\NGLabel{3}$};
\end{tikzpicture}%
}

\expandafter\def\csname ngshape@T028\endcsname{%
\begin{tikzpicture}[scale=1.125,ng diagram]
\draw[] (0,0) 	node [below] {}	-- (1,0) 		node [below] 	{}; \draw (1,0) 		-- (2,0);
\draw[->] (2,0) 	node [below] {} 	-- (1.5,.8660254) 	node [right] 	{$\NGLabel{1}$}; \draw (1.5,.8660254) 	-- (1,1.7320508);
\draw[-<] (1,1.7320508)	node [above] {} 	-- (.5,.8660254) 	node [left] 	{$\NGLabel{2}$}; \draw (.5,.8660254) 	-- (0,0);
\draw[] (0,0) 	-- (.5,.2886751) 	node [above] 	{}; \draw (.5,.2886751) 	-- (1,0.5773503);
\draw[-<] (1,0.5773503) node [below] {} -- (1,1.1547005) 	node[right] {}; \draw (1,1.1547005) 	-- (1,1.7320508);
\draw[] (1,0.5773503) -- (1.5,.2886751) 	node [above] 	{}; \draw (1.5,.2886751) 	-- (2,0);
\node at (1,.69)[right=1.5pt] {$\NGLabel{3}$};
\end{tikzpicture}%
}

\expandafter\def\csname ngshape@T029\endcsname{%
\begin{tikzpicture}[scale=1.125,ng diagram]
\draw[->] (0,0) 	node [below] {}	-- (1,0) 		node [below] 	{$\NGLabel{1}$}; \draw (1,0) 		-- (2,0);
\draw[->] (2,0) 	node [below] {} 	-- (1.5,.8660254) 	node [right] 	{$\NGLabel{2}$}; \draw (1.5,.8660254) 	-- (1,1.7320508);
\draw[->] (1,1.7320508)	node [above] {} 	-- (.5,.8660254) 	node [left] 	{$\NGLabel{3}$}; \draw (.5,.8660254) 	-- (0,0);
\draw[-<] (0,0) 	-- (.5,.2886751) 	node [above] 	{$\NGLabel{4}$}; \draw (.5,.2886751) 	-- (1,0.5773503);
\draw[-<] (1,0.5773503) node [below] {} -- (1,1.1547005) 	node[right] {}; \draw (1,1.1547005) 	-- (1,1.7320508);
\draw[->] (1,0.5773503) -- (1.5,.2886751) 	node [above] 	{$\NGLabel{5}$}; \draw (1.5,.2886751) 	-- (2,0);
\node at (1,.69)[right=1.5pt] {$\NGLabel{6}$};
\end{tikzpicture}%
}

\expandafter\def\csname ngshape@P006\endcsname{%
\begin{tikzpicture}[scale=0.75,ng diagram]
\draw (0,2)--(3,2);
\draw (1,1)--(2,1);
\draw (0,0)--(3,0);
\draw (0,0) -- (0,2) -- (1,1) -- (0,0);
\draw (3,0)--(3,2)--(2,1)--(3,0);
\draw[] (0,2) -- (1.5,2) node [above] {};
\draw[] (1,1) -- (1.5,1) node [above] {};
\draw[->] (0,0) -- (1.5,0) node [above] {$\NGLabel{1}$};
\draw[] (0,2) --++ (.5,-.5) node [right] {};
\draw[->] (1,1) --++ (-.5,-.5) node [right] {$\NGLabel{2}$};
\draw[-<] (0,0) -- (0,1) node [left] {$\NGLabel{3}$};
\draw[-<] (3,2) --++ (-.5,-.5) node [left] {$\NGLabel{4}$};
\draw[] (2,1) --++ (.5,-.5) node [left] {};
\draw[] (3,0) -- (3,1) node [right] {};
\end{tikzpicture}%
}

\expandafter\def\csname ngshape@P007\endcsname{%
\begin{tikzpicture}[scale=0.84375,ng diagram]
\draw (0,2)--(3,2);
\draw (1,1)--(2,1);
\draw (0,0)--(3,0);
\draw (0,0) -- (0,2) -- (1,1) -- (0,0);
\draw (3,0)--(3,2)--(2,1)--(3,0);
\draw[] (0,2) -- (1.5,2) node [above] {};
\draw[] (1,1) -- (1.5,1) node [above] {};
\draw[->] (0,0) -- (1.5,0) node [above] {$\NGLabel{1}$};
\draw[] (0,2) --++ (.5,-.5) node [right] {};
\draw[->] (1,1) --++ (-.5,-.5) node [right] {$\NGLabel{2}$};
\draw[-<] (0,0) -- (0,1) node [left] {$\NGLabel{3}$};
\draw[] (3,2) --++ (-.5,-.5) node [left] {};
\draw[] (2,1) --++ (.5,-.5) node [left] {};
\draw[] (3,0) -- (3,1) node [right] {};
\node at (1.9,.9) {$\NGLabel{4}$};
\node at (3.1,2.1) {$\NGLabel{5}$};
\end{tikzpicture}%
}

\expandafter\def\csname ngshape@P008\endcsname{%
\begin{tikzpicture}[scale=0.84375,ng diagram]
\draw (0,2)--(3,2);
\draw (1,1)--(2,1);
\draw (0,0)--(3,0);
\draw (0,0) -- (0,2) -- (1,1) -- (0,0);
\draw (3,0)--(3,2)--(2,1)--(3,0);
\draw[] (0,2) -- (1.5,2) node [above] {};
\draw[->] (1,1) -- (1.5,1) node [above] {$\NGLabel{1}$};
\draw[] (0,0) -- (1.5,0) node [above] {};
\draw[] (0,2) --++ (.5,-.5) node [right] {};
\draw[] (1,1) --++ (-.5,-.5) node [right] {};
\draw[-<] (0,0) -- (0,1) node [left] {$\NGLabel{2}$};
\draw[] (3,2) --++ (-.5,-.5) node [left] {};
\draw[] (2,1) --++ (.5,-.5) node [left] {};
\draw[->] (3,0) -- (3,1) node [right] {$\NGLabel{3}$};
\end{tikzpicture}%
}

\expandafter\def\csname ngshape@P009\endcsname{%
\begin{tikzpicture}[scale=0.84375,ng diagram]
\draw (0,2)--(3,2);
\draw (1,1)--(2,1);
\draw (0,0)--(3,0);
\draw (0,0) -- (0,2) -- (1,1) -- (0,0);
\draw (3,0)--(3,2)--(2,1)--(3,0);
\draw[->] (0,2) -- (1.5,2) node [above] {$\NGLabel{1}$};
\draw[->] (1,1) -- (1.5,1) node [above] {$\NGLabel{2}$};
\draw[-<] (0,0) -- (1.5,0) node [above] {$\NGLabel{3}$};
\draw[] (0,2) --++ (.5,-.5) node [right] {};
\draw[] (1,1) --++ (-.5,-.5) node [right] {};
\draw[] (0,0) -- (0,1) node [left] {};
\draw[] (3,2) --++ (-.5,-.5) node [left] {};
\draw[] (2,1) --++ (.5,-.5) node [left] {};
\draw[] (3,0) -- (3,1) node [right] {};
\end{tikzpicture}%
}

\expandafter\def\csname ngshape@P010\endcsname{%
\begin{tikzpicture}[scale=0.75,ng diagram]
\draw (0,2)--(3,2);
\draw (1,1)--(2,1);
\draw (0,0)--(3,0);
\draw (0,0) -- (0,2) -- (1,1) -- (0,0);
\draw (3,0)--(3,2)--(2,1)--(3,0);
\draw[] (0,2) -- (1.5,2) node [above] {};
\draw[] (1,1) -- (1.5,1) node [above] {};
\draw[->] (0,0) -- (1.5,0) node [above] {$\NGLabel{1}$};
\draw[] (0,2) --++ (.5,-.5) node [right] {};
\draw[->] (1,1) --++ (-.5,-.5) node [right] {$\NGLabel{2}$};
\draw[-<] (0,0) -- (0,1) node [left] {$\NGLabel{3}$};
\draw[] (3,2) --++ (-.5,-.5) node [left] {};
\draw[] (2,1) --++ (.5,-.5) node [left] {};
\draw[] (3,0) -- (3,1) node [right] {};
\node at (1.9,.9) {$\NGLabel{4}$};
\node at (3.1,2.1) {$\NGLabel{5}$};
\end{tikzpicture}%
}

\expandafter\def\csname ngshape@T030\endcsname{%
\begin{tikzpicture}[scale=1.125,ng diagram]
\draw[->] (0,0) 	node [below] {}	-- (1,0) 		node [below] 	{$\NGLabel{1}$}; \draw (1,0) 		-- (2,0);
\draw[] (2,0) 	node [below] {} 	-- (1.5,.8660254) 	node [right] 	{}; \draw (1.5,.8660254) 	-- (1,1.7320508);
\draw[] (1,1.7320508)	node [above] {$\NGLabel{2}$} 	-- (.5,.8660254) 	node [left] 	{}; \draw (.5,.8660254) 	-- (0,0);
\draw[] (0,0) 	-- (.5,.2886751) 	node [above] 	{}; \draw (.5,.2886751) 	-- (1,0.5773503);
\draw[] (1,0.5773503) node [below] {$\NGLabel{3}$} -- (1,1.1547005) 	node [right]	{}; \draw (1,1.1547005) 	-- (1,1.7320508);
\draw[] (1,0.5773503) -- (1.5,.2886751) 	node [above] 	{}; \draw (1.5,.2886751) 	-- (2,0);
\end{tikzpicture}%
}

\expandafter\def\csname ngshape@P011\endcsname{%
\begin{tikzpicture}[scale=0.75,ng diagram]
\draw (0,2)--(3,2);
\draw (1,1)--(2,1);
\draw (0,0)--(3,0);
\draw (0,0) -- (0,2) -- (1,1) -- (0,0);
\draw (3,0)--(3,2)--(2,1)--(3,0);
\draw[] (0,2) -- (1.5,2) node [above] {};
\draw[->] (1,1) -- (1.5,1) node [above] {$\NGLabel{1}$};
\draw[] (0,0) -- (1.5,0) node [above] {};
\draw[] (0,2) --++ (.5,-.5) node [right] {};
\draw[] (1,1) --++ (-.5,-.5) node [right] {};
\draw[-<] (0,0) -- (0,1) node [left] {$\NGLabel{2}$};
\draw[] (3,2) --++ (-.5,-.5) node [left] {};
\draw[] (2,1) --++ (.5,-.5) node [left] {};
\draw[->] (3,0) -- (3,1) node [right] {$\NGLabel{3}$};
\end{tikzpicture}%
}

\expandafter\def\csname ngshape@T031\endcsname{%
\begin{tikzpicture}[scale=1.125,ng diagram]
\draw[] (0,0) 	node [below] {}	-- (1,0) 		node [below] 	{}; \draw (1,0) 		-- (2,0);
\draw[-<] (2,0) 	node [below] {} 	-- (1.5,.8660254) 	node [right] 	{$\NGLabel{1}$}; \draw (1.5,.8660254) 	-- (1,1.7320508);
\draw[-<] (1,1.7320508)	node [above] {} 	-- (.5,.8660254) 	node [left] 	{$\NGLabel{2}$}; \draw (.5,.8660254) 	-- (0,0);
\draw[] (0,0) 	-- (.5,.2886751) 	node [above] 	{}; \draw (.5,.2886751) 	-- (1,0.5773503);
\draw[-<] (1,0.5773503) node [below] {} -- (1,1.1547005) 	node [right]	{$\NGLabel{3}$}; \draw (1,1.1547005) 	-- (1,1.7320508);
\draw[] (1,0.5773503) -- (1.5,.2886751) 	node [above] 	{}; \draw (1.5,.2886751) 	-- (2,0);
\end{tikzpicture}%
}

\expandafter\def\csname ngshape@T032\endcsname{%
\begin{tikzpicture}[scale=1.125,ng diagram]
\draw[->] (0,0) 	node [below] {}	-- (1,0) 		node [below] 	{$\NGLabel{1}$}; \draw (1,0) 		-- (2,0);
\draw(2,0) 	node [below] {} 	-- (1.5,.8660254) 	node [right] 	{}; \draw (1.5,.8660254) 	-- (1,1.7320508);
\draw (1,1.7320508)	node [above] {} 	-- (.5,.8660254) 	node [left] 	{}; \draw (.5,.8660254) 	-- (0,0);
\draw (0,0) 	-- (.5,.2886751) 	node [above] 	{};
\draw (.5,.2886751) 	-- (1,0.5773503);
\draw[-<] (1,0.5773503) node [below] {} -- (1,1.1547005) 	node[right] {}; \draw (1,1.1547005) 	-- (1,1.7320508);
\draw(1,0.5773503) -- (1.5,.2886751) 	node [above] 	{}; \draw (1.5,.2886751) 	-- (2,0);
\node at (1,.69)[right=1.5pt] {$\NGLabel{2}$};
\end{tikzpicture}%
}

\expandafter\def\csname ngshape@T033\endcsname{%
\begin{tikzpicture}[scale=1.125,ng diagram]
\draw[] (0,0) 	node [below] {}	-- (1,0) 		node [below] 	{}; \draw (1,0) 		-- (2,0);
\draw[->] (2,0) 	node [below] {} 	-- (1.5,.8660254) 	node [right] 	{$\NGLabel{1}$}; \draw (1.5,.8660254) 	-- (1,1.7320508);
\draw[->] (1,1.7320508)	node [above] {} 	-- (.5,.8660254) 	node [left] 	{$\NGLabel{2}$}; \draw (.5,.8660254) 	-- (0,0);
\draw[] (0,0) 	-- (.5,.2886751) 	node [above] 	{}; \draw (.5,.2886751) 	-- (1,0.5773503);
\draw[->] (1,0.5773503) node [below] {} -- (1,1.1547005) 	node[right] {}; \draw (1,1.1547005) 	-- (1,1.7320508);
\draw[] (1,0.5773503) -- (1.5,.2886751) 	node [above] 	{}; \draw (1.5,.2886751) 	-- (2,0);
\node at (1,.69)[right=1.5pt] {$\NGLabel{3}$};
\end{tikzpicture}%
}

\expandafter\def\csname ngshape@P012\endcsname{%
\begin{tikzpicture}[scale=0.84375,ng diagram]
\draw (0,2)--(3,2);
\draw (1,1)--(2,1);
\draw (0,0)--(3,0);
\draw (0,0) -- (0,2) -- (1,1) -- (0,0);
\draw (3,0)--(3,2)--(2,1)--(3,0);
\draw[] (0,2) -- (1.5,2) node [above] {};
\draw[] (1,1) -- (1.5,1) node [above] {};
\draw[] (0,0) -- (1.5,0) node [above] {};
\draw[] (0,2) --++ (.5,-.5) node [right] {};
\draw[-<] (1,1) --++ (-.5,-.5) node [right] {$\NGLabel{1}$};
\draw[] (0,0) -- (0,1) node [left] {};
\draw[] (3,2) --++ (-.5,-.5) node [left] {};
\draw[-<] (2,1) --++ (.5,-.5) node [left] {$\NGLabel{2}$};
\draw[] (3,0) -- (3,1) node [right] {};
\node at (.24,1.58) {$\NGLabel{3}$};
\node at (2.76,1.58){$\NGLabel{4}$};
\end{tikzpicture}%
}

\expandafter\def\csname ngshape@P013\endcsname{%
\begin{tikzpicture}[scale=0.84375,ng diagram]
\draw (0,2)--(3,2);
\draw (1,1)--(2,1);
\draw (0,0)--(3,0);
\draw (0,0) -- (0,2) -- (1,1) -- (0,0);
\draw (3,0)--(3,2)--(2,1)--(3,0);
\draw[] (0,2) -- (1.5,2) node [above] {};
\draw[] (1,1) -- (1.5,1) node [above] {};
\draw[] (0,0) -- (1.5,0) node [above] {};
\draw[] (0,2) --++ (.5,-.5) node [right] {};
\draw[->] (1,1) --++ (-.5,-.5) node [right] {$\NGLabel{1}$};
\draw[] (0,0) -- (0,1) node [left] {};
\draw[] (3,2) --++ (-.5,-.5) node [left] {};
\draw[] (2,1) --++ (.5,-.5) node [left] {};
\draw[->] (3,0) -- (3,1) node [right] {$\NGLabel{2}$};
\node at (-.1,2.1) {$\NGLabel{3}$};
\node at (2,1)[right] {$\NGLabel{4}$};
\end{tikzpicture}%
}

\expandafter\def\csname ngshape@P014\endcsname{%
\begin{tikzpicture}[scale=0.84375,ng diagram]
\draw (0,2)--(3,2);
\draw (1,1)--(2,1);
\draw (0,0)--(3,0);
\draw (0,0) -- (0,2) -- (1,1) -- (0,0);
\draw (3,0)--(3,2)--(2,1)--(3,0);
\draw[] (0,2) -- (1.5,2) node [above] {};
\draw[->] (1,1) -- (1.5,1) node [above] {$\NGLabel{1}$};
\draw[] (0,0) -- (1.5,0) node [above] {};
\draw[] (0,2) --++ (.5,-.5) node [right] {};
\draw[] (1,1) --++ (-.5,-.5) node [right] {};
\draw[] (0,0) -- (0,1) node [left] {};
\draw[] (3,2) --++ (-.5,-.5) node [left] {};
\draw[] (2,1) --++ (.5,-.5) node [left] {};
\draw[->] (3,0) -- (3,1) node [right] {$\NGLabel{2}$};
\node at (-.1,2.1) {$\NGLabel{3}$};
\node at (-.1,-.1) {$\NGLabel{4}$};
\end{tikzpicture}%
}

\expandafter\def\csname ngshape@P015\endcsname{%
\begin{tikzpicture}[scale=0.75,ng diagram]
\draw (0,2)--(3,2);
\draw (1,1)--(2,1);
\draw (0,0)--(3,0);
\draw (0,0) -- (0,2) -- (1,1) -- (0,0);
\draw (3,0)--(3,2)--(2,1)--(3,0);
\draw[] (0,2) -- (1.5,2) node [above] {};
\draw[] (1,1) -- (1.5,1) node [above] {};
\draw[] (0,0) -- (1.5,0) node [above] {};
\draw[] (0,2) --++ (.5,-.5) node [right] {};
\draw[-<] (1,1) --++ (-.5,-.5) node [right] {$\NGLabel{1}$};
\draw[] (0,0) -- (0,1) node [left] {};
\draw[] (3,2) --++ (-.5,-.5) node [left] {};
\draw[-<] (2,1) --++ (.5,-.5) node [left] {$\NGLabel{2}$};
\draw[] (3,0) -- (3,1) node [right] {};
\node at (.24,1.58) {$\NGLabel{3}$};
\node at (2.76,1.58){$\NGLabel{4}$};
\end{tikzpicture}%
}

\expandafter\def\csname ngshape@T034\endcsname{%
\begin{tikzpicture}[scale=1.125,ng diagram]
\draw[] (0,0) 	node [below] {}	-- (1,0) 		node [below] 	{}; \draw (1,0) 		-- (2,0);
\draw[-<] (2,0) 	node [below] {} 	-- (1.5,.8660254) 	node [right] 	{$\NGLabel{1}$}; \draw (1.5,.8660254) 	-- (1,1.7320508);
\draw[->] (1,1.7320508)	node [above] {} 	-- (.5,.8660254) 	node [left] 	{$\NGLabel{2}$}; \draw (.5,.8660254) 	-- (0,0);
\draw[] (0,0) 	-- (.5,.2886751) 	node [above] 	{}; \draw (.5,.2886751) 	-- (1,0.5773503);
\draw[->] (1,0.5773503) node [below] {} -- (1,1.1547005) 	node[right] {}; \draw (1,1.1547005) 	-- (1,1.7320508);
\draw[] (1,0.5773503) -- (1.5,.2886751) 	node [above] 	{}; \draw (1.5,.2886751) 	-- (2,0);
\node at (1,.69)[right=1.5pt] {$\NGLabel{3}$};
\end{tikzpicture}%
}

\expandafter\def\csname ngshape@P016\endcsname{%
\begin{tikzpicture}[scale=0.75,ng diagram]
\draw (0,2)--(3,2);
\draw (1,1)--(2,1);
\draw (0,0)--(3,0);
\draw (0,0) -- (0,2) -- (1,1) -- (0,0);
\draw (3,0)--(3,2)--(2,1)--(3,0);
\draw[] (0,2) -- (1.5,2) node [above] {};
\draw[] (1,1) -- (1.5,1) node [above] {};
\draw[] (0,0) -- (1.5,0) node [above] {};
\draw[] (0,2) --++ (.5,-.5) node [right] {};
\draw[->] (1,1) --++ (-.5,-.5) node [right] {$\NGLabel{1}$};
\draw[] (0,0) -- (0,1) node [left] {};
\draw[] (3,2) --++ (-.5,-.5) node [left] {};
\draw[] (2,1) --++ (.5,-.5) node [left] {};
\draw[->] (3,0) -- (3,1) node [right] {$\NGLabel{2}$};
\node at (-.1,2.1) {$\NGLabel{3}$};
\node at (2,1)[right] {$\NGLabel{4}$};
\end{tikzpicture}%
}

\expandafter\def\csname ngshape@T035\endcsname{%
\begin{tikzpicture}[scale=1.125,ng diagram]
\draw[->] (0,0) 	node [below] {}	-- (1,0) 		node [below] 	{$\NGLabel{1}$}; \draw (1,0) 		-- (2,0);
\draw(2,0) 	node [below] {} 	-- (1.5,.8660254) 	node [right] 	{}; \draw (1.5,.8660254) 	-- (1,1.7320508);
\draw (1,1.7320508)	node [above] {} 	-- (.5,.8660254) 	node [left] 	{}; \draw (.5,.8660254) 	-- (0,0);
\draw (0,0) 	-- (.5,.2886751) 	node [above] 	{};
\draw (.5,.2886751) 	-- (1,0.5773503);
\draw[] (1,0.5773503) node [below] {} -- (1,1.1547005) 	node [right]	{}; \draw (1,1.1547005) 	-- (1,1.7320508);
\draw(1,0.5773503) -- (1.5,.2886751) 	node [above] 	{}; \draw (1.5,.2886751) 	-- (2,0);
\node at (1.23,.78)  {$\NGLabel{2}$};
\node at (1.18,1.20)  {$\NGLabel{3}$};
\end{tikzpicture}%
}

\expandafter\def\csname ngshape@P017\endcsname{%
\begin{tikzpicture}[scale=0.75,ng diagram]
\draw (0,2)--(3,2);
\draw (1,1)--(2,1);
\draw (0,0)--(3,0);
\draw (0,0) -- (0,2) -- (1,1) -- (0,0);
\draw (3,0)--(3,2)--(2,1)--(3,0);
\draw[] (0,2) -- (1.5,2) node [above] {};
\draw[->] (1,1) -- (1.5,1) node [above] {$\NGLabel{1}$};
\draw[] (0,0) -- (1.5,0) node [above] {};
\draw[] (0,2) --++ (.5,-.5) node [right] {};
\draw[] (1,1) --++ (-.5,-.5) node [right] {};
\draw[] (0,0) -- (0,1) node [left] {};
\draw[] (3,2) --++ (-.5,-.5) node [left] {};
\draw[] (2,1) --++ (.5,-.5) node [left] {};
\draw[->] (3,0) -- (3,1) node [right] {$\NGLabel{2}$};
\node at (-.1,2.1) {$\NGLabel{3}$};
\node at (-.1,-.1) {$\NGLabel{4}$};
\end{tikzpicture}%
}

\expandafter\def\csname ngshape@T036\endcsname{%
\begin{tikzpicture}[scale=1.125,ng diagram]
\draw[-<] (0,0) 	node [below] {}	-- (1,0) 		node [below] 	{$\NGLabel{1}$}; \draw (1,0) 		-- (2,0);
\draw(2,0) 	node [below] {} 	-- (1.5,.8660254) 	node [right] 	{}; \draw (1.5,.8660254) 	-- (1,1.7320508);
\draw (1,1.7320508)	node [above] {} 	-- (.5,.8660254) 	node [left] 	{}; \draw (.5,.8660254) 	-- (0,0);
\draw (0,0) 	-- (.5,.2886751) 	node [above] 	{};
\draw (.5,.2886751) 	-- (1,0.5773503);
\draw[] (1,0.5773503) node [below] {} -- (1,1.1547005) 	node [right]	{}; \draw (1,1.1547005) 	-- (1,1.7320508);
\draw(1,0.5773503) -- (1.5,.2886751) 	node [above] 	{}; \draw (1.5,.2886751) 	-- (2,0);
\node at (1,1.7320508*.25)  {$\NGLabel{2}$};
\node at (1,1*1.7320508+.1)  {$\NGLabel{3}$};
\end{tikzpicture}%
}

\expandafter\def\csname ngshape@T037\endcsname{%
\begin{tikzpicture}[scale=1.125,ng diagram]
\draw[->] (0,0) 	node [below] {}	-- (1,0) 		node [below] 	{$\NGLabel{1}$}; \draw (1,0) 		-- (2,0);
\draw(2,0) 	node [below] {} 	-- (1.5,.8660254) 	node [right] 	{}; \draw (1.5,.8660254) 	-- (1,1.7320508);
\draw (1,1.7320508)	node [above] {} 	-- (.5,.8660254) 	node [left] 	{}; \draw (.5,.8660254) 	-- (0,0);
\draw (0,0) 	-- (.5,.2886751) 	node [above] 	{};
\draw (.5,.2886751) 	-- (1,0.5773503);
\draw[] (1,0.5773503) node [below] {} -- (1,1.1547005) 	node [right]	{}; \draw (1,1.1547005) 	-- (1,1.7320508);
\draw(1,0.5773503) -- (1.5,.2886751) 	node [above] 	{}; \draw (1.5,.2886751) 	-- (2,0);
\node at (1,1.7320508*.25)  {$\NGLabel{2}$};
\node at (1,1*1.7320508+.1)  {$\NGLabel{3}$};
\end{tikzpicture}%
}

\expandafter\def\csname ngshape@P018\endcsname{%
\begin{tikzpicture}[scale=0.75,ng diagram]
\draw (0,2)--(3,2);
\draw (1,1)--(2,1);
\draw (0,0)--(3,0);
\draw (0,0) -- (0,2) -- (1,1) -- (0,0);
\draw (3,0)--(3,2)--(2,1)--(3,0);
\draw[] (0,2) -- (1.5,2) node [above] {};
\draw[] (1,1) -- (1.5,1) node [above] {};
\draw[] (0,0) -- (1.5,0) node [above] {};
\draw[] (0,2) --++ (.5,-.5) node [right] {};
\draw[] (1,1) --++ (-.5,-.5) node [right] {};
\draw[] (0,0) -- (0,1) node [left] {};
\draw[] (3,2) --++ (-.5,-.5) node [left] {};
\draw[] (2,1) --++ (.5,-.5) node [left] {};
\draw[->] (3,0) -- (3,1) node [right] {$\NGLabel{1}$};
\node at (.24,.42) {$\NGLabel{2}$};
\node at (.24,1.58) {$\NGLabel{3}$};
\node at (1,1)[left] {$\NGLabel{4}$};
\node at (2,1)[right] {$\NGLabel{5}$};
\end{tikzpicture}%
}

\expandafter\def\csname ngshape@P019\endcsname{%
\begin{tikzpicture}[scale=0.75,ng diagram,xscale=2.0]
\draw (0,2)--(3,2);
\draw (1,1)--(2,1);
\draw (0,0)--(3,0);
\draw (0,0) -- (0,2) -- (1,1) -- (0,0);
\draw (3,0)--(3,2)--(2,1)--(3,0);
\draw[] (0,2) -- (1.5,2) node [above] {};
\draw[] (1,1) -- (1.5,1) node [above] {};
\draw[] (0,0) -- (1.5,0) node [above] {};
\draw[] (0,2) --++ (.5,-.5) node [right] {};
\draw[] (1,1) --++ (-.5,-.5) node [right] {};
\draw[] (0,0) -- (0,1) node [left] {};
\draw[] (3,2) --++ (-.5,-.5) node [left] {};
\draw[] (2,1) --++ (.5,-.5) node [left] {};
\draw[->] (3,0) -- (3,1) node [right] {$\NGLabel{1}$};
\node at (.24,.42) {$\NGLabel{2}$};
\node at (.24,1.58) {$\NGLabel{3}$};
\node at (.48,1) {$\NGLabel{4}$};
\node at (2.52,1) {$\NGLabel{5}$};
\end{tikzpicture}%
}

\expandafter\def\csname ngshape@T038\endcsname{%
\begin{tikzpicture}[scale=1.125,ng diagram]
\draw[] (0,0) 	node [below] {}	-- (1,0) 		node [below] 	{}; \draw (1,0) 		-- (2,0);
\node at (.36,.36) {$\NGLabel{1}$};
\draw[] (2,0) 	node [below] {$\NGLabel{2}$} 	-- (1.5,.8660254) 	node [right] 	{}; \draw (1.5,.8660254) 	-- (1,1.7320508);
\draw[] (1,1.7320508)-- (.5,.8660254) 	node [left] 	{};
\node at (.82,1.20) {$\NGLabel{3}$};
\draw (.5,.8660254) 	-- (0,0);
\draw[] (0,0) 	-- (.5,.2886751) 	node [above] 	{}; \draw (.5,.2886751) 	-- (1,0.5773503);
\draw[] (1,0.5773503) node [below] {} -- (1,1.1547005) 	node [right]	{}; \draw (1,1.1547005) 	-- (1,1.7320508);
\node at (.77,.76) {$\NGLabel{4}$};
\draw[] (1,0.5773503) -- (1.5,.2886751) 	node [above] 	{}; \draw (1.5,.2886751) 	-- (2,0);
\end{tikzpicture}%
}

\expandafter\def\csname ngshape@T039\endcsname{%
\begin{tikzpicture}[scale=1.125,ng diagram]
\draw[] (0,0) 	node [below] {}	-- (1,0) 		node [below] 	{}; \draw (1,0) 		-- (2,0);
\draw[->](2,0) 	node [below] {} 	-- (1.5,.8660254) 	node [right] 	{$\NGLabel{1}$}; \draw (1.5,.8660254) 	-- (1,1.7320508);
\draw (1,1.7320508)	node [above] {} 	-- (.5,.8660254) 	node [left] 	{}; \draw (.5,.8660254) 	-- (0,0);
\draw (0,0) 	-- (.5,.2886751) 	node [above] 	{};
\draw (.5,.2886751) 	-- (1,0.5773503);
\draw (1,0.5773503) node [below] {} -- (1,1.1547005) 	node [right]	{}; \draw (1,1.1547005) 	-- (1,1.7320508);
\draw(1,0.5773503) -- (1.5,.2886751) 	node [above] 	{}; \draw (1.5,.2886751) 	-- (2,0);
\node at (1,1.7320508*.4) [right] {$\NGLabel{2}$};
\node at (0,0)[below]  {$\NGLabel{3}$};
\end{tikzpicture}%
}

\expandafter\def\csname ngshape@T040\endcsname{%
\begin{tikzpicture}[scale=1.125,ng diagram]
\draw[] (0,0) 	node [below] {}	-- (1,0) 		node [below] 	{}; \draw (1,0) 		-- (2,0);
\node at (.36,.36) {$\NGLabel{1}$};
\draw[] (2,0) 	node [below] {$\NGLabel{2}$} 	-- (1.5,.8660254) 	node [right] 	{}; \draw (1.5,.8660254) 	-- (1,1.7320508);
\draw[] (1,1.7320508)-- (.5,.8660254) 	node [left] 	{};
\node at (.82,1.20) {$\NGLabel{3}$};
\draw (.5,.8660254) 	-- (0,0);
\draw[] (0,0) 	-- (.5,.2886751) 	node [above] 	{}; \draw (.5,.2886751) 	-- (1,0.5773503);
\draw[] (1,0.5773503) node [below] {} -- (1,1.1547005) 	node [right]	{}; \draw (1,1.1547005) 	-- (1,1.7320508);
\node at (.77,.76) {$\NGLabel{4}$};
\draw[] (1,0.5773503) -- (1.5,.2886751) 	node [above] 	{}; \draw (1.5,.2886751) 	-- (2,0);
\draw [red] (1,1.232)--(.5/1.7320508+1,1.232);
\draw [red,->] (1,1.232)--(.25/1.7320508+1,1.232);
\node at (.25/1.7320508+1,1.232)[below] {$\NGLabel{5}$};
\draw [red](1,1/1.7320508+.3)--(1+.15*1.7320508,1/1.7320508-.15);
\draw [red,-<](1,1/1.7320508+.3)--(1+.075*1.7320508,1/1.7320508+.075);
\node at (1+.075*1.7320508,1/1.7320508+.075)[right] {$\NGLabel{6}$};
\draw [red] (2-.3*1.7320508,.3)--(2-.3/1.7320508,.3);
\draw [red,-<] (2-.3*1.7320508,.3)--(2-.2*1.7320508,.3);
\node at (2-.2*1.7320508,.3) [above] {$\NGLabel{7}$};
\end{tikzpicture}%
}

\expandafter\def\csname ngshape@T041\endcsname{%
\begin{tikzpicture}[scale=1.125,ng diagram]
\draw[] (0,0) 	node [below] {}	-- (1,0) 		node [below] 	{}; \draw (1,0) 		-- (2,0);
\draw[->](2,0) 	node [below] {} 	-- (1.5,.8660254) 	node [right] 	{$\NGLabel{1}$}; \draw (1.5,.8660254) 	-- (1,1.7320508);
\draw (1,1.7320508)	node [above] {} 	-- (.5,.8660254) 	node [left] 	{}; \draw (.5,.8660254) 	-- (0,0);
\draw (0,0) 	-- (.5,.2886751) 	node [above] 	{};
\draw (.5,.2886751) 	-- (1,0.5773503);
\draw (1,0.5773503) node [below] {} -- (1,1.1547005) 	node [right]	{}; \draw (1,1.1547005) 	-- (1,1.7320508);
\draw(1,0.5773503) -- (1.5,.2886751) 	node [above] 	{}; \draw (1.5,.2886751) 	-- (2,0);
\node at (1,1.7320508*.4) [right] {$\NGLabel{2}$};
\node at (0,0)[below]  {$\NGLabel{3}$};
\begin{scope}[shift={(2,0)},xscale=-1]
\draw [red] (1,1.232)--(.5/1.7320508+1,1.232);
\draw [red,-<] (1,1.232)--(.25/1.7320508+1,1.232);
\node at (.25/1.7320508+1,1.232)[below] {$\NGLabel{4}$};
\draw [red](1,1/1.7320508+.3)--(1+.15*1.7320508,1/1.7320508-.15);
\draw [red,->](1,1/1.7320508+.3)--(1+.075*1.7320508,1/1.7320508+.075);
\node at (1+.075*1.7320508,1/1.7320508+.075)[left] {$\NGLabel{5}$};
\draw [red] (2-.3*1.7320508,.3)--(2-.3/1.7320508,.3);
\draw [red,->] (2-.3*1.7320508,.3)--(2-.2*1.7320508,.3);
\node at (2-.2*1.7320508,.3) [above] {$\NGLabel{6}$};
\end{scope}
\end{tikzpicture}%
}

\expandafter\def\csname ngshape@T042\endcsname{%
\begin{tikzpicture}[scale=1.125,ng diagram]
\draw[] (0,0) 	node [below] {}	-- (1,0) 		node [below] 	{}; \draw (1,0) 		-- (2,0);
\node at (.36,.36) {$\NGLabel{1}$};
\draw[] (2,0) 	node [below] {$\NGLabel{2}$} 	-- (1.5,.8660254) 	node [right] 	{}; \draw (1.5,.8660254) 	-- (1,1.7320508);
\draw[] (1,1.7320508)-- (.5,.8660254) 	node [left] 	{};
\node at (.82,1.20) {$\NGLabel{3}$};
\draw (.5,.8660254) 	-- (0,0);
\draw[] (0,0) 	-- (.5,.2886751) 	node [above] 	{}; \draw (.5,.2886751) 	-- (1,0.5773503);
\draw[] (1,0.5773503) node [below] {} -- (1,1.1547005) 	node [right]	{}; \draw (1,1.1547005) 	-- (1,1.7320508);
\node at (.84,.76) {$\NGLabel{4}$};
\draw[] (1,0.5773503) -- (1.5,.2886751) 	node [above] 	{}; \draw (1.5,.2886751) 	-- (2,0);
\end{tikzpicture}%
}

\expandafter\def\csname ngshape@T043\endcsname{%
\begin{tikzpicture}[scale=1.125,ng diagram]
\draw[] (0,0) 	node [below] {}	-- (1,0) 		node [below] 	{}; \draw (1,0) 		-- (2,0);
\draw[->](2,0) 	node [below] {} 	-- (1.5,.8660254) 	node [right] {}; \draw (1.5,.8660254) 	-- (1,1.7320508);
\draw (1,1.7320508)	node [above] {} 	-- (.5,.8660254) 	node [left] 	{}; \draw (.5,.8660254) 	-- (0,0);
\draw (0,0) 	-- (.5,.2886751) 	node [above] 	{};
\draw (.5,.2886751) 	-- (1,0.5773503);
\draw (1,0.5773503) node [below] {} -- (1,1.1547005) 	node [right]	{}; \draw (1,1.1547005) 	-- (1,1.7320508);
\draw(1,0.5773503) -- (1.5,.2886751) 	node [above] 	{}; \draw (1.5,.2886751) 	-- (2,0);
\node at (1.25,1.299)[right=2pt] {$\NGLabel{1}$};
\node at (1.17,.72) {$\NGLabel{2}$};
\node at (0,0)[below]  {$\NGLabel{3}$};
\begin{scope}[shift={(2,0)},xscale=-1]
\end{scope}
\end{tikzpicture}%
}

\expandafter\def\csname ngshape@T044\endcsname{%
\begin{tikzpicture}[scale=1.125,ng diagram]
\draw[] (0,0) 	node [below] {}	-- (1,0) 		node [below] 	{}; \draw (1,0) 		-- (2,0);
\draw[] (2,0) 	node [below] {} 	-- (1.5,.8660254) 	node [right] 	{}; \draw (1.5,.8660254) 	-- (1,1.7320508);
\draw[] (1,1.7320508)	node [above] {} 	-- (.5,.8660254) 	node [left] 	{}; \draw (.5,.8660254) 	-- (0,0);
\draw[] (0,0) 	-- (.5,.2886751) 	node [above] 	{}; \draw (.5,.2886751) 	-- (1,0.5773503);
\draw[] (1,0.5773503) node [below] {} -- (1,1.1547005) 	node [right]	{}; \draw (1,1.1547005) 	-- (1,1.7320508);
\draw[->] (1,0.5773503) -- (1.5,.2886751) 	node [above] 	{$\NGLabel{1}$}; \draw (1.5,.2886751) 	-- (2,0);
\node at (.36,.36){$\NGLabel{2}$};
\node at (.82,1.20) {$\NGLabel{3}$};
\end{tikzpicture}%
}

\expandafter\def\csname ngshape@T045\endcsname{%
\begin{tikzpicture}[scale=1.125,ng diagram]
\draw[] (0,0) 	node [below] {}	-- (1,0) 		node [below] 	{}; \draw (1,0) 		-- (2,0);
\node at (.36,.36) {$\NGLabel{1}$};
\draw[] (2,0) 	node [below] {} 	-- (1.5,.8660254) 	node [right] 	{}; \draw (1.5,.8660254) 	-- (1,1.7320508);
\node at (1.56,.40) {$\NGLabel{2}$};
\draw[] (1,1.7320508)-- (.5,.8660254) 	node [left] 	{};
\node at (.82,1.20) {$\NGLabel{3}$};
\draw (.5,.8660254) 	-- (0,0);
\draw[] (0,0) 	-- (.5,.2886751) 	node [above] 	{}; \draw (.5,.2886751) 	-- (1,0.5773503);
\draw[] (1,0.5773503) node [below] {} -- (1,1.1547005) 	node [right]	{}; \draw (1,1.1547005) 	-- (1,1.7320508);
\node at (.77,.76) {$\NGLabel{4}$};
\draw[] (1,0.5773503) -- (1.5,.2886751) 	node [above] 	{}; \draw (1.5,.2886751) 	-- (2,0);
\end{tikzpicture}%
}

\expandafter\def\csname ngshape@T046\endcsname{%
\begin{tikzpicture}[scale=1.125,ng diagram]
\draw[] (0,0) 	node [below] {}	-- (1,0) 		node [below] 	{}; \draw (1,0) 		-- (2,0);
\draw[->] (2,0) 	node [below] {} 	-- (1.5,.8660254) 	node [right] 	{$\NGLabel{1}$}; \draw (1.5,.8660254) 	-- (1,1.7320508);
\draw[] (1,1.7320508)	node [above] {} 	-- (.5,.8660254) 	node [left] 	{}; \draw (.5,.8660254) 	-- (0,0);
\draw[] (0,0) 	-- (.5,.2886751) 	node [above] 	{}; \draw (.5,.2886751) 	-- (1,0.5773503);
\draw[] (1,0.5773503) node [below] {} -- (1,1.1547005) 	node [right]	{}; \draw (1,1.1547005) 	-- (1,1.7320508);
\draw[] (1,0.5773503) -- (1.5,.2886751) 	node [above] 	{}; \draw (1.5,.2886751) 	-- (2,0);
\node at (.36,.36){$\NGLabel{2}$};
\node at (.77,.76) {$\NGLabel{3}$};
\end{tikzpicture}%
}

\expandafter\def\csname ngshape@T047\endcsname{%
\begin{tikzpicture}[scale=1.125,ng diagram]
\draw[] (0,0) 	node [below] {}	-- (1,0) 		node [below] 	{}; \draw (1,0) 		-- (2,0);
\draw[] (2,0) 	node [below] {} 	-- (1.5,.8660254) 	node [right] 	{}; \draw (1.5,.8660254) 	-- (1,1.7320508);
\draw[] (1,1.7320508)	node [above] {} 	-- (.5,.8660254) 	node [left] 	{$\NGLabel{1}$}; \draw (.5,.8660254) 	-- (0,0);
\draw[] (0,0) 	-- (.5,.2886751) 	node[above] {}; \draw (.5,.2886751) 	-- (1,0.5773503);
\draw[] (1,0.5773503) node [below] {} -- (1,1.1547005) 	node [right]	{}; \draw (1,1.1547005) 	-- (1,1.7320508);
\draw[->] (1,0.5773503) -- (1.5,.2886751) 	node [above] 	{$\NGLabel{2}$}; \draw (1.5,.2886751) 	-- (2,0);
\node at (.36,.36){$\NGLabel{3}$};
\node at (.82,1.20) {$\NGLabel{4}$};
\draw [red,domain=-60:240] plot ({1+.3*cos(\x)}, {1.7320508+.3*sin(\x)});
\draw [red,domain=-60:90,->] plot ({1+.3*cos(\x)}, {1.7320508+.3*sin(\x)});
\node at (1,1.7320508+.3)[above] {$\NGLabel{5}$};
\draw [red,domain=60:360] plot ({.3*cos(\x)}, {.3*sin(\x)});
\draw [red,domain=60:270,->] plot ({0+.3*cos(\x)}, {0+.3*sin(\x)});
\node at (0,-.3) [below] {$\NGLabel{6}$};
\draw [red,domain=180:480] plot ({2+.3*cos(\x)}, {.3*sin(\x)});
\draw [red,domain=180:270,->] plot ({2+.3*cos(\x)}, {0+.3*sin(\x)});
\node at (2,-.3) [below] {$\NGLabel{7}$};
\node at (.72,.16) {$\NGLabel{8}$};
\end{tikzpicture}%
}

\expandafter\def\csname ngshape@T048\endcsname{%
\begin{tikzpicture}[scale=1.125,ng diagram]
\draw[] (0,0) 	node [below] {}	-- (1,0) 		node [below] 	{}; \draw (1,0) 		-- (2,0);
\node at (.36,.36) {$\NGLabel{1}$};
\draw[] (2,0) 	node [below] {} 	-- (1.5,.8660254) 	node [right] 	{}; \draw (1.5,.8660254) 	-- (1,1.7320508);
\node at (1.56,.40) {$\NGLabel{2}$};
\draw[] (1,1.7320508)-- (.5,.8660254) 	node [left] 	{};
\node at (.82,1.20) {$\NGLabel{3}$};
\draw (.5,.8660254) 	-- (0,0);
\draw[] (0,0) 	-- (.5,.2886751) 	node [above] 	{}; \draw (.5,.2886751) 	-- (1,0.5773503);
\draw[] (1,0.5773503) node [below] {} -- (1,1.1547005) 	node [right]	{}; \draw (1,1.1547005) 	-- (1,1.7320508);
\node at (.77,.76) {$\NGLabel{4}$};
\draw[] (1,0.5773503) -- (1.5,.2886751) 	node [above] 	{}; \draw (1.5,.2886751) 	-- (2,0);
\draw[red] (.5,0)--(.5,.5/1.7320508);
\draw[red,->] (.5,0)--(.5,.25/1.7320508);
\node at (.5,.25/1.7320508)[right] {$\NGLabel{5}$};
\draw[red] (1.5,0)--(1.5,.5/1.7320508);
\draw[red,-<] (1.5,0)--(1.5,.25/1.7320508);
\node at (1.5,.25/1.7320508)[left] {$\NGLabel{6}$};
\draw [red](.75,.75/1.7320508)--(1.25,.75/1.7320508);
\draw [->,red](.75,.75/1.7320508)--(1,.75/1.7320508);
\node at (1,.75/1.7320508)[below] {$\NGLabel{7}$};
\end{tikzpicture}%
}

\expandafter\def\csname ngshape@T049\endcsname{%
\begin{tikzpicture}[scale=1.125,ng diagram]
\draw[] (0,0) 	node [below] {}	-- (1,0) 		node [below] 	{}; \draw (1,0) 		-- (2,0);
\draw[] (2,0) 	node [below] {} 	-- (1.5,.8660254) 	node [right] 	{}; \draw (1.5,.8660254) 	-- (1,1.7320508);
\draw[] (1,1.7320508)	node [above] {} 	-- (.5,.8660254) 	node [left] 	{}; \draw (.5,.8660254) 	-- (0,0);
\draw[] (0,0) 	-- (.5,.2886751) 	node [above] 	{}; \draw (.5,.2886751) 	-- (1,0.5773503);
\draw[] (1,0.5773503) node [below] {} -- (1,1.1547005) 	node [right]	{}; \draw (1,1.1547005) 	-- (1,1.7320508);
\draw[->] (1,0.5773503) -- (1.5,.2886751) 	node [above] 	{$\NGLabel{1}$}; \draw (1.5,.2886751) 	-- (2,0);
\node at (.36,.36){$\NGLabel{2}$};
\node at (.85,1.24) {$\NGLabel{3}$};
\end{tikzpicture}%
}

\expandafter\def\csname ngshape@T050\endcsname{%
\begin{tikzpicture}[scale=1.125,ng diagram]
\draw[] (0,0) 	node [below] {}	-- (1,0) 		node [below] 	{}; \draw (1,0) 		-- (2,0);
\node at (.36,.36) {$\NGLabel{1}$};
\draw[] (2,0) 	node [below] {} 	-- (1.5,.8660254) 	node [right] 	{}; \draw (1.5,.8660254) 	-- (1,1.7320508);
\node at (1.61,.38) {$\NGLabel{2}$};
\draw[] (1,1.7320508)-- (.5,.8660254) 	node [left] 	{};
\node at (.82,1.20) {$\NGLabel{3}$};
\draw (.5,.8660254) 	-- (0,0);
\draw[] (0,0) 	-- (.5,.2886751) 	node [above] 	{}; \draw (.5,.2886751) 	-- (1,0.5773503);
\draw[] (1,0.5773503) node [below] {} -- (1,1.1547005) 	node [right]	{}; \draw (1,1.1547005) 	-- (1,1.7320508);
\node at (.83,.77) {$\NGLabel{4}$};
\draw[] (1,0.5773503) -- (1.5,.2886751) 	node [above] 	{}; \draw (1.5,.2886751) 	-- (2,0);
\end{tikzpicture}%
}

\expandafter\def\csname ngshape@P020\endcsname{%
\begin{tikzpicture}[scale=0.75,ng diagram]
\draw (0,2)--(3,2);
\draw (1,1)--(2,1);
\draw (0,0)--(3,0);
\draw (0,0) -- (0,2) -- (1,1) -- (0,0);
\draw (3,0)--(3,2)--(2,1)--(3,0);
\draw[->] (0,2) -- (1.5,2) node [above] {$\NGLabel{1}$};
\draw[->] (1,1) -- (1.5,1) node [above] {$\NGLabel{2}$};
\draw[-<] (0,0) -- (1.5,0) node [above] {$\NGLabel{3}$};
\draw[] (0,2) --++ (.5,-.5) node [right] {};
\draw[] (1,1) --++ (-.5,-.5) node [right] {};
\draw[] (0,0) -- (0,1) node [left] {};
\draw[] (3,2) --++ (-.5,-.5) node [left] {};
\draw[] (2,1) --++ (.5,-.5) node [left] {};
\draw[] (3,0) -- (3,1) node [right] {};
\end{tikzpicture}%
}

\expandafter\def\csname ngshape@P021\endcsname{%
\begin{tikzpicture}[scale=0.75,ng diagram]
\draw (0,2)--(3,2);
\draw (1,1)--(2,1);
\draw (0,0)--(3,0);
\draw (0,0) -- (0,2) -- (1,1) -- (0,0);
\draw (3,0)--(3,2)--(2,1)--(3,0);
\draw[] (0,2) -- (1.5,2) node [above] {};
\draw[] (0,2) --++ (.5,-.5) node [right] {};
\draw[] (1,1) --++ (-.5,-.5) node [right] {};
\draw[->] (0,0) -- (1.5,0) node [above] {$\NGLabel{1}$};
\draw[] (0,0) -- (0,1) node [left] {};
\draw[] (3,2) --++ (-.5,-.5) node [left] {};
\draw[] (2,1) --++ (.5,-.5) node [left] {};
\draw[] (3,0) -- (3,1) node [right] {};
\node at (1,1)[left] {$\NGLabel{2}$};
\node at (.15,1.8)[below]{$\NGLabel{3}$};
\node at (3-.15,1.8)[below]{$\NGLabel{4}$};
\node at (2,1)[right]{$\NGLabel{5}$};
\end{tikzpicture}%
}

\expandafter\def\csname ngshape@T051\endcsname{%
\begin{tikzpicture}[scale=1.125,ng diagram]
\draw[-<] (0,0) 	node [below] {}	-- (1,0) 		node [below] 	{$\NGLabel{1}$}; \draw (1,0) 		-- (2,0);
\draw[-<] (2,0) 	node [below] {} 	-- (1.5,.8660254) 	node [right] 	{$\NGLabel{2}$}; \draw (1.5,.8660254) 	-- (1,1.7320508);
\draw[] (1,1.7320508)	node [above] {} 	-- (.5,.8660254) 	node [left] 	{}; \draw (.5,.8660254) 	-- (0,0);
\draw[] (0,0) 	-- (.5,.2886751) 	node [above] 	{}; \draw (.5,.2886751) 	-- (1,0.5773503);
\draw[] (1,0.5773503) node [below] {} -- (1,1.1547005) 	node [right]	{}; \draw (1,1.1547005) 	-- (1,1.7320508);
\draw[-<] (1,0.5773503) -- (1.5,.2886751) 	node [above] 	{$\NGLabel{3}$}; \draw (1.5,.2886751) 	-- (2,0);
\end{tikzpicture}%
}

\expandafter\def\csname ngshape@T052\endcsname{%
\begin{tikzpicture}[scale=1.125,ng diagram]
\draw[-<] (0,0) 	node [below] {}	-- (1,0) 		node [below] 	{$\NGLabel{1}$}; \draw (1,0) 		-- (2,0);
\draw[] (2,0) 	node [below] {} 	-- (1.5,.8660254) 	node [right] 	{}; \draw (1.5,.8660254) 	-- (1,1.7320508);
\draw[-<] (1,1.7320508)	node [above] {} 	-- (.5,.8660254) 	node [left] 	{$\NGLabel{2}$}; \draw (.5,.8660254) 	-- (0,0);
\draw[->] (0,0) 	-- (.5,.2886751) 	node [above] 	{$\NGLabel{3}$}; \draw (.5,.2886751) 	-- (1,0.5773503);
\draw[] (1,0.5773503) node [below] {} -- (1,1.1547005) 	node [right]	{}; \draw (1,1.1547005) 	-- (1,1.7320508);
\draw[] (1,0.5773503) -- (1.5,.2886751) 	node [above] 	{}; \draw (1.5,.2886751) 	-- (2,0);
\end{tikzpicture}%
}

\expandafter\def\csname ngshape@T053\endcsname{%
\begin{tikzpicture}[scale=1.125,ng diagram]
\draw[-<] (0,0) 	node [below] {}	-- (1,0) 		node [below] 	{$\NGLabel{1}$}; \draw (1,0) 		-- (2,0);
\draw(2,0) 	node [below] {} 	-- (1.5,.8660254) 	node [right] 	{}; \draw (1.5,.8660254) 	-- (1,1.7320508);
\draw (1,1.7320508)	node [above] {} 	-- (.5,.8660254) 	node [left] 	{}; \draw (.5,.8660254) 	-- (0,0);
\draw (0,0) 	-- (.5,.2886751) 	node [above] 	{};
\draw (.5,.2886751) 	-- (1,0.5773503);
\draw[] (1,0.5773503) node [below] {} -- (1,1.1547005) 	node [right]	{}; \draw (1,1.1547005) 	-- (1,1.7320508);
\draw(1,0.5773503) -- (1.5,.2886751) 	node [above] 	{}; \draw (1.5,.2886751) 	-- (2,0);
\node at (1.23,.78)  {$\NGLabel{2}$};
\node at (1.18,1.20)  {$\NGLabel{3}$};
\end{tikzpicture}%
}

\expandafter\def\csname ngshape@T054\endcsname{%
\begin{tikzpicture}[scale=1.125,ng diagram]
\draw[-<] (0,0) 	node [below] {}	-- (1,0) 		node [below] 	{$\NGLabel{1}$}; \draw (1,0) 		-- (2,0);
\draw(2,0) 	node [below] {} 	-- (1.5,.8660254) 	node [right] 	{}; \draw (1.5,.8660254) 	-- (1,1.7320508);
\draw (1,1.7320508)	node [above] {} 	-- (.5,.8660254) 	node [left] 	{}; \draw (.5,.8660254) 	-- (0,0);
\draw (0,0) 	-- (.5,.2886751) 	node [above] 	{};
\draw (.5,.2886751) 	-- (1,0.5773503);
\draw[-<] (1,0.5773503) node [below] {} -- (1,1.1547005) 	node [right]	{$\NGLabel{2}$}; \draw (1,1.1547005) 	-- (1,1.7320508);
\draw(1,0.5773503) -- (1.5,.2886751) 	node [above] 	{}; \draw (1.5,.2886751) 	-- (2,0);
\end{tikzpicture}%
}

\expandafter\def\csname ngshape@P022\endcsname{%
\begin{tikzpicture}[scale=0.75,ng diagram]
\draw (0,2)--(3,2);
\draw (1,1)--(2,1);
\draw (0,0)--(3,0);
\draw (0,0) -- (0,2) -- (1,1) -- (0,0);
\draw (3,0)--(3,2)--(2,1)--(3,0);
\draw[] (0,2) -- (1.5,2) node [above] {};
\draw[] (1,1) -- (1.5,1) node [above] {};
\draw[] (0,0) -- (1.5,0) node [above] {};
\draw[] (0,2) --++ (.5,-.5) node [right] {};
\draw[] (1,1) --++ (-.5,-.5) node [right] {};
\draw[] (0,0) -- (0,1) node [left] {};
\draw[] (3,2) --++ (-.5,-.5) node [left] {};
\draw[] (2,1) --++ (.5,-.5) node [left] {};
\draw[] (3,0) -- (3,1) node [right] {};
\node at (0+.15,2-.3) {$\NGLabel{1}$};
\node at (1-.2,1) {$\NGLabel{2}$};
\node at (0+.15,0+.3) {$\NGLabel{3}$};
\node at (3-.15,2-.3) {$\NGLabel{4}$};
\node at (2+.2,1) {$\NGLabel{5}$};
\node at (3-.15,0+.3) {$\NGLabel{6}$};
\end{tikzpicture}%
}

\expandafter\def\csname ngshape@T055\endcsname{%
\begin{tikzpicture}[scale=1.125,ng diagram]
\draw[] (0,0) 	node [below] {}	-- (1,0) 		node [below] 	{}; \draw (1,0) 		-- (2,0);
\node at (.33,.33) {$\NGLabel{1}$};
\draw[] (2,0) 	node [below] {} 	-- (1.5,.8660254) 	node [right] 	{}; \draw (1.5,.8660254) 	-- (1,1.7320508);
\node at (1.50,.48) {$\NGLabel{2}$};
\draw[] (1,1.7320508)-- (.5,.8660254) 	node [left] 	{};
\node at (.82,1.20) {$\NGLabel{3}$};
\draw (.5,.8660254) 	-- (0,0);
\draw[] (0,0) 	-- (.5,.2886751) 	node [above] 	{}; \draw (.5,.2886751) 	-- (1,0.5773503);
\draw[] (1,0.5773503) node [below] {} -- (1,1.1547005) 	node [right]	{}; \draw (1,1.1547005) 	-- (1,1.7320508);
\node at (.77,.76) {$\NGLabel{4}$};
\draw[] (1,0.5773503) -- (1.5,.2886751) 	node [above] 	{}; \draw (1.5,.2886751) 	-- (2,0);
\end{tikzpicture}%
}

\expandafter\def\csname ngshape@T056\endcsname{%
\begin{tikzpicture}[scale=1.125,ng diagram]
\draw[] (0,0) 	node [below] {}	-- (1,0) 		node [below] 	{}; \draw (1,0) 		-- (2,0);
\node at (.36,.36) {$\NGLabel{1}$};
\draw[] (2,0) 	node [below] {} 	-- (1.5,.8660254) 	node [right] 	{}; \draw (1.5,.8660254) 	-- (1,1.7320508);
\node at (1.61,.38) {$\NGLabel{2}$};
\draw[] (1,1.7320508)-- (.5,.8660254) 	node [left] 	{};
\node at (.82,1.20) {$\NGLabel{3}$};
\draw (.5,.8660254) 	-- (0,0);
\draw[] (0,0) 	-- (.5,.2886751) 	node [above] 	{}; \draw (.5,.2886751) 	-- (1,0.5773503);
\draw[] (1,0.5773503) node [below] {} -- (1,1.1547005) 	node [right]	{}; \draw (1,1.1547005) 	-- (1,1.7320508);
\node at (.83,.77) {$\NGLabel{4}$};
\draw[] (1,0.5773503) -- (1.5,.2886751) 	node [above] 	{}; \draw (1.5,.2886751) 	-- (2,0);
\end{tikzpicture}%
}

\expandafter\def\csname ngshape@O054\endcsname{%
\begin{tikzpicture}[scale=1,ng diagram]
\begin{scope}
\draw (-.5,.3) arc[radius=.5, start angle=180, end angle=360];
\draw (0,.5)--(0,0);
\draw [-<](0,-.2) arc[radius=.5, start angle=270, end angle=180];
\draw [-<](0,-.2) arc[radius=.5, start angle=270, end angle=360];
\draw [-<](0,-.2)--(0,.3);
\draw (0,.3)--(0,.5);
\draw (.5,.3)--(.5,.5);
\draw (-.5,.3)--(-.5,.5);
\node at (-.5,.3)[left] {$\NGLabel{1}$};
\node at (0,.3)[left] {$\NGLabel{2}$};
\node at (.5,.3)[left] {$\NGLabel{3}$};
\end{scope}
\end{tikzpicture}%
}

\expandafter\def\csname ngshape@O055\endcsname{%
\begin{tikzpicture}[scale=1,ng diagram]
\draw (-.5,0) arc[radius=.5, start angle=180, end angle=0];
\draw (0,.5)--(0,0);
\draw [](0,.5) arc[radius=.5, start angle=90, end angle=180];
\draw [](0,.5) arc[radius=.5, start angle=90, end angle=0];
\draw [](0,.5)--(0,0);
\draw (0,0)--(0,-.2);
\draw (.5,0)--(.5,-.2);
\draw (-.5,0)--(-.5,-.2);
\node at (-.19,.29) {$\NGLabel{1}$};
\end{tikzpicture}%
}

\expandafter\def\csname ngshape@O056\endcsname{%
\begin{tikzpicture}[scale=1,ng diagram]
\begin{scope}
\draw (-.5,.3) arc[radius=.5, start angle=180, end angle=360];
\draw (0,.5)--(0,0);
\draw [-<](0,-.2) arc[radius=.5, start angle=270, end angle=180];
\draw [-<](0,-.2) arc[radius=.5, start angle=270, end angle=360];
\draw [-<](0,-.2)--(0,.3);
\draw (0,.3)--(0,.5);
\draw (.5,.3)--(.5,.5);
\draw (-.5,.3)--(-.5,.5);
\node at (-.19,.00) {$\NGLabel{1}$};
\end{scope}
\end{tikzpicture}%
}

\usetikzlibrary{arrows.meta,decorations.markings}
\tikzset{ng/core/.style={line width=.45pt,line cap=round,line join=round,
 every node/.style={font=\normalfont\fontsize{8}{9.5}\selectfont,inner sep=2pt}},
 ng/arrow/.style={postaction={decorate},decoration={markings,
 mark=at position .5 with {\arrow{Stealth[length=3pt,width=2.6pt]}}}},
 ng/tetrahedron/.style={ng/core,scale=1.125},
 ng/vertex/.style={inner sep=1.8pt},
 ng/box/.style={draw,fill=white,minimum width=9mm,minimum height=5mm,inner sep=2pt}}

\pgfkeys{/ng/prism/.is family,/ng/prism,
 e1/.initial=X_1,e2/.initial=X_2,e3/.initial=X_3,
 e4/.initial=X_4,e5/.initial=X_5,e6/.initial=X_6,
 e7/.initial=X_7,e8/.initial=X_8,e9/.initial=X_9,
 v1/.initial=\alpha_1,v2/.initial=\alpha_2,v3/.initial=\alpha_3,
 v4/.initial=\alpha_4,v5/.initial=\alpha_5,v6/.initial=\alpha_6,scale/.initial=1}

\pgfkeys{/ng/tetra/.is family,/ng/tetra,
 e1/.initial=X_1,e2/.initial=X_2,e3/.initial=X_3,
 e4/.initial=X_4,e5/.initial=X_5,e6/.initial=X_6,
 center/.initial=\alpha,left/.initial=\beta,top/.initial=\gamma,
 right/.initial=\delta}

\expandafter\def\csname NGD@D010\endcsname{%
\begin{tikzpicture}[sd/base,x=1mm,y=1mm]
\draw[sd/arrow] (19.0000,10.9697) -- (38.0000,0.0000);
\draw[sd/reverse] (19.0000,10.9697) -- (19.0000,32.9090);
\draw[sd/arrow] (19.0000,10.9697) -- (0.0000,0.0000);
\draw[sd/reverse] (38.0000,0.0000) -- (0.0000,0.0000);
\draw[sd/arrow] (19.0000,32.9090) -- (0.0000,0.0000);
\draw[sd/arrow] (38.0000,0.0000) -- (19.0000,32.9090);
\fill (19.0000,9.0697) circle[radius=.30mm];
\fill (-1.6454,-0.9500) circle[radius=.30mm];
\fill (19.0000,34.8090) circle[radius=.30mm];
\fill (39.6454,-0.9500) circle[radius=.30mm];
\node[sd/label] at (19.7600,-2.3496) {$g+h$};
\node[sd/label] at (7.9760,18.2120) {$g$};
\node[sd/label] at (32.9253,18.1319) {$g+k$};
\node[sd/label] at (17.0848,21.5005) {$k$};
\node[sd/label] at (8.6921,7.7618) {$h$};
\node[sd/label] at (26.2084,2.3933) {$k-h$};
\end{tikzpicture}
}

\expandafter\def\csname NGD@D011\endcsname{%
\begin{tikzpicture}[sd/base,x=1mm,y=1mm]
\draw[sd/string] (19.0000,10.9697) -- (38.0000,0.0000);
\draw[sd/arrow] (19.0000,10.9697) -- (19.0000,32.9090);
\draw[sd/string] (19.0000,10.9697) -- (0.0000,0.0000);
\draw[sd/string] (38.0000,0.0000) -- (0.0000,0.0000);
\draw[sd/reverse] (19.0000,32.9090) -- (0.0000,0.0000);
\draw[sd/reverse] (38.0000,0.0000) -- (19.0000,32.9090);
\fill (19.0000,9.0697) circle[radius=.30mm];
\fill (-1.6454,-0.9500) circle[radius=.30mm];
\fill (19.0000,34.8090) circle[radius=.30mm];
\fill (39.6454,-0.9500) circle[radius=.30mm];
\node[sd/label] at (7.9760,18.2120) {$g$};
\node[sd/label] at (32.9595,18.1516) {$g+h$};
\node[sd/label] at (17.0392,21.5005) {$h$};
\end{tikzpicture}
}

\expandafter\def\csname NGD@D017\endcsname{%
\begin{tikzpicture}[sd/base,x=1mm,y=1mm]
\draw[sd/string] (19.0000,10.9697) -- (38.0000,0.0000);
\draw[sd/reverse] (19.0000,10.9697) -- (19.0000,32.9090);
\draw[sd/string] (19.0000,10.9697) -- (0.0000,0.0000);
\draw[sd/reverse] (38.0000,0.0000) -- (0.0000,0.0000);
\draw[sd/string] (19.0000,32.9090) -- (0.0000,0.0000);
\draw[sd/string] (38.0000,0.0000) -- (19.0000,32.9090);
\fill (19.0000,9.0697) circle[radius=.30mm];
\fill (-1.6454,-0.9500) circle[radius=.30mm];
\fill (19.0000,34.8090) circle[radius=.30mm];
\fill (39.6454,-0.9500) circle[radius=.30mm];
\node[sd/label] at (19.7600,-1.9786) {$g$};
\node[sd/label] at (17.0392,21.5005) {$h$};
\end{tikzpicture}
}

\expandafter\def\csname NGD@D024\endcsname{%
\begin{tikzpicture}[sd/base,x=1mm,y=1mm]
\draw[sd/string] (19.0000,10.9697) -- (38.0000,0.0000);
\draw[sd/reverse] (19.0000,10.9697) -- (19.0000,32.9090);
\draw[sd/string] (19.0000,10.9697) -- (0.0000,0.0000);
\draw[sd/reverse] (38.0000,0.0000) -- (0.0000,0.0000);
\draw[sd/string] (19.0000,32.9090) -- (0.0000,0.0000);
\draw[sd/string] (38.0000,0.0000) -- (19.0000,32.9090);
\fill (19.0000,9.0697) circle[radius=.30mm];
\fill (-1.6454,-0.9500) circle[radius=.30mm];
\fill (19.0000,34.8090) circle[radius=.30mm];
\fill (39.6454,-0.9500) circle[radius=.30mm];
\node[sd/label] at (19.7600,-2.0763) {$h$};
\node[sd/label] at (17.1387,21.5005) {$g$};
\end{tikzpicture}
}

\expandafter\def\csname NGD@D025\endcsname{%
\begin{tikzpicture}[sd/base,x=1mm,y=1mm]
\draw[sd/string] (19.0000,10.9697) -- (38.0000,0.0000);
\draw[sd/arrow] (19.0000,10.9697) -- (19.0000,32.9090);
\draw[sd/string] (19.0000,10.9697) -- (0.0000,0.0000);
\draw[sd/arrow] (38.0000,0.0000) -- (0.0000,0.0000);
\draw[sd/string] (19.0000,32.9090) -- (0.0000,0.0000);
\draw[sd/string] (38.0000,0.0000) -- (19.0000,32.9090);
\fill (19.0000,9.0697) circle[radius=.30mm];
\fill (-1.6454,-0.9500) circle[radius=.30mm];
\fill (19.0000,34.8090) circle[radius=.30mm];
\fill (39.6454,-0.9500) circle[radius=.30mm];
\node[sd/label] at (19.7600,-1.9786) {$g$};
\node[sd/label] at (17.0392,21.5005) {$h$};
\end{tikzpicture}
}

\expandafter\def\csname NGD@D072\endcsname{%
\begin{tikzpicture}[sd/base,x=1mm,y=1mm]
\draw[sd/string] (16.5000,9.5263) -- (33.0000,0.0000);
\draw[sd/string] (16.5000,9.5263) -- (16.5000,28.5788);
\draw[sd/string] (16.5000,9.5263) -- (0.0000,0.0000);
\draw[sd/string] (33.0000,0.0000) -- (0.0000,0.0000);
\draw[sd/string] (16.5000,28.5788) -- (0.0000,0.0000);
\draw[sd/string] (33.0000,0.0000) -- (16.5000,28.5788);
\node[sd/label] at (14.2916,10.8013) {${h}$};
\node[sd/label] at (5.5154,5.5154) {${0}$};
\node[sd/label] at (13.9636,19.1128) {${k}'$};
\node[sd/label] at (27.4846,5.5154) {${g}$};
\end{tikzpicture}
}

\expandafter\def\csname NGD@D079\endcsname{%
\begin{tikzpicture}[sd/base,x=1mm,y=1mm]
\draw[sd/string] (19.0000,10.9697) -- (38.0000,0.0000);
\draw[sd/string] (19.0000,10.9697) -- (19.0000,32.9090);
\draw[sd/string] (19.0000,10.9697) -- (0.0000,0.0000);
\draw[sd/reverse] (38.0000,0.0000) -- (0.0000,0.0000);
\draw[sd/string] (19.0000,32.9090) -- (0.0000,0.0000);
\draw[sd/string] (38.0000,0.0000) -- (19.0000,32.9090);
\node[sd/label] at (19.0000,8.4197) {${h}$};
\fill (-1.6454,-0.9500) circle[radius=.30mm];
\node[sd/label] at (19.0000,35.2090) {${k}'$};
\fill (39.6454,-0.9500) circle[radius=.30mm];
\node[sd/label] at (19.7600,-1.9786) {$g$};
\end{tikzpicture}
}

\expandafter\def\csname NGD@D080\endcsname{%
\begin{tikzpicture}[sd/base,x=1mm,y=1mm]
\draw[sd/string] (16.5000,9.5263) -- (33.0000,0.0000);
\draw[sd/string] (16.5000,9.5263) -- (16.5000,28.5788);
\draw[sd/string] (16.5000,9.5263) -- (0.0000,0.0000);
\draw[sd/string] (33.0000,0.0000) -- (0.0000,0.0000);
\draw[sd/string] (16.5000,28.5788) -- (0.0000,0.0000);
\draw[sd/string] (33.0000,0.0000) -- (16.5000,28.5788);
\node[sd/label] at (14.5081,10.6763) {${p}$};
\node[sd/label] at (5.5154,5.5154) {${g}$};
\node[sd/label] at (13.8989,18.8713) {${h}'$};
\node[sd/label] at (28.0149,4.9851) {${r}$};
\end{tikzpicture}
}

\expandafter\def\csname NGD@D082\endcsname{%
\begin{tikzpicture}[sd/base,x=1mm,y=1mm]
\draw[sd/string] (16.5000,9.5263) -- (33.0000,0.0000);
\draw[sd/string] (16.5000,9.5263) -- (16.5000,28.5788);
\draw[sd/string] (16.5000,9.5263) -- (0.0000,0.0000);
\draw[sd/string] (33.0000,0.0000) -- (0.0000,0.0000);
\draw[sd/string] (16.5000,28.5788) -- (0.0000,0.0000);
\draw[sd/string] (33.0000,0.0000) -- (16.5000,28.5788);
\node[sd/label] at (14.2916,10.8013) {${h}$};
\node[sd/label] at (5.5154,5.5154) {${0}$};
\node[sd/label] at (13.8989,18.8713) {${h}'$};
\node[sd/label] at (27.1310,5.8690) {${k}$};
\end{tikzpicture}
}

\expandafter\def\csname NGD@D086\endcsname{%
\begin{tikzpicture}[sd/base,x=1mm,y=1mm]
\draw[sd/arrow] (0.0000,25.2000) -- (42.0000,25.2000);
\draw[sd/arrow] (12.1800,12.6000) -- (29.8200,12.6000);
\draw[sd/reverse] (0.0000,0.0000) -- (42.0000,0.0000);
\draw[sd/arrow] (0.0000,25.2000) -- (12.1800,12.6000);
\draw[sd/arrow] (12.1800,12.6000) -- (0.0000,0.0000);
\draw[sd/arrow] (0.0000,0.0000) -- (0.0000,25.2000);
\draw[sd/reverse] (42.0000,25.2000) -- (29.8200,12.6000);
\draw[sd/arrow] (29.8200,12.6000) -- (42.0000,0.0000);
\draw[sd/reverse] (42.0000,0.0000) -- (42.0000,25.2000);
\fill (1.0870,22.5114) circle[radius=.30mm];
\fill (10.2800,12.6000) circle[radius=.30mm];
\fill (1.0870,2.6886) circle[radius=.30mm];
\fill (40.9130,22.5114) circle[radius=.30mm];
\fill (31.7200,12.6000) circle[radius=.30mm];
\fill (40.9130,2.6886) circle[radius=.30mm];
\node[sd/label] at (-4.5021,12.0960) {$g+k$};
\node[sd/label] at (46.1494,12.0960) {$g+l$};
\node[sd/label] at (20.1600,27.1786) {$g$};
\node[sd/label] at (20.1600,-2.3496) {$g+h$};
\node[sd/label] at (4.1462,17.5085) {$k$};
\node[sd/label] at (9.4957,3.4953) {$k-h$};
\node[sd/label] at (34.6358,20.6192) {$l$};
\node[sd/label] at (32.6866,3.6716) {$h-l$};
\node[sd/label] at (20.6472,10.5237) {$h$};
\end{tikzpicture}
}

\expandafter\def\csname NGD@D092\endcsname{%
\begin{tikzpicture}[sd/base,x=1mm,y=1mm]
\draw[sd/arrow] (0.0000,25.2000) -- (42.0000,25.2000);
\draw[sd/arrow] (12.1800,12.6000) -- (29.8200,12.6000);
\draw[sd/reverse] (0.0000,0.0000) -- (42.0000,0.0000);
\draw[sd/string] (0.0000,25.2000) -- (12.1800,12.6000);
\draw[sd/string] (12.1800,12.6000) -- (0.0000,0.0000);
\draw[sd/string] (0.0000,0.0000) -- (0.0000,25.2000);
\draw[sd/reverse] (42.0000,25.2000) -- (29.8200,12.6000);
\draw[sd/arrow] (29.8200,12.6000) -- (42.0000,0.0000);
\draw[sd/reverse] (42.0000,0.0000) -- (42.0000,25.2000);
\fill (1.0870,22.5114) circle[radius=.30mm];
\fill (10.2800,12.6000) circle[radius=.30mm];
\fill (1.0870,2.6886) circle[radius=.30mm];
\fill (40.9130,22.5114) circle[radius=.30mm];
\fill (31.7200,12.6000) circle[radius=.30mm];
\fill (40.9130,2.6886) circle[radius=.30mm];
\node[sd/label] at (46.5021,12.0960) {$g+k$};
\node[sd/label] at (20.1600,27.1786) {$g$};
\node[sd/label] at (20.1600,-2.3496) {$g+h$};
\node[sd/label] at (34.4534,20.7955) {$k$};
\node[sd/label] at (32.5043,3.4953) {$h-k$};
\node[sd/label] at (20.6472,10.5237) {$h$};
\end{tikzpicture}
}

\expandafter\def\csname NGD@D098\endcsname{%
\begin{tikzpicture}[sd/base,x=1mm,y=1mm]
\draw[sd/string] (0.0000,25.2000) -- (42.0000,25.2000);
\draw[sd/string] (12.1800,12.6000) -- (29.8200,12.6000);
\draw[sd/arrow] (0.0000,0.0000) -- (42.0000,0.0000);
\draw[sd/string] (0.0000,25.2000) -- (12.1800,12.6000);
\draw[sd/arrow] (12.1800,12.6000) -- (0.0000,0.0000);
\draw[sd/reverse] (0.0000,0.0000) -- (0.0000,25.2000);
\draw[sd/string] (42.0000,25.2000) -- (29.8200,12.6000);
\draw[sd/arrow] (29.8200,12.6000) -- (42.0000,0.0000);
\draw[sd/arrow] (42.0000,0.0000) -- (42.0000,25.2000);
\fill (1.0870,22.5114) circle[radius=.30mm];
\fill (10.2800,12.6000) circle[radius=.30mm];
\fill (1.0870,2.6886) circle[radius=.30mm];
\fill (40.9130,22.5114) circle[radius=.30mm];
\fill (31.7200,12.6000) circle[radius=.30mm];
\fill (40.9130,2.6886) circle[radius=.30mm];
\node[sd/label] at (-1.8613,12.0960) {$g$};
\node[sd/label] at (49.1885,12.0960) {$g+h+k$};
\node[sd/label] at (20.1600,-2.3496) {$g+h$};
\node[sd/label] at (4.6099,8.2183) {$h$};
\node[sd/label] at (33.9662,4.9085) {$k$};
\end{tikzpicture}
}

\expandafter\def\csname NGD@D104\endcsname{%
\begin{tikzpicture}[sd/base,x=1mm,y=1mm]
\draw[sd/string] (0.0000,25.2000) -- (42.0000,25.2000);
\draw[sd/string] (12.1800,12.6000) -- (29.8200,12.6000);
\draw[sd/arrow] (0.0000,0.0000) -- (42.0000,0.0000);
\draw[sd/string] (0.0000,25.2000) -- (12.1800,12.6000);
\draw[sd/arrow] (12.1800,12.6000) -- (0.0000,0.0000);
\draw[sd/reverse] (0.0000,0.0000) -- (0.0000,25.2000);
\draw[sd/reverse] (42.0000,25.2000) -- (29.8200,12.6000);
\draw[sd/string] (29.8200,12.6000) -- (42.0000,0.0000);
\draw[sd/string] (42.0000,0.0000) -- (42.0000,25.2000);
\fill (1.0870,22.5114) circle[radius=.30mm];
\fill (10.2800,12.6000) circle[radius=.30mm];
\fill (1.0870,2.6886) circle[radius=.30mm];
\fill (40.9130,22.5114) circle[radius=.30mm];
\fill (31.7200,12.6000) circle[radius=.30mm];
\fill (40.9130,2.6886) circle[radius=.30mm];
\node[sd/label] at (-1.8613,12.0960) {$g$};
\node[sd/label] at (20.1600,-2.3496) {$g+h$};
\node[sd/label] at (4.6099,8.2183) {$h$};
\node[sd/label] at (34.4534,20.7955) {$k$};
\end{tikzpicture}
}

\expandafter\def\csname NGD@D108\endcsname{%
\begin{tikzpicture}[sd/base,x=1mm,y=1mm]
\draw[sd/string] (0.0000,25.2000) -- (42.0000,25.2000);
\draw[sd/string] (12.1800,12.6000) -- (29.8200,12.6000);
\draw[sd/arrow] (0.0000,0.0000) -- (42.0000,0.0000);
\draw[sd/string] (0.0000,25.2000) -- (12.1800,12.6000);
\draw[sd/arrow] (12.1800,12.6000) -- (0.0000,0.0000);
\draw[sd/reverse] (0.0000,0.0000) -- (0.0000,25.2000);
\draw[sd/string] (42.0000,25.2000) -- (29.8200,12.6000);
\draw[sd/string] (29.8200,12.6000) -- (42.0000,0.0000);
\draw[sd/string] (42.0000,0.0000) -- (42.0000,25.2000);
\fill (1.0870,22.5114) circle[radius=.30mm];
\fill (10.2800,12.6000) circle[radius=.30mm];
\fill (1.0870,2.6886) circle[radius=.30mm];
\node[sd/label] at (39.6385,19.3593) {${l}'$};
\node[sd/label] at (32.8700,12.6000) {${k}$};
\fill (40.9130,2.6886) circle[radius=.30mm];
\node[sd/label] at (-1.8613,12.0960) {$g$};
\node[sd/label] at (20.1600,-2.3496) {$g+h$};
\node[sd/label] at (4.6099,8.2183) {$h$};
\end{tikzpicture}
}

\expandafter\def\csname NGD@D112\endcsname{%
\begin{tikzpicture}[sd/base,x=1mm,y=1mm]
\draw[sd/string] (0.0000,25.2000) -- (42.0000,25.2000);
\draw[sd/arrow] (12.1800,12.6000) -- (29.8200,12.6000);
\draw[sd/string] (0.0000,0.0000) -- (42.0000,0.0000);
\draw[sd/string] (0.0000,25.2000) -- (12.1800,12.6000);
\draw[sd/string] (12.1800,12.6000) -- (0.0000,0.0000);
\draw[sd/reverse] (0.0000,0.0000) -- (0.0000,25.2000);
\draw[sd/string] (42.0000,25.2000) -- (29.8200,12.6000);
\draw[sd/string] (29.8200,12.6000) -- (42.0000,0.0000);
\draw[sd/arrow] (42.0000,0.0000) -- (42.0000,25.2000);
\fill (1.0870,22.5114) circle[radius=.30mm];
\fill (10.2800,12.6000) circle[radius=.30mm];
\fill (1.0870,2.6886) circle[radius=.30mm];
\fill (40.9130,22.5114) circle[radius=.30mm];
\fill (31.7200,12.6000) circle[radius=.30mm];
\fill (40.9130,2.6886) circle[radius=.30mm];
\node[sd/label] at (-1.8613,12.0960) {$g$};
\node[sd/label] at (43.9152,12.0960) {$k$};
\node[sd/label] at (20.6472,10.5237) {$h$};
\end{tikzpicture}
}

\expandafter\def\csname NGD@D121\endcsname{%
\begin{tikzpicture}[sd/base,x=1mm,y=1mm]
\draw[sd/string] (0.0000,25.2000) -- (42.0000,25.2000);
\draw[sd/string] (12.1800,12.6000) -- (29.8200,12.6000);
\draw[sd/string] (0.0000,0.0000) -- (42.0000,0.0000);
\draw[sd/string] (0.0000,25.2000) -- (12.1800,12.6000);
\draw[sd/reverse] (12.1800,12.6000) -- (0.0000,0.0000);
\draw[sd/string] (0.0000,0.0000) -- (0.0000,25.2000);
\draw[sd/string] (42.0000,25.2000) -- (29.8200,12.6000);
\draw[sd/reverse] (29.8200,12.6000) -- (42.0000,0.0000);
\draw[sd/string] (42.0000,0.0000) -- (42.0000,25.2000);
\node[sd/label] at (2.1741,19.8229) {${k}$};
\fill (10.2800,12.6000) circle[radius=.30mm];
\fill (1.0870,2.6886) circle[radius=.30mm];
\node[sd/label] at (39.6385,19.3593) {${l}'$};
\fill (31.7200,12.6000) circle[radius=.30mm];
\fill (40.9130,2.6886) circle[radius=.30mm];
\node[sd/label] at (4.7101,8.1214) {$g$};
\node[sd/label] at (33.9427,4.8857) {$h$};
\end{tikzpicture}
}

\expandafter\def\csname NGD@D126\endcsname{%
\begin{tikzpicture}[sd/base,x=1mm,y=1mm]
\draw[sd/string] (0.0000,25.2000) -- (42.0000,25.2000);
\draw[sd/string] (12.1800,12.6000) -- (29.8200,12.6000);
\draw[sd/string] (0.0000,0.0000) -- (42.0000,0.0000);
\draw[sd/string] (0.0000,25.2000) -- (12.1800,12.6000);
\draw[sd/arrow] (12.1800,12.6000) -- (0.0000,0.0000);
\draw[sd/string] (0.0000,0.0000) -- (0.0000,25.2000);
\draw[sd/string] (42.0000,25.2000) -- (29.8200,12.6000);
\draw[sd/string] (29.8200,12.6000) -- (42.0000,0.0000);
\draw[sd/arrow] (42.0000,0.0000) -- (42.0000,25.2000);
\node[sd/label] at (2.6426,18.6640) {${k}'$};
\fill (10.2800,12.6000) circle[radius=.30mm];
\fill (1.0870,2.6886) circle[radius=.30mm];
\fill (40.9130,22.5114) circle[radius=.30mm];
\node[sd/label] at (32.6200,12.6000) {${l}$};
\fill (40.9130,2.6886) circle[radius=.30mm];
\node[sd/label] at (43.9608,12.0960) {$h$};
\node[sd/label] at (4.7101,8.1214) {$g$};
\end{tikzpicture}
}

\expandafter\def\csname NGD@D131\endcsname{%
\begin{tikzpicture}[sd/base,x=1mm,y=1mm]
\draw[sd/string] (0.0000,25.2000) -- (42.0000,25.2000);
\draw[sd/arrow] (12.1800,12.6000) -- (29.8200,12.6000);
\draw[sd/string] (0.0000,0.0000) -- (42.0000,0.0000);
\draw[sd/string] (0.0000,25.2000) -- (12.1800,12.6000);
\draw[sd/string] (12.1800,12.6000) -- (0.0000,0.0000);
\draw[sd/string] (0.0000,0.0000) -- (0.0000,25.2000);
\draw[sd/string] (42.0000,25.2000) -- (29.8200,12.6000);
\draw[sd/string] (29.8200,12.6000) -- (42.0000,0.0000);
\draw[sd/arrow] (42.0000,0.0000) -- (42.0000,25.2000);
\node[sd/label] at (2.1741,19.8229) {${k}$};
\fill (10.2800,12.6000) circle[radius=.30mm];
\node[sd/label] at (2.3615,5.8407) {${l}'$};
\fill (40.9130,22.5114) circle[radius=.30mm];
\fill (31.7200,12.6000) circle[radius=.30mm];
\fill (40.9130,2.6886) circle[radius=.30mm];
\node[sd/label] at (43.9608,12.0960) {$h$};
\node[sd/label] at (20.6472,10.6214) {$g$};
\end{tikzpicture}
}

\expandafter\def\csname NGD@D136\endcsname{%
\begin{tikzpicture}[sd/base,x=1mm,y=1mm]
\draw[sd/string] (0.0000,25.2000) -- (42.0000,25.2000);
\draw[sd/string] (12.1800,12.6000) -- (29.8200,12.6000);
\draw[sd/string] (0.0000,0.0000) -- (42.0000,0.0000);
\draw[sd/string] (0.0000,25.2000) -- (12.1800,12.6000);
\draw[sd/string] (12.1800,12.6000) -- (0.0000,0.0000);
\draw[sd/string] (0.0000,0.0000) -- (0.0000,25.2000);
\draw[sd/string] (42.0000,25.2000) -- (29.8200,12.6000);
\draw[sd/string] (29.8200,12.6000) -- (42.0000,0.0000);
\draw[sd/arrow] (42.0000,0.0000) -- (42.0000,25.2000);
\node[sd/label] at (2.9238,17.9687) {${h_2}$};
\node[sd/label] at (8.1300,12.6000) {${h_3}$};
\node[sd/label] at (2.9238,7.2313) {${h_1}$};
\fill (40.9130,22.5114) circle[radius=.30mm];
\node[sd/label] at (33.6200,12.6000) {${k}'$};
\fill (40.9130,2.6886) circle[radius=.30mm];
\node[sd/label] at (43.8613,12.0960) {$g$};
\end{tikzpicture}
}

\expandafter\def\csname NGD@D138\endcsname{%
\begin{tikzpicture}[sd/base,x=1mm,y=1mm]
\draw[sd/string] (16.5000,9.5263) -- (33.0000,0.0000);
\draw[sd/string] (16.5000,9.5263) -- (16.5000,28.5788);
\draw[sd/string] (16.5000,9.5263) -- (0.0000,0.0000);
\draw[sd/string] (33.0000,0.0000) -- (0.0000,0.0000);
\draw[sd/string] (16.5000,28.5788) -- (0.0000,0.0000);
\draw[sd/string] (33.0000,0.0000) -- (16.5000,28.5788);
\node[sd/label] at (13.6421,11.1763) {${h_3}$};
\node[sd/label] at (7.8135,7.8135) {${h_1}$};
\node[sd/label] at (13.6400,17.9054) {${h_2}$};
\node[sd/label] at (34.9919,-1.1500) {${s}'$};
\end{tikzpicture}
}

\expandafter\def\csname NGD@D139\endcsname{%
\begin{tikzpicture}[sd/base,x=1mm,y=1mm]
\draw[sd/string] (19.0000,10.9697) -- (38.0000,0.0000);
\draw[sd/string] (19.0000,10.9697) -- (19.0000,32.9090);
\draw[sd/string] (19.0000,10.9697) -- (0.0000,0.0000);
\draw[sd/string] (38.0000,0.0000) -- (0.0000,0.0000);
\draw[sd/string] (19.0000,32.9090) -- (0.0000,0.0000);
\draw[sd/arrow] (38.0000,0.0000) -- (19.0000,32.9090);
\node[sd/label] at (21.6414,12.4947) {${k}'$};
\node[sd/label] at (-1.9919,-1.1500) {${s}$};
\fill (19.0000,34.8090) circle[radius=.30mm];
\fill (39.6454,-0.9500) circle[radius=.30mm];
\node[sd/label] at (30.7840,16.8956) {$g$};
\end{tikzpicture}
}

\expandafter\def\csname NGD@D144\endcsname{%
\begin{tikzpicture}[sd/base,x=1mm,y=1mm]
\draw[sd/arrow] (19.0000,10.9697) -- (38.0000,0.0000);
\draw[sd/string] (19.0000,10.9697) -- (19.0000,32.9090);
\draw[sd/string] (19.0000,10.9697) -- (0.0000,0.0000);
\draw[sd/string] (38.0000,0.0000) -- (0.0000,0.0000);
\draw[sd/string] (19.0000,32.9090) -- (0.0000,0.0000);
\draw[sd/string] (38.0000,0.0000) -- (19.0000,32.9090);
\fill (19.0000,9.0697) circle[radius=.30mm];
\node[sd/label] at (7.8135,7.8135) {${h_2}$};
\node[sd/label] at (16.5930,23.9259) {${s}'$};
\fill (39.6454,-0.9500) circle[radius=.30mm];
\node[sd/label] at (26.9992,3.7630) {$g$};
\end{tikzpicture}
}

\expandafter\def\csname NGD@D145\endcsname{%
\begin{tikzpicture}[sd/base,x=1mm,y=1mm]
\draw[sd/string] (16.5000,9.5263) -- (33.0000,0.0000);
\draw[sd/string] (16.5000,9.5263) -- (16.5000,28.5788);
\draw[sd/string] (16.5000,9.5263) -- (0.0000,0.0000);
\draw[sd/string] (33.0000,0.0000) -- (0.0000,0.0000);
\draw[sd/string] (16.5000,28.5788) -- (0.0000,0.0000);
\draw[sd/string] (33.0000,0.0000) -- (16.5000,28.5788);
\node[sd/label] at (14.5081,10.6763) {${s}$};
\node[sd/label] at (7.8135,7.8135) {${h_3}$};
\node[sd/label] at (14.7400,22.0105) {${t}$};
\node[sd/label] at (26.0704,6.9296) {${k}'$};
\end{tikzpicture}
}

\expandafter\def\csname NGD@D146\endcsname{%
\begin{tikzpicture}[sd/base,x=1mm,y=1mm]
\draw[sd/string] (19.0000,10.9697) -- (38.0000,0.0000);
\draw[sd/string] (19.0000,10.9697) -- (19.0000,32.9090);
\draw[sd/string] (19.0000,10.9697) -- (0.0000,0.0000);
\draw[sd/string] (38.0000,0.0000) -- (0.0000,0.0000);
\draw[sd/string] (19.0000,32.9090) -- (0.0000,0.0000);
\draw[sd/arrow] (38.0000,0.0000) -- (19.0000,32.9090);
\node[sd/label] at (16.5751,12.3697) {${t}'$};
\node[sd/label] at (7.8135,7.8135) {${h_1}$};
\fill (19.0000,34.8090) circle[radius=.30mm];
\fill (39.6454,-0.9500) circle[radius=.30mm];
\node[sd/label] at (30.7840,16.8956) {$g$};
\end{tikzpicture}
}

\expandafter\def\csname NGD@D153\endcsname{%
\begin{tikzpicture}[sd/base,x=1mm,y=1mm]
\draw[sd/arrow] (0.0000,25.2000) -- (42.0000,25.2000);
\draw[sd/arrow] (12.1800,12.6000) -- (29.8200,12.6000);
\draw[sd/reverse] (0.0000,0.0000) -- (42.0000,0.0000);
\draw[sd/string] (0.0000,25.2000) -- (12.1800,12.6000);
\draw[sd/string] (12.1800,12.6000) -- (0.0000,0.0000);
\draw[sd/string] (0.0000,0.0000) -- (0.0000,25.2000);
\draw[sd/string] (42.0000,25.2000) -- (29.8200,12.6000);
\draw[sd/string] (29.8200,12.6000) -- (42.0000,0.0000);
\draw[sd/string] (42.0000,0.0000) -- (42.0000,25.2000);
\fill (1.0870,22.5114) circle[radius=.30mm];
\fill (10.2800,12.6000) circle[radius=.30mm];
\fill (1.0870,2.6886) circle[radius=.30mm];
\fill (40.9130,22.5114) circle[radius=.30mm];
\fill (31.7200,12.6000) circle[radius=.30mm];
\fill (40.9130,2.6886) circle[radius=.30mm];
\node[sd/label] at (20.1600,27.1786) {$g$};
\node[sd/label] at (20.1600,-2.3496) {$g+h$};
\node[sd/label] at (20.6472,10.5237) {$h$};
\end{tikzpicture}
}

\expandafter\def\csname NGD@D154\endcsname{%
\begin{tikzpicture}[sd/base,x=1mm,y=1mm]
\draw[sd/string] (0.0000,25.2000) -- (42.0000,25.2000);
\draw[sd/string] (12.1800,12.6000) -- (29.8200,12.6000);
\draw[sd/arrow] (0.0000,0.0000) -- (42.0000,0.0000);
\draw[sd/string] (0.0000,25.2000) -- (12.1800,12.6000);
\draw[sd/string] (12.1800,12.6000) -- (0.0000,0.0000);
\draw[sd/string] (0.0000,0.0000) -- (0.0000,25.2000);
\draw[sd/string] (42.0000,25.2000) -- (29.8200,12.6000);
\draw[sd/string] (29.8200,12.6000) -- (42.0000,0.0000);
\draw[sd/string] (42.0000,0.0000) -- (42.0000,25.2000);
\node[sd/label] at (2.8300,18.2005) {${k_2}$};
\node[sd/label] at (8.1300,12.6000) {${k_1}$};
\fill (1.0870,2.6886) circle[radius=.30mm];
\node[sd/label] at (39.1700,18.2005) {${k_3}$};
\node[sd/label] at (33.8700,12.6000) {${k_4}$};
\fill (40.9130,2.6886) circle[radius=.30mm];
\node[sd/label] at (20.1600,-1.9786) {$g$};
\end{tikzpicture}
}

\expandafter\def\csname NGD@D164\endcsname{%
\begin{tikzpicture}[sd/base,x=1mm,y=1mm]
\draw[sd/string] (0.0000,25.2000) -- (42.0000,25.2000);
\draw[sd/string] (12.1800,12.6000) -- (29.8200,12.6000);
\draw[sd/string] (0.0000,0.0000) -- (42.0000,0.0000);
\draw[sd/string] (0.0000,25.2000) -- (12.1800,12.6000);
\draw[sd/string] (12.1800,12.6000) -- (0.0000,0.0000);
\draw[sd/string] (0.0000,0.0000) -- (0.0000,25.2000);
\draw[sd/string] (42.0000,25.2000) -- (29.8200,12.6000);
\draw[sd/string] (29.8200,12.6000) -- (42.0000,0.0000);
\draw[sd/string] (42.0000,0.0000) -- (42.0000,25.2000);
\node[sd/label] at (2.5489,18.8958) {${g_1}$};
\node[sd/label] at (8.6300,12.6000) {${g_2}$};
\node[sd/label] at (2.5489,6.3042) {${g_3}$};
\node[sd/label] at (39.0762,17.9687) {${h_1}$};
\node[sd/label] at (33.8700,12.6000) {${h_2}$};
\node[sd/label] at (39.0762,7.2313) {${h_3}$};
\end{tikzpicture}
}

\expandafter\def\csname NGD@E-00-conjugated\endcsname{%
\begin{tikzpicture}[sd/base,x=1mm,y=1mm]
\node[sd/box,minimum width=8.4289mm,minimum height=4.9786mm] (b0) at (0.0000,-6.6893) {$D_h^{-1}$};
\node[sd/box,minimum width=6.7800mm,minimum height=4.0000mm] (b1) at (-0.8245,-15.5786) {$p_0$};
\node[sd/box,minimum width=6.7800mm,minimum height=4.0000mm] (b2) at (-0.8245,-23.9786) {$q_0$};
\node[sd/box,minimum width=6.7800mm,minimum height=4.0000mm] (b3) at (-0.8245,-32.3786) {$D_h$};
\draw[sd/string] (-2.5952,0.0000) -- ([xshift=-2.5952mm]b0.north);
\node[sd/label,anchor=south] at (-2.5952,1.0000) {$\varrho$};
\draw[sd/string] (1.4048,0.0000) -- ([xshift=1.4048mm]b0.north);
\node[sd/label,anchor=south] at (1.4048,1.0000) {$\varrho$};
\draw[sd/string] ([xshift=-3.1145mm]b0.south) -- ([xshift=-2.2900mm]b1.north);
\draw[sd/string] ([xshift=0.8855mm]b0.south) -- ([xshift=1.7100mm]b1.north);
\draw[sd/string] ([xshift=0.0000mm]b1.south) -- ([xshift=0.0000mm]b2.north);
\draw[sd/string] ([xshift=-2.2900mm]b2.south) -- ([xshift=-2.2900mm]b3.north);
\draw[sd/string] ([xshift=1.7100mm]b2.south) -- ([xshift=1.7100mm]b3.north);
\draw[sd/string] ([xshift=-2.2900mm]b3.south) -- (-3.1145,-39.3786);
\node[sd/label,anchor=north] at (-3.1145,-40.3786) {$\varrho$};
\draw[sd/string] ([xshift=1.7100mm]b3.south) -- (0.8855,-39.3786);
\node[sd/label,anchor=north] at (0.8855,-40.3786) {$\varrho$};
\end{tikzpicture}
}

\expandafter\def\csname NGD@E-hh\endcsname{%
\begin{tikzpicture}[sd/base,x=1mm,y=1mm]
\node[sd/box,minimum width=6.5000mm,minimum height=4.0000mm] (b0) at (0.0000,-6.2000) {$p_h$};
\node[sd/box,minimum width=6.5000mm,minimum height=4.0000mm] (b1) at (0.0000,-14.6000) {$q_h$};
\draw[sd/string] (-2.1500,0.0000) -- ([xshift=-2.1500mm]b0.north);
\node[sd/label,anchor=south] at (-2.1500,1.0000) {$\varrho$};
\draw[sd/string] (1.8500,0.0000) -- ([xshift=1.8500mm]b0.north);
\node[sd/label,anchor=south] at (1.8500,1.0000) {$\varrho$};
\draw[sd/string] ([xshift=0.0000mm]b0.south) -- ([xshift=0.0000mm]b1.north);
\draw[sd/string] ([xshift=-2.1500mm]b1.south) -- (-2.1500,-21.6000);
\node[sd/label,anchor=north] at (-2.1500,-22.6000) {$\varrho$};
\draw[sd/string] ([xshift=1.8500mm]b1.south) -- (1.8500,-21.6000);
\node[sd/label,anchor=north] at (1.8500,-22.6000) {$\varrho$};
\end{tikzpicture}
}

\expandafter\def\csname NGD@E-hk\endcsname{%
\begin{tikzpicture}[sd/base,x=1mm,y=1mm]
\node[sd/box,minimum width=6.5000mm,minimum height=4.0000mm] (b0) at (0.0000,-6.2000) {$p_h$};
\node[sd/box,minimum width=6.5000mm,minimum height=4.0000mm] (b1) at (0.0000,-14.6000) {$q_k$};
\draw[sd/string] (-2.1500,0.0000) -- ([xshift=-2.1500mm]b0.north);
\node[sd/label,anchor=south] at (-2.1500,1.0000) {$\varrho$};
\draw[sd/string] (1.8500,0.0000) -- ([xshift=1.8500mm]b0.north);
\node[sd/label,anchor=south] at (1.8500,1.0000) {$\varrho$};
\draw[sd/string] ([xshift=0.0000mm]b0.south) -- ([xshift=0.0000mm]b1.north);
\draw[sd/string] ([xshift=-2.1500mm]b1.south) -- (-2.1500,-21.6000);
\node[sd/label,anchor=north] at (-2.1500,-22.6000) {$\varrho$};
\draw[sd/string] ([xshift=1.8500mm]b1.south) -- (1.8500,-21.6000);
\node[sd/label,anchor=north] at (1.8500,-22.6000) {$\varrho$};
\end{tikzpicture}
}

\expandafter\def\csname NGD@E-product\endcsname{%
\begin{tikzpicture}[sd/base,x=1mm,y=1mm]
\node[sd/box,minimum width=6.5000mm,minimum height=4.0000mm] (b0) at (0.0000,-6.2000) {$p_h$};
\node[sd/box,minimum width=6.5000mm,minimum height=4.0000mm] (b1) at (0.0000,-14.6000) {$q_k$};
\node[sd/box,minimum width=6.5000mm,minimum height=4.0000mm] (b2) at (0.0000,-23.0000) {$p_p$};
\node[sd/box,minimum width=6.5000mm,minimum height=4.0000mm] (b3) at (0.0000,-31.4000) {$q_q$};
\draw[sd/string] (-2.1500,0.0000) -- ([xshift=-2.1500mm]b0.north);
\node[sd/label,anchor=south] at (-2.1500,1.0000) {$\varrho$};
\draw[sd/string] (1.8500,0.0000) -- ([xshift=1.8500mm]b0.north);
\node[sd/label,anchor=south] at (1.8500,1.0000) {$\varrho$};
\draw[sd/string] ([xshift=0.0000mm]b0.south) -- ([xshift=0.0000mm]b1.north);
\draw[sd/string] ([xshift=-2.1500mm]b1.south) -- ([xshift=-2.1500mm]b2.north);
\draw[sd/string] ([xshift=1.8500mm]b1.south) -- ([xshift=1.8500mm]b2.north);
\draw[sd/string] ([xshift=0.0000mm]b2.south) -- ([xshift=0.0000mm]b3.north);
\draw[sd/string] ([xshift=-2.1500mm]b3.south) -- (-2.1500,-38.4000);
\node[sd/label,anchor=north] at (-2.1500,-39.4000) {$\varrho$};
\draw[sd/string] ([xshift=1.8500mm]b3.south) -- (1.8500,-38.4000);
\node[sd/label,anchor=north] at (1.8500,-39.4000) {$\varrho$};
\end{tikzpicture}
}

\expandafter\def\csname NGD@E-reduced\endcsname{%
\begin{tikzpicture}[sd/base,x=1mm,y=1mm]
\node[sd/box,minimum width=6.5000mm,minimum height=4.0000mm] (b0) at (0.0000,-6.2000) {$p_h$};
\node[sd/box,minimum width=6.5000mm,minimum height=4.0000mm] (b1) at (0.0000,-14.6000) {$q_q$};
\draw[sd/string] (-2.1500,0.0000) -- ([xshift=-2.1500mm]b0.north);
\node[sd/label,anchor=south] at (-2.1500,1.0000) {$\varrho$};
\draw[sd/string] (1.8500,0.0000) -- ([xshift=1.8500mm]b0.north);
\node[sd/label,anchor=south] at (1.8500,1.0000) {$\varrho$};
\draw[sd/string] ([xshift=0.0000mm]b0.south) -- ([xshift=0.0000mm]b1.north);
\draw[sd/string] ([xshift=-2.1500mm]b1.south) -- (-2.1500,-21.6000);
\node[sd/label,anchor=north] at (-2.1500,-22.6000) {$\varrho$};
\draw[sd/string] ([xshift=1.8500mm]b1.south) -- (1.8500,-21.6000);
\node[sd/label,anchor=north] at (1.8500,-22.6000) {$\varrho$};
\end{tikzpicture}
}

\expandafter\def\csname NGD@F-u\endcsname{%
\begin{tikzpicture}[sd/base,x=1mm,y=1mm]
\node[sd/box,minimum width=8.1617mm,minimum height=4.1117mm] (b0) at (0.0000,-6.2558) {$\mathsf F(u)$};
\draw[sd/string] (-2.6397,0.0000) -- ([xshift=-2.6397mm]b0.north);
\node[sd/label,anchor=south] at (-2.6397,1.0000) {$\varrho$};
\draw[sd/string] (1.3603,0.0000) -- ([xshift=1.3603mm]b0.north);
\node[sd/label,anchor=south] at (1.3603,1.0000) {$\varrho$};
\draw[sd/string] ([xshift=0.0000mm]b0.south) -- (0.0000,-13.3117);
\node[sd/label,anchor=north] at (0.0000,-14.3117) {$\varrho$};
\end{tikzpicture}
}

\expandafter\def\csname NGD@H-0g\endcsname{%
\begin{tikzpicture}[sd/base,x=1mm,y=1mm]
\node[sd/box,minimum width=9.5774mm,minimum height=4.1117mm] (b0) at (-3.1882,-6.2558) {$\mathsf S(u_0)$};
\node[sd/box,minimum width=12.6890mm,minimum height=4.4128mm] (b1) at (1.6325,-14.9181) {$\mathsf F(\varrho^2u_g)$};
\draw[sd/string] (-3.1882,0.0000) -- ([xshift=0.0000mm]b0.north);
\node[sd/label,anchor=south] at (-3.1882,1.0000) {$\varrho$};
\draw[sd/string] (3.7654,0.0000) -- ([xshift=2.1329mm]b1.north);
\node[sd/label,anchor=south] at (3.7654,1.0000) {$\varrho$};
\draw[sd/string] ([xshift=-3.6887mm]b0.south) -- (-6.8769,-22.1244);
\node[sd/label,anchor=north] at (-6.8769,-23.1244) {$\varrho$};
\draw[sd/string] ([xshift=2.1511mm]b0.south) -- ([xshift=-2.6696mm]b1.north);
\draw[sd/string] ([xshift=0.0000mm]b1.south) -- (1.6325,-22.1244);
\node[sd/label,anchor=north] at (1.6325,-23.1244) {$\varrho$};
\end{tikzpicture}
}

\expandafter\def\csname NGD@H-conjugated\endcsname{%
\begin{tikzpicture}[sd/base,x=1mm,y=1mm]
\node[sd/box,minimum width=6.7800mm,minimum height=4.0000mm] (b0) at (-0.8245,-6.2000) {$D_h$};
\node[sd/box,minimum width=6.7800mm,minimum height=4.0000mm] (b1) at (-0.8245,-14.6000) {$H_g$};
\node[sd/box,minimum width=8.4289mm,minimum height=4.9786mm] (b2) at (0.0000,-23.4893) {$D_h^{-1}$};
\draw[sd/string] (-3.1145,0.0000) -- ([xshift=-2.2900mm]b0.north);
\node[sd/label,anchor=south] at (-3.1145,1.0000) {$\varrho$};
\draw[sd/string] (0.8855,0.0000) -- ([xshift=1.7100mm]b0.north);
\node[sd/label,anchor=south] at (0.8855,1.0000) {$\varrho$};
\draw[sd/string] ([xshift=-2.2900mm]b0.south) -- ([xshift=-2.2900mm]b1.north);
\draw[sd/string] ([xshift=1.7100mm]b0.south) -- ([xshift=1.7100mm]b1.north);
\draw[sd/string] ([xshift=-2.2900mm]b1.south) -- ([xshift=-3.1145mm]b2.north);
\draw[sd/string] ([xshift=1.7100mm]b1.south) -- ([xshift=0.8855mm]b2.north);
\draw[sd/string] ([xshift=-2.5952mm]b2.south) -- (-2.5952,-30.9786);
\node[sd/label,anchor=north] at (-2.5952,-31.9786) {$\varrho$};
\draw[sd/string] ([xshift=1.4048mm]b2.south) -- (1.4048,-30.9786);
\node[sd/label,anchor=north] at (1.4048,-31.9786) {$\varrho$};
\end{tikzpicture}
}

\expandafter\def\csname NGD@H-pq\endcsname{%
\begin{tikzpicture}[sd/base,x=1mm,y=1mm]
\node[sd/box,minimum width=9.6435mm,minimum height=4.1703mm] (b0) at (-3.1161,-6.2851) {$\mathsf S(u_p)$};
\node[sd/box,minimum width=12.6109mm,minimum height=4.4128mm] (b1) at (1.6325,-14.9766) {$\mathsf F(\varrho^2u_q)$};
\draw[sd/string] (-3.1161,0.0000) -- ([xshift=0.0000mm]b0.north);
\node[sd/label,anchor=south] at (-3.1161,1.0000) {$\varrho$};
\draw[sd/string] (3.8706,0.0000) -- ([xshift=2.2381mm]b1.north);
\node[sd/label,anchor=south] at (3.8706,1.0000) {$\varrho$};
\draw[sd/string] ([xshift=-3.7218mm]b0.south) -- (-6.8379,-22.1830);
\node[sd/label,anchor=north] at (-6.8379,-23.1830) {$\varrho$};
\draw[sd/string] ([xshift=2.1270mm]b0.south) -- ([xshift=-2.6216mm]b1.north);
\draw[sd/string] ([xshift=0.0000mm]b1.south) -- (1.6325,-22.1830);
\node[sd/label,anchor=north] at (1.6325,-23.1830) {$\varrho$};
\end{tikzpicture}
}

\expandafter\def\csname NGD@P-l\endcsname{%
\begin{tikzpicture}[sd/base,x=1mm,y=1mm]
\node[sd/box,minimum width=6.5000mm,minimum height=4.0000mm] (b0) at (0.0000,-6.2000) {$w_l$};
\node[sd/box,minimum width=6.5000mm,minimum height=4.0000mm] (b1) at (0.0000,-14.6000) {$\check w_l$};
\draw[sd/string] (-2.1500,0.0000) -- ([xshift=-2.1500mm]b0.north);
\node[sd/label,anchor=south] at (-2.1500,1.0000) {$\varrho$};
\draw[sd/string] (1.8500,0.0000) -- ([xshift=1.8500mm]b0.north);
\node[sd/label,anchor=south] at (1.8500,1.0000) {$\varrho$};
\draw[sd/string] ([xshift=0.0000mm]b0.south) -- ([xshift=0.0000mm]b1.north);
\node[sd/label,anchor=west] at (1.0000,-10.4000) {$l$};
\draw[sd/string] ([xshift=-2.1500mm]b1.south) -- (-2.1500,-21.6000);
\node[sd/label,anchor=north] at (-2.1500,-22.6000) {$\varrho$};
\draw[sd/string] ([xshift=1.8500mm]b1.south) -- (1.8500,-21.6000);
\node[sd/label,anchor=north] at (1.8500,-22.6000) {$\varrho$};
\end{tikzpicture}
}

\expandafter\def\csname NGD@S-u\endcsname{%
\begin{tikzpicture}[sd/base,x=1mm,y=1mm]
\node[sd/box,minimum width=8.1130mm,minimum height=4.1117mm] (b0) at (0.0000,-6.2558) {$\mathsf S(u)$};
\draw[sd/string] (0.0000,0.0000) -- ([xshift=0.0000mm]b0.north);
\node[sd/label,anchor=south] at (0.0000,1.0000) {$\varrho$};
\draw[sd/string] ([xshift=-2.6478mm]b0.south) -- (-2.6478,-13.3117);
\node[sd/label,anchor=north] at (-2.6478,-14.3117) {$\varrho$};
\draw[sd/string] ([xshift=1.3522mm]b0.south) -- (1.3522,-13.3117);
\node[sd/label,anchor=north] at (1.3522,-14.3117) {$\varrho$};
\end{tikzpicture}
}

\expandafter\def\csname NGD@TPE-left-first\endcsname{%
\begin{tikzpicture}[sd/base,x=1mm,y=1mm]
\draw[sd/reverse] (19.0000,10.9697) -- (38.0000,0.0000);
\draw[sd/reverse] (19.0000,10.9697) -- (19.0000,32.9090);
\draw[sd/reverse] (19.0000,10.9697) -- (0.0000,0.0000);
\draw[sd/reverse] (38.0000,0.0000) -- (0.0000,0.0000);
\draw[sd/arrow] (19.0000,32.9090) -- (0.0000,0.0000);
\draw[sd/arrow] (38.0000,0.0000) -- (19.0000,32.9090);
\node[sd/label] at (19.0000,7.6697) {$b_s'$};
\node[sd/label] at (-2.4249,-1.4000) {$\alpha_3$};
\node[sd/label] at (19.0000,35.2090) {$\alpha_2$};
\node[sd/label] at (40.4249,-1.4000) {$\alpha_1$};
\node[sd/label] at (19.7600,-2.2559) {$X_6$};
\node[sd/label] at (6.9563,18.8007) {$X_5$};
\node[sd/label] at (31.8037,17.4843) {$X_4$};
\node[sd/label] at (15.9393,18.8678) {$X_2$};
\node[sd/label] at (11.4207,3.0356) {$X_3$};
\node[sd/label] at (26.5793,3.0356) {$X_1$};
\end{tikzpicture}
}

\expandafter\def\csname NGD@TPE-left-second\endcsname{%
\begin{tikzpicture}[sd/base,x=1mm,y=1mm]
\draw[sd/arrow] (19.0000,10.9697) -- (38.0000,0.0000);
\draw[sd/arrow] (19.0000,10.9697) -- (19.0000,32.9090);
\draw[sd/arrow] (19.0000,10.9697) -- (0.0000,0.0000);
\draw[sd/arrow] (38.0000,0.0000) -- (0.0000,0.0000);
\draw[sd/reverse] (19.0000,32.9090) -- (0.0000,0.0000);
\draw[sd/reverse] (38.0000,0.0000) -- (19.0000,32.9090);
\node[sd/label] at (19.0000,7.9197) {$b_s$};
\node[sd/label] at (-2.4249,-1.4000) {$\alpha_4$};
\node[sd/label] at (19.0000,35.2090) {$\alpha_5$};
\node[sd/label] at (40.4249,-1.4000) {$\alpha_6$};
\node[sd/label] at (19.7600,-2.2559) {$X_9$};
\node[sd/label] at (6.9563,18.8007) {$X_7$};
\node[sd/label] at (31.8037,17.4843) {$X_8$};
\node[sd/label] at (15.9393,18.8678) {$X_2$};
\node[sd/label] at (11.4207,3.0356) {$X_1$};
\node[sd/label] at (26.5793,3.0356) {$X_3$};
\end{tikzpicture}
}

\expandafter\def\csname NGD@TPE-right-1\endcsname{%
\begin{tikzpicture}[sd/base,x=1mm,y=1mm]
\draw[sd/arrow] (19.0000,10.9697) -- (38.0000,0.0000);
\draw[sd/reverse] (19.0000,10.9697) -- (19.0000,32.9090);
\draw[sd/arrow] (19.0000,10.9697) -- (0.0000,0.0000);
\draw[sd/reverse] (38.0000,0.0000) -- (0.0000,0.0000);
\draw[sd/reverse] (19.0000,32.9090) -- (0.0000,0.0000);
\draw[sd/arrow] (38.0000,0.0000) -- (19.0000,32.9090);
\node[sd/label] at (19.0000,7.4197) {$b_{3}'$};
\node[sd/label] at (7.4600,7.4600) {$\alpha_1$};
\node[sd/label] at (19.0000,35.2090) {$b_1$};
\node[sd/label] at (30.5400,7.4600) {$\alpha_4$};
\node[sd/label] at (19.7600,-2.2559) {$X_1$};
\node[sd/label] at (6.9563,18.8007) {$X_4$};
\node[sd/label] at (31.8037,17.4843) {$X_7$};
\node[sd/label] at (16.5618,21.5005) {$X$};
\node[sd/label] at (11.4207,3.0356) {$X_6$};
\node[sd/label] at (26.5793,3.0356) {$X_9$};
\end{tikzpicture}
}

\expandafter\def\csname NGD@TPE-right-2\endcsname{%
\begin{tikzpicture}[sd/base,x=1mm,y=1mm]
\draw[sd/arrow] (19.0000,10.9697) -- (38.0000,0.0000);
\draw[sd/reverse] (19.0000,10.9697) -- (19.0000,32.9090);
\draw[sd/arrow] (19.0000,10.9697) -- (0.0000,0.0000);
\draw[sd/reverse] (38.0000,0.0000) -- (0.0000,0.0000);
\draw[sd/reverse] (19.0000,32.9090) -- (0.0000,0.0000);
\draw[sd/arrow] (38.0000,0.0000) -- (19.0000,32.9090);
\node[sd/label] at (19.0000,7.4197) {$b_{1}'$};
\node[sd/label] at (7.4600,7.4600) {$\alpha_2$};
\node[sd/label] at (19.0000,35.2090) {$b_2$};
\node[sd/label] at (30.5400,7.4600) {$\alpha_5$};
\node[sd/label] at (19.7600,-2.2559) {$X_2$};
\node[sd/label] at (6.9563,18.8007) {$X_5$};
\node[sd/label] at (31.8037,17.4843) {$X_8$};
\node[sd/label] at (16.5618,21.5005) {$X$};
\node[sd/label] at (11.4207,3.0356) {$X_4$};
\node[sd/label] at (26.5793,3.0356) {$X_7$};
\end{tikzpicture}
}

\expandafter\def\csname NGD@TPE-right-3\endcsname{%
\begin{tikzpicture}[sd/base,x=1mm,y=1mm]
\draw[sd/arrow] (19.0000,10.9697) -- (38.0000,0.0000);
\draw[sd/reverse] (19.0000,10.9697) -- (19.0000,32.9090);
\draw[sd/arrow] (19.0000,10.9697) -- (0.0000,0.0000);
\draw[sd/reverse] (38.0000,0.0000) -- (0.0000,0.0000);
\draw[sd/reverse] (19.0000,32.9090) -- (0.0000,0.0000);
\draw[sd/arrow] (38.0000,0.0000) -- (19.0000,32.9090);
\node[sd/label] at (19.0000,7.4197) {$b_{2}'$};
\node[sd/label] at (7.4600,7.4600) {$\alpha_3$};
\node[sd/label] at (19.0000,35.2090) {$b_3$};
\node[sd/label] at (30.5400,7.4600) {$\alpha_6$};
\node[sd/label] at (19.7600,-2.2559) {$X_3$};
\node[sd/label] at (6.9563,18.8007) {$X_6$};
\node[sd/label] at (31.8037,17.4843) {$X_9$};
\node[sd/label] at (16.5618,21.5005) {$X$};
\node[sd/label] at (11.4207,3.0356) {$X_5$};
\node[sd/label] at (26.5793,3.0356) {$X_8$};
\end{tikzpicture}
}

\expandafter\def\csname NGD@annular-D\endcsname{%
\begin{tikzpicture}[sd/base,x=1mm,y=1mm]
\node[sd/box,minimum width=6.7800mm,minimum height=4.0000mm] (b1) at (-5.3155,-11.8400) {$R_{g}$};
\node[sd/box,minimum width=10.6332mm,minimum height=4.0101mm] (b2) at (3.3889,-20.2451) {$L_{-g}$};
\draw[sd/string] (-7.6055,0.0000) -- ([xshift=-2.2900mm]b1.north);
\node[sd/label,anchor=south] at (-7.6055,1.0000) {$\varrho$};
\draw[sd/string] (7.6055,0.0000) -- ([xshift=4.2166mm]b2.north);
\node[sd/label,anchor=south] at (7.6055,1.0000) {$\varrho$};
\draw[sd/string] (-3.6055,-5.4400) -- ([xshift=1.7100mm]b1.north);
\node[sd/label,anchor=west] at (-2.6055,-7.6400) {$g$};
\draw[sd/string] (0.3945,-5.4400) -- ([xshift=-2.9944mm]b2.north);
\node[sd/label,anchor=west] at (1.3945,-7.6400) {$-g$};
\draw[sd/string] ([xshift=-0.0000mm]b1.south) -- (-5.3155,-27.2501);
\node[sd/label,anchor=north] at (-5.3155,-28.2501) {$\varrho$};
\draw[sd/string] ([xshift=-0.0000mm]b2.south) -- (3.3889,-27.2501);
\node[sd/label,anchor=north] at (3.3889,-28.2501) {$\varrho$};
\draw[sd/string] (-3.6055,-5.4400) arc[start angle=180,end angle=0,x radius=2.0000mm,y radius=1.2400mm];
\end{tikzpicture}
}

\expandafter\def\csname NGD@annular-cup\endcsname{%
\begin{tikzpicture}[sd/base,x=1mm,y=1mm]
\node[sd/box,minimum width=6.7800mm,minimum height=4.0000mm] (b2) at (-4.7830,-16.4253) {$R_{g}$};
\node[sd/box,minimum width=9.3553mm,minimum height=4.0101mm] (b3) at (3.4954,-16.4304) {$L_{-g}$};
\draw[sd/string] (-7.0730,-4.3853) -- ([xshift=-2.2900mm]b2.north);
\draw[sd/string] (7.0730,-4.3853) -- ([xshift=3.5777mm]b3.north);
\draw[sd/string] (-3.0730,-10.0253) -- ([xshift=1.7100mm]b2.north);
\node[sd/label,anchor=west] at (-2.0730,-12.2253) {$g$};
\draw[sd/string] (0.9270,-10.0253) -- ([xshift=-2.5684mm]b3.north);
\node[sd/label,anchor=west] at (1.9270,-12.2253) {$-g$};
\draw[sd/string] ([xshift=-0.0000mm]b2.south) -- (-4.7830,-23.4354);
\node[sd/label,anchor=north] at (-4.7830,-24.4354) {$\varrho$};
\draw[sd/string] ([xshift=-0.0000mm]b3.south) -- (3.4954,-23.4354);
\node[sd/label,anchor=north] at (3.4954,-24.4354) {$\varrho$};
\draw[sd/string] (-7.0730,-4.3853) arc[start angle=180,end angle=0,x radius=7.0730mm,y radius=4.3853mm];
\draw[sd/string] (-3.0730,-10.0253) arc[start angle=180,end angle=0,x radius=2.0000mm,y radius=1.2400mm];
\end{tikzpicture}
}

\expandafter\def\csname NGD@basis-h\endcsname{%
\begin{tikzpicture}[sd/base,x=1mm,y=1mm]
\node[sd/box,minimum width=10.4000mm,minimum height=4.0000mm] (b0) at (0.0000,-2.0000) {$u_h$};
\draw[sd/string] ([xshift=-4.1000mm]b0.south) -- (-4.1000,-9.0000);
\node[sd/label,anchor=north] at (-4.1000,-10.0000) {$\varrho$};
\draw[sd/string] ([xshift=-0.1000mm]b0.south) -- (-0.1000,-9.0000);
\node[sd/label,anchor=north] at (-0.1000,-10.0000) {$\varrho$};
\draw[sd/string] ([xshift=3.9000mm]b0.south) -- (3.9000,-9.0000);
\node[sd/label,anchor=north] at (3.9000,-10.0000) {$\varrho$};
\end{tikzpicture}
}

\expandafter\def\csname NGD@bicharacter-end\endcsname{%
\begin{tikzpicture}[sd/base,x=1mm,y=1mm]
\node[sd/box,minimum width=7.6235mm,minimum height=4.0000mm] (b0) at (-0.0270,-6.2000) {$L_{h}$};
\node[sd/box,minimum width=9.9316mm,minimum height=4.0000mm] (b1) at (-2.1342,-14.6000) {$L_{g}$};
\node[sd/box,minimum width=10.3342mm,minimum height=4.0000mm] (b2) at (1.9329,-23.0000) {$R_{k}$};
\draw[sd/string] (-6.0000,0.0000) -- ([xshift=-3.8658mm]b1.north);
\node[sd/label,anchor=south] at (-6.0000,1.0000) {$g$};
\draw[sd/string] (-2.0000,0.0000) -- ([xshift=-1.9730mm]b0.north);
\node[sd/label,anchor=south] at (-2.0000,1.0000) {$h$};
\draw[sd/string] (2.0000,0.0000) -- ([xshift=2.0270mm]b0.north);
\node[sd/label,anchor=south] at (2.0000,1.0000) {$\varrho$};
\draw[sd/string] (6.0000,0.0000) -- ([xshift=4.0671mm]b2.north);
\node[sd/label,anchor=south] at (6.0000,1.0000) {$k$};
\draw[sd/string] ([xshift=-0.2260mm]b0.south) -- ([xshift=1.8813mm]b1.north);
\draw[sd/string] ([xshift=1.1724mm]b1.south) -- ([xshift=-2.8947mm]b2.north);
\draw[sd/string] ([xshift=0.0000mm]b2.south) -- (1.9329,-30.0000);
\node[sd/label,anchor=north] at (1.9329,-31.0000) {$\varrho$};
\end{tikzpicture}
}

\expandafter\def\csname NGD@bicharacter-mid\endcsname{%
\begin{tikzpicture}[sd/base,x=1mm,y=1mm]
\node[sd/box,minimum width=7.6235mm,minimum height=4.0000mm] (b0) at (-0.0270,-6.2000) {$L_{h}$};
\node[sd/box,minimum width=9.9795mm,minimum height=4.0000mm] (b1) at (2.1102,-14.6000) {$R_{k}$};
\node[sd/box,minimum width=10.3102mm,minimum height=4.0000mm] (b2) at (-1.9449,-23.0000) {$L_{g}$};
\draw[sd/string] (-6.0000,0.0000) -- ([xshift=-4.0551mm]b2.north);
\node[sd/label,anchor=south] at (-6.0000,1.0000) {$g$};
\draw[sd/string] (-2.0000,0.0000) -- ([xshift=-1.9730mm]b0.north);
\node[sd/label,anchor=south] at (-2.0000,1.0000) {$h$};
\draw[sd/string] (2.0000,0.0000) -- ([xshift=2.0270mm]b0.north);
\node[sd/label,anchor=south] at (2.0000,1.0000) {$\varrho$};
\draw[sd/string] (6.0000,0.0000) -- ([xshift=3.8898mm]b1.north);
\node[sd/label,anchor=south] at (6.0000,1.0000) {$k$};
\draw[sd/string] ([xshift=0.2370mm]b0.south) -- ([xshift=-1.9002mm]b1.north);
\draw[sd/string] ([xshift=-1.1684mm]b1.south) -- ([xshift=2.8867mm]b2.north);
\draw[sd/string] ([xshift=0.0000mm]b2.south) -- (-1.9449,-30.0000);
\node[sd/label,anchor=north] at (-1.9449,-31.0000) {$\varrho$};
\end{tikzpicture}
}

\expandafter\def\csname NGD@bicharacter-start\endcsname{%
\begin{tikzpicture}[sd/base,x=1mm,y=1mm]
\node[sd/box,minimum width=6.8392mm,minimum height=4.0000mm] (b0) at (3.6804,-6.2000) {$R_{k}$};
\node[sd/box,minimum width=8.0823mm,minimum height=4.0000mm] (b1) at (0.2024,-14.6000) {$L_{h}$};
\node[sd/box,minimum width=8.4024mm,minimum height=4.0000mm] (b2) at (-2.8988,-23.0000) {$L_{g}$};
\draw[sd/string] (-6.0000,0.0000) -- ([xshift=-3.1012mm]b2.north);
\node[sd/label,anchor=south] at (-6.0000,1.0000) {$g$};
\draw[sd/string] (-2.0000,0.0000) -- ([xshift=-2.2024mm]b1.north);
\node[sd/label,anchor=south] at (-2.0000,1.0000) {$h$};
\draw[sd/string] (2.0000,0.0000) -- ([xshift=-1.6804mm]b0.north);
\node[sd/label,anchor=south] at (2.0000,1.0000) {$\varrho$};
\draw[sd/string] (6.0000,0.0000) -- ([xshift=2.3196mm]b0.north);
\node[sd/label,anchor=south] at (6.0000,1.0000) {$k$};
\draw[sd/string] ([xshift=-1.0655mm]b0.south) -- ([xshift=2.4125mm]b1.north);
\draw[sd/string] ([xshift=-0.8504mm]b1.south) -- ([xshift=2.2508mm]b2.north);
\draw[sd/string] ([xshift=0.0000mm]b2.south) -- (-2.8988,-30.0000);
\node[sd/label,anchor=north] at (-2.8988,-31.0000) {$\varrho$};
\end{tikzpicture}
}

\expandafter\def\csname NGD@boundary-two-groups\endcsname{%
\begin{tikzpicture}[sd/base,x=1mm,y=1mm]
\draw[sd/arrow] (0.0000,25.2000) -- (42.0000,25.2000);
\draw[sd/arrow] (12.1800,12.6000) -- (29.8200,12.6000);
\draw[sd/string] (0.0000,0.0000) -- (42.0000,0.0000);
\draw[sd/string] (0.0000,25.2000) -- (12.1800,12.6000);
\draw[sd/string] (12.1800,12.6000) -- (0.0000,0.0000);
\draw[sd/string] (0.0000,0.0000) -- (0.0000,25.2000);
\draw[sd/string] (42.0000,25.2000) -- (29.8200,12.6000);
\draw[sd/string] (29.8200,12.6000) -- (42.0000,0.0000);
\draw[sd/string] (42.0000,0.0000) -- (42.0000,25.2000);
\fill (1.0870,22.5114) circle[radius=.30mm];
\fill (10.2800,12.6000) circle[radius=.30mm];
\node[sd/label] at (2.0804,5.1453) {$p$};
\fill (40.9130,22.5114) circle[radius=.30mm];
\fill (31.7200,12.6000) circle[radius=.30mm];
\node[sd/label] at (39.9196,5.1453) {$q$};
\node[sd/label] at (20.1600,27.1786) {$g$};
\node[sd/label] at (20.6472,10.5237) {$h$};
\end{tikzpicture}
}

\expandafter\def\csname NGD@cancel-L-domain\endcsname{%
\begin{tikzpicture}[sd/base,x=1mm,y=1mm]
\node[sd/box,minimum width=6.9906mm,minimum height=4.0000mm] (b0) at (-0.4680,-6.2000) {$L_{g}$};
\node[sd/box,minimum width=7.9267mm,minimum height=4.8321mm] (b1) at (0.0000,-15.0161) {$L_{g}^{-1}$};
\draw[sd/string] (-2.8634,0.0000) -- ([xshift=-2.3953mm]b0.north);
\node[sd/label,anchor=south] at (-2.8634,1.0000) {$g$};
\draw[sd/string] (1.1366,0.0000) -- ([xshift=1.6047mm]b0.north);
\node[sd/label,anchor=south] at (1.1366,1.0000) {$\varrho$};
\draw[sd/string] ([xshift=0.2340mm]b0.south) -- ([xshift=-0.2340mm]b1.north);
\draw[sd/string] ([xshift=-2.6789mm]b1.south) -- (-2.6789,-22.4321);
\node[sd/label,anchor=north] at (-2.6789,-23.4321) {$g$};
\draw[sd/string] ([xshift=1.3211mm]b1.south) -- (1.3211,-22.4321);
\node[sd/label,anchor=north] at (1.3211,-23.4321) {$\varrho$};
\end{tikzpicture}
}

\expandafter\def\csname NGD@cancel-L-range\endcsname{%
\begin{tikzpicture}[sd/base,x=1mm,y=1mm]
\node[sd/box,minimum width=7.9267mm,minimum height=4.8321mm] (b0) at (0.0000,-6.6161) {$L_{g}^{-1}$};
\node[sd/box,minimum width=6.7800mm,minimum height=4.0000mm] (b1) at (-0.5734,-15.4321) {$L_{g}$};
\draw[sd/string] (0.0000,0.0000) -- ([xshift=0.0000mm]b0.north);
\node[sd/label,anchor=south] at (0.0000,1.0000) {$\varrho$};
\draw[sd/string] ([xshift=-2.8634mm]b0.south) -- ([xshift=-2.2900mm]b1.north);
\node[sd/label,anchor=west] at (-1.8634,-11.2321) {$g$};
\draw[sd/string] ([xshift=1.1366mm]b0.south) -- ([xshift=1.7100mm]b1.north);
\draw[sd/string] ([xshift=0.0000mm]b1.south) -- (-0.5734,-22.4321);
\node[sd/label,anchor=north] at (-0.5734,-23.4321) {$\varrho$};
\end{tikzpicture}
}

\expandafter\def\csname NGD@closed-coefficient\endcsname{%
\begin{tikzpicture}[sd/base,x=1mm,y=1mm]
\node[sd/box,minimum width=6.7800mm,minimum height=4.0000mm] (b2) at (-3.1600,-15.7600) {$H_g$};
\node[sd/box,minimum width=6.7800mm,minimum height=4.0000mm] (b3) at (-3.1600,-24.1600) {$p_k$};
\node[sd/box,minimum width=6.7800mm,minimum height=4.0000mm] (b4) at (-3.1600,-32.5600) {$q_h$};
\draw[sd/string] (-5.4500,-3.7200) -- ([xshift=-2.2900mm]b2.north);
\draw[sd/string] (6.5500,-3.7200) -- (6.5500,-38.9600);
\draw[sd/string] (-1.4500,-9.3600) -- ([xshift=1.7100mm]b2.north);
\draw[sd/string] (2.5500,-9.3600) -- (2.5500,-38.9600);
\draw[sd/string] ([xshift=-2.2900mm]b2.south) -- ([xshift=-2.2900mm]b3.north);
\draw[sd/string] ([xshift=1.7100mm]b2.south) -- ([xshift=1.7100mm]b3.north);
\draw[sd/string] ([xshift=0.0000mm]b3.south) -- ([xshift=0.0000mm]b4.north);
\draw[sd/string] ([xshift=-2.2900mm]b4.south) -- (-5.4500,-38.9600);
\draw[sd/string] ([xshift=1.7100mm]b4.south) -- (-1.4500,-38.9600);
\draw[sd/string] (-5.4500,-3.7200) arc[start angle=180,end angle=0,x radius=6.0000mm,y radius=3.7200mm];
\draw[sd/string] (-1.4500,-9.3600) arc[start angle=180,end angle=0,x radius=2.0000mm,y radius=1.2400mm];
\draw[sd/string] (-1.4500,-38.9600) arc[start angle=180,end angle=360,x radius=2.0000mm,y radius=1.2400mm];
\draw[sd/string] (-5.4500,-38.9600) arc[start angle=180,end angle=360,x radius=6.0000mm,y radius=3.7200mm];
\end{tikzpicture}
}

\expandafter\def\csname NGD@closed-invertible-coefficient\endcsname{%
\begin{tikzpicture}[sd/base,x=1mm,y=1mm]
\node[sd/box,minimum width=6.7800mm,minimum height=4.0000mm] (b2) at (-3.1600,-15.7600) {$H_g$};
\node[sd/box,minimum width=6.7800mm,minimum height=4.0000mm] (b3) at (-3.1600,-24.1600) {$w_l$};
\node[sd/box,minimum width=6.7800mm,minimum height=4.0000mm] (b4) at (-3.1600,-32.5600) {$\check w_l$};
\draw[sd/string] (-5.4500,-3.7200) -- ([xshift=-2.2900mm]b2.north);
\draw[sd/string] (6.5500,-3.7200) -- (6.5500,-38.9600);
\draw[sd/string] (-1.4500,-9.3600) -- ([xshift=1.7100mm]b2.north);
\draw[sd/string] (2.5500,-9.3600) -- (2.5500,-38.9600);
\draw[sd/string] ([xshift=-2.2900mm]b2.south) -- ([xshift=-2.2900mm]b3.north);
\draw[sd/string] ([xshift=1.7100mm]b2.south) -- ([xshift=1.7100mm]b3.north);
\draw[sd/string] ([xshift=0.0000mm]b3.south) -- ([xshift=0.0000mm]b4.north);
\node[sd/label,anchor=west] at (-2.1600,-28.3600) {$l$};
\draw[sd/string] ([xshift=-2.2900mm]b4.south) -- (-5.4500,-38.9600);
\draw[sd/string] ([xshift=1.7100mm]b4.south) -- (-1.4500,-38.9600);
\draw[sd/string] (-5.4500,-3.7200) arc[start angle=180,end angle=0,x radius=6.0000mm,y radius=3.7200mm];
\draw[sd/string] (-1.4500,-9.3600) arc[start angle=180,end angle=0,x radius=2.0000mm,y radius=1.2400mm];
\draw[sd/string] (-1.4500,-38.9600) arc[start angle=180,end angle=360,x radius=2.0000mm,y radius=1.2400mm];
\draw[sd/string] (-5.4500,-38.9600) arc[start angle=180,end angle=360,x radius=6.0000mm,y radius=3.7200mm];
\end{tikzpicture}
}

\expandafter\def\csname NGD@coev-minus\endcsname{%
\begin{tikzpicture}[sd/base,x=1mm,y=1mm]
\draw[sd/string] (-2.7190,-1.6858) -- (-2.7190,-6.6858);
\node[sd/label,anchor=north] at (-2.7190,-7.6858) {$X^*$};
\draw[sd/string] (2.7190,-1.6858) -- (2.7190,-6.6858);
\node[sd/label,anchor=north] at (2.7190,-7.6858) {$X$};
\draw[sd/string] (-2.7190,-1.6858) arc[start angle=180,end angle=0,x radius=2.7190mm,y radius=1.6858mm];
\end{tikzpicture}
}

\expandafter\def\csname NGD@coev-plus\endcsname{%
\begin{tikzpicture}[sd/base,x=1mm,y=1mm]
\draw[sd/string] (-2.7190,-1.6858) -- (-2.7190,-6.6858);
\node[sd/label,anchor=north] at (-2.7190,-7.6858) {$X$};
\draw[sd/string] (2.7190,-1.6858) -- (2.7190,-6.6858);
\node[sd/label,anchor=north] at (2.7190,-7.6858) {$X^*$};
\draw[sd/string] (-2.7190,-1.6858) arc[start angle=180,end angle=0,x radius=2.7190mm,y radius=1.6858mm];
\end{tikzpicture}
}

\expandafter\def\csname NGD@cyclic-result\endcsname{%
\begin{tikzpicture}[sd/base,x=1mm,y=1mm]
\node[sd/box,minimum width=12.3940mm,minimum height=4.1117mm] (b0) at (0.0000,-2.0558) {$\varrho(u)$};
\draw[sd/string] ([xshift=-5.0970mm]b0.south) -- (-5.0970,-9.1117);
\node[sd/label,anchor=north] at (-5.0970,-10.1117) {$Y$};
\draw[sd/string] ([xshift=-0.7879mm]b0.south) -- (-0.7879,-9.1117);
\node[sd/label,anchor=north] at (-0.7879,-10.1117) {$Z$};
\draw[sd/string] ([xshift=5.0970mm]b0.south) -- (5.0970,-9.1117);
\node[sd/label,anchor=north] at (5.0970,-10.1117) {$X^{**}$};
\end{tikzpicture}
}

\expandafter\def\csname NGD@cyclic-rotation\endcsname{%
\begin{tikzpicture}[sd/base,x=1mm,y=1mm]
\node[sd/box,minimum width=13.5948mm,minimum height=4.0000mm] (b1) at (1.0131,-13.8300) {$u$};
\draw[sd/string] (-11.9838,-7.4300) -- (-11.9838,-20.2300);
\node[sd/label,anchor=west] at (-10.9838,-9.6300) {$X^*$};
\draw[sd/string] (11.9838,-7.4300) -- (11.9838,-27.4928);
\node[sd/label,anchor=north] at (11.9838,-28.4928) {$X^{**}$};
\draw[sd/string] ([xshift=-5.6974mm]b1.south) -- (-4.6844,-20.2300);
\node[sd/label,anchor=west] at (-3.6844,-18.0300) {$X$};
\draw[sd/string] ([xshift=0.7767mm]b1.south) -- (1.7898,-27.4928);
\node[sd/label,anchor=north] at (1.7898,-28.4928) {$Y$};
\draw[sd/string] ([xshift=5.0859mm]b1.south) -- (6.0990,-27.4928);
\node[sd/label,anchor=north] at (6.0990,-28.4928) {$Z$};
\draw[sd/string] (-11.9838,-7.4300) arc[start angle=180,end angle=0,x radius=11.9838mm,y radius=7.4300mm];
\draw[sd/string] (-11.9838,-20.2300) arc[start angle=180,end angle=360,x radius=3.6497mm,y radius=2.2628mm];
\end{tikzpicture}
}

\expandafter\def\csname NGD@ev-minus\endcsname{%
\begin{tikzpicture}[sd/base,x=1mm,y=1mm]
\draw[sd/string] (-2.7190,0.0000) -- (-2.7190,-4.2000);
\node[sd/label,anchor=south] at (-2.7190,1.0000) {$X$};
\draw[sd/string] (2.7190,0.0000) -- (2.7190,-4.2000);
\node[sd/label,anchor=south] at (2.7190,1.0000) {$X^*$};
\draw[sd/string] (-2.7190,-4.2000) arc[start angle=180,end angle=360,x radius=2.7190mm,y radius=1.6858mm];
\end{tikzpicture}
}

\expandafter\def\csname NGD@ev-plus\endcsname{%
\begin{tikzpicture}[sd/base,x=1mm,y=1mm]
\draw[sd/string] (-2.7190,0.0000) -- (-2.7190,-4.2000);
\node[sd/label,anchor=south] at (-2.7190,1.0000) {$X^*$};
\draw[sd/string] (2.7190,0.0000) -- (2.7190,-4.2000);
\node[sd/label,anchor=south] at (2.7190,1.0000) {$X$};
\draw[sd/string] (-2.7190,-4.2000) arc[start angle=180,end angle=360,x radius=2.7190mm,y radius=1.6858mm];
\end{tikzpicture}
}

\expandafter\def\csname NGD@group-loop\endcsname{%
\begin{tikzpicture}[sd/base,x=1mm,y=1mm]
\draw[sd/string] (-2.0000,-1.2400) -- (-2.0000,-5.6400);
\node[sd/label,anchor=west] at (-1.0000,-3.4400) {$g$};
\draw[sd/string] (2.0000,-1.2400) -- (2.0000,-5.6400);
\node[sd/label,anchor=west] at (3.0000,-3.4400) {$-g$};
\draw[sd/string] (-2.0000,-1.2400) arc[start angle=180,end angle=0,x radius=2.0000mm,y radius=1.2400mm];
\draw[sd/string] (-2.0000,-5.6400) arc[start angle=180,end angle=360,x radius=2.0000mm,y radius=1.2400mm];
\end{tikzpicture}
}

\expandafter\def\csname NGD@group-loop-l\endcsname{%
\begin{tikzpicture}[sd/base,x=1mm,y=1mm]
\draw[sd/string] (-2.0000,-1.2400) -- (-2.0000,-5.6400);
\node[sd/label,anchor=west] at (-1.0000,-3.4400) {$l$};
\draw[sd/string] (2.0000,-1.2400) -- (2.0000,-5.6400);
\node[sd/label,anchor=west] at (3.0000,-3.4400) {$-l$};
\draw[sd/string] (-2.0000,-1.2400) arc[start angle=180,end angle=0,x radius=2.0000mm,y radius=1.2400mm];
\draw[sd/string] (-2.0000,-5.6400) arc[start angle=180,end angle=360,x radius=2.0000mm,y radius=1.2400mm];
\end{tikzpicture}
}

\expandafter\def\csname NGD@identity-X\endcsname{%
\begin{tikzpicture}[sd/base,x=1mm,y=1mm]
\draw[sd/string] (0.0000,0.0000) -- (0.0000,-8.0000);
\node[sd/label,anchor=south] at (0.0000,1.0000) {$X$};
\node[sd/label,anchor=north] at (0.0000,-9.0000) {$X$};
\end{tikzpicture}
}

\expandafter\def\csname NGD@identity-g-rho\endcsname{%
\begin{tikzpicture}[sd/base,x=1mm,y=1mm]
\draw[sd/string] (-2.0000,0.0000) -- (-2.0000,-8.0000);
\node[sd/label,anchor=south] at (-2.0000,1.0000) {$g$};
\node[sd/label,anchor=north] at (-2.0000,-9.0000) {$g$};
\draw[sd/string] (2.0000,0.0000) -- (2.0000,-8.0000);
\node[sd/label,anchor=south] at (2.0000,1.0000) {$\varrho$};
\node[sd/label,anchor=north] at (2.0000,-9.0000) {$\varrho$};
\end{tikzpicture}
}

\expandafter\def\csname NGD@identity-rho\endcsname{%
\begin{tikzpicture}[sd/base,x=1mm,y=1mm]
\draw[sd/string] (0.0000,0.0000) -- (0.0000,-8.0000);
\node[sd/label,anchor=south] at (0.0000,1.0000) {$\varrho$};
\node[sd/label,anchor=north] at (0.0000,-9.0000) {$\varrho$};
\end{tikzpicture}
}

\expandafter\def\csname NGD@identity-rho2\endcsname{%
\begin{tikzpicture}[sd/base,x=1mm,y=1mm]
\draw[sd/string] (-2.0000,0.0000) -- (-2.0000,-8.0000);
\node[sd/label,anchor=south] at (-2.0000,1.0000) {$\varrho$};
\node[sd/label,anchor=north] at (-2.0000,-9.0000) {$\varrho$};
\draw[sd/string] (2.0000,0.0000) -- (2.0000,-8.0000);
\node[sd/label,anchor=south] at (2.0000,1.0000) {$\varrho$};
\node[sd/label,anchor=north] at (2.0000,-9.0000) {$\varrho$};
\end{tikzpicture}
}

\expandafter\def\csname NGD@identity-simple\endcsname{%
\begin{tikzpicture}[sd/base,x=1mm,y=1mm]
\draw[sd/string] (0.0000,0.0000) -- (0.0000,-8.0000);
\node[sd/label,anchor=south] at (0.0000,1.0000) {$X$};
\node[sd/label,anchor=north] at (0.0000,-9.0000) {$X$};
\end{tikzpicture}
}

\expandafter\def\csname NGD@insert-x\endcsname{%
\begin{tikzpicture}[sd/base,x=1mm,y=1mm]
\node[sd/box,minimum width=7.9267mm,minimum height=4.8321mm] (b0) at (1.6325,-6.6161) {$L_{g}^{-1}$};
\node[sd/box,minimum width=7.2849mm,minimum height=4.0000mm] (b1) at (-1.9534,-15.4321) {$R_{g}$};
\node[sd/box,minimum width=9.6408mm,minimum height=4.0000mm] (b2) at (0.7754,-23.8321) {$u$};
\draw[sd/string] (-4.4958,0.0000) -- ([xshift=-2.5424mm]b1.north);
\node[sd/label,anchor=south] at (-4.4958,1.0000) {$\varrho$};
\draw[sd/string] (1.6325,0.0000) -- ([xshift=0.0000mm]b0.north);
\node[sd/label,anchor=south] at (1.6325,1.0000) {$\varrho$};
\draw[sd/string] ([xshift=-2.1283mm]b0.south) -- ([xshift=1.4576mm]b1.north);
\node[sd/label,anchor=west] at (0.5042,-11.2321) {$g$};
\draw[sd/string] ([xshift=1.8717mm]b0.south) -- ([xshift=2.7288mm]b2.north);
\draw[sd/string] ([xshift=0.5610mm]b1.south) -- ([xshift=-2.1678mm]b2.north);
\draw[sd/string] ([xshift=-0.0000mm]b2.south) -- (0.7754,-30.8321);
\node[sd/label,anchor=north] at (0.7754,-31.8321) {$\varrho$};
\end{tikzpicture}
}

\expandafter\def\csname NGD@insert-y\endcsname{%
\begin{tikzpicture}[sd/base,x=1mm,y=1mm]
\node[sd/box,minimum width=8.2136mm,minimum height=5.2715mm] (b0) at (1.6325,-6.8357) {$R_{-g}^{-1}$};
\node[sd/box,minimum width=7.2449mm,minimum height=4.0000mm] (b1) at (-2.1168,-15.8715) {$u$};
\node[sd/box,minimum width=10.2346mm,minimum height=4.0101mm] (b2) at (0.6220,-24.2765) {$R_{-g}$};
\draw[sd/string] (-4.6393,0.0000) -- ([xshift=-2.5224mm]b1.north);
\node[sd/label,anchor=south] at (-4.6393,1.0000) {$\varrho$};
\draw[sd/string] (1.6325,0.0000) -- ([xshift=0.0000mm]b0.north);
\node[sd/label,anchor=south] at (1.6325,1.0000) {$\varrho$};
\draw[sd/string] ([xshift=-2.2717mm]b0.south) -- ([xshift=1.4776mm]b1.north);
\draw[sd/string] ([xshift=1.7283mm]b0.south) -- ([xshift=2.7388mm]b2.north);
\node[sd/label,anchor=west] at (4.3607,-11.6715) {$-g$};
\draw[sd/string] ([xshift=0.5165mm]b1.south) -- ([xshift=-2.2223mm]b2.north);
\draw[sd/string] ([xshift=0.0000mm]b2.south) -- (0.6220,-31.2816);
\node[sd/label,anchor=north] at (0.6220,-32.2816) {$\varrho$};
\end{tikzpicture}
}

\expandafter\def\csname NGD@insert-z\endcsname{%
\begin{tikzpicture}[sd/base,x=1mm,y=1mm]
\node[sd/box,minimum width=7.9267mm,minimum height=4.8321mm] (b0) at (-1.6325,-6.6161) {$L_{g}^{-1}$};
\node[sd/box,minimum width=6.5691mm,minimum height=4.0000mm] (b1) at (2.3113,-15.4321) {$u$};
\node[sd/box,minimum width=9.0071mm,minimum height=4.0000mm] (b2) at (-1.0923,-23.8321) {$L_{g}$};
\draw[sd/string] (-1.6325,0.0000) -- ([xshift=0.0000mm]b0.north);
\node[sd/label,anchor=south] at (-1.6325,1.0000) {$\varrho$};
\draw[sd/string] (4.4958,0.0000) -- ([xshift=2.1846mm]b1.north);
\node[sd/label,anchor=south] at (4.4958,1.0000) {$\varrho$};
\draw[sd/string] ([xshift=-2.8634mm]b0.south) -- ([xshift=-3.4035mm]b2.north);
\node[sd/label,anchor=west] at (-3.4958,-11.2321) {$g$};
\draw[sd/string] ([xshift=2.0850mm]b0.south) -- ([xshift=-1.8587mm]b1.north);
\draw[sd/string] ([xshift=-0.9512mm]b1.south) -- ([xshift=2.4524mm]b2.north);
\draw[sd/string] ([xshift=0.0000mm]b2.south) -- (-1.0923,-30.8321);
\node[sd/label,anchor=north] at (-1.0923,-31.8321) {$\varrho$};
\end{tikzpicture}
}

\expandafter\def\csname NGD@interchange-left\endcsname{%
\begin{tikzpicture}[sd/base,x=1mm,y=1mm]
\node[sd/box,minimum width=6.9387mm,minimum height=4.0000mm] (b0) at (1.6306,-6.2000) {$R_{h}$};
\node[sd/box,minimum width=7.8306mm,minimum height=4.0000mm] (b1) at (-1.1847,-14.6000) {$L_{g}$};
\draw[sd/string] (-4.0000,0.0000) -- ([xshift=-2.8153mm]b1.north);
\node[sd/label,anchor=south] at (-4.0000,1.0000) {$g$};
\draw[sd/string] (0.0000,0.0000) -- ([xshift=-1.6306mm]b0.north);
\node[sd/label,anchor=south] at (0.0000,1.0000) {$\varrho$};
\draw[sd/string] (4.0000,0.0000) -- ([xshift=2.3694mm]b0.north);
\node[sd/label,anchor=south] at (4.0000,1.0000) {$h$};
\draw[sd/string] ([xshift=-0.7551mm]b0.south) -- ([xshift=2.0602mm]b1.north);
\draw[sd/string] ([xshift=0.0000mm]b1.south) -- (-1.1847,-21.6000);
\node[sd/label,anchor=north] at (-1.1847,-22.6000) {$\varrho$};
\end{tikzpicture}
}

\expandafter\def\csname NGD@interchange-right\endcsname{%
\begin{tikzpicture}[sd/base,x=1mm,y=1mm]
\node[sd/box,minimum width=6.8392mm,minimum height=4.0000mm] (b0) at (-1.6804,-6.2000) {$L_{g}$};
\node[sd/box,minimum width=7.8804mm,minimum height=4.0000mm] (b1) at (1.1598,-14.6000) {$R_{h}$};
\draw[sd/string] (-4.0000,0.0000) -- ([xshift=-2.3196mm]b0.north);
\node[sd/label,anchor=south] at (-4.0000,1.0000) {$g$};
\draw[sd/string] (0.0000,0.0000) -- ([xshift=1.6804mm]b0.north);
\node[sd/label,anchor=south] at (0.0000,1.0000) {$\varrho$};
\draw[sd/string] (4.0000,0.0000) -- ([xshift=2.8402mm]b1.north);
\node[sd/label,anchor=south] at (4.0000,1.0000) {$h$};
\draw[sd/string] ([xshift=0.7634mm]b0.south) -- ([xshift=-2.0768mm]b1.north);
\draw[sd/string] ([xshift=-0.0000mm]b1.south) -- (1.1598,-21.6000);
\node[sd/label,anchor=north] at (1.1598,-22.6000) {$\varrho$};
\end{tikzpicture}
}

\expandafter\def\csname NGD@invariant-u\endcsname{%
\begin{tikzpicture}[sd/base,x=1mm,y=1mm]
\node[sd/box,minimum width=11.0451mm,minimum height=4.0000mm] (b0) at (0.0000,-2.0000) {$u$};
\draw[sd/string] ([xshift=-4.4225mm]b0.south) -- (-4.4225,-9.0000);
\node[sd/label,anchor=north] at (-4.4225,-10.0000) {$X$};
\draw[sd/string] ([xshift=0.1134mm]b0.south) -- (0.1134,-9.0000);
\node[sd/label,anchor=north] at (0.1134,-10.0000) {$Y$};
\draw[sd/string] ([xshift=4.4225mm]b0.south) -- (4.4225,-9.0000);
\node[sd/label,anchor=north] at (4.4225,-10.0000) {$Z$};
\end{tikzpicture}
}

\expandafter\def\csname NGD@left-assoc\endcsname{%
\begin{tikzpicture}[sd/base,x=1mm,y=1mm]
\node[sd/box,minimum width=6.9387mm,minimum height=4.0000mm] (b0) at (1.6306,-6.2000) {$L_{h}$};
\node[sd/box,minimum width=7.8306mm,minimum height=4.0000mm] (b1) at (-1.1847,-14.6000) {$L_{g}$};
\draw[sd/string] (-4.0000,0.0000) -- ([xshift=-2.8153mm]b1.north);
\node[sd/label,anchor=south] at (-4.0000,1.0000) {$g$};
\draw[sd/string] (0.0000,0.0000) -- ([xshift=-1.6306mm]b0.north);
\node[sd/label,anchor=south] at (0.0000,1.0000) {$h$};
\draw[sd/string] (4.0000,0.0000) -- ([xshift=2.3694mm]b0.north);
\node[sd/label,anchor=south] at (4.0000,1.0000) {$\varrho$};
\draw[sd/string] ([xshift=-0.7551mm]b0.south) -- ([xshift=2.0602mm]b1.north);
\draw[sd/string] ([xshift=0.0000mm]b1.south) -- (-1.1847,-21.6000);
\node[sd/label,anchor=north] at (-1.1847,-22.6000) {$\varrho$};
\end{tikzpicture}
}

\expandafter\def\csname NGD@left-fused\endcsname{%
\begin{tikzpicture}[sd/base,x=1mm,y=1mm]
\node[sd/box,minimum width=8.9063mm,minimum height=4.0000mm] (b0) at (-1.7770,-6.2000) {$m_{g,h}$};
\node[sd/box,minimum width=12.4602mm,minimum height=4.1273mm] (b1) at (0.0000,-14.6637) {$L_{g+h}$};
\draw[sd/string] (-4.2926,0.0000) -- ([xshift=-2.5156mm]b0.north);
\node[sd/label,anchor=south] at (-4.2926,1.0000) {$g$};
\draw[sd/string] (-0.2926,0.0000) -- ([xshift=1.4844mm]b0.north);
\node[sd/label,anchor=south] at (-0.2926,1.0000) {$h$};
\draw[sd/string] (5.1301,0.0000) -- ([xshift=5.1301mm]b1.north);
\node[sd/label,anchor=south] at (5.1301,1.0000) {$\varrho$};
\draw[sd/string] ([xshift=-3.3532mm]b0.south) -- ([xshift=-5.1301mm]b1.north);
\node[sd/label,anchor=west] at (-4.1301,-10.4000) {$g+h$};
\draw[sd/string] ([xshift=0.0000mm]b1.south) -- (0.0000,-21.7273);
\node[sd/label,anchor=north] at (0.0000,-22.7273) {$\varrho$};
\end{tikzpicture}
}

\expandafter\def\csname NGD@mate-cup\endcsname{%
\begin{tikzpicture}[sd/base,x=1mm,y=1mm]
\node[sd/box,minimum width=8.1625mm,minimum height=4.8321mm] (b1) at (-1.2500,-8.5152) {$R_{g}^{-1}$};
\node[sd/box,minimum width=6.7800mm,minimum height=4.0000mm] (b2) at (1.9412,-17.3313) {$L_{g}$};
\draw[sd/string] (-1.2500,-1.6992) -- ([xshift=-0.0000mm]b1.north);
\draw[sd/string] (4.2312,-1.6992) -- ([xshift=2.2900mm]b2.north);
\draw[sd/string] ([xshift=-2.5188mm]b1.south) -- (-3.7688,-24.3313);
\node[sd/label,anchor=north] at (-3.7688,-25.3313) {$\varrho$};
\draw[sd/string] ([xshift=1.4812mm]b1.south) -- ([xshift=-1.7100mm]b2.north);
\node[sd/label,anchor=west] at (1.2312,-13.1313) {$g$};
\draw[sd/string] ([xshift=-0.0000mm]b2.south) -- (1.9412,-24.3313);
\node[sd/label,anchor=north] at (1.9412,-25.3313) {$\varrho$};
\draw[sd/string] (-1.2500,-1.6992) arc[start angle=180,end angle=0,x radius=2.7406mm,y radius=1.6992mm];
\end{tikzpicture}
}

\expandafter\def\csname NGD@mixed-theta\endcsname{%
\begin{tikzpicture}[sd/base,x=1mm,y=1mm]
\node[sd/box,minimum width=7.9267mm,minimum height=4.8321mm] (b1) at (-0.7000,-8.4787) {$L_{g}^{-1}$};
\node[sd/box,minimum width=6.7800mm,minimum height=4.0000mm] (b2) at (-1.2734,-17.2948) {$L_{g}$};
\draw[sd/string] (-0.7000,-1.6626) -- ([xshift=-0.0000mm]b1.north);
\draw[sd/string] (4.6634,-1.6626) -- (4.6634,-23.6948);
\draw[sd/string] ([xshift=-2.8634mm]b1.south) -- ([xshift=-2.2900mm]b2.north);
\node[sd/label,anchor=west] at (-2.5634,-13.0948) {$g$};
\draw[sd/string] ([xshift=1.1366mm]b1.south) -- ([xshift=1.7100mm]b2.north);
\draw[sd/string] ([xshift=-0.0000mm]b2.south) -- (-1.2734,-23.6948);
\draw[sd/string] (-0.7000,-1.6626) arc[start angle=180,end angle=0,x radius=2.6817mm,y radius=1.6626mm];
\draw[sd/string] (-1.2734,-23.6948) arc[start angle=180,end angle=360,x radius=2.9684mm,y radius=1.8404mm];
\end{tikzpicture}
}

\expandafter\def\csname NGD@morphism-f\endcsname{%
\begin{tikzpicture}[sd/base,x=1mm,y=1mm]
\node[sd/box,minimum width=6.7359mm,minimum height=4.0000mm] (b0) at (0.0000,-6.2000) {$f$};
\draw[sd/string] (-2.2680,0.0000) -- ([xshift=-2.2680mm]b0.north);
\node[sd/label,anchor=south] at (-2.2680,1.0000) {$X$};
\draw[sd/string] (2.2680,0.0000) -- ([xshift=2.2680mm]b0.north);
\node[sd/label,anchor=south] at (2.2680,1.0000) {$Y$};
\draw[sd/string] ([xshift=0.0000mm]b0.south) -- (0.0000,-13.2000);
\node[sd/label,anchor=north] at (0.0000,-14.2000) {$Z$};
\end{tikzpicture}
}

\expandafter\def\csname NGD@partial-trace-R\endcsname{%
\begin{tikzpicture}[sd/base,x=1mm,y=1mm]
\node[sd/box,minimum width=7.7000mm,minimum height=4.0000mm] (b1) at (0.7000,-11.8400) {$R_{g}$};
\draw[sd/string] (3.4500,0.0000) -- ([xshift=2.7500mm]b1.north);
\node[sd/label,anchor=south] at (3.4500,1.0000) {$g$};
\draw[sd/string] (-4.5500,-5.4400) -- (-4.5500,-18.2400);
\draw[sd/string] (-0.5500,-5.4400) -- ([xshift=-1.2500mm]b1.north);
\draw[sd/string] ([xshift=-1.2500mm]b1.south) -- (-0.5500,-18.2400);
\draw[sd/string] (-4.5500,-5.4400) arc[start angle=180,end angle=0,x radius=2.0000mm,y radius=1.2400mm];
\draw[sd/string] (-4.5500,-18.2400) arc[start angle=180,end angle=360,x radius=2.0000mm,y radius=1.2400mm];
\end{tikzpicture}
}

\expandafter\def\csname NGD@pointed-left\endcsname{%
\begin{tikzpicture}[sd/base,x=1mm,y=1mm]
\node[sd/box,minimum width=8.9063mm,minimum height=4.0000mm] (b0) at (-1.8021,-6.2000) {$m_{g,h}$};
\node[sd/box,minimum width=12.5105mm,minimum height=4.0000mm] (b1) at (0.0000,-14.6000) {$m_{g+h,k}$};
\draw[sd/string] (-4.3177,0.0000) -- ([xshift=-2.5156mm]b0.north);
\node[sd/label,anchor=south] at (-4.3177,1.0000) {$g$};
\draw[sd/string] (-0.3177,0.0000) -- ([xshift=1.4844mm]b0.north);
\node[sd/label,anchor=south] at (-0.3177,1.0000) {$h$};
\draw[sd/string] (5.1553,0.0000) -- ([xshift=5.1553mm]b1.north);
\node[sd/label,anchor=south] at (5.1553,1.0000) {$k$};
\draw[sd/string] ([xshift=-3.3532mm]b0.south) -- ([xshift=-5.1553mm]b1.north);
\node[sd/label,anchor=west] at (-4.1553,-10.4000) {$g+h$};
\draw[sd/string] ([xshift=0.0000mm]b1.south) -- (0.0000,-21.6000);
\node[sd/label,anchor=north] at (0.0000,-22.6000) {$g+h+k$};
\end{tikzpicture}
}

\expandafter\def\csname NGD@pointed-right\endcsname{%
\begin{tikzpicture}[sd/base,x=1mm,y=1mm]
\node[sd/box,minimum width=8.9866mm,minimum height=4.0000mm] (b0) at (1.5966,-6.2000) {$m_{h,k}$};
\node[sd/box,minimum width=12.3159mm,minimum height=4.0000mm] (b1) at (0.0000,-14.6000) {$m_{g,h+k}$};
\draw[sd/string] (-5.0580,0.0000) -- ([xshift=-5.0580mm]b1.north);
\node[sd/label,anchor=south] at (-5.0580,1.0000) {$g$};
\draw[sd/string] (-0.9056,0.0000) -- ([xshift=-2.5022mm]b0.north);
\node[sd/label,anchor=south] at (-0.9056,1.0000) {$h$};
\draw[sd/string] (3.0944,0.0000) -- ([xshift=1.4978mm]b0.north);
\node[sd/label,anchor=south] at (3.0944,1.0000) {$k$};
\draw[sd/string] ([xshift=0.2280mm]b0.south) -- ([xshift=1.8246mm]b1.north);
\node[sd/label,anchor=west] at (2.8246,-10.4000) {$h+k$};
\draw[sd/string] ([xshift=0.0000mm]b1.south) -- (0.0000,-21.6000);
\node[sd/label,anchor=north] at (0.0000,-22.6000) {$g+h+k$};
\end{tikzpicture}
}

\expandafter\def\csname NGD@prism-coupon-dictionary\endcsname{%
\begin{tikzpicture}[sd/base,x=1mm,y=1mm]
\node[sd/box,anchor=south,minimum width=17mm,minimum height=4.3mm] (1) at (-14.0000,-0.0000) {$\alpha_1$};
\node[sd/box,anchor=south,minimum width=17mm,minimum height=4.3mm] (4) at (14.0000,-0.0000) {$\alpha_4$};
\node[sd/box,anchor=south,minimum width=17mm,minimum height=4.3mm] (2) at (-14.0000,-18.5000) {$\alpha_2$};
\node[sd/box,anchor=south,minimum width=17mm,minimum height=4.3mm] (5) at (14.0000,-18.5000) {$\alpha_5$};
\node[sd/box,anchor=south,minimum width=17mm,minimum height=4.3mm] (3) at (-14.0000,-37.0000) {$\alpha_3$};
\node[sd/box,anchor=south,minimum width=17mm,minimum height=4.3mm] (6) at (14.0000,-37.0000) {$\alpha_6$};
\draw[sd/string] ([xshift=0.0000mm]1.south) -- (-14.0000,-3.7000) arc[start angle=180,end angle=360,x radius=14.0000mm,y radius=3.5000mm] -- ([xshift=0.0000mm]4.south);
\node[sd/label,anchor=center] at (0.0000,-4.8000) {$X_1$};
\draw[sd/string] ([xshift=0.0000mm]2.south) -- (-14.0000,-22.2000) arc[start angle=180,end angle=360,x radius=14.0000mm,y radius=3.5000mm] -- ([xshift=0.0000mm]5.south);
\node[sd/label,anchor=center] at (0.0000,-23.3000) {$X_2$};
\draw[sd/string] ([xshift=0.0000mm]3.south) -- (-14.0000,-40.7000) arc[start angle=180,end angle=360,x radius=14.0000mm,y radius=3.5000mm] -- ([xshift=0.0000mm]6.south);
\node[sd/label,anchor=center] at (0.0000,-41.8000) {$X_3$};
\draw[sd/string] ([xshift=-5.2000mm]1.south) -- (-19.2000,-10.0000) arc[start angle=180,end angle=360,x radius=6.3500mm,y radius=2.7000mm] -- (-6.5000,-9.0000) arc[start angle=180,end angle=0,x radius=1.5000mm,y radius=1.3000mm] -- (-3.5000,-18.5000) arc[start angle=0,end angle=-180,x radius=2.6500mm,y radius=1.5000mm] -- ([xshift=5.2000mm]2.south);
\draw[sd/string] ([xshift=5.2000mm]4.south) -- (19.2000,-10.0000) arc[start angle=0,end angle=-180,x radius=6.3500mm,y radius=2.7000mm] -- (6.5000,-9.0000) arc[start angle=0,end angle=180,x radius=1.5000mm,y radius=1.3000mm] -- (3.5000,-18.5000) arc[start angle=180,end angle=360,x radius=2.6500mm,y radius=1.5000mm] -- ([xshift=-5.2000mm]5.south);
\node[sd/label,anchor=west] at (-18.2500,-4.6000) {$X_4$};
\node[sd/label,anchor=east] at (18.2500,-4.6000) {$X_7$};
\draw[sd/string] ([xshift=-5.2000mm]2.south) -- (-19.2000,-28.5000) arc[start angle=180,end angle=360,x radius=6.3500mm,y radius=2.7000mm] -- (-6.5000,-27.5000) arc[start angle=180,end angle=0,x radius=1.5000mm,y radius=1.3000mm] -- (-3.5000,-37.0000) arc[start angle=0,end angle=-180,x radius=2.6500mm,y radius=1.5000mm] -- ([xshift=5.2000mm]3.south);
\draw[sd/string] ([xshift=5.2000mm]5.south) -- (19.2000,-28.5000) arc[start angle=0,end angle=-180,x radius=6.3500mm,y radius=2.7000mm] -- (6.5000,-27.5000) arc[start angle=0,end angle=180,x radius=1.5000mm,y radius=1.3000mm] -- (3.5000,-37.0000) arc[start angle=180,end angle=360,x radius=2.6500mm,y radius=1.5000mm] -- ([xshift=-5.2000mm]6.south);
\node[sd/label,anchor=west] at (-18.2500,-23.1000) {$X_5$};
\node[sd/label,anchor=east] at (18.2500,-23.1000) {$X_8$};
\draw[sd/string] ([xshift=5.2000mm]1.south) arc[start angle=180,end angle=360,x radius=2.0000mm,y radius=1.3000mm] -- (-4.8000,5.0000) arc[start angle=0,end angle=180,x radius=9.8500mm,y radius=2.4000mm] -- (-24.5000,-37.0000) arc[start angle=180,end angle=360,x radius=2.6500mm,y radius=1.5000mm] -- ([xshift=-5.2000mm]3.south);
\draw[sd/string] ([xshift=-5.2000mm]4.south) arc[start angle=0,end angle=-180,x radius=2.0000mm,y radius=1.3000mm] -- (4.8000,5.0000) arc[start angle=180,end angle=0,x radius=9.8500mm,y radius=2.4000mm] -- (24.5000,-37.0000) arc[start angle=0,end angle=-180,x radius=2.6500mm,y radius=1.5000mm] -- ([xshift=5.2000mm]6.south);
\node[sd/label,anchor=east] at (-25.5000,-19.0000) {$X_6$};
\node[sd/label,anchor=west] at (25.5000,-19.0000) {$X_9$};
\end{tikzpicture}
}

\expandafter\def\csname NGD@resolution-term\endcsname{%
\begin{tikzpicture}[sd/base,x=1mm,y=1mm]
\node[sd/box,minimum width=7.7319mm,minimum height=4.1351mm] (b0) at (0.0000,-6.2676) {$b_{X,i}$};
\node[sd/box,minimum width=7.7319mm,minimum height=5.5745mm] (b1) at (0.0000,-15.5224) {$b_{X,i}^{'}$};
\draw[sd/string] (-2.7113,0.0000) -- ([xshift=-2.7113mm]b0.north);
\node[sd/label,anchor=south] at (-2.7113,1.0000) {$\varrho$};
\draw[sd/string] (1.2887,0.0000) -- ([xshift=1.2887mm]b0.north);
\node[sd/label,anchor=south] at (1.2887,1.0000) {$\varrho$};
\draw[sd/string] ([xshift=0.0000mm]b0.south) -- ([xshift=0.0000mm]b1.north);
\node[sd/label,anchor=west] at (1.0000,-10.5351) {$X$};
\draw[sd/string] ([xshift=-2.7113mm]b1.south) -- (-2.7113,-23.3096);
\node[sd/label,anchor=north] at (-2.7113,-24.3096) {$\varrho$};
\draw[sd/string] ([xshift=1.2887mm]b1.south) -- (1.2887,-23.3096);
\node[sd/label,anchor=north] at (1.2887,-24.3096) {$\varrho$};
\end{tikzpicture}
}

\expandafter\def\csname NGD@rho-cup\endcsname{%
\begin{tikzpicture}[sd/base,x=1mm,y=1mm]
\draw[sd/string] (-2.0000,-1.2400) -- (-2.0000,-6.2400);
\node[sd/label,anchor=north] at (-2.0000,-7.2400) {$\varrho$};
\draw[sd/string] (2.0000,-1.2400) -- (2.0000,-6.2400);
\node[sd/label,anchor=north] at (2.0000,-7.2400) {$\varrho$};
\draw[sd/string] (-2.0000,-1.2400) arc[start angle=180,end angle=0,x radius=2.0000mm,y radius=1.2400mm];
\end{tikzpicture}
}

\expandafter\def\csname NGD@rho-loop\endcsname{%
\begin{tikzpicture}[sd/base,x=1mm,y=1mm]
\draw[sd/string] (-2.0000,-1.2400) -- (-2.0000,-5.6400);
\draw[sd/string] (2.0000,-1.2400) -- (2.0000,-5.6400);
\draw[sd/string] (-2.0000,-1.2400) arc[start angle=180,end angle=0,x radius=2.0000mm,y radius=1.2400mm];
\draw[sd/string] (-2.0000,-5.6400) arc[start angle=180,end angle=360,x radius=2.0000mm,y radius=1.2400mm];
\end{tikzpicture}
}

\expandafter\def\csname NGD@rho-root-u\endcsname{%
\begin{tikzpicture}[sd/base,x=1mm,y=1mm]
\node[sd/box,minimum width=10.9351mm,minimum height=4.0000mm] (b0) at (-3.0825,-6.2000) {$u$};
\draw[sd/string] (4.5500,0.0000) -- (4.5500,-12.6000);
\node[sd/label,anchor=south] at (4.5500,1.0000) {$\varrho$};
\draw[sd/string] (8.5500,0.0000) -- (8.5500,-12.6000);
\node[sd/label,anchor=south] at (8.5500,1.0000) {$\varrho$};
\draw[sd/string] ([xshift=-4.3675mm]b0.south) -- (-7.4500,-21.3200);
\node[sd/label,anchor=north] at (-7.4500,-22.3200) {$\varrho$};
\draw[sd/string] ([xshift=-0.3675mm]b0.south) -- (-3.4500,-12.6000);
\draw[sd/string] ([xshift=3.6325mm]b0.south) -- (0.5500,-12.6000);
\draw[sd/string] (0.5500,-12.6000) arc[start angle=180,end angle=360,x radius=2.0000mm,y radius=1.2400mm];
\draw[sd/string] (-3.4500,-12.6000) arc[start angle=180,end angle=360,x radius=6.0000mm,y radius=3.7200mm];
\end{tikzpicture}
}

\expandafter\def\csname NGD@rho-split-u\endcsname{%
\begin{tikzpicture}[sd/base,x=1mm,y=1mm]
\node[sd/box,minimum width=10.9351mm,minimum height=4.0000mm] (b0) at (-1.0825,-6.2000) {$u$};
\draw[sd/string] (6.5500,0.0000) -- (6.5500,-12.6000);
\node[sd/label,anchor=south] at (6.5500,1.0000) {$\varrho$};
\draw[sd/string] ([xshift=-4.3675mm]b0.south) -- (-5.4500,-18.8400);
\node[sd/label,anchor=north] at (-5.4500,-19.8400) {$\varrho$};
\draw[sd/string] ([xshift=-0.3675mm]b0.south) -- (-1.4500,-18.8400);
\node[sd/label,anchor=north] at (-1.4500,-19.8400) {$\varrho$};
\draw[sd/string] ([xshift=3.6325mm]b0.south) -- (2.5500,-12.6000);
\draw[sd/string] (2.5500,-12.6000) arc[start angle=180,end angle=360,x radius=2.0000mm,y radius=1.2400mm];
\end{tikzpicture}
}

\expandafter\def\csname NGD@rho-v\endcsname{%
\begin{tikzpicture}[sd/base,x=1mm,y=1mm]
\node[sd/box,minimum width=6.5000mm,minimum height=4.0000mm] (b0) at (0.0000,-6.2000) {$v_h$};
\draw[sd/string] (-2.1500,0.0000) -- ([xshift=-2.1500mm]b0.north);
\node[sd/label,anchor=south] at (-2.1500,1.0000) {$\varrho$};
\draw[sd/string] (1.8500,0.0000) -- ([xshift=1.8500mm]b0.north);
\node[sd/label,anchor=south] at (1.8500,1.0000) {$\varrho$};
\draw[sd/string] ([xshift=0.0000mm]b0.south) -- (0.0000,-13.2000);
\node[sd/label,anchor=north] at (0.0000,-14.2000) {$\varrho$};
\end{tikzpicture}
}

\expandafter\def\csname NGD@right-assoc\endcsname{%
\begin{tikzpicture}[sd/base,x=1mm,y=1mm]
\node[sd/box,minimum width=6.8392mm,minimum height=4.0000mm] (b0) at (-1.6804,-6.2000) {$R_{g}$};
\node[sd/box,minimum width=7.8804mm,minimum height=4.0000mm] (b1) at (1.1598,-14.6000) {$R_{h}$};
\draw[sd/string] (-4.0000,0.0000) -- ([xshift=-2.3196mm]b0.north);
\node[sd/label,anchor=south] at (-4.0000,1.0000) {$\varrho$};
\draw[sd/string] (0.0000,0.0000) -- ([xshift=1.6804mm]b0.north);
\node[sd/label,anchor=south] at (0.0000,1.0000) {$g$};
\draw[sd/string] (4.0000,0.0000) -- ([xshift=2.8402mm]b1.north);
\node[sd/label,anchor=south] at (4.0000,1.0000) {$h$};
\draw[sd/string] ([xshift=0.7634mm]b0.south) -- ([xshift=-2.0768mm]b1.north);
\draw[sd/string] ([xshift=-0.0000mm]b1.south) -- (1.1598,-21.6000);
\node[sd/label,anchor=north] at (1.1598,-22.6000) {$\varrho$};
\end{tikzpicture}
}

\expandafter\def\csname NGD@right-channel\endcsname{%
\begin{tikzpicture}[sd/base,x=1mm,y=1mm]
\draw[sd/string] (19.0000,10.9697) -- (38.0000,0.0000);
\draw[sd/reverse] (19.0000,10.9697) -- (19.0000,32.9090);
\draw[sd/string] (19.0000,10.9697) -- (0.0000,0.0000);
\draw[sd/reverse] (38.0000,0.0000) -- (0.0000,0.0000);
\draw[sd/string] (19.0000,32.9090) -- (0.0000,0.0000);
\draw[sd/string] (38.0000,0.0000) -- (19.0000,32.9090);
\node[sd/label] at (19.0000,8.6697) {$v$};
\node[sd/label] at (5.5154,5.5154) {$p$};
\node[sd/label] at (19.0000,35.2090) {$u$};
\node[sd/label] at (32.4846,5.5154) {$q$};
\node[sd/label] at (19.7600,-1.9647) {$t$};
\node[sd/label] at (16.5618,21.5005) {$X$};
\end{tikzpicture}
}

\expandafter\def\csname NGD@right-fused\endcsname{%
\begin{tikzpicture}[sd/base,x=1mm,y=1mm]
\node[sd/box,minimum width=8.9063mm,minimum height=4.0000mm] (b0) at (1.6325,-6.2000) {$m_{g,h}$};
\node[sd/box,minimum width=9.6853mm,minimum height=4.1273mm] (b1) at (-1.2430,-14.6637) {$R_{g+h}$};
\draw[sd/string] (-4.9856,0.0000) -- ([xshift=-3.7427mm]b1.north);
\node[sd/label,anchor=south] at (-4.9856,1.0000) {$\varrho$};
\draw[sd/string] (-0.8832,0.0000) -- ([xshift=-2.5156mm]b0.north);
\node[sd/label,anchor=south] at (-0.8832,1.0000) {$g$};
\draw[sd/string] (3.1168,0.0000) -- ([xshift=1.4844mm]b0.north);
\node[sd/label,anchor=south] at (3.1168,1.0000) {$h$};
\draw[sd/string] ([xshift=-0.6306mm]b0.south) -- ([xshift=2.2448mm]b1.north);
\node[sd/label,anchor=west] at (2.0019,-10.4000) {$g+h$};
\draw[sd/string] ([xshift=0.0000mm]b1.south) -- (-1.2430,-21.7273);
\node[sd/label,anchor=north] at (-1.2430,-22.7273) {$\varrho$};
\end{tikzpicture}
}

\expandafter\def\csname NGD@root-f\endcsname{%
\begin{tikzpicture}[sd/base,x=1mm,y=1mm]
\node[sd/box,minimum width=12.9300mm,minimum height=4.0000mm] (b0) at (-5.5310,-6.2000) {$u$};
\draw[sd/string] (7.4601,0.0000) -- (7.4601,-12.6000);
\node[sd/label,anchor=south] at (7.4601,1.0000) {$X$};
\draw[sd/string] (11.9960,0.0000) -- (11.9960,-12.6000);
\node[sd/label,anchor=south] at (11.9960,1.0000) {$Y$};
\draw[sd/string] ([xshift=-5.3650mm]b0.south) -- (-10.8960,-23.4565);
\node[sd/label,anchor=north] at (-10.8960,-24.4565) {$Z$};
\draw[sd/string] ([xshift=-1.3650mm]b0.south) -- (-6.8960,-12.6000);
\node[sd/label,anchor=west] at (-5.8960,-10.4000) {$Y^*$};
\draw[sd/string] ([xshift=4.8533mm]b0.south) -- (-0.6777,-12.6000);
\node[sd/label,anchor=west] at (0.3223,-10.4000) {$X^*$};
\draw[sd/string] (-0.6777,-12.6000) arc[start angle=180,end angle=360,x radius=4.0689mm,y radius=2.5227mm];
\draw[sd/string] (-6.8960,-12.6000) arc[start angle=180,end angle=360,x radius=9.4460mm,y radius=5.8565mm];
\end{tikzpicture}
}

\expandafter\def\csname NGD@rotated-basis\endcsname{%
\begin{tikzpicture}[sd/base,x=1mm,y=1mm]
\node[sd/box,minimum width=10.4000mm,minimum height=4.0000mm] (b0) at (0.0000,-2.0000) {$\varrho u_g$};
\draw[sd/string] ([xshift=-4.1000mm]b0.south) -- (-4.1000,-9.0000);
\node[sd/label,anchor=north] at (-4.1000,-10.0000) {$\varrho$};
\draw[sd/string] ([xshift=-0.1000mm]b0.south) -- (-0.1000,-9.0000);
\node[sd/label,anchor=north] at (-0.1000,-10.0000) {$\varrho$};
\draw[sd/string] ([xshift=3.9000mm]b0.south) -- (3.9000,-9.0000);
\node[sd/label,anchor=north] at (3.9000,-10.0000) {$\varrho$};
\end{tikzpicture}
}

\expandafter\def\csname NGD@slide-Linv-R\endcsname{%
\begin{tikzpicture}[sd/base,x=1mm,y=1mm]
\node[sd/box,minimum width=6.9906mm,minimum height=4.0000mm] (b0) at (-0.4680,-6.2000) {$R_{h}$};
\node[sd/box,minimum width=7.9267mm,minimum height=4.8321mm] (b1) at (0.0000,-15.0161) {$L_{g}^{-1}$};
\draw[sd/string] (-2.8634,0.0000) -- ([xshift=-2.3953mm]b0.north);
\node[sd/label,anchor=south] at (-2.8634,1.0000) {$\varrho$};
\draw[sd/string] (1.1366,0.0000) -- ([xshift=1.6047mm]b0.north);
\node[sd/label,anchor=south] at (1.1366,1.0000) {$h$};
\draw[sd/string] ([xshift=0.2340mm]b0.south) -- ([xshift=-0.2340mm]b1.north);
\draw[sd/string] ([xshift=-2.6789mm]b1.south) -- (-2.6789,-22.4321);
\node[sd/label,anchor=north] at (-2.6789,-23.4321) {$g$};
\draw[sd/string] ([xshift=1.3211mm]b1.south) -- (1.3211,-22.4321);
\node[sd/label,anchor=north] at (1.3211,-23.4321) {$\varrho$};
\end{tikzpicture}
}

\expandafter\def\csname NGD@slide-R-Linv\endcsname{%
\begin{tikzpicture}[sd/base,x=1mm,y=1mm]
\node[sd/box,minimum width=7.9267mm,minimum height=4.8321mm] (b0) at (-1.6804,-6.6161) {$L_{g}^{-1}$};
\node[sd/box,minimum width=7.1030mm,minimum height=4.0000mm] (b1) at (2.0922,-15.4321) {$R_{h}$};
\draw[sd/string] (-1.6804,0.0000) -- ([xshift=0.0000mm]b0.north);
\node[sd/label,anchor=south] at (-1.6804,1.0000) {$\varrho$};
\draw[sd/string] (4.5437,0.0000) -- ([xshift=2.4515mm]b1.north);
\node[sd/label,anchor=south] at (4.5437,1.0000) {$h$};
\draw[sd/string] ([xshift=-2.0450mm]b0.south) -- (-3.7254,-22.4321);
\node[sd/label,anchor=north] at (-3.7254,-23.4321) {$g$};
\draw[sd/string] ([xshift=1.9550mm]b0.south) -- ([xshift=-1.8177mm]b1.north);
\draw[sd/string] ([xshift=0.0000mm]b1.south) -- (2.0922,-22.4321);
\node[sd/label,anchor=north] at (2.0922,-23.4321) {$\varrho$};
\end{tikzpicture}
}

\expandafter\def\csname NGD@slide-two-inverses-a\endcsname{%
\begin{tikzpicture}[sd/base,x=1mm,y=1mm]
\node[sd/box,minimum width=8.5546mm,minimum height=4.8321mm] (b0) at (-1.5467,-6.6161) {$L_{g}^{-1}$};
\node[sd/box,minimum width=8.3867mm,minimum height=5.2715mm] (b1) at (1.6306,-16.0679) {$R_{-h}^{-1}$};
\draw[sd/string] (-1.5467,0.0000) -- ([xshift=0.0000mm]b0.north);
\node[sd/label,anchor=south] at (-1.5467,1.0000) {$\varrho$};
\draw[sd/string] ([xshift=-3.1773mm]b0.south) -- (-4.7240,-23.7036);
\node[sd/label,anchor=north] at (-4.7240,-24.7036) {$g$};
\draw[sd/string] ([xshift=2.3015mm]b0.south) -- ([xshift=-0.8758mm]b1.north);
\draw[sd/string] ([xshift=-2.3546mm]b1.south) -- (-0.7240,-23.7036);
\node[sd/label,anchor=north] at (-0.7240,-24.7036) {$\varrho$};
\draw[sd/string] ([xshift=2.4329mm]b1.south) -- (4.0635,-23.7036);
\node[sd/label,anchor=north] at (4.0635,-24.7036) {$-h$};
\end{tikzpicture}
}

\expandafter\def\csname NGD@slide-two-inverses-b\endcsname{%
\begin{tikzpicture}[sd/base,x=1mm,y=1mm]
\node[sd/box,minimum width=9.5859mm,minimum height=5.2715mm] (b0) at (1.4317,-6.8357) {$R_{-h}^{-1}$};
\node[sd/box,minimum width=7.9267mm,minimum height=4.8321mm] (b1) at (-2.2613,-16.2875) {$L_{g}^{-1}$};
\draw[sd/string] (1.4317,0.0000) -- ([xshift=-0.0000mm]b0.north);
\node[sd/label,anchor=south] at (1.4317,1.0000) {$\varrho$};
\draw[sd/string] ([xshift=-2.6453mm]b0.south) -- ([xshift=1.0477mm]b1.north);
\draw[sd/string] ([xshift=3.6930mm]b0.south) -- (5.1246,-23.7036);
\node[sd/label,anchor=north] at (5.1246,-24.7036) {$-h$};
\draw[sd/string] ([xshift=-2.6789mm]b1.south) -- (-4.9402,-23.7036);
\node[sd/label,anchor=north] at (-4.9402,-24.7036) {$g$};
\draw[sd/string] ([xshift=1.3211mm]b1.south) -- (-0.9402,-23.7036);
\node[sd/label,anchor=north] at (-0.9402,-24.7036) {$\varrho$};
\end{tikzpicture}
}

\expandafter\def\csname NGD@spherical-prism-reference\endcsname{%
\begin{tikzpicture}[sd/base,x=1mm,y=1mm]
\draw[sd/arrow] (0.0000,25.2000) -- (42.0000,25.2000);
\draw[sd/arrow] (12.1800,12.6000) -- (29.8200,12.6000);
\draw[sd/arrow] (0.0000,0.0000) -- (42.0000,0.0000);
\draw[sd/arrow] (0.0000,25.2000) -- (12.1800,12.6000);
\draw[sd/arrow] (12.1800,12.6000) -- (0.0000,0.0000);
\draw[sd/arrow] (0.0000,0.0000) -- (0.0000,25.2000);
\draw[sd/arrow] (42.0000,25.2000) -- (29.8200,12.6000);
\draw[sd/arrow] (29.8200,12.6000) -- (42.0000,0.0000);
\draw[sd/arrow] (42.0000,0.0000) -- (42.0000,25.2000);
\node[sd/label] at (2.7363,18.4322) {$\alpha_1$};
\node[sd/label] at (8.3800,12.6000) {$\alpha_2$};
\node[sd/label] at (2.7363,6.7678) {$\alpha_3$};
\node[sd/label] at (39.2637,18.4322) {$\alpha_4$};
\node[sd/label] at (33.6200,12.6000) {$\alpha_5$};
\node[sd/label] at (39.2637,6.7678) {$\alpha_6$};
\node[sd/label] at (-3.0607,12.0960) {$X_6$};
\node[sd/label] at (45.0607,12.0960) {$X_9$};
\node[sd/label] at (20.1600,27.4559) {$X_1$};
\node[sd/label] at (20.1600,-2.2559) {$X_3$};
\node[sd/label] at (8.2285,21.4547) {$X_4$};
\node[sd/label] at (8.7157,4.2493) {$X_5$};
\node[sd/label] at (33.7715,21.4547) {$X_7$};
\node[sd/label] at (33.2843,4.2493) {$X_8$};
\node[sd/label] at (20.6472,10.3441) {$X_2$};
\end{tikzpicture}
}

\expandafter\def\csname NGD@spherical-tetrahedron-reference\endcsname{%
\begin{tikzpicture}[sd/base,x=1mm,y=1mm]
\draw[sd/arrow] (19.0000,10.9697) -- (38.0000,0.0000);
\draw[sd/arrow] (19.0000,10.9697) -- (19.0000,32.9090);
\draw[sd/arrow] (19.0000,10.9697) -- (0.0000,0.0000);
\draw[sd/arrow] (38.0000,0.0000) -- (0.0000,0.0000);
\draw[sd/arrow] (19.0000,32.9090) -- (0.0000,0.0000);
\draw[sd/arrow] (38.0000,0.0000) -- (19.0000,32.9090);
\node[sd/label] at (19.0000,8.6697) {$\alpha$};
\node[sd/label] at (-1.9919,-1.1500) {$\beta$};
\node[sd/label] at (19.0000,35.2090) {$\gamma$};
\node[sd/label] at (39.9919,-1.1500) {$\delta$};
\node[sd/label] at (19.7600,-2.2559) {$X_4$};
\node[sd/label] at (6.9563,18.8007) {$X_5$};
\node[sd/label] at (31.8037,17.4843) {$X_6$};
\node[sd/label] at (15.9393,18.8678) {$X_2$};
\node[sd/label] at (11.4207,3.0356) {$X_3$};
\node[sd/label] at (26.5793,3.0356) {$X_1$};
\end{tikzpicture}
}

\expandafter\def\csname NGD@tetra-translation\endcsname{%
\begin{tikzpicture}[sd/base,x=1mm,y=1mm]
\draw[sd/string] (16.5000,9.5263) -- (33.0000,0.0000);
\draw[sd/string] (16.5000,9.5263) -- (16.5000,28.5788);
\draw[sd/string] (16.5000,9.5263) -- (0.0000,0.0000);
\draw[sd/string] (33.0000,0.0000) -- (0.0000,0.0000);
\draw[sd/string] (16.5000,28.5788) -- (0.0000,0.0000);
\draw[sd/string] (33.0000,0.0000) -- (16.5000,28.5788);
\node[sd/label] at (13.2091,11.4263) {$\widetilde u_p$};
\node[sd/label] at (8.5206,8.5206) {$\widetilde u_g$};
\node[sd/label] at (13.8989,18.8713) {$h'$};
\node[sd/label] at (24.8329,8.1671) {$\widetilde u_r$};
\end{tikzpicture}
}

\expandafter\def\csname NGD@tetrahedron-coupon-dictionary\endcsname{%
\begin{tikzpicture}[sd/base,x=1mm,y=1mm]
\node[sd/box,anchor=south,minimum width=17mm,minimum height=4.3mm] (b) at (-22.0000,0.0000) {$\beta$};
\node[sd/box,anchor=south,minimum width=17mm,minimum height=4.3mm] (c) at (0.0000,0.0000) {$\gamma$};
\node[sd/box,anchor=south,minimum width=17mm,minimum height=4.3mm] (d) at (22.0000,0.0000) {$\delta$};
\node[sd/box,anchor=south,minimum width=17mm,minimum height=4.3mm] (a) at (0.0000,-20.0000) {$\alpha$};
\draw[sd/string] ([xshift=5.2000mm]b.south) arc[start angle=180,end angle=360,x radius=5.8000mm,y radius=2.4000mm] -- ([xshift=-5.2000mm]c.south);
\draw[sd/string] ([xshift=5.2000mm]c.south) arc[start angle=180,end angle=360,x radius=5.8000mm,y radius=2.4000mm] -- ([xshift=-5.2000mm]d.south);
\draw[sd/string] ([xshift=-5.2000mm]b.south) -- (-27.2000,-25.8000) arc[start angle=180,end angle=360,x radius=27.2000mm,y radius=6.0000mm] -- ([xshift=5.2000mm]d.south);
\draw[sd/string] ([xshift=0.0000mm]b.south) -- (-22.0000,-24.0000) arc[start angle=180,end angle=360,x radius=13.6000mm,y radius=4.2000mm] -- ([xshift=5.2000mm]a.south);
\draw[sd/string] ([xshift=0.0000mm]c.south) -- (0.0000,-5.8000) arc[start angle=0,end angle=-180,x radius=5.0000mm,y radius=2.2000mm] -- (-10.0000,-5.1000) arc[start angle=0,end angle=180,x radius=3.0000mm,y radius=1.7000mm] -- (-16.0000,-22.5000) arc[start angle=180,end angle=360,x radius=8.0000mm,y radius=3.2000mm] -- ([xshift=0.0000mm]a.south);
\draw[sd/string] ([xshift=0.0000mm]d.south) -- (22.0000,-21.7000) arc[start angle=0,end angle=-180,x radius=5.5000mm,y radius=2.6000mm] -- (11.0000,-14.7000) arc[start angle=0,end angle=180,x radius=11.0000mm,y radius=4.2000mm] -- (-11.0000,-20.0000) arc[start angle=180,end angle=360,x radius=2.9000mm,y radius=1.3000mm] -- ([xshift=-5.2000mm]a.south);
\node[sd/label,anchor=center] at (-11.0000,-0.5500) {$X_5$};
\node[sd/label,anchor=center] at (11.0000,-0.5500) {$X_6$};
\node[sd/label,anchor=west] at (28.3000,-9.0000) {$X_4$};
\node[sd/label,anchor=west] at (-4.3500,-21.1000) {$X_1$};
\node[sd/label,anchor=west] at (0.8500,-21.1000) {$X_2$};
\node[sd/label,anchor=west] at (6.0500,-21.1000) {$X_3$};
\end{tikzpicture}
}

\expandafter\def\csname NGD@theta-dual\endcsname{%
\begin{tikzpicture}[sd/base,x=1mm,y=1mm]
\node[sd/box,minimum width=12.0000mm,minimum height=4.4983mm] (l) at (-8.5000,0.0000) {$u_h$};
\node[sd/box,minimum width=12.0000mm,minimum height=4.4983mm] (r) at (8.5000,0.0000) {$u_k'$};
\draw[sd/string] ([xshift=4.0000mm]l.south) arc[start angle=180,end angle=360,x radius=4.5000mm,y radius=2.7900mm];
\draw[sd/string] ([xshift=0.0000mm]l.south) arc[start angle=180,end angle=360,x radius=8.5000mm,y radius=5.2700mm];
\draw[sd/string] ([xshift=-4.0000mm]l.south) arc[start angle=180,end angle=360,x radius=12.5000mm,y radius=7.7500mm];
\end{tikzpicture}
}

\expandafter\def\csname NGD@theta-h-minus\endcsname{%
\begin{tikzpicture}[sd/base,x=1mm,y=1mm]
\node[sd/box,minimum width=12.0000mm,minimum height=4.0000mm] (l) at (-8.5000,0.0000) {$u_h$};
\node[sd/box,minimum width=12.0000mm,minimum height=4.0000mm] (r) at (8.5000,0.0000) {$u_{-h}$};
\draw[sd/string] ([xshift=4.0000mm]l.south) arc[start angle=180,end angle=360,x radius=4.5000mm,y radius=2.7900mm];
\draw[sd/string] ([xshift=0.0000mm]l.south) arc[start angle=180,end angle=360,x radius=8.5000mm,y radius=5.2700mm];
\draw[sd/string] ([xshift=-4.0000mm]l.south) arc[start angle=180,end angle=360,x radius=12.5000mm,y radius=7.7500mm];
\end{tikzpicture}
}

\expandafter\def\csname NGD@theta-hk\endcsname{%
\begin{tikzpicture}[sd/base,x=1mm,y=1mm]
\node[sd/box,minimum width=12.0000mm,minimum height=4.0000mm] (l) at (-8.5000,0.0000) {$u_h$};
\node[sd/box,minimum width=12.0000mm,minimum height=4.0000mm] (r) at (8.5000,0.0000) {$u_k$};
\draw[sd/string] ([xshift=4.0000mm]l.south) arc[start angle=180,end angle=360,x radius=4.5000mm,y radius=2.7900mm];
\draw[sd/string] ([xshift=0.0000mm]l.south) arc[start angle=180,end angle=360,x radius=8.5000mm,y radius=5.2700mm];
\draw[sd/string] ([xshift=-4.0000mm]l.south) arc[start angle=180,end angle=360,x radius=12.5000mm,y radius=7.7500mm];
\end{tikzpicture}
}

\expandafter\def\csname NGD@theta-hk-loop\endcsname{%
\begin{tikzpicture}[sd/base,x=1mm,y=1mm]
\node[sd/box,minimum width=12.0000mm,minimum height=4.0000mm] (l) at (-8.5000,0.0000) {$u_h$};
\node[sd/box,minimum width=12.0000mm,minimum height=4.0000mm] (r) at (8.5000,0.0000) {$u_k$};
\draw[sd/string] ([xshift=4.0000mm]l.south) arc[start angle=180,end angle=360,x radius=4.5000mm,y radius=2.7900mm];
\draw[sd/string] ([xshift=0.0000mm]l.south) arc[start angle=180,end angle=360,x radius=8.5000mm,y radius=5.2700mm];
\draw[sd/string] ([xshift=-4.0000mm]l.south) arc[start angle=180,end angle=360,x radius=12.5000mm,y radius=7.7500mm];
\draw[sd/string] (-2.0000,-13.9500) arc[start angle=180,end angle=0,x radius=2mm,y radius=1.24mm] -- (2.0000,-15.9500) arc[start angle=0,end angle=-180,x radius=2mm,y radius=1.24mm] -- cycle;
\node[sd/label,anchor=west] at (3.2000,-14.9500) {$l$};
\end{tikzpicture}
}

\expandafter\def\csname NGD@theta-translated\endcsname{%
\begin{tikzpicture}[sd/base,x=1mm,y=1mm]
\node[sd/box,minimum width=12.0000mm,minimum height=4.0000mm] (l) at (-8.5000,0.0000) {$u_{h-g}$};
\node[sd/box,minimum width=12.0000mm,minimum height=4.0000mm] (r) at (8.5000,0.0000) {$u_{-h+g}$};
\draw[sd/string] ([xshift=4.0000mm]l.south) arc[start angle=180,end angle=360,x radius=4.5000mm,y radius=2.7900mm];
\draw[sd/string] ([xshift=0.0000mm]l.south) arc[start angle=180,end angle=360,x radius=8.5000mm,y radius=5.2700mm];
\draw[sd/string] ([xshift=-4.0000mm]l.south) arc[start angle=180,end angle=360,x radius=12.5000mm,y radius=7.7500mm];
\end{tikzpicture}
}

\expandafter\def\csname NGD@theta-uv\endcsname{%
\begin{tikzpicture}[sd/base,x=1mm,y=1mm]
\node[sd/box,minimum width=12.0000mm,minimum height=4.0000mm] (l) at (-8.5000,0.0000) {$u$};
\node[sd/box,minimum width=12.0000mm,minimum height=4.0000mm] (r) at (8.5000,0.0000) {$v$};
\draw[sd/string] ([xshift=4.0000mm]l.south) arc[start angle=180,end angle=360,x radius=4.5000mm,y radius=2.7900mm];
\draw[sd/string] ([xshift=0.0000mm]l.south) arc[start angle=180,end angle=360,x radius=8.5000mm,y radius=5.2700mm];
\draw[sd/string] ([xshift=-4.0000mm]l.south) arc[start angle=180,end angle=360,x radius=12.5000mm,y radius=7.7500mm];
\end{tikzpicture}
}

\expandafter\def\csname NGD@theta-xy-inserted\endcsname{%
\begin{tikzpicture}[sd/base,x=1mm,y=1mm]
\node[sd/box,minimum width=14.8754mm,minimum height=4.1976mm] (l) at (-9.9377,0.0000) {$\pi_y(g)u_h$};
\node[sd/box,minimum width=14.8754mm,minimum height=4.1976mm] (r) at (9.9377,0.0000) {$\pi_x(g)u_{-h}$};
\draw[sd/string] ([xshift=4.0000mm]l.south) arc[start angle=180,end angle=360,x radius=5.9377mm,y radius=3.6814mm];
\draw[sd/string] ([xshift=0.0000mm]l.south) arc[start angle=180,end angle=360,x radius=9.9377mm,y radius=6.1614mm];
\draw[sd/string] ([xshift=-4.0000mm]l.south) arc[start angle=180,end angle=360,x radius=13.9377mm,y radius=8.6414mm];
\end{tikzpicture}
}

\expandafter\def\csname NGD@theta-z-inserted\endcsname{%
\begin{tikzpicture}[sd/base,x=1mm,y=1mm]
\node[sd/box,minimum width=12.1155mm,minimum height=4.1117mm] (l) at (-8.5578,0.0000) {$\pi_z(l)u_h$};
\node[sd/box,minimum width=12.1155mm,minimum height=4.1117mm] (r) at (8.5578,0.0000) {$\pi_z(l)u_k$};
\draw[sd/string] ([xshift=4.0000mm]l.south) arc[start angle=180,end angle=360,x radius=4.5578mm,y radius=2.8258mm];
\draw[sd/string] ([xshift=0.0000mm]l.south) arc[start angle=180,end angle=360,x radius=8.5578mm,y radius=5.3058mm];
\draw[sd/string] ([xshift=-4.0000mm]l.south) arc[start angle=180,end angle=360,x radius=12.5578mm,y radius=7.7858mm];
\end{tikzpicture}
}

\expandafter\def\csname NGD@theta-zero\endcsname{%
\begin{tikzpicture}[sd/base,x=1mm,y=1mm]
\node[sd/box,minimum width=12.0000mm,minimum height=4.0000mm] (l) at (-8.5000,0.0000) {$u_0$};
\node[sd/box,minimum width=12.0000mm,minimum height=4.0000mm] (r) at (8.5000,0.0000) {$u_0$};
\draw[sd/string] ([xshift=4.0000mm]l.south) arc[start angle=180,end angle=360,x radius=4.5000mm,y radius=2.7900mm];
\draw[sd/string] ([xshift=0.0000mm]l.south) arc[start angle=180,end angle=360,x radius=8.5000mm,y radius=5.2700mm];
\draw[sd/string] ([xshift=-4.0000mm]l.south) arc[start angle=180,end angle=360,x radius=12.5000mm,y radius=7.7500mm];
\end{tikzpicture}
}

\expandafter\def\csname NGD@trace-dual-bubble\endcsname{%
\begin{tikzpicture}[sd/base,x=1mm,y=1mm]
\node[sd/box,minimum width=7.9933mm,minimum height=5.5745mm] (b0) at (0.0000,-6.9873) {$b_{X,j}^{'}$};
\node[sd/box,minimum width=7.7319mm,minimum height=4.1351mm] (b1) at (-0.1307,-16.2421) {$b_{X,i}$};
\draw[sd/string] (0.0000,0.0000) -- ([xshift=0.0000mm]b0.north);
\node[sd/label,anchor=south] at (0.0000,1.0000) {$X$};
\draw[sd/string] ([xshift=-2.7549mm]b0.south) -- ([xshift=-2.6242mm]b1.north);
\draw[sd/string] ([xshift=1.2451mm]b0.south) -- ([xshift=1.3758mm]b1.north);
\draw[sd/string] ([xshift=0.0000mm]b1.south) -- (-0.1307,-23.3096);
\node[sd/label,anchor=north] at (-0.1307,-24.3096) {$X$};
\end{tikzpicture}
}

\expandafter\def\csname NGD@triangle-charge\endcsname{%
\begin{tikzpicture}[sd/base,x=1mm,y=1mm]
\node[sd/box,minimum width=14.9638mm,minimum height=4.4128mm] (b0) at (1.0112,-6.4064) {$\mathsf S(\varrho^2(u_g))$};
\node[sd/box,minimum width=13.5797mm,minimum height=4.1117mm] (b1) at (-1.7032,-15.0686) {$\mathsf S(\varrho(u_k))$};
\node[sd/box,minimum width=13.7212mm,minimum height=4.1117mm] (b2) at (1.6325,-23.5803) {$\mathsf F(\varrho(u_h))$};
\draw[sd/string] (1.0112,0.0000) -- ([xshift=0.0000mm]b0.north);
\node[sd/label,anchor=south] at (1.0112,1.0000) {$\varrho$};
\draw[sd/string] ([xshift=-2.6042mm]b0.south) -- ([xshift=0.1102mm]b1.north);
\draw[sd/string] ([xshift=5.4755mm]b0.south) -- ([xshift=4.8542mm]b2.north);
\draw[sd/string] ([xshift=-5.6899mm]b1.south) -- (-7.3931,-30.6361);
\node[sd/label,anchor=north] at (-7.3931,-31.6361) {$\varrho$};
\draw[sd/string] ([xshift=1.6560mm]b1.south) -- ([xshift=-1.6796mm]b2.north);
\draw[sd/string] ([xshift=0.0000mm]b2.south) -- (1.6325,-30.6361);
\node[sd/label,anchor=north] at (1.6325,-31.6361) {$\varrho$};
\end{tikzpicture}
}

\expandafter\def\csname NGD@triangle-inserted\endcsname{%
\begin{tikzpicture}[sd/base,x=1mm,y=1mm]
\node[sd/box,minimum width=21.4436mm,minimum height=4.4128mm] (b0) at (1.0112,-6.4064) {$\mathsf S(\varrho^2(\pi_z(l)u_g))$};
\node[sd/box,minimum width=20.0595mm,minimum height=4.1117mm] (b1) at (-1.7032,-15.0686) {$\mathsf S(\varrho(\pi_z(l)u_k))$};
\node[sd/box,minimum width=20.2010mm,minimum height=4.1117mm] (b2) at (1.6325,-23.5803) {$\mathsf F(\varrho(\pi_z(l)u_h))$};
\draw[sd/string] (1.0112,0.0000) -- ([xshift=0.0000mm]b0.north);
\node[sd/label,anchor=south] at (1.0112,1.0000) {$\varrho$};
\draw[sd/string] ([xshift=-3.1441mm]b0.south) -- ([xshift=-0.4298mm]b1.north);
\draw[sd/string] ([xshift=8.7154mm]b0.south) -- ([xshift=8.0941mm]b2.north);
\draw[sd/string] ([xshift=-8.9298mm]b1.south) -- (-10.6330,-30.6361);
\node[sd/label,anchor=north] at (-10.6330,-31.6361) {$\varrho$};
\draw[sd/string] ([xshift=1.6560mm]b1.south) -- ([xshift=-1.6796mm]b2.north);
\draw[sd/string] ([xshift=0.0000mm]b2.south) -- (1.6325,-30.6361);
\node[sd/label,anchor=north] at (1.6325,-31.6361) {$\varrho$};
\end{tikzpicture}
}

\expandafter\def\csname NGD@unroot-f\endcsname{%
\begin{tikzpicture}[sd/base,x=1mm,y=1mm]
\node[sd/box,minimum width=7.9518mm,minimum height=4.0000mm] (b2) at (-5.5690,-18.5328) {$f$};
\draw[sd/string] (-8.4449,-5.5769) -- ([xshift=-2.8759mm]b2.north);
\node[sd/label,anchor=west] at (-7.4449,-7.7769) {$X$};
\draw[sd/string] (9.5449,-5.5769) -- (9.5449,-25.5328);
\node[sd/label,anchor=north] at (9.5449,-26.5328) {$X^*$};
\draw[sd/string] (-3.4684,-12.1328) -- ([xshift=2.1006mm]b2.north);
\node[sd/label,anchor=west] at (-2.4684,-14.3328) {$Y$};
\draw[sd/string] (3.4860,-12.1328) -- (3.4860,-25.5328);
\node[sd/label,anchor=north] at (3.4860,-26.5328) {$Y^*$};
\draw[sd/string] ([xshift=0.0000mm]b2.south) -- (-5.5690,-25.5328);
\node[sd/label,anchor=north] at (-5.5690,-26.5328) {$Z$};
\draw[sd/string] (-8.4449,-5.5769) arc[start angle=180,end angle=0,x radius=8.9949mm,y radius=5.5769mm];
\draw[sd/string] (-3.4684,-12.1328) arc[start angle=180,end angle=0,x radius=3.4772mm,y radius=2.1559mm];
\end{tikzpicture}
}

\expandafter\def\csname NGD@xx-fused\endcsname{%
\begin{tikzpicture}[sd/base,x=1mm,y=1mm]
\node[sd/box,minimum width=12.1304mm,minimum height=5.3886mm] (b0) at (1.6325,-6.8943) {$L_{g+h}^{-1}$};
\node[sd/box,minimum width=9.6853mm,minimum height=4.1273mm] (b1) at (-2.8550,-16.0523) {$R_{g+h}$};
\node[sd/box,minimum width=11.6526mm,minimum height=4.0000mm] (b2) at (1.8713,-24.5159) {$u$};
\draw[sd/string] (-6.5977,0.0000) -- ([xshift=-3.7427mm]b1.north);
\node[sd/label,anchor=south] at (-6.5977,1.0000) {$\varrho$};
\draw[sd/string] (1.6325,0.0000) -- ([xshift=0.0000mm]b0.north);
\node[sd/label,anchor=south] at (1.6325,1.0000) {$\varrho$};
\draw[sd/string] ([xshift=-4.2301mm]b0.south) -- ([xshift=0.2573mm]b1.north);
\node[sd/label,anchor=west] at (-1.5977,-11.7886) {$g+h$};
\draw[sd/string] ([xshift=4.9652mm]b0.south) -- ([xshift=4.7263mm]b2.north);
\draw[sd/string] ([xshift=1.3921mm]b1.south) -- ([xshift=-3.3342mm]b2.north);
\draw[sd/string] ([xshift=0.0000mm]b2.south) -- (1.8713,-31.5159);
\node[sd/label,anchor=north] at (1.8713,-32.5159) {$\varrho$};
\end{tikzpicture}
}

\expandafter\def\csname NGD@xx-stacked\endcsname{%
\begin{tikzpicture}[sd/base,x=1mm,y=1mm]
\node[sd/box,minimum width=7.9267mm,minimum height=4.8321mm] (b0) at (1.6325,-6.6161) {$L_{g}^{-1}$};
\node[sd/box,minimum width=6.5000mm,minimum height=4.0000mm] (b1) at (-2.3458,-15.4321) {$R_{g}$};
\node[sd/box,minimum width=7.9267mm,minimum height=4.9786mm] (b2) at (1.6325,-24.3214) {$L_{h}^{-1}$};
\node[sd/box,minimum width=7.2909mm,minimum height=4.0000mm] (b3) at (-1.9504,-33.2107) {$R_{h}$};
\node[sd/box,minimum width=9.6163mm,minimum height=4.0000mm] (b4) at (0.7877,-41.6107) {$u$};
\draw[sd/string] (-4.4958,0.0000) -- ([xshift=-2.1500mm]b1.north);
\node[sd/label,anchor=south] at (-4.4958,1.0000) {$\varrho$};
\draw[sd/string] (1.6325,0.0000) -- ([xshift=0.0000mm]b0.north);
\node[sd/label,anchor=south] at (1.6325,1.0000) {$\varrho$};
\draw[sd/string] ([xshift=-2.1283mm]b0.south) -- ([xshift=1.8500mm]b1.north);
\node[sd/label,anchor=west] at (0.5042,-11.2321) {$g$};
\draw[sd/string] ([xshift=1.8717mm]b0.south) -- ([xshift=1.8717mm]b2.north);
\draw[sd/string] ([xshift=-2.1500mm]b1.south) -- ([xshift=-2.5454mm]b3.north);
\draw[sd/string] ([xshift=-2.1283mm]b2.south) -- ([xshift=1.4546mm]b3.north);
\node[sd/label,anchor=west] at (0.5042,-29.0107) {$h$};
\draw[sd/string] ([xshift=1.8932mm]b2.south) -- ([xshift=2.7380mm]b4.north);
\draw[sd/string] ([xshift=0.5677mm]b3.south) -- ([xshift=-2.1704mm]b4.north);
\draw[sd/string] ([xshift=0.0000mm]b4.south) -- (0.7877,-48.6107);
\node[sd/label,anchor=north] at (0.7877,-49.6107) {$\varrho$};
\end{tikzpicture}
}

\expandafter\def\csname NGD@xy-left\endcsname{%
\begin{tikzpicture}[sd/base,x=1mm,y=1mm]
\node[sd/box,minimum width=7.9267mm,minimum height=4.8321mm] (b0) at (1.6325,-6.6161) {$L_{g}^{-1}$};
\node[sd/box,minimum width=6.5000mm,minimum height=4.0000mm] (b1) at (-2.3458,-15.4321) {$R_{g}$};
\node[sd/box,minimum width=8.3867mm,minimum height=5.2715mm] (b2) at (1.4025,-24.4679) {$R_{-h}^{-1}$};
\node[sd/box,minimum width=7.2849mm,minimum height=4.0000mm] (b3) at (-1.9534,-33.5036) {$u$};
\node[sd/box,minimum width=9.6408mm,minimum height=4.0101mm] (b4) at (0.7754,-41.9086) {$R_{-h}$};
\draw[sd/string] (-4.4958,0.0000) -- ([xshift=-2.1500mm]b1.north);
\node[sd/label,anchor=south] at (-4.4958,1.0000) {$\varrho$};
\draw[sd/string] (1.6325,0.0000) -- ([xshift=0.0000mm]b0.north);
\node[sd/label,anchor=south] at (1.6325,1.0000) {$\varrho$};
\draw[sd/string] ([xshift=-2.1283mm]b0.south) -- ([xshift=1.8500mm]b1.north);
\node[sd/label,anchor=west] at (0.5042,-11.2321) {$g$};
\draw[sd/string] ([xshift=1.8717mm]b0.south) -- ([xshift=2.1017mm]b2.north);
\draw[sd/string] ([xshift=-2.1500mm]b1.south) -- ([xshift=-2.5424mm]b3.north);
\draw[sd/string] ([xshift=-1.8983mm]b2.south) -- ([xshift=1.4576mm]b3.north);
\draw[sd/string] ([xshift=2.1017mm]b2.south) -- ([xshift=2.7288mm]b4.north);
\node[sd/label,anchor=west] at (4.5042,-29.3036) {$-h$};
\draw[sd/string] ([xshift=0.5610mm]b3.south) -- ([xshift=-2.1678mm]b4.north);
\draw[sd/string] ([xshift=-0.0000mm]b4.south) -- (0.7754,-48.9137);
\node[sd/label,anchor=north] at (0.7754,-49.9137) {$\varrho$};
\end{tikzpicture}
}

\expandafter\def\csname NGD@xy-right\endcsname{%
\begin{tikzpicture}[sd/base,x=1mm,y=1mm]
\node[sd/box,minimum width=8.3867mm,minimum height=5.2715mm] (b0) at (2.9067,-6.8357) {$R_{-h}^{-1}$};
\node[sd/box,minimum width=8.4351mm,minimum height=4.8321mm] (b1) at (0.3825,-16.2875) {$L_{g}^{-1}$};
\node[sd/box,minimum width=7.5447mm,minimum height=4.0000mm] (b2) at (-3.3277,-25.1036) {$R_{g}$};
\node[sd/box,minimum width=9.3425mm,minimum height=4.0000mm] (b3) at (-0.0713,-33.5036) {$u$};
\node[sd/box,minimum width=8.3867mm,minimum height=4.0101mm] (b4) at (2.9067,-41.9086) {$R_{-h}$};
\draw[sd/string] (-6.0000,0.0000) -- ([xshift=-2.6723mm]b2.north);
\node[sd/label,anchor=south] at (-6.0000,1.0000) {$\varrho$};
\draw[sd/string] (2.9067,0.0000) -- ([xshift=0.0000mm]b0.north);
\node[sd/label,anchor=south] at (2.9067,1.0000) {$\varrho$};
\draw[sd/string] ([xshift=-1.9610mm]b0.south) -- ([xshift=0.5632mm]b1.north);
\draw[sd/string] ([xshift=3.0933mm]b0.south) -- ([xshift=3.0933mm]b4.north);
\node[sd/label,anchor=west] at (7.0000,-11.6715) {$-h$};
\draw[sd/string] ([xshift=-2.3825mm]b1.south) -- ([xshift=1.3277mm]b2.north);
\node[sd/label,anchor=west] at (-1.0000,-20.9036) {$g$};
\draw[sd/string] ([xshift=1.6175mm]b1.south) -- ([xshift=2.0713mm]b3.north);
\draw[sd/string] ([xshift=0.8497mm]b2.south) -- ([xshift=-2.4067mm]b3.north);
\draw[sd/string] ([xshift=0.7901mm]b3.south) -- ([xshift=-2.1879mm]b4.north);
\draw[sd/string] ([xshift=0.0000mm]b4.south) -- (2.9067,-48.9137);
\node[sd/label,anchor=north] at (2.9067,-49.9137) {$\varrho$};
\end{tikzpicture}
}

\expandafter\def\csname NGD@yy-fused\endcsname{%
\begin{tikzpicture}[sd/base,x=1mm,y=1mm]
\node[sd/box,minimum width=13.7467mm,minimum height=5.6229mm] (b0) at (1.6325,-7.0115) {$R_{-(g+h)}^{-1}$};
\node[sd/box,minimum width=9.6000mm,minimum height=4.0000mm] (b1) at (-3.7058,-16.2229) {$u$};
\node[sd/box,minimum width=17.0116mm,minimum height=4.3616mm] (b2) at (0.0000,-24.8037) {$R_{-(g+h)}$};
\draw[sd/string] (-7.4058,0.0000) -- ([xshift=-3.7000mm]b1.north);
\node[sd/label,anchor=south] at (-7.4058,1.0000) {$\varrho$};
\draw[sd/string] (1.6325,0.0000) -- ([xshift=0.0000mm]b0.north);
\node[sd/label,anchor=south] at (1.6325,1.0000) {$\varrho$};
\draw[sd/string] ([xshift=-3.0147mm]b0.south) -- ([xshift=2.3236mm]b1.north);
\draw[sd/string] ([xshift=1.7470mm]b0.south) -- ([xshift=3.3794mm]b2.north);
\node[sd/label,anchor=west] at (4.3794,-12.0229) {$-(g+h)$};
\draw[sd/string] ([xshift=0.4353mm]b1.south) -- ([xshift=-3.2705mm]b2.north);
\draw[sd/string] ([xshift=0.0000mm]b2.south) -- (0.0000,-31.9845);
\node[sd/label,anchor=north] at (0.0000,-32.9845) {$\varrho$};
\end{tikzpicture}
}

\expandafter\def\csname NGD@yy-stacked\endcsname{%
\begin{tikzpicture}[sd/base,x=1mm,y=1mm]
\node[sd/box,minimum width=9.8382mm,minimum height=5.2715mm] (b0) at (3.3534,-6.8357) {$R_{-g}^{-1}$};
\node[sd/box,minimum width=9.0839mm,minimum height=5.2715mm] (b1) at (-0.4657,-16.5072) {$R_{-h}^{-1}$};
\node[sd/box,minimum width=7.3798mm,minimum height=4.0000mm] (b2) at (-4.5827,-25.5429) {$u$};
\node[sd/box,minimum width=13.5336mm,minimum height=4.0101mm] (b3) at (-0.9943,-33.9480) {$R_{-h}$};
\node[sd/box,minimum width=10.3668mm,minimum height=4.0101mm] (b4) at (3.0891,-42.3581) {$R_{-g}$};
\draw[sd/string] (-7.1725,0.0000) -- ([xshift=-2.5899mm]b2.north);
\node[sd/label,anchor=south] at (-7.1725,1.0000) {$\varrho$};
\draw[sd/string] (3.3534,0.0000) -- ([xshift=-0.0000mm]b0.north);
\node[sd/label,anchor=south] at (3.3534,1.0000) {$\varrho$};
\draw[sd/string] ([xshift=-2.7294mm]b0.south) -- ([xshift=1.0897mm]b1.north);
\draw[sd/string] ([xshift=3.8191mm]b0.south) -- ([xshift=4.0834mm]b4.north);
\node[sd/label,anchor=west] at (8.1725,-11.6715) {$-g$};
\draw[sd/string] ([xshift=-2.7069mm]b1.south) -- ([xshift=1.4101mm]b2.north);
\draw[sd/string] ([xshift=1.2931mm]b1.south) -- ([xshift=1.8217mm]b3.north);
\node[sd/label,anchor=west] at (1.8275,-21.3429) {$-h$};
\draw[sd/string] ([xshift=0.6664mm]b2.south) -- ([xshift=-2.9220mm]b3.north);
\draw[sd/string] ([xshift=1.1778mm]b3.south) -- ([xshift=-2.9056mm]b4.north);
\draw[sd/string] ([xshift=-0.0000mm]b4.south) -- (3.0891,-49.3632);
\node[sd/label,anchor=north] at (3.0891,-50.3632) {$\varrho$};
\end{tikzpicture}
}

\expandafter\def\csname NGD@yz-left\endcsname{%
\begin{tikzpicture}[sd/base,x=1mm,y=1mm]
\node[sd/box,minimum width=8.2136mm,minimum height=5.2715mm] (b0) at (3.4040,-6.8357) {$R_{-g}^{-1}$};
\node[sd/box,minimum width=7.9267mm,minimum height=4.9786mm] (b1) at (-3.5474,-16.3607) {$L_{h}^{-1}$};
\node[sd/box,minimum width=6.5000mm,minimum height=4.0000mm] (b2) at (0.5608,-25.2500) {$u$};
\node[sd/box,minimum width=11.0017mm,minimum height=4.0000mm] (b3) at (-2.0099,-33.6500) {$L_{h}$};
\node[sd/box,minimum width=10.6207mm,minimum height=4.0101mm] (b4) at (2.2004,-42.0551) {$R_{-g}$};
\draw[sd/string] (-3.5474,0.0000) -- ([xshift=0.0000mm]b1.north);
\node[sd/label,anchor=south] at (-3.5474,1.0000) {$\varrho$};
\draw[sd/string] (3.4040,0.0000) -- ([xshift=0.0000mm]b0.north);
\node[sd/label,anchor=south] at (3.4040,1.0000) {$\varrho$};
\draw[sd/string] ([xshift=-0.9932mm]b0.south) -- ([xshift=1.8500mm]b2.north);
\draw[sd/string] ([xshift=3.0068mm]b0.south) -- ([xshift=4.2103mm]b4.north);
\node[sd/label,anchor=west] at (7.4108,-11.6715) {$-g$};
\draw[sd/string] ([xshift=-2.8634mm]b1.south) -- ([xshift=-4.4008mm]b3.north);
\node[sd/label,anchor=west] at (-5.4108,-21.0500) {$h$};
\draw[sd/string] ([xshift=1.9582mm]b1.south) -- ([xshift=-2.1500mm]b2.north);
\draw[sd/string] ([xshift=-0.3685mm]b2.south) -- ([xshift=2.2021mm]b3.north);
\draw[sd/string] ([xshift=1.2201mm]b3.south) -- ([xshift=-2.9902mm]b4.north);
\draw[sd/string] ([xshift=0.0000mm]b4.south) -- (2.2004,-49.0602);
\node[sd/label,anchor=north] at (2.2004,-50.0602) {$\varrho$};
\end{tikzpicture}
}

\expandafter\def\csname NGD@yz-right\endcsname{%
\begin{tikzpicture}[sd/base,x=1mm,y=1mm]
\node[sd/box,minimum width=7.9267mm,minimum height=4.9786mm] (b0) at (-3.8042,-6.6893) {$L_{h}^{-1}$};
\node[sd/box,minimum width=8.2136mm,minimum height=5.2715mm] (b1) at (3.6608,-16.2143) {$R_{-g}^{-1}$};
\node[sd/box,minimum width=6.8466mm,minimum height=4.0000mm] (b2) at (0.4772,-25.2500) {$u$};
\node[sd/box,minimum width=10.7136mm,minimum height=4.0101mm] (b3) at (2.4108,-33.6551) {$R_{-g}$};
\node[sd/box,minimum width=11.2783mm,minimum height=4.0000mm] (b4) at (-2.1284,-42.0602) {$L_{h}$};
\draw[sd/string] (-3.8042,0.0000) -- ([xshift=0.0000mm]b0.north);
\node[sd/label,anchor=south] at (-3.8042,1.0000) {$\varrho$};
\draw[sd/string] (3.6608,0.0000) -- ([xshift=0.0000mm]b1.north);
\node[sd/label,anchor=south] at (3.6608,1.0000) {$\varrho$};
\draw[sd/string] ([xshift=-2.8634mm]b0.south) -- ([xshift=-4.5392mm]b4.north);
\node[sd/label,anchor=west] at (-5.6676,-11.3786) {$h$};
\draw[sd/string] ([xshift=1.9582mm]b0.south) -- ([xshift=-2.3233mm]b2.north);
\draw[sd/string] ([xshift=-1.5068mm]b1.south) -- ([xshift=1.6767mm]b2.north);
\draw[sd/string] ([xshift=2.4932mm]b1.south) -- ([xshift=3.7432mm]b3.north);
\node[sd/label,anchor=west] at (7.1540,-21.0500) {$-g$};
\draw[sd/string] ([xshift=0.0740mm]b2.south) -- ([xshift=-1.8596mm]b3.north);
\draw[sd/string] ([xshift=-1.3297mm]b3.south) -- ([xshift=3.2094mm]b4.north);
\draw[sd/string] ([xshift=0.0000mm]b4.south) -- (-2.1284,-49.0602);
\node[sd/label,anchor=north] at (-2.1284,-50.0602) {$\varrho$};
\end{tikzpicture}
}

\expandafter\def\csname NGD@zigzag-left\endcsname{%
\begin{tikzpicture}[sd/base,x=1mm,y=1mm]
\draw[sd/string] (6.0689,0.0000) -- (6.0689,-9.8400);
\node[sd/label,anchor=south] at (6.0689,1.0000) {$X$};
\draw[sd/string] (-6.0689,-5.4400) -- (-6.0689,-17.3627);
\node[sd/label,anchor=north] at (-6.0689,-18.3627) {$X$};
\draw[sd/string] (-2.0689,-5.4400) -- (-2.0689,-9.8400);
\node[sd/label,anchor=west] at (-1.0689,-7.6400) {$X^*$};
\draw[sd/string] (-6.0689,-5.4400) arc[start angle=180,end angle=0,x radius=2.0000mm,y radius=1.2400mm];
\draw[sd/string] (-2.0689,-9.8400) arc[start angle=180,end angle=360,x radius=4.0689mm,y radius=2.5227mm];
\end{tikzpicture}
}

\expandafter\def\csname NGD@zigzag-right\endcsname{%
\begin{tikzpicture}[sd/base,x=1mm,y=1mm]
\draw[sd/string] (-6.0689,0.0000) -- (-6.0689,-11.1227);
\node[sd/label,anchor=south] at (-6.0689,1.0000) {$X$};
\draw[sd/string] (-2.0689,-6.7227) -- (-2.0689,-11.1227);
\node[sd/label,anchor=west] at (-1.0689,-8.9227) {$X^*$};
\draw[sd/string] (6.0689,-6.7227) -- (6.0689,-17.3627);
\node[sd/label,anchor=north] at (6.0689,-18.3627) {$X$};
\draw[sd/string] (-2.0689,-6.7227) arc[start angle=180,end angle=0,x radius=4.0689mm,y radius=2.5227mm];
\draw[sd/string] (-6.0689,-11.1227) arc[start angle=180,end angle=360,x radius=2.0000mm,y radius=1.2400mm];
\end{tikzpicture}
}

\expandafter\def\csname NGD@zx-left\endcsname{%
\begin{tikzpicture}[sd/base,x=1mm,y=1mm]
\node[sd/box,minimum width=7.9267mm,minimum height=4.8321mm] (b0) at (-3.5613,-6.6161) {$L_{g}^{-1}$};
\node[sd/box,minimum width=7.9267mm,minimum height=4.9786mm] (b1) at (3.5613,-15.9214) {$L_{h}^{-1}$};
\node[sd/box,minimum width=6.8582mm,minimum height=4.0000mm] (b2) at (0.5270,-24.8107) {$R_{h}$};
\node[sd/box,minimum width=10.2356mm,minimum height=4.0000mm] (b3) at (2.4068,-33.2107) {$u$};
\node[sd/box,minimum width=11.0314mm,minimum height=4.0000mm] (b4) at (-2.0089,-41.6107) {$L_{g}$};
\draw[sd/string] (-3.5613,0.0000) -- ([xshift=0.0000mm]b0.north);
\node[sd/label,anchor=south] at (-3.5613,1.0000) {$\varrho$};
\draw[sd/string] (3.5613,0.0000) -- ([xshift=0.0000mm]b1.north);
\node[sd/label,anchor=south] at (3.5613,1.0000) {$\varrho$};
\draw[sd/string] ([xshift=-2.8634mm]b0.south) -- ([xshift=-4.4157mm]b4.north);
\node[sd/label,anchor=west] at (-5.4246,-11.2321) {$g$};
\draw[sd/string] ([xshift=1.7592mm]b0.south) -- ([xshift=-2.3291mm]b2.north);
\draw[sd/string] ([xshift=-1.3634mm]b1.south) -- ([xshift=1.6709mm]b2.north);
\node[sd/label,anchor=west] at (3.1979,-20.6107) {$h$};
\draw[sd/string] ([xshift=2.6582mm]b1.south) -- ([xshift=3.8126mm]b3.north);
\draw[sd/string] ([xshift=0.0869mm]b2.south) -- ([xshift=-1.7929mm]b3.north);
\draw[sd/string] ([xshift=-1.2886mm]b3.south) -- ([xshift=3.1271mm]b4.north);
\draw[sd/string] ([xshift=0.0000mm]b4.south) -- (-2.0089,-48.6107);
\node[sd/label,anchor=north] at (-2.0089,-49.6107) {$\varrho$};
\end{tikzpicture}
}

\expandafter\def\csname NGD@zx-right\endcsname{%
\begin{tikzpicture}[sd/base,x=1mm,y=1mm]
\node[sd/box,minimum width=7.9267mm,minimum height=4.9786mm] (b0) at (1.6325,-6.6893) {$L_{h}^{-1}$};
\node[sd/box,minimum width=7.2579mm,minimum height=4.0000mm] (b1) at (-1.9668,-15.5786) {$R_{h}$};
\node[sd/box,minimum width=7.9267mm,minimum height=4.8321mm] (b2) at (-1.6325,-24.3946) {$L_{g}^{-1}$};
\node[sd/box,minimum width=6.5000mm,minimum height=4.0000mm] (b3) at (2.2767,-33.2107) {$u$};
\node[sd/box,minimum width=8.9725mm,minimum height=4.0000mm] (b4) at (-1.1096,-41.6107) {$L_{g}$};
\draw[sd/string] (-4.4958,0.0000) -- ([xshift=-2.5290mm]b1.north);
\node[sd/label,anchor=south] at (-4.4958,1.0000) {$\varrho$};
\draw[sd/string] (1.6325,0.0000) -- ([xshift=0.0000mm]b0.north);
\node[sd/label,anchor=south] at (1.6325,1.0000) {$\varrho$};
\draw[sd/string] ([xshift=-1.8554mm]b0.south) -- ([xshift=1.7439mm]b1.north);
\node[sd/label,anchor=west] at (0.7771,-11.3786) {$h$};
\draw[sd/string] ([xshift=2.4942mm]b0.south) -- ([xshift=1.8500mm]b3.north);
\draw[sd/string] ([xshift=0.1672mm]b1.south) -- ([xshift=-0.1672mm]b2.north);
\draw[sd/string] ([xshift=-2.8634mm]b2.south) -- ([xshift=-3.3863mm]b4.north);
\node[sd/label,anchor=west] at (-3.4958,-29.0107) {$g$};
\draw[sd/string] ([xshift=1.7592mm]b2.south) -- ([xshift=-2.1500mm]b3.north);
\draw[sd/string] ([xshift=-0.9454mm]b3.south) -- ([xshift=2.4408mm]b4.north);
\draw[sd/string] ([xshift=0.0000mm]b4.south) -- (-1.1096,-48.6107);
\node[sd/label,anchor=north] at (-1.1096,-49.6107) {$\varrho$};
\end{tikzpicture}
}

\expandafter\def\csname NGD@zz-fused\endcsname{%
\begin{tikzpicture}[sd/base,x=1mm,y=1mm]
\node[sd/box,minimum width=12.9304mm,minimum height=5.3886mm] (b0) at (-1.7825,-6.8943) {$L_{g+h}^{-1}$};
\node[sd/box,minimum width=6.5000mm,minimum height=4.0000mm] (b1) at (4.9977,-15.9886) {$u$};
\node[sd/box,minimum width=14.3453mm,minimum height=4.1273mm] (b2) at (-1.0750,-24.4523) {$L_{g+h}$};
\draw[sd/string] (-1.7825,0.0000) -- ([xshift=0.0000mm]b0.north);
\node[sd/label,anchor=south] at (-1.7825,1.0000) {$\varrho$};
\draw[sd/string] (6.8477,0.0000) -- ([xshift=1.8500mm]b1.north);
\node[sd/label,anchor=south] at (6.8477,1.0000) {$\varrho$};
\draw[sd/string] ([xshift=-5.3652mm]b0.south) -- ([xshift=-6.0727mm]b2.north);
\node[sd/label,anchor=west] at (-6.1477,-11.7886) {$g+h$};
\draw[sd/string] ([xshift=4.6301mm]b0.south) -- ([xshift=-2.1500mm]b1.north);
\draw[sd/string] ([xshift=-1.8409mm]b1.south) -- ([xshift=4.2318mm]b2.north);
\draw[sd/string] ([xshift=0.0000mm]b2.south) -- (-1.0750,-31.5159);
\node[sd/label,anchor=north] at (-1.0750,-32.5159) {$\varrho$};
\end{tikzpicture}
}

\expandafter\def\csname NGD@zz-stacked\endcsname{%
\begin{tikzpicture}[sd/base,x=1mm,y=1mm]
\node[sd/box,minimum width=9.6859mm,minimum height=4.8321mm] (b0) at (-3.1291,-6.6161) {$L_{g}^{-1}$};
\node[sd/box,minimum width=7.9267mm,minimum height=4.9786mm] (b1) at (0.6139,-15.9214) {$L_{h}^{-1}$};
\node[sd/box,minimum width=6.5000mm,minimum height=4.0000mm] (b2) at (4.7220,-24.8107) {$u$};
\node[sd/box,minimum width=7.0215mm,minimum height=4.0000mm] (b3) at (0.1613,-33.2107) {$L_{h}$};
\node[sd/box,minimum width=9.2333mm,minimum height=4.0000mm] (b4) at (-3.3554,-41.6107) {$L_{g}$};
\draw[sd/string] (-3.1291,0.0000) -- ([xshift=0.0000mm]b0.north);
\node[sd/label,anchor=south] at (-3.1291,1.0000) {$\varrho$};
\draw[sd/string] (6.7421,0.0000) -- ([xshift=2.0201mm]b2.north);
\node[sd/label,anchor=south] at (6.7421,1.0000) {$\varrho$};
\draw[sd/string] ([xshift=-3.7429mm]b0.south) -- ([xshift=-3.5166mm]b4.north);
\node[sd/label,anchor=west] at (-5.8720,-11.2321) {$g$};
\draw[sd/string] ([xshift=2.6786mm]b0.south) -- ([xshift=-1.0643mm]b1.north);
\draw[sd/string] ([xshift=-2.8634mm]b1.south) -- ([xshift=-2.4108mm]b3.north);
\node[sd/label,anchor=west] at (-1.2495,-20.6107) {$h$};
\draw[sd/string] ([xshift=2.1283mm]b1.south) -- ([xshift=-1.9799mm]b2.north);
\draw[sd/string] ([xshift=-2.1500mm]b2.south) -- ([xshift=2.4108mm]b3.north);
\draw[sd/string] ([xshift=-0.9889mm]b3.south) -- ([xshift=2.5278mm]b4.north);
\draw[sd/string] ([xshift=0.0000mm]b4.south) -- (-3.3554,-48.6107);
\node[sd/label,anchor=north] at (-3.3554,-49.6107) {$\varrho$};
\end{tikzpicture}
}

\hypersetup{pdftitle={Triangular prism equations for near-group categories: algebraic solutions and a unitary conjecture},
 pdfauthor={Huixuan He, Zhengwei Liu, Fan Lu, Sebastien Palcoux, Yunxiang Ren}}
\begin{document}
\title[Triangular prism equations for near-group categories]
{\texorpdfstring{{Triangular prism equations for near-group categories}: {algebraic solutions and a unitary conjecture}}
{Triangular prism equations for near-group categories: algebraic solutions and a unitary conjecture}}
\author{Huixuan He}
\address{H.~He, Department of Mathematics, Tsinghua University, Beijing, China}
\email{hhx22@mails.tsinghua.edu.cn}

\author{Zhengwei Liu}
\address{Z.~Liu, Yau Mathematical Sciences Center and Department of Mathematics, Tsinghua University, and Beijing Institute of Mathematical Sciences and Applications, Beijing, China}
\email{liuzhengwei@mail.tsinghua.edu.cn}

\author{Fan Lu}
\address{F.~Lu, Yau Mathematical Sciences Center, Tsinghua University, and Beijing Institute of Mathematical Sciences and Applications, Beijing, China}
\email{lufan@bimsa.cn}

\author{Sebastien Palcoux}
\address{S.~Palcoux, Beijing Institute of Mathematical Sciences and Applications, Huairou District, Beijing, China}
\email{sebastienpalcoux@gmail.com}

\author{Yunxiang Ren}
\address{Y.~Ren, LinkedIn Corporation, Sunnyvale, CA, USA}
\email{renyunxiang@gmail.com}

\date{}
\subjclass[2020]{18M20, 18M15, 46L37, 11R37, 11R42}
\keywords{{Near-group fusion category, triangular prism equation, hyperbolic gamma function, Stark conjecture, Hilbert's twelfth problem}}

\begin{abstract}
  We derive the triangular prism equations (TPE) for near-group fusion rules of type $G+|G|$, where $G$ is a finite abelian group. We prove that spherical categorification with trivial Frobenius--Schur indicator over any field in which $|G|$ is invertible is equivalent to the solvability of the TPE system. For every cyclic group, we construct an explicit solution to the TPE system using hyperbolic gamma functions, yielding both a spherical category with negative simple-object dimensions and, after Galois conjugation, a pseudo-unitary category with positive dimensions. For scalar solutions with positive dimensions, we establish an exact singular-value formula and a uniform gap, which provide explicit sufficient criteria for unitarity. Finally, we show that the unitarity of the cyclic Galois-conjugated coefficients for arbitrary orders corresponds to a specialization of a real-multiplication conjecture implied by the order-one abelian Stark conjecture, revealing a deep connection between categorical unitarity and Hilbert's twelfth problem for real quadratic fields.
\end{abstract}

\maketitle
\begin{center}\textit{In memory of Vaughan Jones}\end{center}
\tableofcontents

\section{Introduction}\label{sec: intro}

Near-group fusion rules consist of a finite group $G$ of invertible
simple objects and one non-invertible simple object $\varrho$, with
\[
 g\varrho=\varrho g=\varrho,\qquad
 \varrho^2=\sum_{g\in G}g+m\varrho.
\]
The case $m=0$ corresponds to classical Tambara--Yamagami categories \cite{TY}, whereas Siehler examined the general associative structures for arbitrary $m$ \cite{Siehler}. In this paper, we study the near-group case $m=|G|=n$ for finite abelian groups $G$. Our scalar system of equations and categorical reconstruction apply to every finite abelian $G$, while our explicit complex-analytic constructions specialize to cyclic groups $G=C_n$.

Historically, the classification and construction of near-group fusion rules were deeply rooted in subfactor theory. In his foundational work \cite{Izumi2001LongoRehren2}, Izumi associated two distinct nonlinear systems of equations to finite abelian groups $G$, leading to two celebrated infinite families of conjectural subfactors:
\begin{enumerate}
    \item \textbf{The Haagerup--Izumi series}: Defined for odd-order abelian groups $G$, these subfactors have index $\frac{|G|+2+\sqrt{|G|^2+4}}{2}$. The case $G=\mathbb{Z}_3$ yields the famous exotic Haagerup subfactor, with higher $G$ explored extensively by Evans and Gannon \cite{EvaGan2017}.
    \item \textbf{The Near-Group / Izumi--Xu series} ($m=|G|$): Defined for arbitrary finite abelian groups $G$, these subfactors have index $\frac{|G|+2+\sqrt{|G|^2+4|G|}}{2}$ and possess a star-like principal graph $2^{G}1$ (consisting of a central vertex $\varrho$, one arm of length $1$, and $|G|$ arms of length $2$ corresponding to the elements of $G$, as shown in Figure~\ref{fig:2^G1principalgraph}). For small groups, $G=\mathbb{Z}_1$ and $G=\mathbb{Z}_2$ recover the index $<4$ subfactors $A_4$ and $E_6$ respectively, while $G=\mathbb{Z}_3$ realizes the Izumi--Xu subfactor.
\end{enumerate}
\begin{figure}[h]
\centering
\begin{tikzpicture}[baseline=-.1cm,scale=.6]
	\filldraw (1,0) circle (.1cm) node [above] {{\scriptsize{$1$}}};
	\filldraw (2,0) circle (.1cm);
	\filldraw (3,0) circle (.1cm) node [below] {{\scriptsize{$\varrho$}}};
	\filldraw (3,.6) circle (.1cm);
	\filldraw (4,.4) circle (.1cm);
	\filldraw (4,-.4) circle (.1cm);
	\filldraw (5,.4) circle (.1cm) node [above] {{\scriptsize{$g$}}};
	\filldraw (5,-.4) circle (.1cm) node [below] {{\scriptsize{$h$}}};
	\draw[] (1,0)--(3,0);
	\draw (3,.6)--(3,0)--(4,.4)--(5,.4);
	\draw (5,-.4)--(4,-.4)--(3,0);
	\draw[thick, dotted]  (4,.4)--(4,-.4);
	\draw[thick, dotted]  (5,.4)--(5,-.4);
\end{tikzpicture}
\caption{The $2^G 1$ principal graph.}\label{fig:2^G1principalgraph}
\end{figure}

To properly contextually locate these subfactors, one distinguishes three progressively stronger categorical and operator-algebraic structures:
\[
\text{spherical fusion category}
\ \subset\
\text{unitary fusion category}
\ \subset\
\text{subfactor realization}.
\]
\begin{enumerate}
    \item \textbf{Spherical structure}: A pivotal tensor category equipped with a spherical trace, establishing purely algebraic notions of dimension and duality.
    \item \textbf{Unitary structure}: A $C^*$ tensor category equipped with a positive-definite Hermitian inner product on morphism spaces.
    \item \textbf{Subfactor structure}: A unitary fusion category equipped with a (connected) Frobenius algebra (or Q-system), encoding the subfactor realization $N \subset M$.
\end{enumerate}

Izumi's Cuntz-algebra approach \cite{Izumi2017} directly targets subfactor and $C^*$ near-group structures by realizing fusion categories through endomorphisms of Cuntz algebras. This operator-algebraic program spurred significant progress, including investigations into Drinfeld centers by Evans and Gannon \cite{EvaGan}, categorifiability constraints by Schopieray \cite{Schopieray}, super-modular near-group centers by Rowell, Sun, and Zheng \cite{RSZ}, and modular data evaluations by Yu and Zheng \cite{YZ}.

In contrast to Cuntz-algebra methods, we tackle the categorification problem directly at the foundational polynomial level via the triangular prism equations (TPE) of Liu, Palcoux, Ren, and Vercleyen \cite{LPR}, originally conceived for spherical near-group categories. Sphericality provides a natural, weaker structural framework when omitting unitary structure; moreover, in characteristic zero, Thornton's theorem \cite[Theorem~1.3]{Thornton} guarantees its existence. We establish that the existence of a spherical near-group fusion category with trivial Frobenius--Schur indicator is equivalent to the solvability of the TPE system. We focus on the trivial Frobenius--Schur indicator because our primary motivation is to construct an infinite family of $2^G 1$ subfactors for cyclic groups $G = C_p$ of odd prime order. The key structural insight is that three group actions on morphism spaces determine a coherent $ G $-indexed basis in which every tetrahedral coefficient is explicitly expressed through a nondegenerate symmetric bicharacter $\chi$, an even quadratic refinement $a$, a rotation scalar $c$, and a scalar function $\beta \colon G \to \mathbb{C}$ (these group actions were also independently obtained by Noah Snyder).

For fixed dimension and quadratic data, Proposition~\ref{prop:cubic-reduction} reduces the TPE system to the single boundary condition $\beta(0)=-1/d$ together with the  cubic identity \eqref{eq:two_beta_identity}. This cubic identity from TPE essentially simplifies the pentagon equations of near-group categories by employing a distinct positioning of labeled morphisms in the graphical calculus.  An invertible Fourier transform identifies this TPE system with Izumi's polynomial subsystem for every finite abelian group (Lemma~\ref{lem:izumi-cubic-dictionary}); consequently, under unitary conditions, the two formulations coincide.

For every cyclic group $C_n$, we construct an explicit solution to the TPE system using hyperbolic gamma functions, yielding an irrational simple object dimension
\[
    d_-=\frac{n-\sqrt{n^2+4n}}{2}.
\]
While algebraic reconstruction over $\mathbb{C}$ yields a spherical category with $ d_{\varrho}=d_-<0$, applying a root-reversing field embedding produces a pseudo-unitary category with $ d_{\varrho}=d_+=(n+\sqrt{n^2+4n})/2$. Crucially, this construction relies only on fundamental functional equations satisfied by hyperbolic gamma functions—independent of their explicit analytic formulas—and an integral identity established by Belousov, Sarkissian, and Spiridonov \cite{Belousov_2026}. Then cubic identity~\eqref{eq:two_beta_identity} is proved by the residue theorem in complex analysis.

Unitarity is the primary consideration for subfactor theory. We compute channel singular values and establish a uniform gap from Hermitian reflection (Theorem~\ref{thm:unitarity-gap}); in particular, the bound $\sqrt{n}\,|\beta(g)|\le\sqrt{2}$ for all $g\ne0$ guarantees reflection positivity and enables unitary reconstruction. For the cyclic analytic family, unitarity under a root-reversing embedding $ \iota $ reduces to the condition $\sqrt{n}|\iota(\beta(g))|=1$ for all $1\le g<n$, which is easy to verify for $n\le6$. For general $n$, this condition translates to a cyclic specialization of the Minimalist Real Multiplication Values Conjecture \cite[Conjecture~1.36(2)]{AFK}, and conditionally follows from the order-one abelian Stark conjecture \cite[Theorem~2.20(2)]{AFK}. Consequently, a single common embedding satisfying these analytic conditions suffices for a unitary categorification at each order, establishing a  connection between categorical unitarity and Hilbert's twelfth problem. For the real quadratic fields $\mathbb{Q}(\sqrt{n(n+4)})$, our parameters are linked to Stark class invariants via the Shintani--Faddeev normalization \cite[Theorem~1.1]{Kopp}; see Remark~\ref{rem:hilbert-twelfth}. Whether this constructs an infinite family of $2^{G}1$ subfactors remains a compelling open question.

Notably, hyperbolic gamma functions were also employed in the construction of cyclic Haagerup--Izumi fusion categories at every odd order \cite{huang2026cyclichaagerupizumifusioncategories,gannon2026haagerupizumifusioncategories}. Furthermore, Radchenko and Wheeler \cite{RadchenkoWheeler} solved Izumi's polynomial subsystem for $G = C_n \times C_m$ using finite quantum dilogarithms built from hyperbolic gamma functions.  Crucially, what distinguishes our approach from these prior works is that the integral identity satisfied by the hyperbolic gamma function directly reproduces the TPE system, without requiring any algebraic transformations.

We remark that our investigation of near-group categories over fields of characteristic two was initially stimulated by discussions between the second author and Terry Gannon during a conference last September. In characteristic two, we obtain the Frobenius relation $\beta(2g) = \beta(g)^4$. Parallel to the central role played by hyperbolic gamma functions over $\mathbb{C}$, the unique supersingular elliptic curve $Y^2 + Y = X^3 + d_{\varrho}$ furnishes a universal arithmetic framework for near-group categories in characteristic two. A systematic treatment of this construction will appear in a subsequent paper.

The paper is organized as follows. Section~\ref{sec:abelian-system} formulates the TPE system and presents an explicit construction of solutions. Sections~\ref{sec: prelim} establish our graphical calculus and diagrammatic conventions. In Section~\ref{sec:fsymbols}, we exploit three natural group actions to derive explicit tetrahedral coefficients, while Section~\ref{sec:solve-TPE} verifies the triangular prism system and proves algebraic and unitary reconstruction. Finally, Section~\ref{sec:unitarity} develops criteria for unitarity and formulates a unitary conjecture for the Galois-conjugated solutions.

\section{The TPE system for finite abelian groups}\label{sec:abelian-system}
We first state the TPE system over a general field.
Sections~\ref{sec:fsymbols}--\ref{sec:solve-TPE} subsequently derive these
parameters and equations from spherical/unitary near-group categories and reconstruct categories
from abstract scalar solutions.

\Needspace{60mm}
\subsection{The TPE system and its reduction}
\begin{definition}[TPE system]\label{thm: equation}
{Let $G$ be a finite abelian group of order $n$} and let $K$ be a field in which $n$ is invertible.
The TPE system consist of $d_\varrho,c\in K^\times$, a
nondegenerate symmetric bicharacter $\chi:G\times G\to K^\times$,
$a:G\to K^\times$, and $\beta:G\to K$, satisfying
\begin{align}
 d_\varrho^2&=n+nd_\varrho,\label{eq:dim_rho}\\
 a(-g)&=a(g),\qquad a(0)=1,\label{eq:apm}\\
 a(g+h)\chi(g,h)&=a(g)a(h),\label{eq:achi}\\
 c^3\sum_{g\in G}a(g)&=\frac1n,\label{eq:c_nonzero}\\
 \sum_{g\in G}a(g)^{-1}\chi(h,g)\beta(g)&=cn\beta(h),\label{eq:Fourier}\\
 \beta(g)\beta(-g)+\frac{\delta_{g,0}}{d_\varrho}&=\frac{a(g)}n,\label{eq:betapm}\\
 \frac{a(h-g)}n\sum_{k\in G}\chi(h,k)^{-1}
 \frac{\beta(g-k)\beta(k)}{a(k)}
 &=\beta(-h)\beta(g)\beta(h-g)+\frac{\delta_{h,0}\delta_{g,0}}{d_\varrho}.
 \label{eq:two_beta_identity}
\end{align}
A unitary TPE system over $ K = \mathbb{C} $ requires further
\begin{align}
  d &> 0, \\
  \overline{\beta(g)} &= \frac{\beta(-g)}{a(g)}.
\end{align}
\end{definition}
\begin{remark}
    The primary structural advantage of the TPE system lies in the form of its most complicated equation~\eqref{eq:two_beta_identity}: it equates a sum of quadratic terms to a single cubic term, whereas the corresponding equation in Izumi's system equates a sum of cubic terms to a single quadratic term. While such a shift might appear minor in a general polynomial system, it is crucial in the near-group setting, where this is the \emph{sole} non-trivial equation governing the system.

    This structural simplification provides substantial theoretical and computational leverage. For instance, in characteristic two, setting $g=2h$ in Equation~\eqref{eq:two_beta_identity} directly yields the Frobenius relation $\beta(2h)=\beta(h)^4$. Crucially, the fundamental role of the TPE framework is further validated by the work of Radchenko and Wheeler \cite{RadchenkoWheeler}, who constructed solutions to Izumi's subsystem by first establishing that they satisfy an equation analogous to TPE-equation~\eqref{eq:two_beta_identity}, rather than tackling the pentagon equations directly.
\end{remark}

\begin{theorem}
    Let $G$ be a finite abelian group of order $n$. The existence of a spherical (resp., unitary) near-group fusion category with fusion rules $G + n$ and trivial Frobenius--Schur indicator over a field $K$ (resp., $\mathbb{C}$) is equivalent to the solvability of the TPE system (resp., the unitary TPE system).
\end{theorem}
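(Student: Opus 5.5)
The proof splits into the two directions of the equivalence: extracting TPE data from a category, and reconstructing a category from TPE data.

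\emph{Category $\Rightarrow$ TPE.} Start from a spherical near-group category $\mathcal C$ with fusion rules $G+n$ and trivial Frobenius--Schur indicator. The dimension equation \eqref{eq:dim_rho} follows at once from $\varrho^2=\sum_g g+n\varrho$ and $d_g=1$.

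Next, fix trivalent bases of $\Hom(\varrho\otimes\varrho,g)$ and of $\Hom(\varrho\otimes\varrho,\varrho)$; the latter space has dimension $n$. There are three natural group actions on $\Hom(\varrho\otimes\varrho,\varrho)$:
\begin{itemize}
\item tensoring $g$ on the left,
\item tensoring $g$ on the right,
\item rotation, i.e.\ conjugation by the invertible objects.
\end{itemize}
I would show these actions commute up to scalars. Those scalars should define a nondegenerate symmetric bicharacter $\chi$ together with a quadratic refinement $a$ satisfying \eqref{eq:apm} and \eqref{eq:achi}. Evenness of $a$ and the condition $a(0)=1$ come from the trivial Frobenius--Schur indicator. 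Diagonalizing the actions then yields a $G$-indexed basis $\{u_h\}$, which I expect to be unique up to an overall rescaling.

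In this basis every tetrahedral coefficient is determined by $\chi$, $a$, the rotation eigenvalue $c$, and the single function $\beta(h)$. Here $\beta(h)$ is the tetrahedron with three $\varrho$-vertices carrying $u_0,u_h,u_{-h}$, suitably normalized. Equation \eqref{eq:c_nonzero} comes from computing the cube of the rotation, which is the identity, via a Gauss-sum evaluation. Equation \eqref{eq:Fourier} expresses that $\beta$ is invariant under the $120^\circ$ rotation of the tetrahedron; in the basis this becomes a Fourier transform twisted by $a^{-1}$.

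The remaining equations are specializations of the triangular prism equations of \cite{LPR}, which hold in any spherical fusion category:
\begin{itemize}
\item \eqref{eq:betapm} is the theta/bubble normalization, obtained by evaluating a degenerate prism;
\item \eqref{eq:two_beta_identity} is the prism with all internal labels $\varrho$ and with boundary labels chosen as $u_h$, $u_g$, $u_{h-g}$.
\end{itemize}
In the derivation of \eqref{eq:two_beta_identity}, the $\delta$-terms arise from the invertible channels in the resolution $\varrho\otimes\varrho=\bigoplus_g g\oplus n\varrho$.

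For the unitary version, $d_\varrho>0$ is automatic. I would choose the $u_h$ isometric, and then the conjugation identity for $\beta$ follows from the Hermitian reflection of the tetrahedron.

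\emph{TPE $\Rightarrow$ category.} Conversely, given a solution of the TPE system, define all tetrahedral symbols by the explicit formulas obtained in the first direction. The main obstacle is showing that these symbols satisfy the full set of TPE, not just the reduced system. There are many label configurations, and they must be sorted by how many $\varrho$-vertices carry invertible labels:
\begin{itemize}
\item Cases involving invertible objects reduce to the bicharacter and quadratic identities \eqref{eq:achi} and \eqref{eq:c_nonzero}.
\item Cases with mixed labels reduce to \eqref{eq:Fourier} and \eqref{eq:betapm}.
\item The all-$\varrho$ case is handled in two steps. First, the group actions reduce it to three free indices. Second, the Fourier relation \eqref{eq:Fourier} converts it into exactly \eqref{eq:two_beta_identity}.
\end{itemize}
I would organize this as Proposition~\ref{prop:cubic-reduction} run in reverse. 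The invertibility of $n$ in $K$ is needed here, to invert the Fourier transform and the projections onto the $\varrho$-channel.

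Once the TPE hold, the reconstruction theorem of \cite{LPR} produces a spherical fusion category with these tetrahedral evaluations and the prescribed fusion rules. Its Frobenius--Schur indicator is trivial because $a(0)=1$ and $a$ is even.

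In the unitary case, the conjugation condition together with $d_\varrho>0$ implies that the symbols are invariant under reflection. It also gives positivity of the inner products on the $G$-labelled channels. Positivity on the $\varrho$-channel requires reflection positivity of the resulting Hermitian form, which should follow from \eqref{eq:betapm} with $\overline{\beta(g)}=\beta(-g)/a(g)$, since these make that form a positive multiple of the identity in the basis $u_h$. Applying the unitary reconstruction then finishes the proof.
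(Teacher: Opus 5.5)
\textbf{Gap: tetrahedral symmetry in the converse.} Your route is the paper's: three mixed insertions give $\chi$ and the cocycle that becomes $a$; the $G$-indexed basis comes from $\pi_z$-weights translated by $\pi_x$; $c^3S_a=1/n$ comes from $R^3=I$; the Fourier equation is rotation invariance of $\beta$; the product and cubic identities come from boundary prisms; LPR reconstruction finishes; unitarity comes from the channel Gram matrix. But your converse skips a step the paper needs. You propose to define all tetrahedral symbols ``by the explicit formulas obtained in the first direction''. Those formulas are only available on representatives in one marked-corner convention. Every other marked tetrahedron is obtained from them by corner transport with $R$, $R^2$ and the bilinear dual $Q$. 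For an abstract scalar solution you must prove these transports are consistent. Concretely, the table must be invariant under the proper tetrahedral group $A_4$, and compatible under reflection with the algebraic involution $\tau$ of Lemma~\ref{lem:algebraic-reflection}. The TPE of \cite{LPR} and their reconstruction theorem presuppose such a consistently transported table. Without it you also cannot reduce the TPE to the orbit representatives you plan to sort by cases. This is Lemma~\ref{lem:algebraic-tetra-transport}. The $3$-cycle identity \eqref{eq:proper-tetra-three-cycle} is exactly where the Fourier equation enters, before any prism is evaluated. The double transposition \eqref{eq:proper-tetra-double-transposition} uses character orthogonality. The condition $c^3S_a=1/n$ is what makes the corner change an order-three map. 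The $\tau$-compatibility is what disposes of the second skew-pair bulk orbit.

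\textbf{Two smaller corrections.} First, your case list omits the boundary family with two horizontal invertible edges. By \eqref{eq:two-horizontal-complete} that family is equivalent to $\beta(0)=-d^{-1}$, which is not among the defining equations. You must derive it as in the converse part of Proposition~\ref{prop:cubic-reduction}: the product relation gives $\beta(0)^2=d^{-2}$, and the cubic identity at $g=h=0$ gives $\beta(0)^3=-d^{-3}$. Second, triviality of the Frobenius--Schur indicator of the reconstructed category does not follow from $a(0)=1$ and evenness of $a$. It is built into the strictly self-dual, unoriented $\varrho$-strand convention in which the LPR reconstruction is carried out. In the forward direction the logic runs the other way: $\nu_2(\varrho)=1$ makes the theta pairing symmetric, and that symmetry forces $a$ to be even. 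Separately, $a(0)=1$ is automatic from the cocycle relation $t(g+h)=t(g)t(h)\chi(g,h)$.
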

\begin{proof}
    See Sections~\ref{sec:fsymbols}--\ref{sec:solve-TPE}.
\end{proof}
\begin{remark}
    Two main reasons motivate our focus on trivial Frobenius--Schur indicators:
    \begin{enumerate}
        \item Rather than providing a complete classification of near-group fusion categories, our primary goal is to construct an infinite family of $2^G 1$ subfactors for $G = C_p$, a cyclic group of odd prime order. In this setting, the Frobenius--Schur indicator of any self-dual simple object $X \cong X^*$ is always $1$.
        \item Over fields of characteristic two, the Frobenius--Schur indicator is automatically trivial.
    \end{enumerate}
\end{remark}

The two solutions to Eq.~\eqref{eq:dim_rho} are
\begin{align}
  d_+
  &=
  \frac{n+\sqrt{n^2+4n}}{2},
  &
  d_-
  &=
  \frac{n-\sqrt{n^2+4n}}{2}.
\end{align}
Set $\omega_1=d_+$, $\omega_2=-d_-$ and
$\Omega=\omega_1+\omega_2$. Then
\begin{equation}\label{eq:periods}
  \omega_1-\omega_2
  =
  \omega_1\omega_2
  =
  n,
  \qquad
  \frac{1}{\omega_2}-\frac{1}{\omega_1}=1,
  \qquad
  0<\omega_2<1,
  \qquad
  \Omega>n.
\end{equation}
Let $ \gamma(z):= \gamma^{(2)}(z;\omega_1,\omega_2) $ be the hyperbolic gamma function as defined in \cite[Section~1]{Belousov_2026}.
\begin{lemma}[BSS identity]\label{lem:BSS_identity}
  Let $ \alpha, g\in \mathbb{C} $ such that $ 0 < \mathrm{Re}(\alpha) < \mathrm{Re}(\Omega + g) $
  \begin{align}
&\int_{-i\infty}^{i\infty}e^{\pi i z(2\alpha-g-\Omega)/n}
 \gamma(z-g)\gamma(-z)\frac{dz}{i\sqrt n}\nonumber
&\qquad=e^{\pi i g(2\alpha-g-\Omega)/(2n)}
 \gamma(\alpha)\gamma(\Omega+g-\alpha)\gamma(-g),
 \tag{BSS}\label{eq:BSS}
\end{align}
\end{lemma}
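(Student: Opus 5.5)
This identity should be a specialization of the hyperbolic-gamma Fourier transform identity in \cite{Belousov_2026}, which in turn rests on the hyperbolic beta integral. The plan is to recall that identity in BSS's normalization and match variables, rather than prove it from scratch. In their normalization, the integral in question has the shape
\[
\int_{-i\infty}^{i\infty} e^{\pi i \lambda z/(\omega_1\omega_2)}\,\gamma(z+a)\gamma(-z+b)\,\frac{dz}{i\sqrt{\omega_1\omega_2}}
= e^{\pi i \lambda(b-a)/(2\omega_1\omega_2)}\,\gamma\!\Big(\tfrac{\Omega+\lambda}{2}\Big)\dots.
\]
Its right-hand side is a product of three hyperbolic gamma factors in $a+b$ and $\lambda$, multiplied by a Gaussian-type exponential prefactor.

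\textbf{Step 1: match the parameters.} Since $\omega_1\omega_2=n$ by \eqref{eq:periods}, the measure $dz/(i\sqrt n)$ is exactly BSS's normalized measure. I would then set $a=-g$, $b=0$, and choose the Fourier parameter so that $\lambda = 2\alpha-g-\Omega$. With these choices the three gamma factors should become $\gamma(\alpha)$, $\gamma(\Omega+g-\alpha)$ and $\gamma(-g)$. The last of these is $\gamma(a+b)$, the one independent of $\lambda$.

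\textbf{Step 2: reconcile the exponentials and conventions.} There are two parts to this. First, the exponential prefactor $e^{\pi i g(2\alpha-g-\Omega)/(2n)}$ should arise from shifting the contour variable $z\mapsto z+g/2$ to symmetrize the two gamma arguments. Second, any quadratic exponentials coming from BSS's convention for $\gamma^{(2)}$ (for example, the reflection formula $\gamma(z)\gamma(\Omega-z)=e^{\pi i B_{2,2}(z)/2}$ or a sign convention $\gamma\leftrightarrow 1/\gamma$) must either cancel or be absorbed. I would track these with the reflection formula and the quasi-periodicity $\gamma(z+\omega_1)=2\sin(\pi z/\omega_2)\gamma(z)$ to confirm that exactly the stated prefactor survives.

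\textbf{Step 3: verify the conditions.} Convergence and validity of the contour, which must separate the pole sequences of $\gamma(z-g)$ and $\gamma(-z)$, hold in BSS under the condition $0<\mathrm{Re}\big((\Omega+\lambda)/2\big)<\mathrm{Re}(\Omega+\dots)$. I would check that this translates exactly into $0<\mathrm{Re}(\alpha)<\mathrm{Re}(\Omega+g)$. This uses the asymptotics $\gamma(z)\sim e^{\mp\pi i B_{2,2}(z)/2}$ as $\mathrm{Im} z\to\pm\infty$, which give exponential decay of the integrand precisely in this strip. If the correspondence is only up to a contour indentation for small $\mathrm{Re}(g)$, I would extend by analytic continuation in $g$.

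\textbf{Main obstacle.} The hard step is Step 2. The bookkeeping of normalization conventions for $\gamma^{(2)}$ across references (Ruijsenaars, Faddeev, BSS) routinely introduces sign flips of arguments and quadratic phases. Getting the prefactor $e^{\pi i g(2\alpha-g-\Omega)/(2n)}$ exactly right is where errors would appear. As an independent sanity check, I would first confirm the $g=0$ case by evaluating the residues at the poles of $\gamma(-z)$ and summing.
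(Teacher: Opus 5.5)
Your plan matches the paper's proof, which just specializes \cite[arXiv version, equation~(2.17)]{Belousov_2026} at $f=-g$, $h=0$ (your $a=-g$, $b=0$) with $\omega_1\omega_2=n$; because the paper's $\gamma$ is BSS's own $\gamma^{(2)}$, so that $\gamma(z)\gamma(\Omega-z)=1$ exactly, none of the convention conversions you anticipate in Step~2 arise. Do correct your recalled right-hand side, whose factors should be $\gamma(a+b)\,\gamma\bigl(\tfrac{\Omega-a-b+\lambda}{2}\bigr)\gamma\bigl(\tfrac{\Omega-a-b-\lambda}{2}\bigr)$ rather than involving $\gamma\bigl(\tfrac{\Omega+\lambda}{2}\bigr)$ (with your version $\lambda=2\alpha-g-\Omega$ yields $\gamma(\alpha-g/2)$ and the wrong strip $0<\mathrm{Re}(\alpha-g/2)$), and replace the $g=0$ check, where both sides have a pole, by matching singular parts as $g\to0$: moving the contour to the right across $z=0$ shows the integral is $\gamma(-g)$ plus a term regular at $g=0$, matching $\gamma(\alpha)\gamma(\Omega-\alpha)\gamma(-g)=\gamma(-g)$ on the right and thereby confirming the measure $dz/(i\sqrt n)$.
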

\begin{proof}
    Specializing \cite[arXiv version, equation~(2.17)]{Belousov_2026}
at $f=-g$, $h=0$, and $\omega_1\omega_2=n$ gives the identity.
\end{proof}
\begin{remark}
  The primary constraint \eqref{eq:two_beta_identity} in the TPE system equates a sum of products of two terms of $ \beta $ to a product of three terms of $ \beta $. Remarkably, the Belousov--Sarkissian--Spiridonov (BSS) identity exhibits the exact same algebraic structure, expressing an integral of a product of two terms of $ \gamma $ as a product of three terms of $ \gamma $. This striking structural parallel provides the key motivation for constructing $\beta$ via hyperbolic gamma functions $ \gamma(z) $.
\end{remark}

\begin{proposition}[A single cubic identity]\label{prop:cubic-reduction}\label{prop:beta-normalization}
Let $G$ be a finite abelian group of order $n$, over a field $K$ in
which $n$ is invertible. Fix a nondegenerate symmetric bicharacter
$\chi$, an even quadratic refinement $a$ satisfying
\eqref{eq:apm}--\eqref{eq:achi}, and $d\ne0$ with $d^2=n+nd$; put $d_\varrho=d$.
Then the TPE system is equivalent to
\[
 \beta(0)=-d^{-1}
\]
and the cubic equation~\eqref{eq:two_beta_identity}. More precisely,
these conditions force $B:=\sum_{g\in G}\beta(g)\ne0$, and the
remaining equations hold with the unique choice
\[
 c=-\frac1{dB}.
\]
Every $\beta(g)$ is then nonzero, and
\begin{equation}\label{eq:beta-zero-convolution}
 \beta(0)=-\frac1d,\qquad
 \sum_{k\in G}\frac{\beta(g-k)\beta(k)}{a(k)}
 =\delta_{g,0}-\frac1d.
\end{equation}
\end{proposition}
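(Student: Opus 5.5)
The plan is to prove the forward implication by specializing the TPE equations, and the converse by extracting everything from the cubic identity \eqref{eq:two_beta_identity} through Fourier analysis on $G$. Throughout I write $\widehat f(h)=\sum_k\chi(h,k)^{-1}f(k)$. Since $\chi$ is nondegenerate, $\widehat{\widehat f}(h)=n f(-h)$. The relation $d^2=n+nd$ will be used repeatedly in the forms $1-n/d^2=n/d$ and $d\neq n$.

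\emph{Forward direction.} Put $g=0$ in \eqref{eq:betapm} to get $\beta(0)^2=1/n-1/d$, and $d^2=n+nd$ turns this into $\beta(0)^2=1/d^2$. To fix the sign I would set $g=h=0$ in \eqref{eq:two_beta_identity} and substitute \eqref{eq:betapm} into the left-hand sum. This gives $\tfrac1n(1-\tfrac1d)=\beta(0)^3+\tfrac1d$. I would then check, again using $d^2=n+nd$, that this equation rules out $\beta(0)=+1/d$.

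\emph{Converse: recovering \eqref{eq:betapm}.} Assume $\beta(0)=-1/d$ and \eqref{eq:two_beta_identity}. Put $P(k)=\beta(k)\beta(-k)/a(k)$, which is an even function. Setting $g=0$ in the cubic identity gives
\[
\widehat P(h)=-\tfrac nd P(h)+\tfrac nd\,\delta_{h,0}.
\]
Applying the transform a second time and using $\widehat{\widehat P}=nP$ produces a linear equation for $P$. Its coefficient is $1-n/d^2=n/d$, so it solves to $P(h)=1/n-\delta_{h,0}/d$, which is exactly \eqref{eq:betapm}. In particular $\beta(g)\beta(-g)=a(g)/n\neq0$ for $g\neq0$, so every $\beta(g)$ is nonzero.

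\emph{Converse: the convolution identity, $B\neq0$, and $c$.} Setting $h=0$ in the cubic identity and inserting \eqref{eq:betapm} gives \eqref{eq:beta-zero-convolution}. Summing \eqref{eq:beta-zero-convolution} over $g$ factors the left side as $B\cdot\sum_k\beta(k)/a(k)$ and the right side becomes $1-n/d$. Since $d\neq n$, this is nonzero, so $B\neq0$.

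Deriving \eqref{eq:Fourier} and \eqref{eq:c_nonzero} is the step I expect to be the main obstacle. My approach is to fix $h$, sum \eqref{eq:two_beta_identity} over $g$, and use $a(h-g)=a(h)a(g)\chi(h,g)$ (from \eqref{eq:achi} and evenness of $a$) to separate the double sum into a product.
\begin{itemize}
\item On the right, the $\delta$ term contributes only at $h=0$, and the remaining sum $\sum_g\beta(g)\beta(h-g)$ must be evaluated. I would obtain it from \eqref{eq:beta-zero-convolution} after twisting by $a$.
\item On the left, the sum over $g$ should produce a Fourier transform of $\beta/a$ multiplied by a Gauss sum $\sum_g a(g)^{\pm1}\chi(\cdot,g)$.
\end{itemize}
Comparing the two sides should yield \eqref{eq:Fourier} with constant $cn$. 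Evaluating \eqref{eq:Fourier} at $h=0$ and summing then pins down $c=-1/(dB)$. Finally, applying the transform in \eqref{eq:Fourier} three times reduces, through the standard quadratic Gauss sum identity for $a$, to $c^3\sum_g a(g)=1/n$, which is \eqref{eq:c_nonzero}.

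If this direct summation becomes unwieldy, my fallback is to set $g=h$ in the cubic identity. Since $\beta(0)$ is known, that relation expresses $\widehat{(\beta/a)}$ directly against $\beta$, and I would try to read \eqref{eq:Fourier} off from it.
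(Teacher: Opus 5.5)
Much of your plan matches the paper. This includes the forward direction, recovering \eqref{eq:betapm} from the $g=0$ case by transforming twice, the convolution identity from $h=0$, and the $T^3$ argument for \eqref{eq:c_nonzero}. For the sign of $\beta(0)$, dividing $\beta(0)^3=-1/d^3$ by $\beta(0)^2=1/d^2$ avoids any case distinction, including in characteristic $2$. Your proof of $B\neq0$ by summing \eqref{eq:beta-zero-convolution} over $g$ is a valid variant: it gives $B\sum_k\beta(k)/a(k)=1-n/d=n/d^2$. Together with \eqref{eq:Fourier} at $h=0$, it does force $c=-1/(dB)$.

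The gap is exactly the step you flagged. You never obtain \eqref{eq:Fourier}, i.e.\ that $\beta$ is an eigenvector of $f\mapsto\sum_g a(g)^{-1}\chi(\cdot,g)f(g)$, and neither route you describe can produce it.

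Suppose you sum \eqref{eq:two_beta_identity} over $g$, keeping $a(h-g)$ and expanding it as $a(h)a(g)\chi(h,g)$. The variables do not separate: after $g=k+m$ the left side is $\frac{a(h)}{n}\sum_m a(m)\chi(h,m)\beta(m)\sum_k\chi(m,k)^{-1}\beta(k)$. That is not a Fourier transform of $\beta/a$ times a Gauss sum. The right side would then need the unweighted convolution $\sum_g\beta(g)\beta(h-g)$. This cannot be recovered from the $a^{-1}$-weighted convolution in \eqref{eq:beta-zero-convolution}, because $a(h)/a(k)=a(h-k)\chi(h-k,k)^{-1}$ brings in a coupling bicharacter. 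Your fallback $g=h$ yields $\sum_k\chi(h,k)^{-1}\beta(h-k)\beta(k)/a(k)=-a(h)/d$ for $h\ne0$, which is still quadratic in $\beta$.

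The missing move is to divide \eqref{eq:two_beta_identity} by $a(h-g)$ \emph{before} summing over $g$, which is the paper's argument.
\begin{itemize}
\item On the left, $\sum_g\beta(g-k)=B$ factors out, giving $\frac Bn\sum_k\chi(h,k)^{-1}\beta(k)/a(k)$. So the factor is $B$, not a Gauss sum.
\item On the right you get $\beta(-h)$ times precisely the weighted convolution of \eqref{eq:beta-zero-convolution}, plus $\delta_{h,0}/d$. Since $\beta(0)=-1/d$, this equals $\beta(-h)(\delta_{h,0}-1/d)+\delta_{h,0}/d=-\beta(-h)/d$.
\end{itemize}
Replacing $h$ by $-h$ then gives \eqref{eq:Fourier} with $cn=-n/(dB)$.
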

\begin{proof}
Write $b=\beta(0)=-1/d$ and
$q(g)=\beta(g)\beta(-g)/a(g)$. Thus $q(-g)=q(g)$. For the
unnormalized Fourier transform
\[
 (\mathcal Ff)(h)=\sum_{k\in G}\chi(h,k)^{-1}f(k),
\]
character orthogonality gives $\mathcal F^2q=nq$,
$\mathcal F\delta_0=\mathbf1$, and $\mathcal F\mathbf1=n\delta_0$.
The case $g=0$ of the cubic identity, divided by $a(h)/n$, is
\[
 \mathcal Fq=nbq+\frac nd\delta_0.
\]
Applying $\mathcal F$ again gives
\[
 (1-nb^2)q=\frac{nb}{d}\delta_0+\frac1d\mathbf1.
\]
Since $1-nb^2=n/d$, this is precisely
\[
 q=\frac1n\mathbf1-\frac1d\delta_0,
\]
which proves~\eqref{eq:betapm} and nonvanishing of every $\beta(g)$.

Put
\[
 C_h=\sum_{g\in G}\frac{\beta(g)\beta(h-g)}{a(h-g)}.
\]
The case $h=0$ of the cubic equation, with its other argument renamed,
now gives
\[
 C_h=nbq(h)+\frac nd\delta_{h,0}=b+\delta_{h,0},
\]
using $n/d+n/d^2=1$. Divide the cubic equation by $a(h-g)$ and sum
over $g$ to obtain
\[
 \frac Bn\sum_{k\in G}\chi(h,k)^{-1}\frac{\beta(k)}{a(k)}
 =\beta(-h)C_h+\frac{\delta_{h,0}}d=b\beta(-h).
\]
At $h=0$ the right side is $b^2\ne0$, so $B\ne0$.
Replacing $h$ by $-h$ proves~\eqref{eq:Fourier} for $c=b/B$.

Finally, set $S_a=\sum_g a(g)$, $S_{a^{-1}}=\sum_g a(g)^{-1}$,
and
\[
 (Tf)(h)=\sum_g a(g)^{-1}\chi(h,g)f(g).
\]
The refinement identity and character orthogonality give
\begin{equation}\label{eq:scalar-fourier-toolkit}
 S_aS_{a^{-1}}=n,\qquad
 (T^2)_{h,j}=S_{a^{-1}}a(h)\chi(h,j)^{-1},\qquad
 T^3=nS_{a^{-1}}I.
\end{equation}
Since $T\beta=cn\beta$ and $\beta\ne0$, we get
$c^3S_a=1/n$, proving~\eqref{eq:c_nonzero}.

Conversely, suppose the full scalar system holds. Its quadratic
identity gives $\beta(0)^2=1/n-1/d=1/d^2$. At $(g,h)=(0,0)$
the cubic identity then gives $\beta(0)^3=-1/d^3$, whence
$\beta(0)=-1/d$. The displayed summed cubic identity and the
Fourier equation force $c=b/B$, proving uniqueness.
\end{proof}

\subsection{An explicit construction for cyclic groups}
We construct the soutions to the TPE system for cyclic groups $ C_n $. We only need the fundamental properties of hyperbolic gamma functions as follows:
\begin{align}
  \gamma(z+\omega_1)
  &=
  2\sin\left(\frac{\pi z}{\omega_2}\right)\gamma(z),
  \label{eq:gamma-shift1}\\
  \gamma(z+\omega_2)
  &=
  2\sin\left(\frac{\pi z}{\omega_1}\right)\gamma(z),
  \label{eq:gamma-shift2}\\
  \gamma(z)\gamma(\Omega-z)
  &=
  1,
  \label{eq:gamma-reflection}\\
  \operatorname{Res}_{z=0}\gamma(z)
  &=
  \frac{\sqrt{\omega_1\omega_2}}{2\pi}
  =
  \frac{\sqrt n}{2\pi}.
  \label{eq:gamma-residue}
\end{align}
\begin{equation}\label{eq:gamma-positivity}
  \gamma(x)>0,
  \qquad 0<x<\Omega.
\end{equation}
Fix
\begin{equation}\label{eq:explicit-quadratic-data}
 \chi(g,h)=e^{2\pi i gh/n},\qquad
 a(g)=e^{\pi i g(n-g)/n}.
\end{equation}
These are well defined on $C_n$ and satisfy
\eqref{eq:apm}--\eqref{eq:achi}. The similarity between Equation~\eqref{eq:betapm} and Equation~\eqref{eq:gamma-reflection} implies that we can express $ \beta $ as a product of $ a(g)^{1/2} $ and the shift of gamma function. Hence, for $z\in\mathbb C$ and $x\in\mathbb Z$, put
\begin{equation}\label{eq:s-definition}
 F(z)=\frac1{\gamma(\omega_2+z)},\qquad
 s(x)=\exp\left(\frac{\pi i x(n-x)}{2n}\right).
\end{equation}
Write $\bar g\in\{0,\ldots,n-1\}$ for the standard representative
of $g\in C_n$.

\begin{proposition}\label{prop:F-identities}\label{prop:sF-well-defined}
The function $F$ satisfies
\[
 F(z)=\gamma(\omega_1-z),\qquad F(z)F(n-z)=1,\qquad
 F(z+n)=\frac{\sin(\pi z/\omega_1)}{\sin(\pi z/\omega_2)}F(z).
\]
For every nonzero integer $m$,
\[
 F(m+n)=(-1)^mF(m),\qquad F(-m)=\frac{(-1)^m}{F(m)}.
\]
Moreover,
\[
 F(0)=\frac{\sqrt n}{\omega_2},\qquad
 F(0)^2-1=\sqrt n\,F(0),\qquad F(k)>0\quad(0\leq k<n).
\]
The product $s(x)F(x)$ is $n$-periodic on
$\mathbb Z\setminus n\mathbb Z$. The zero residue class is exceptional:
$F(n)=F(0)^{-1}\ne F(0)$.
\end{proposition}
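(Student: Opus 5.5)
The plan is to derive every claim from the four functional equations \eqref{eq:gamma-shift1}--\eqref{eq:gamma-residue}, the positivity \eqref{eq:gamma-positivity}, and the period relations \eqref{eq:periods}.

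\emph{The first three identities.} Apply \eqref{eq:gamma-reflection} with $z\mapsto\omega_2+z$. Since $\Omega-\omega_2-z=\omega_1-z$, this gives $F(z)=\gamma(\omega_1-z)$. Next, $n-z=\omega_1-\omega_2-z$, so $F(n-z)=1/\gamma(\omega_1-z)=1/F(z)$. For the shift we write $n=\omega_1-\omega_2$, so that
\[
F(z+n)=\frac{1}{\gamma(\omega_1+z)}.
\]
We then compare $\gamma(\omega_1+z)$ with $\gamma(\omega_2+z)$ through $\gamma(z)$. By \eqref{eq:gamma-shift1}, $\gamma(z+\omega_1)=2\sin(\pi z/\omega_2)\gamma(z)$, and by \eqref{eq:gamma-shift2}, $\gamma(z+\omega_2)=2\sin(\pi z/\omega_1)\gamma(z)$. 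Dividing the second by the first gives the stated ratio of sines.

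\emph{Integer values.} For an integer $m\neq0$ we have $\pi m/\omega_1=\pi m(1/\omega_2-1)=\pi m/\omega_2-\pi m$, by \eqref{eq:periods}. Hence $\sin(\pi m/\omega_1)=(-1)^m\sin(\pi m/\omega_2)$, and this sine is nonzero because $m/\omega_2\in\mathbb Z$ would force $\omega_2$ to be rational, whereas $\omega_2$ is irrational (its defining discriminant $n^2+4n$ is not a square for $n\ge1$). This gives $F(m+n)=(-1)^mF(m)$. Combining it with $F(n-z)F(z)=1$ at $z=-m$ yields $F(-m)=1/F(m+n)=(-1)^m/F(m)$.

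\emph{The value $F(0)$.} Here the shift formula degenerates, since at $z=0$ both sines vanish and both gammas have a pole. We compute $F(0)=1/\gamma(\omega_2)$ as a limit:
\[
\gamma(\omega_2+z)=2\sin(\pi z/\omega_1)\gamma(z)\longrightarrow 2\cdot\frac{\pi}{\omega_1}\cdot\frac{\sqrt n}{2\pi}=\frac{\sqrt n}{\omega_1},
\]
using \eqref{eq:gamma-residue}. Thus $F(0)=\omega_1/\sqrt n=\sqrt n/\omega_2$, because $\omega_1\omega_2=n$. Then
\[
F(0)^2-1=\frac{\omega_1^2-n}{n},
\]
and $\omega_1^2=n+n\omega_1$ gives $F(0)^2-1=\omega_1=\sqrt n\,F(0)$.

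The same limit taken with $\omega_1$ and $\omega_2$ interchanged gives $F(n)=1/\gamma(\omega_1)=\sqrt n/\omega_2\cdot\omega_2/\omega_1\cdot$; more simply, $F(n)=1/F(0)$ by the reflection identity at $z=0$. Since $F(0)^2=1+\sqrt n F(0)>1$, we get $F(n)\neq F(0)$.

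\emph{Positivity.} For $0\le k<n$ we have $F(k)=\gamma(\omega_1-k)$ with $\omega_1-k\in(\omega_1-n,\omega_1]=(\omega_2,\omega_1]\subset(0,\Omega)$, so \eqref{eq:gamma-positivity} applies and $F(k)>0$. At $k=0$ the argument is $\omega_1$, which lies in $(0,\Omega)$, so this case is also covered directly (consistently with the formula $F(0)=\omega_1/\sqrt n>0$).

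\emph{Periodicity of $sF$.} We have
\[
\frac{s(x+n)}{s(x)}=\exp\!\Big(\frac{\pi i}{2n}\big[(x+n)(-x)-x(n-x)\big]\Big)=e^{-\pi i x}=(-1)^x .
\]
Together with $F(x+n)=(-1)^xF(x)$ for $x\notin n\mathbb Z$, this shows that $s(x+n)F(x+n)=s(x)F(x)$. The exceptional class $x\in n\mathbb Z$ is genuinely different: there $s(n)=s(0)=1$ but $F(n)\neq F(0)$.

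The main obstacle is the $z=0$ limit, where one must handle the pole of $\gamma$ carefully. A secondary point is the irrationality of $\omega_2$, which is needed to keep the sine nonzero at nonzero integers.
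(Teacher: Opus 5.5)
Your proof is correct and follows the paper's argument essentially step by step: reflection gives the first two identities, the two shift equations give the sine ratio, $1/\omega_2-1/\omega_1=1$ gives the sign $(-1)^m$, the residue at $0$ gives $F(0)$, and positivity of $\gamma$ on $(0,\Omega)$ gives $F(k)>0$. The differences are cosmetic: you compute $F(0)=1/\gamma(\omega_2)$ via \eqref{eq:gamma-shift2} rather than $F(0)=\gamma(\omega_1)$ via \eqref{eq:gamma-shift1}, you explicitly justify that the sines are nonzero at integers by the irrationality of $\omega_2$ (which the paper leaves implicit), and the dangling expression in your $F(n)$ sentence should simply be deleted, since the reflection argument you give immediately afterwards already proves the claim.
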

\begin{proof}
Reflection gives the first two identities, and the two gamma shifts give
\[
 \frac{F(z+n)}{F(z)}
 =\frac{\gamma(\omega_2+z)}{\gamma(\omega_1+z)}
 =\frac{\sin(\pi z/\omega_1)}{\sin(\pi z/\omega_2)}.
\]
At a nonzero integer $m$, the sines do not vanish and their ratio is
$(-1)^m$, since $1/\omega_2-1/\omega_1=1$.
Reflection then gives the formula for $F(-m)$, while
$s(m+n)=(-1)^m s(m)$ proves periodicity on the nonzero classes.
Taking $z\to0$ in~\eqref{eq:gamma-shift1} and using the residue at zero
gives $F(0)=\gamma(\omega_1)=\sqrt n/\omega_2$.
The equation $\omega_2^2+n\omega_2=n$ proves its quadratic identity,
and~\eqref{eq:gamma-positivity} proves positivity.
\end{proof}

Replacing $ \beta(g) $ with $ \frac{s(g)F(g)}{\sqrt{n}} $ in Equation~\eqref{eq:two_beta_identity} forces us to prove the following lemma.
\begin{lemma}\label{lem:S-residue-continuation}
For \(1\le g<n\) and \(0\le h<n\), define
\[
S_{g,h}
:=
\sum_{k=0}^{n-1}
(-1)^k
\exp\left(\frac{\pi i(g-2h)k}{n}\right)
F(k)F(g-k).
\]
Then
\begin{equation}\label{eq:finite-sum-F-final}
S_{g,h}
=
\sqrt{n}\,F(g)
\frac{s(\overline{-h})}{s(\overline{g-h})}
F(\overline{-h})F(\overline{h-g}).
\end{equation}
\end{lemma}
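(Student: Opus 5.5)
The plan is to obtain the finite sum $S_{g,h}$ as a residue sum of the contour integral in Lemma~\ref{lem:BSS_identity}, with parameters chosen so that its integrand reproduces the summand.

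\textbf{Step 1: rewrite the BSS integral in terms of $F$.} By Proposition~\ref{prop:F-identities} we have $\gamma(-z)=F(z+\omega_1)$, and $\gamma(z-g)$ can likewise be expressed through $F$ at a shifted argument via $F(w)=\gamma(\omega_1-w)$. The integrand of \eqref{eq:BSS} then becomes $e^{\pi i z(2\alpha-g-\Omega)/n}$ times a product of two $F$'s.

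\textbf{Step 2: shift the contour and collect residues.} Shift the integration contour by $n=\omega_1-\omega_2$. The shift relation $F(z+n)=\frac{\sin(\pi z/\omega_1)}{\sin(\pi z/\omega_2)}F(z)$ shows that translating by $n$ multiplies the integrand by an explicit ratio of sines. With the exponential parameter $\alpha$ tuned appropriately, this ratio combines with the exponential so that (integral) minus (shifted integral) is a closed contour integral around the strip $0\le \mathrm{Re}\,z<n$. The integrand's poles in the strip come from the $\sin(\pi z/\omega_2)$ factors and from the gamma poles at the lattice points. The choice of $\alpha$ should make the surviving poles exactly the integers $k=0,\dots,n-1$, with residues proportional to $(-1)^k e^{\pi i(g-2h)k/n}F(k)F(g-k)$. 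The sign $(-1)^k$ comes from the derivative of $\sin(\pi z/\omega_2)$ at $z=k$ together with $1/\omega_2-1/\omega_1=1$, and the constant comes from \eqref{eq:gamma-residue}. The natural choice is $2\alpha-g-\Omega\equiv g-2h$ up to the adjustment forced by the sine ratio, roughly $\alpha=\Omega/2+g-h$ modulo shifts by multiples of $\omega_i$.

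\textbf{Step 3: evaluate via the BSS right-hand side.} By \eqref{eq:BSS}, both the original and the shifted integrals equal explicit gamma products. Their difference gives $2\pi i\sum\mathrm{Res}=S_{g,h}\cdot(\text{const})$, and this equals a combination of $\gamma(\alpha)\gamma(\Omega+g-\alpha)\gamma(-g)$ terms. Using reflection \eqref{eq:gamma-reflection} and $F(w)=\gamma(\omega_1-w)$, convert these to $F(g)F(\overline{-h})F(\overline{h-g})$. The periodicity of $s(x)F(x)$ on nonzero classes absorbs the reduction to standard representatives $\overline{\cdot}$ and produces the ratio $s(\overline{-h})/s(\overline{g-h})$. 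The case $h=0$ (where $\overline{-h}=0$ is the exceptional class) should be checked separately using $F(0)=\sqrt n/\omega_2$.

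\textbf{Main obstacle.} The hardest part is making the contour-shift argument rigorous. One must verify the BSS convergence condition $0<\mathrm{Re}\,\alpha<\mathrm{Re}(\Omega+g)$ for both contours, possibly by indenting them or by analytic continuation in $\alpha$. One must also confirm that no extra poles (the $\omega_2$-lattice poles of $\gamma$) lie in the strip, and that the horizontal segments vanish by the exponential decay of $\gamma$. If a single shift does not close cleanly, the fallback is to compare the BSS identity at $\alpha$ and $\alpha+n$ directly as two integrals and subtract.
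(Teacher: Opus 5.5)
There is a genuine gap in Step~2: shifting the contour of the BSS integrand itself cannot produce the finite sum.

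\textbf{Why the proposed mechanism fails.} Write the integrand as $J(z)=e^{-i\phi z}\gamma(-z)\gamma(z-g)$, with $-\phi=\pi(2\alpha-g-\Omega)/n$. Its poles lie only on the two families $j\omega_1+k\omega_2$ and $g-j\omega_1-k\omega_2$ with $j,k\ge0$. Since $j\omega_1+k\omega_2$ is irrational unless $j=k=0$, the only integers on these families are $0$ and $g$.

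You attribute the integer poles and the sign $(-1)^k$ to $\sin(\pi z/\omega_2)$. But that sine does not vanish at any nonzero integer, because $\omega_2$ is irrational. This is exactly why, in Proposition~\ref{prop:F-identities}, $F(m+n)=(-1)^mF(m)$ holds with a finite nonzero sine ratio. Several consequences follow.
\begin{itemize}
\item Translating the line $\mathrm{Re}\,z=\sigma$ by $n$ picks up residues at $k\omega_2$ and $g-k\omega_2$. These involve $\gamma$ at nonintegral points, not $F(k)F(g-k)$.
\item Even at the other integers, your rewritten integrand takes the values $F(\omega_1+k)F(\omega_1+g-k)$. These differ from the summand by nonvanishing sine factors.
\item The translated line is not a separating contour: the points $k\omega_2<\sigma+n$ lie on the wrong side. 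So Lemma~\ref{lem:BSS_identity} does not evaluate it, and the claim that both integrals are explicit gamma products fails.
\item Your fallback of comparing $\alpha$ with $\alpha+n$ only multiplies the integrand on a single line by $e^{2\pi iz}-1$. This again yields no sum over integers.
\end{itemize}

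\textbf{The missing idea.} You need an auxiliary kernel with simple poles at the integers whose $n$-difference is the BSS integrand. The paper uses
\[
 K_g(z)=\frac{\pi e^{-i\phi z}}{\sin\pi z}\,\frac{F(z)}{F(z-g)},\qquad e^{-i\phi}=-e^{\pi i(g-2h)/n},\quad \phi\in(-\pi,\pi].
\]
By $F(-m)=(-1)^m/F(m)$, its residue at an integer $k\ne g$ is $e^{\pi i(g-2h)k/n}(-1)^{g-k}F(k)F(g-k)$.

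Now write $F(z)/F(z-g)=-4\sin(\pi z/\omega_2)\sin(\pi(z-g)/\omega_1)\gamma(-z)\gamma(z-g)$. The translate $K_g(z+n)$ carries instead the factor $(-1)^g\sin(\pi z/\omega_1)\sin(\pi(z-g)/\omega_2)$. Using $1/\omega_2-1/\omega_1=1$, the difference of the two sine products is exactly $\sin(\pi z)\sin(\pi g/\omega_1)$. The factor $\sin\pi z$ cancels the kernel, giving
\[
K_g(z+n)-K_g(z)=-4\pi\sin(\pi g/\omega_1)J(z).
\]
Only this identity lets the period rectangle convert the finite sum into a BSS-type integral.

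\textbf{Steps still unaccounted for.} Even with the correct kernel, three further points remain that your proposal does not anticipate.
\begin{itemize}
\item $K_g$ also has poles at $p_\ell=g-\ell\omega_2$ in the strip. These cancel when $\mathrm{Re}\,z=\sigma$ is deformed to the separating contour, which crosses exactly $p_0=g,p_1,\dots,p_L$.
\item The exceptional $k=g$ term, which involves $F(0)$, cancels against the contribution of the residue of $J$ at $g$. This uses $(1-F(0)^2)/F(0)=-\sqrt n$.
\item The BSS identity must be continued from $\mathrm{Re}\,g<0$ to the integers $1\le g<n$. This is possible because no pole collision occurs there.
\end{itemize}
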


\begin{proof}

The proof is divided into three steps.
\begin{figure}[ht]
\centering
\begin{tikzpicture}[
    x=1.15cm,
    y=0.95cm,
    >=stealth,
    decoration={
        markings,
        mark=at position 0.5 with {\arrow{>}}
    }
]

\def\xsigma{-0.28}
\def\xright{8.50}
\def\ybottom{-1.35}
\def\ytop{1.35}

\draw[->,gray!60]
  (-0.75,0) -- (8.85,0)
  node[right] {$\mathrm{Re} (z)$};

\draw[->,gray!60]
  (0,\ybottom-0.15) -- (0,\ytop+0.18)
  node[above] {$\mathrm{Im} (z)$};

\draw[thick,postaction={decorate}]
  (\xsigma,\ybottom) -- (\xright,\ybottom);

\draw[thick,postaction={decorate}]
  (\xright,\ybottom) -- (\xright,\ytop);

\draw[thick,postaction={decorate}]
  (\xright,\ytop) -- (\xsigma,\ytop);

\draw[thick,postaction={decorate}]
  (\xsigma,\ytop) -- (\xsigma,\ybottom);

\node[above] at (\xsigma,\ytop) {$\mathrm{Re} (z)=\sigma$};
\node[above] at (\xright,\ytop) {$\mathrm{Re} (z)=\sigma+n$};

\foreach \x/\lab in {
  0/{$0$},
  1/{$1$},
  2/{$2$},
  3/{$\cdots$},
  4/{$g-1$},
  5/{$g$},
  6.4/{$\cdots$},
  7.8/{$n-1$}
}{
  \fill (\x,0) circle (1.5pt);
  \node[below=3pt] at (\x,0) {\lab};
}

\fill (0.38,0) circle (1.5pt);
\node[above=5pt] at (0.38,0) {$p_L$};

\fill (4.62,0) circle (1.5pt);
\node[above=5pt] at (4.62,0) {$p_1$};

\node[above=5pt] at (2.55,0) {$p_{L-1},\ldots,p_2$};

\node[below left] at (\xsigma,0) {$\sigma$};
\node[below right] at (\xright,0) {$\sigma+n$};

\end{tikzpicture}
\caption{The positively oriented residue rectangle
$\mathcal R_R$ for $ K_g(z) $ with vertical sides
$\mathrm{Re} (z)=\sigma$ and $\mathrm{Re} (z)=\sigma+n$.}
\label{fig:residue-strip}
\end{figure}

\begin{figure}[ht]
  \centering
  \begin{tikzpicture}[scale=1.2, >=Stealth]

    \def\sigmaVal{-0.3}    
    \def\r{0.25}           
    \def\yBot{-2.2}        
    \def\yTop{2.2}         
    \def\yReturn{0.5}      

    \def\pThree{0.8}       
    \def\pTwo{2.0}         
    \def\pOne{3.2}         
    \def\pZero{4.4}        

    \draw[->, thick, gray!60] (-2.2, 0) -- (5.6, 0) node[right, black] {$ \mathrm{Re} (z) $};
    \draw[->, thick, gray!60] (0, -2.5) -- (0, 2.5) node[above, black] {$\mathrm{Im} (z)$};
    \node[below left, gray!80!black] at (0,0) {$0$};

    \draw[dashed, gray!40] (\sigmaVal, \yBot) -- (\sigmaVal, \yTop);
    \node[below, black] at (\sigmaVal, 0) {$\sigma$};
    \fill (\sigmaVal, 0) circle (1.5pt);

    \foreach \p in {\pThree, \pTwo, \pOne, \pZero} {
        \fill[red!80!black] (\p, 0) circle (2pt);
    }
    
    \node[above=4pt] at (\pThree, 0) {$p_L$};
    \node[above=4pt] at (\pTwo, 0) {$\dots$};
    \node[above=4pt] at (\pOne, 0) {$p_1$};
    \node[above=4pt] at (\pZero, 0) {$p_0 = g$};

    \draw[
        line width=1.2pt, 
        blue!80!black,
        decoration={
            markings,
            mark=at position 0.08 with {\arrow{>}},
            mark=at position 0.28 with {\arrow{>}},
            mark=at position 0.58 with {\arrow{>}},
            mark=at position 0.78 with {\arrow{>}},
            mark=at position 0.95 with {\arrow{>}}
        },
        postaction={decorate}
    ]
        (\sigmaVal, \yBot) -- (\sigmaVal, 0.1)
        
        -- (\pThree - \r, 0.1) arc (160:380:\r)
        -- (\pTwo - \r, 0.1)   arc (160:380:\r)
        -- (\pOne - \r, 0.1)   arc (160:380:\r)
        -- (\pZero - \r, 0.1)  arc (160:380:\r)
        
        -- (\pZero + \r + 0.3, 0.1)
        -- (\pZero + \r + 0.3, \yReturn)
        -- (\sigmaVal, \yReturn)
        
        -- (\sigmaVal, \yTop);

    \node[blue!80!black, left] at (\sigmaVal, -1.2) {$C_{\text{sep}}(g)$};

\end{tikzpicture}
  \caption{The separating contour $C_{\rm sep}(g)$ for the two pole families
$g-j\omega_1-k\omega_2$ and $j\omega_1+k\omega_2$.}
\label{fig:separating-contour}
\end{figure}

\medskip
\noindent\textbf{Step 1: the finite sum as a residue sum.}

Put
\[
A:=\frac{\pi(g-2h)}{n},
\]
and choose \(\phi\in(-\pi,\pi]\) such that
\begin{equation}\label{eq:phi-direct}
\phi\equiv\pi-A\pmod{2\pi}.
\end{equation}
Then
\[
(-1)^k e^{-i\phi k}=e^{iAk},
\qquad k\in\mathbb Z.
\]

Define
\[
R_g(z):=\frac{F(z)}{F(z-g)},
\qquad
K_g(z):=
\frac{\pi e^{-i\phi z}}{\sin\pi z}R_g(z),
\qquad
J_g(z):=
e^{-i\phi z}\gamma(-z)\gamma(z-g).
\]

Let
\[
L:=\left\lfloor\frac{g}{\omega_2}\right\rfloor
\]
and choose
\[
0<\varepsilon<
\min\{1,(L+1)\omega_2-g\},
\qquad
\sigma:=-\varepsilon.
\]
In the strip
\[
\sigma<\mathrm{Re} (z)<\sigma+n,
\]
the poles of \(K_g\) are
\[
0,1,\ldots,n-1
\]
and
\[
p_\ell:=g-\ell\omega_2,
\qquad
1\leq\ell\leq L.
\]
The points \(p_\ell\) are nonintegral by the irrationality of \(\omega_2\).

At an integer \(k\),
\[
\operatorname{Res}_{z=k}K_g(z)
=
e^{iAk}R_g(k).
\]
For \(k\neq g\),
\[
R_g(k)
=
(-1)^{g-k}F(k)F(g-k),
\]
whereas
\[
R_g(g)=\frac{F(g)}{F(0)}.
\]
Therefore
\begin{align}
\sum_{k=0}^{n-1}\operatorname{Res}_{z=k}K_g(z)
&=
(-1)^gS_{g,h}
+
(1-F(0)^2)e^{iAg}\frac{F(g)}{F(0)}.
\label{eq:integer-residues-direct}
\end{align}

The vertical asymptotics of the hyperbolic gamma function give, uniformly on the strip,
\[
R_g(x+iy)
=
O\left(e^{-\pi g|y|/n}\right),
\]
and hence
\[
K_g(x+iy)
=
O\left(
e^{-[\pi(1+g/n)-|\phi|]|y|}
\right).
\]
Thus the horizontal sides of the residue rectangle vanish as its height tends to infinity.

Using
\[
R_g(z)
=
-4
\sin\frac{\pi z}{\omega_2}
\sin\frac{\pi(z-g)}{\omega_1}
\gamma(-z)\gamma(z-g),
\]
together with
\[
\frac1{\omega_2}-\frac1{\omega_1}=1,
\]
one obtains
\[
K_g(z+n)-K_g(z)
=
-4\pi
\sin\frac{\pi g}{\omega_1}
J_g(z).
\]
Consequently, the residue theorem on the rectangle $\mathcal R_R$ (shown in Figure~\ref{fig:residue-strip}) gives
\begin{align}
&\sum_{k=0}^{n-1}\operatorname{Res}_{z=k}K_g(z)
+
\sum_{\ell=1}^{L}
\operatorname{Res}_{z=p_\ell}K_g(z)
=
-2\sin\frac{\pi g}{\omega_1}
\int_{\sigma-i\infty}^{\sigma+i\infty}
J_g(z)\frac{dz}{i}.
\label{eq:rectangle-direct}
\end{align}

Write
\[
r_\ell
:=
\operatorname{Res}_{z=p_\ell}J_g(z),
\qquad
0\leq\ell\leq L,
\]
where \(p_0=g\). A direct substitution of
\(p_\ell=g-\ell\omega_2\) gives
\[
\operatorname{Res}_{z=p_\ell}K_g(z)
=
4\pi
\sin\frac{\pi g}{\omega_1}\,r_\ell,
\qquad
1\leq\ell\leq L.
\]
Hence
\begin{equation}\label{eq:sigma-direct}
\sum_{k=0}^{n-1}\operatorname{Res}_{z=k}K_g(z)
=
-2\sin\frac{\pi g}{\omega_1}I_\sigma
-
4\pi\sin\frac{\pi g}{\omega_1}
\sum_{\ell=1}^{L}r_\ell,
\end{equation}
where
\[
I_\sigma
:=
\int_{\sigma-i\infty}^{\sigma+i\infty}
J_g(z)\frac{dz}{i}.
\]

Now deform the contour $ \mathrm{Re} (z) = \sigma $ to a separating contour \(C_{\rm sep}(g)\) (shown in Figure~\ref{fig:separating-contour}) for the two pole families of $ J_g(z) $
\[
g-j\omega_1-k\omega_2
\qquad\text{and}\qquad
j\omega_1+k\omega_2.
\]
The deformation crosses precisely
\[
p_0,p_1,\ldots,p_L,
\]
and therefore
\[
I_{\rm sep}
=
I_\sigma
+
2\pi\sum_{\ell=0}^{L}r_\ell.
\]
Substituting this into \eqref{eq:sigma-direct}, all nonintegral residues cancel, and we obtain
\begin{equation}\label{eq:direct-Isep}
\sum_{k=0}^{n-1}\operatorname{Res}_{z=k}K_g(z)
=
-2\sin\frac{\pi g}{\omega_1}\,I_{\rm sep}
+
4\pi\sin\frac{\pi g}{\omega_1}\,r_0.
\end{equation}

\medskip
\noindent\textbf{Step 2: meromorphic continuation of the separating-contour integral.}

For independent complex parameters \(g,\alpha\), consider
\begin{equation}\label{eq:BSS-family}
\mathcal I(g,\alpha)
:=
\int_{C_{\rm sep}(g)}
e^{\pi i z(2\alpha-g-\Omega)/n}
\gamma(z-g)\gamma(-z)\frac{dz}{i\sqrt n}.
\end{equation}
The contour $C_{\rm sep}(g)$ is precisely the separating contour used
in $I_{\rm sep}$.
For \(\mathrm{Re} (g)<0\), the two pole families
\[
g-j\omega_1-k\omega_2
\]
and
\[
j\omega_1+k\omega_2
\]
are separated by a vertical contour $ \mathrm{Re} (z) = \sigma $. The integral is therefore well defined in a nonempty open region of the \((g,\alpha)\)-plane where the convergence conditions of the BSS formula are satisfied.

On compact subsets avoiding pole collisions, the separating contour may be chosen continuously in the parameters, and the vertical asymptotics of the hyperbolic gamma function imply locally uniform convergence. Hence \(\mathcal I(g,\alpha)\) is holomorphic there. Continuing the contour across a pole changes the integral by the corresponding residue; consequently \(\mathcal I(g,\alpha)\) admits a meromorphic continuation in the parameters.

A collision between the two pole families can occur only when
\[
g\in\omega_1\mathbb Z+\omega_2\mathbb Z.
\]
By
\[
(\omega_1\mathbb Z+\omega_2\mathbb Z)\cap\mathbb Z=n\mathbb Z,
\]
none of \(1,\ldots,n-1\) is a collision point. Thus the separating-contour integral may be continued from \(\mathrm{Re} (g)<0\) to each
\[
g=1,\ldots,n-1.
\]

The BSS identity (Lemma~\ref{lem:BSS_identity}) gives
\begin{align}
\mathcal I(g,\alpha)
&=
e^{\pi i g(2\alpha-g-\Omega)/(2n)}
\gamma(\alpha)
\gamma(\Omega+g-\alpha)
\gamma(-g).
\label{eq:BSS-meromorphic}
\end{align}
Both sides are meromorphic in \(g\) and \(\alpha\), so the identity remains valid under the above continuation.

For our fixed integers \(g,h\), choose
\[
\alpha
=
\frac{\Omega+g}{2}
-\frac{n\phi}{2\pi}.
\]
Then
\[
\frac{\pi i}{n}(2\alpha-g-\Omega)z
=
-i\phi z,
\]
and hence
\begin{equation}\label{eq:Isep-direct}
I_{\rm sep}
=
\sqrt n\,
e^{-ig\phi/2}
\gamma(-g)
\gamma(\alpha)
\gamma(\Omega+g-\alpha).
\end{equation}

Moreover,
\[
r_0
=
\frac{\sqrt n}{2\pi}
e^{-i\phi g}\gamma(-g).
\]
Using
\[
2\sin\frac{\pi g}{\omega_1}\gamma(-g)
=
(-1)^{g+1}F(g),
\]
equation \eqref{eq:direct-Isep} becomes
\begin{align}
\sum_{k=0}^{n-1}\operatorname{Res}_{z=k}K_g(z)
&=
(-1)^g\sqrt n\,
e^{-ig\phi/2}
F(g)
\gamma(\alpha)
\gamma(\Omega+g-\alpha)
+
(-1)^{g+1}\sqrt n\,
e^{-i\phi g}F(g).
\label{eq:residues-BSS-direct}
\end{align}
Since
\[
e^{-i\phi g}
=
(-1)^ge^{iAg}
\]
and
\[
\frac{1-F(0)^2}{F(0)}
=
-\sqrt n,
\]
the second term in \eqref{eq:residues-BSS-direct} is exactly the correction term in \eqref{eq:integer-residues-direct}. Cancelling these correction terms gives
\begin{equation}\label{eq:S-BSS-direct}
S_{g,h}
=
\sqrt n\,
e^{-ig\phi/2}
F(g)
\gamma(\alpha)
\gamma(\Omega+g-\alpha).
\end{equation}

\medskip
\noindent\textbf{Step 3: the special choice of \(\alpha\).}

Set
\[
\eta
=
\begin{cases}
0,&2h\leq g,\\
1,&2h>g.
\end{cases}
\]
Since
\[
\phi
\equiv
\pi-\frac{\pi(g-2h)}n
\pmod{2\pi},
\]
we have
\[
\phi
=
\pi-\frac{\pi(g-2h)}n-2\pi\eta.
\]
Using
\[
\Omega-n=2\omega_2,
\qquad
\Omega+n=2\omega_1,
\]
we obtain
\begin{align}
\alpha
&=
\omega_2+g-h+n\eta,
\label{eq:alpha-special}\\
\Omega+g-\alpha
&=
\omega_1+h-n\eta.
\label{eq:beta-special}
\end{align}
Therefore
\begin{equation}\label{eq:gamma-product-direct}
\gamma(\alpha)\gamma(\Omega+g-\alpha)
=
\frac{F(-h+n\eta)}
     {F(g-h+n\eta)}.
\end{equation}

The integer identities
\[
F(x)F(n-x)=1,
\qquad
F(x+n)=(-1)^xF(x)\quad(x\neq0),
\]
together with
\[
s(x+n)=(-1)^x s(x),
\]
give
\begin{equation}\label{eq:phase-conversion-direct}
e^{-ig\phi/2}
\frac{F(-h+n\eta)}
     {F(g-h+n\eta)}
=
\frac{s(\overline{-h})}
     {s(\overline{g-h})}
F(\overline{-h})F(\overline{h-g}).
\end{equation}
The boundary case \(g-h=0\) is interpreted using
\[
F(n)=F(0)^{-1}.
\]
Thus \eqref{eq:phase-conversion-direct} holds for all
\(0\leq h<n\).

Substituting \eqref{eq:phase-conversion-direct} into
\eqref{eq:S-BSS-direct} proves
\[
S_{g,h}
=
\sqrt n\,F(g)
\frac{s(\overline{-h})}
     {s(\overline{g-h})}
F(\overline{-h})F(\overline{h-g}),
\]
as required.
\end{proof}

\begin{theorem}
    Let $\chi, a$ and $ s $ be defined as in \eqref{eq:explicit-quadratic-data} and \eqref{eq:s-definition}, and Let 
    \begin{align*}
      \beta(k) = \frac{s(k)F(k)}{\sqrt{n}}, \qquad 0\le k < n,
    \end{align*}
    Then $ d_{-}, \chi, a, \beta $ satisfy the TPE system.
\end{theorem}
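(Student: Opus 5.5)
By Proposition~\ref{prop:cubic-reduction}, with $d=d_-$ (which satisfies $d^2=n+nd$) and the quadratic data \eqref{eq:explicit-quadratic-data} (already known to satisfy \eqref{eq:apm}--\eqref{eq:achi}), it suffices to check two things: $\beta(0)=-1/d_-$, and the cubic identity \eqref{eq:two_beta_identity} for all $g,h\in C_n$. The remaining equations, including \eqref{eq:betapm}, \eqref{eq:Fourier} and \eqref{eq:c_nonzero}, then follow with $c=-1/(dB)$.

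The boundary condition is immediate. Since $s(0)=1$, Proposition~\ref{prop:F-identities} gives $\beta(0)=F(0)/\sqrt n=1/\omega_2=-1/d_-$.

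For the cubic identity, I would substitute $\beta(k)=s(\bar k)F(\bar k)/\sqrt n$. We have $a(k)=s(k)^2$ for $0\le k<n$, and $\chi(h,k)^{-1}=e^{-2\pi i hk/n}$. The left side of \eqref{eq:two_beta_identity} then becomes $n^{-2}a(h-g)\sum_k e^{-2\pi ihk/n}\,s(\overline{g-k})F(\overline{g-k})F(k)/s(k)$.

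The next step is a phase computation with representatives $0\le k<n$ and $g$ fixed in $\{0,\dots,n-1\}$, split according to whether $k\le g$ or $k>g$. In the second case $\overline{g-k}=g-k+n$, and we rewrite $s(g-k+n)F(g-k+n)=s(g-k)F(g-k)$ using the $n$-periodicity of $sF$ on nonzero classes. The exponent of $s(g-k)/s(k)$ is $\frac{\pi i}{2n}\bigl((g-k)(n-g+k)-k(n-k)\bigr)$. Expanding it, the quadratic terms in $k$ should cancel, leaving $s(g)\,e^{\pi i(\cdots)k/n}$ times a sign $(-1)^k$. Combined with $e^{-2\pi ihk/n}$, the sum should become exactly $s(g)\,S_{g,h}$ from Lemma~\ref{lem:S-residue-continuation}, up to an explicit constant phase depending only on $g,h$.

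For $1\le g<n$ I would then apply \eqref{eq:finite-sum-F-final}. The right side of \eqref{eq:two_beta_identity} is $n^{-3/2}s(\overline{-h})s(g)s(\overline{h-g})F(\overline{-h})F(g)F(\overline{h-g})$, with no delta term since $g\neq0$. This matches the lemma's output once we verify the scalar identity $a(h-g)\,s(g)/s(\overline{g-h})\cdot(\text{phase})=s(g)\,s(\overline{h-g})$, that is, $a(h-g)=s(\overline{g-h})s(\overline{h-g})$ up to the tracked phase. This is a routine check of exponents, using $s(x)s(n-x)=a(x)$.

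The case $g=0$ is not covered by the lemma. Here I would instead use the collapsed form from the proof of Proposition~\ref{prop:cubic-reduction}, namely $\mathcal Fq=nbq+\frac nd\delta_0$. Because $\beta(k)\beta(-k)/a(k)=F(k)F(n-k)/n=1/n$ for $k\ne0$ (by $F(z)F(n-z)=1$), we get $q=\frac1n\mathbf 1-\frac1d\delta_0$ directly, using $F(0)^2/n-1/n=1/\omega_2^2-1/n=-1/d$, which follows from $F(0)^2-1=\sqrt n\,F(0)$. Then $\mathcal F q$ can be computed by orthogonality and checked against the right side.

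The main obstacle is the phase bookkeeping in the reduction to $S_{g,h}$. One must confirm that $e^{-2\pi ihk/n}s(\overline{g-k})/s(k)$ really equals $(-1)^k e^{\pi i(g-2h)k/n}$ times a $k$-independent factor, uniformly across the wraparound $k>g$ and the exceptional term $k=g$, where $F(0)$ versus $F(n)=F(0)^{-1}$ matters. If the naive exponent does not match, I would absorb the discrepancy by conjugating $h\mapsto -h$ or $g\mapsto \bar g$ in the lemma, which is valid for every $0\le h<n$.
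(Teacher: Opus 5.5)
Your proposal is correct and follows the paper's proof: reduce via Proposition~\ref{prop:cubic-reduction}, use the $n$-periodicity of $sF$ on nonzero classes to turn the convolution into $\frac{s(g)}{n}S_{g,h}$, apply Lemma~\ref{lem:S-residue-continuation}, and match phases via $a(h-g)=s(\overline{g-h})\,s(\overline{h-g})$. Your hedging is unnecessary: the exponent computation gives exactly $s(g-k)/s(k)=s(g)(-1)^k e^{\pi i gk/n}$ with no leftover phase, and at $k=g$ one has $\overline{g-k}=g-k=0$, so no wraparound occurs. Your explicit check of the case $g=0$ through $F(z)F(n-z)=1$ and $F(0)^2-1=\sqrt n\,F(0)$ fills in a step the paper only asserts.
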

\begin{proof}
  It suffices to verify Equation~\eqref{eq:two_beta_identity} for $ g\ne 0 $.
By the definition of \(\beta\),
\[
\begin{aligned}
&\sum_{k\in G}
\overline{\chi(h,k)}
\frac{\beta(g-k)\beta(k)}{a(k)}
=
\frac1n
\sum_{k=0}^{n-1}
e^{-2\pi i hk/n}
\frac{s(\overline{g-k})}
     {s(k)}
F(\overline{g-k})F(k).
\end{aligned}
\]
Using the \(n\)-periodicity of \(s(x)F(x)\) on integer arguments,
\[
\frac{s(\overline{g-k})}{s(k)}
F(\overline{g-k})
=
s(g)(-1)^k
e^{\pi i gk/n}
F(g-k),
\]
and hence
\begin{equation}\label{eq:cubic-S-direct}
\sum_{k\in G}
\overline{\chi(h,k)}
\frac{\beta(g-k)\beta(k)}{a(k)}
=
\frac{s(g)}nS_{g,h}.
\end{equation}

Applying Lemma~\ref{lem:S-residue-continuation},
\[
\begin{aligned}
\frac{a(h-g)}n
\sum_{k\in G}
\overline{\chi(h,k)}
\frac{\beta(g-k)\beta(k)}{a(k)}&=
\frac{a(h-g)s(g)}{n^2}
S_{g,h}
\\
&=
\frac{
s(g)
s(\overline{h-g})
s(\overline{-h})
}{
n^{3/2}
}
F(g)
F(\overline{h-g})
F(\overline{-h}).
\end{aligned}
\]
Since
\[
a(h-g)=s(\overline{h-g})^2,
\]
the last expression is exactly
\[
\beta(-h)\beta(g)\beta(h-g).
\]
Thus
\[
\frac{a(h-g)}n
\sum_{k\in G}
\overline{\chi(h,k)}
\frac{\beta(g-k)\beta(k)}{a(k)}
=
\beta(-h)\beta(g)\beta(h-g),
\]
which is the desired identity for \(g\neq0\).
\end{proof}

\begin{remark}
  This construction yields a spherical near-group fusion category, while its Galois conjugate produces a pseudo-unitary category. Despite Belousov being a colleague of the authors at Beijing Institute of Mathematical Sciences and Applications, the BSS identity was only brought to our attention through AI-assisted exploration.
\end{remark}

\begin{proposition}[All even quadratic refinements]\label{prop:abelian-refinements}
Let $G=\bigoplus_{i=1}^r\mathbb Z/m_i\mathbb Z\,e_i$ and let
$\chi:G\times G\to\mathbb T$ be a symmetric bicharacter. Choose
$\xi_i$ with
\[
 \xi_i^2=\chi(e_i,e_i),\qquad \xi_i^{m_i^2}=1.
\]
Then
\begin{equation}\label{eq:abelian-quadratic-formula}
 a\!\left(\sum_i x_ie_i\right)
 =\prod_i\xi_i^{-x_i^2}
  \prod_{i<j}\chi(e_i,e_j)^{-x_ix_j}
\end{equation}
is a well-defined even quadratic refinement of $\chi$. Such refinements
form a torsor for $\operatorname{Hom}(G,\{\pm1\})$; in particular
there are exactly $|G[2]|$ of them. If $|G|$ is odd, the unique one is
$a(g)=\chi(g/2,g)^{-1}$.
\end{proposition}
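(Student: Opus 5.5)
We verify the four assertions in turn: the formula \eqref{eq:abelian-quadratic-formula} is well defined, it is an even quadratic refinement in the sense of \eqref{eq:apm}--\eqref{eq:achi}, refinements form a $\operatorname{Hom}(G,\{\pm1\})$-torsor, and the odd-order formula holds. Throughout, $x_i$ is an integer lift of a coordinate in $\mathbb Z/m_i$.

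\emph{Well-definedness.} The factors $\chi(e_i,e_j)^{-x_ix_j}$ with $i<j$ only depend on $x_i,x_j$ modulo $m_i,m_j$. Indeed, $\chi(e_i,e_j)^{m_i}=\chi(m_ie_i,e_j)=1$, and similarly for $m_j$. For the diagonal factor, replace $x_i$ by $x_i+m_i$. Then $\xi_i^{-x_i^2}$ changes by
\[
\xi_i^{-2x_im_i-m_i^2}=\chi(e_i,e_i)^{-x_im_i}\,\xi_i^{-m_i^2}=1,
\]
using $\xi_i^2=\chi(e_i,e_i)$, $\chi(e_i,e_i)^{m_i}=1$ and $\xi_i^{m_i^2}=1$. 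Such $\xi_i$ exist: $\chi(e_i,e_i)$ is an $m_i$-th root of unity $\zeta$. If $m_i$ is odd, take $\xi_i$ with $\xi_i^2=\zeta$ and $\xi_i^{m_i}=1$. If $m_i$ is even, any square root works, since then $\xi_i^{2m_i}=1$ and $2m_i\mid m_i^2$.

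\emph{Refinement and evenness.} Expanding $(x_i+y_i)^2$ and $(x_i+y_i)(x_j+y_j)$ gives
\[
\frac{a(x+y)}{a(x)a(y)}=\prod_i\xi_i^{-2x_iy_i}\prod_{i<j}\chi(e_i,e_j)^{-(x_iy_j+y_ix_j)}=\chi(x,y)^{-1}
\]
by bilinearity and symmetry of $\chi$. This is \eqref{eq:achi}. The equalities $a(0)=1$ and $a(-g)=a(g)$ are immediate, since the formula is quadratic in $x$.

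\emph{Torsor.} If $a,a'$ both satisfy \eqref{eq:achi}, the ratio $\epsilon=a'/a$ is a homomorphism $G\to K^\times$. Evenness gives $\epsilon(-g)=\epsilon(g)$, so $\epsilon(g)^2=\epsilon(g)\epsilon(-g)=1$. Conversely, multiplying by any such $\epsilon$ preserves \eqref{eq:apm}--\eqref{eq:achi}, since $\epsilon(-g)=\epsilon(g)^{-1}=\epsilon(g)$. Hence refinements form a torsor under $\operatorname{Hom}(G,\{\pm1\})$, which is nonempty by the construction above. Its order is $|\operatorname{Hom}(G,\{\pm1\})|=|G/2G|=|G[2]|$.

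\emph{Odd order.} If $|G|$ is odd, then $\operatorname{Hom}(G,\{\pm1\})$ is trivial, so the refinement is unique. Moreover $g/2$ makes sense, and $b(g)=\chi(g/2,g)^{-1}$ satisfies
\[
b(g+h)=b(g)\,b(h)\,\chi(g,h)^{-1},
\]
because the cross terms $\chi(g/2,h)\chi(h/2,g)=\chi(g,h)$ by symmetry. Also $b(-g)=b(g)$, so $b$ is a refinement and hence is the unique one.

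The only real subtlety is the diagonal well-definedness. This is exactly why the condition $\xi_i^{m_i^2}=1$ is imposed, and for even $m_i$ it is not automatic that $\xi_i^{m_i}=1$, so the argument must use $m_i^2$ rather than $m_i$.
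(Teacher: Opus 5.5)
Your proof is correct and follows essentially the same route as the paper: well-definedness via $\xi_i^{2m_ix_i+m_i^2}=1$ and $\chi(e_i,e_j)^{m_i}=1$, the polarization identity from expanding the quadratic exponents, the torsor structure from the ratio of two refinements being a character of square $1$, and uniqueness in odd order from $G[2]=0$. You also write out details the paper leaves implicit, namely the existence of a permissible $\xi_i$ in the odd and even cases and the explicit cross-term check showing that $\chi(g/2,g)^{-1}$ is a refinement.
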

\begin{proof}
If $m_i$ is odd, there is a unique permissible square root,
$\xi_i=\chi(e_i,e_i)^{(m_i+1)/2}$. If $m_i$ is even, both square
roots satisfy the required power condition. Increasing $x_i$ by $m_i$
does not change the diagonal factor because
$\xi_i^{2m_ix_i+m_i^2}=1$, or a mixed factor because
$\chi(e_i,e_j)^{m_i}=1$. Thus the formula is well defined. Negation
preserves it, and expansion of $(x_i+y_i)^2$ gives
$a(x+y)\chi(x,y)=a(x)a(y)$, including the mixed terms.
The quotient of two refinements is a character. Evenness says that
this character has square $1$, and conversely multiplying by any
such character preserves all requirements. Finally,
$|\operatorname{Hom}(G,\{\pm1\})|=|G/2G|=|G[2]|$.
For odd-order $G$, division by $2$ is unique; the stated formula has
the required polarization, and uniqueness follows from $G[2]=0$.
\end{proof}

\begin{lemma}[Algebraic reflection on scalar solutions]
\label{lem:algebraic-reflection}
The rational transformation
\begin{equation}\label{eq:parameter-reflection}
 \tau(d,\chi,a,c,\beta)
 =\left(d,\chi^{-1},a^{-1},\frac1{nc},
        \left(g\longmapsto\frac{\beta(-g)}{a(g)}\right)\right)
\end{equation}
is an involution of the solution set of Definition~\ref{thm: equation},
over every field allowed there.
\end{lemma}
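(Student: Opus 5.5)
We verify each equation of Definition~\ref{thm: equation} directly for the transformed tuple $(d,\chi',a',c',\beta')=(d,\chi^{-1},a^{-1},1/(nc),\beta')$ with $\beta'(g)=\beta(-g)/a(g)$. Then we check that $\tau^2=\mathrm{id}$.

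\textbf{Involutivity.} Applying $\tau$ twice gives $\chi$, $a$, and $c''=1/(n\cdot 1/(nc))=c$. For $\beta$ we get
\[
\beta''(g)=\frac{\beta'(-g)}{a^{-1}(g)}=a(g)\,\frac{\beta(g)}{a(-g)}=\beta(g),
\]
using evenness of $a$. This step is immediate.

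\textbf{Easy equations.} Equation \eqref{eq:dim_rho} is untouched. Equations \eqref{eq:apm}--\eqref{eq:achi} pass to inverses, since $\chi^{-1}$ is still a nondegenerate symmetric bicharacter. For \eqref{eq:c_nonzero} we need $(nc)^{-3}S_{a^{-1}}=1/n$. From \eqref{eq:scalar-fourier-toolkit} we have $S_{a^{-1}}=n/S_a$, and $c^3S_a=1/n$, so
\[
(nc)^{-3}S_{a^{-1}}=\frac{1}{n^3c^3}\cdot\frac{n}{S_a}=\frac{n}{n^3}\cdot n=\frac1n.
\]
For \eqref{eq:betapm}:
\[
\beta'(g)\beta'(-g)=\frac{\beta(-g)\beta(g)}{a(g)^2}=\frac{1}{a(g)}\Bigl(\frac{a(g)}{n}-\frac{\delta_{g,0}}{d}\Bigr)=\frac{a'(g)}{n}-\frac{\delta_{g,0}}{d},
\]
since $a(0)=1$.

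\textbf{Fourier equation.} Equation \eqref{eq:Fourier} for the new data reads $\sum_g a(g)\chi(h,g)^{-1}\beta(-g)/a(g)=\beta(-h)/(c\,a(h))$, that is, $(\mathcal F\beta)(-h)=\beta(-h)/(c\,a(h))$. The cleanest route is to apply $T$ twice to $T\beta=cn\beta$. This gives $T^2\beta=c^2n^2\beta$, and combined with $T^3=nS_{a^{-1}}I$ and the explicit kernel $(T^2)_{h,j}=S_{a^{-1}}a(h)\chi(h,j)^{-1}$ we obtain
\[
S_{a^{-1}}\,a(h)\,(\mathcal F\beta)(h)=c^2n^2\beta(h).
\]
Since $c^3S_a=1/n$ gives $S_{a^{-1}}=n/S_a=n^2c^3$, this becomes $a(h)(\mathcal F\beta)(h)=\beta(h)/c$. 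Replacing $h$ by $-h$ and using evenness of $a$ gives exactly the required identity.

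\textbf{Cubic identity (main obstacle).} The hardest step is the cubic \eqref{eq:two_beta_identity}. The new left side is
\[
\frac{1}{n\,a(h-g)}\sum_k\chi(h,k)\,\frac{\beta(k-g)\beta(-k)}{a(g-k)}.
\]
The plan is to rewrite $1/(a(g-k)a(h-g))$ with \eqref{eq:achi} so that the sum becomes an original-style convolution at the negated arguments $(-g,-h)$, up to quadratic factors. To do this, substitute $k\mapsto -k$ and expand $a(-g+k)$ and $a(h-g)$ in terms of $a(g),a(h),a(k)$ and $\chi$. The resulting factor $\chi(h,k)^{-1}$ times the $\chi(g,k)$ coming from the expansion must combine into $\chi(h-g,k)^{\pm1}$. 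This matches the cubic at the argument pair $(-g,\,g-h)$ or similar, after also using the relabeling $k\mapsto g-k$ inside the original convolution.

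Rather than fight the bookkeeping directly, I would first use Proposition~\ref{prop:cubic-reduction}. It says that, given $d$, $\chi'$, $a'$, the whole system reduces to $\beta'(0)=-1/d$ together with the cubic. Here $\beta'(0)=\beta(0)=-1/d$, so the only remaining task is the cubic, which would be checked by the substitution above. The right side $\beta'(-h)\beta'(g)\beta'(h-g)$ equals $\beta(h)\beta(-g)\beta(g-h)/(a(h)a(g)a(h-g))$, the original triple at negated arguments divided by $a$'s. The crux is showing that the $a$ and $\chi$ prefactors on both sides agree, via repeated use of \eqref{eq:achi}. The $\delta$-term is unchanged because $a(0)=1$.
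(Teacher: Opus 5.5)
\textbf{Verdict.} Your proposal has a genuine gap: the cubic identity \eqref{eq:two_beta_identity}, the only substantive equation, is never verified, although your plan for it is the right one and is easily completed.

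\textbf{What is correct.} Everything except the cubic is sound. This includes the involution check, the equation \eqref{eq:c_nonzero} via $S_aS_{a^{-1}}=n$, and the new Fourier equation obtained from the kernel of $T^2$. The last is a genuinely different route from the paper. The paper obtains the Fourier equation from Proposition~\ref{prop:cubic-reduction}, and then identifies $c_\tau=\beta_\tau(0)/\sum_g\beta_\tau(g)=1/(nc)$ using the original Fourier equation at $h=0$. Since you checked all the other equations directly with $c'=1/(nc)$, you do not need that proposition at all. There is one slip in \eqref{eq:betapm}: your middle expression should be $a(g)^{-2}\bigl(a(g)/n-\delta_{g,0}/d\bigr)$, not $a(g)^{-1}(\cdots)$, though your final line is right.

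\textbf{The gap.} For the cubic you describe a plan and say that ``the crux is showing that the prefactors agree,'' but you never carry it out. Your guessed comparison point $(-g,g-h)$ is also wrong. At that point the original identity carries $\chi(g-h,k)^{-1}$ and the triple $\beta(h-g)\beta(-g)\beta(2g-h)$, and neither matches. The extra relabeling $k\mapsto g-k$ is unnecessary.

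\textbf{How to close it.} After $k\mapsto-k$, your new left side is
$\frac{1}{n\,a(h-g)}\sum_k\chi(h,k)^{-1}\beta(-g-k)\beta(k)/a(g+k)$.
Use \eqref{eq:achi} in the form $a(g+k)^{-1}=\chi(g,k)/(a(g)a(k))$. This turns $\chi(h,k)^{-1}a(g+k)^{-1}$ into $\chi(h-g,k)^{-1}/(a(g)a(k))$. The original left side at $(-g,h-g)$ has prefactor $a(h)/n$. On the right side, $\beta'(-h)\beta'(g)\beta'(h-g)=\beta(h)\beta(-g)\beta(g-h)/(a(g)a(h)a(h-g))$, which is the original triple at $(-g,h-g)$ divided by the same factor. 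Writing $E$ and $E'$ for left minus right side of \eqref{eq:two_beta_identity} for the original and transformed tuples, you get
\[
 E'(g,h)=\frac{E(-g,h-g)}{a(g)a(h)a(h-g)}.
\]
The exceptional terms match because $\delta_{h-g,0}\delta_{-g,0}=\delta_{h,0}\delta_{g,0}$, and the denominator equals $1$ at $g=h=0$. Since $(g,h)\mapsto(-g,h-g)$ is a bijection of $G\times G$, the cubic holds for the transformed tuple. This is exactly the paper's key identity, and with it your argument is complete.
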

\begin{proof}
The dimension and quadratic-refinement data are preserved by substitution,
as is $\beta(0)=-1/d$. If $E(g,h)$ denotes the left side minus the
right side of~\eqref{eq:two_beta_identity}, substitution and the change
$k\mapsto-k$ give
\[
 E_\tau(g,h)=\frac{E(-g,h-g)}{a(g)a(h)a(h-g)}.
\]
The denominator equals $1$ on the support of the exceptional term.
Proposition~\ref{prop:cubic-reduction} therefore supplies all the remaining
equations. To identify the new value of $c$, use the original Fourier
equation at zero and evenness of $a$:
\[
 \sum_g\beta_\tau(g)=\sum_g\frac{\beta(g)}{a(g)}=cn\beta(0),
 \qquad
 c_\tau=\frac{\beta_\tau(0)}{\sum_g\beta_\tau(g)}=\frac1{nc}.
\]
Applying $\tau$ twice fixes every parameter.
\end{proof}

\begin{lemma}[Fourier form of the cubic identity]\label{lem:izumi-cubic-dictionary}
Assume the quadratic and Fourier equations and $\beta(0)=-1/d$.
Then~\eqref{eq:two_beta_identity} is equivalent to
\begin{equation}\label{eq:izumi-cubic-form}
 \sum_{t\in G}\frac{\beta(-t)\beta(t+u)\beta(t+v)}{a(t)}
 =\chi(u,v)\beta(u)\beta(v)-\frac1{cdn}
 \qquad(u,v\in G).
\end{equation}
Thus Izumi's parameters in \cite[Remark~9.2]{Izumi2017} are
$\langle g,h\rangle_I=\chi(g,h)^{-1}$,
$a_I(g)=a(g)^{-1}$, $c'_I=c$, and $b_I(g)=\beta(g)$.
\end{lemma}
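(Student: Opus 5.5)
Let $\mathcal F$ be the unnormalized Fourier transform from Proposition~\ref{prop:cubic-reduction} and $T$ the twisted transform from its proof, so that $T\beta=cn\beta$. The plan is to take the Fourier transform of~\eqref{eq:two_beta_identity} in one variable and rewrite every $\beta$ that appears through its Fourier image $T\beta/(cn)$. Concretely, I write the cubic identity with $g$ fixed and multiply by $\chi(h,v)$ (suitably twisted by $a$). Then I sum over $h$. On the left, the factor $a(h-g)$ combines with $\chi$ through the refinement identity $a(h-g)=a(h)a(g)\chi(h,g)^{-1}\cdot(\text{sign})$, which uses evenness. The $h$-sum then collapses, by character orthogonality and the Gauss-sum relations~\eqref{eq:scalar-fourier-toolkit}, to a single convolution-type term.

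On the right, the term $\beta(-h)\beta(h-g)$ summed against characters in $h$ is exactly a convolution of the same shape as~\eqref{eq:beta-zero-convolution}. Here I substitute $\beta=T\beta/(cn)$ for one factor, which turns the $h$-sum into a triple sum over $t$. Matching variables as $u=g$ (or $-g$) and $v$ the dual variable should produce $\sum_t\beta(-t)\beta(t+u)\beta(t+v)/a(t)$ on one side and $\chi(u,v)\beta(u)\beta(v)$ on the other. The $\chi(u,v)$ factor comes from recombining $a(u+v)^{-1}a(u)a(v)$ via~\eqref{eq:achi}.

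The delta term $\delta_{h,0}\delta_{g,0}/d$ transforms into a constant in $v$ supported at $u=0$. To get the uniform constant $-1/(cdn)$ for all $(u,v)$, I expect to also need the quadratic identity~\eqref{eq:betapm}. This is where the $q=\frac1n\mathbf 1-\frac1d\delta_0$ computation from Proposition~\ref{prop:cubic-reduction} gets reused, and the identity $c^3S_a=1/n$ converts the leftover Gauss-sum factors into the single constant $-1/(cdn)$. Since every step is an invertible linear transform in one variable (with $n$ and $c$ invertible), the argument reverses, which gives equivalence rather than just an implication.

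The main obstacle is bookkeeping: choosing the correct twist so that the $a$-factors telescope cleanly, and verifying that the delta contributions assemble into exactly $-1/(cdn)$, using $1-nb^2=n/d$ and $b=-1/d$. I would first check the identity at $u=v=0$ and at $v=0$ as sanity checks that fix the constant. The dictionary with Izumi's parameters then follows by comparing~\eqref{eq:izumi-cubic-form} term by term with \cite[Remark~9.2]{Izumi2017} after substituting $\langle g,h\rangle_I=\chi(g,h)^{-1}$ and $a_I=a^{-1}$.
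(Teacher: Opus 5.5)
Your proposal has a genuine gap. No Fourier transform in $h$ whose weights are built only from $a$ and $\chi$ can turn \eqref{eq:two_beta_identity} into \eqref{eq:izumi-cubic-form}. Under such a transform, the convolution on the left of \eqref{eq:two_beta_identity} collapses to a product of two $\beta$'s; for example, the weight $\chi(h,v)/a(h-g)$ gives $\beta(g-v)\beta(v)/a(v)$. Meanwhile every cubic monomial from the right-hand side still carries the fixed factor $\beta(g)$ outside the $h$-sum. Substituting the Fourier expansion for one of the other two factors, as you propose, only replaces one convolution by another. You therefore never reach a single sum in which all three arguments move with the summation index, as in $\sum_t\beta(-t)\beta(t+u)\beta(t+v)/a(t)$.

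Your matching is also reversed. The fixed factor $\beta(g)=\beta(u)$ shows that the TPE cubic term must become Izumi's quadratic term $\chi(u,v)\beta(u)\beta(v)$. The TPE convolution must become Izumi's cubic sum.

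The missing idea is to weight the $h$-sum by $\beta(h)$ itself. With $E$ and $D$ the residuals (left minus right) of \eqref{eq:two_beta_identity} and \eqref{eq:izumi-cubic-form}, one has
\[
 D(u,v)=\frac1c\sum_{s\in G}\chi(v,s)\frac{\beta(s)}{a(s)a(s-u)}E(u,s).
\]
The terms work out as follows.
\begin{itemize}
\item In the convolution term, the new factor is absorbed by the Fourier equation, $\sum_s\chi(v-k,s)\beta(s)/a(s)=cn\beta(v-k)$. This supplies the third varying factor of Izumi's sum.
\item In the cubic term, the product relation replaces $\beta(s)\beta(-s)/a(s)$ by $\frac1n-\frac{\delta_{s,0}}d$. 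The $\frac1n$ part gives $-\chi(u,v)\beta(u)\beta(v)$ after the Fourier equation at the shifted argument $s-u$.
\item The $\delta_{s,0}$ part gives $\frac1{cd}\cdot\frac{\beta(u)\beta(-u)}{a(u)}=\frac1{cdn}-\frac{\delta_{u,0}}{cd^2}$.
\item The transformed delta term equals $\frac{\delta_{u,0}}{cd^2}$, using $\beta(0)=-1/d$. It cancels the $\delta_{u,0}$ piece above, leaving the uniform constant $\frac1{cdn}$; no Gauss sum or $c^3S_a=1/n$ is needed.
\end{itemize}

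For the converse, character orthogonality inverts the $s$-transform, but you must then also divide by $\beta(s)$. This is legitimate only because every $\beta(s)\neq0$, which follows from the product relation and $\beta(0)=-1/d$. Your claim that every step is an invertible linear transform overlooks exactly this point.
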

\begin{proof}
Let $E(g,h)$ be the residual of~\eqref{eq:two_beta_identity} and let
$D(u,v)$ be the residual of~\eqref{eq:izumi-cubic-form}, in each case
left side minus right side. Expanding $\beta(t+v)$ by the Fourier
equation and applying the product relation gives
\[
 D(u,v)=\frac1c\sum_{s\in G}\chi(v,s)
       \frac{\beta(s)}{a(s)a(s-u)}E(u,s).
\]
Every $\beta(s)$ is nonzero by the product relation and
$\beta(0)=-1/d$. Character orthogonality gives the inverse
\[
 E(u,s)=\frac{c\,a(s)a(s-u)}{n\beta(s)}
         \sum_{v\in G}\chi(v,s)^{-1}D(u,v).
\]
Thus the two residual systems vanish simultaneously over $K$.
\end{proof}

Over $\mathbb C$, the additional unitary conditions are
$d=(n+\sqrt{n^2+4n})/2$ and
$\overline{\beta(g)}=\beta(-g)/a(g)$.
Under the displayed dictionary, Izumi's antilinear map is
$J_I f(g)=\overline{a_I(g)f(-g)}$, and these are precisely his
additional conditions in \cite[Remark~9.2]{Izumi2017}.
Thus the unitary parameter systems agree for every finite abelian group.
Equivalence classes still require the group-automorphism identifications
of \cite[Theorem~9.1]{Izumi2017}.

\section{Planar algebraic notation for spherical fusion categories}\label{sec: prelim}
We restrict our attention to spherical fusion categories, as Thornton's theorem \cite[Theorem~1.3]{Thornton} guarantees that every near-group category is spherical. Following \cite[Remark~4.20]{LPR}, for the categorification problem, we may assume without loss of generality that both the monoidal and pivotal structures are strict. As noted in \cite[Remark~2.17]{LPR}, even in the strict setting, a simple object satisfying $X \cong X^*$ with second Frobenius--Schur indicator $\nu(X) = -1$ cannot be strictly self-dual (i.e., $X^* \neq X$). Therefore, we further assume that the Frobenius--Schur indicator is $1$ for every self-dual simple object. Together, these strictness assumptions and the triviality of the Frobenius--Schur indicator allow us to adopt a planar algebraic framework to analyze the near-group categorification problem.

Let $G$ be a finite abelian group of order $n \ge 1$, written additively, and let $\mathcal{C}$ be a spherical near-group fusion category over $\mathbb{C}$. The simple objects of $\mathcal{C}$ are indexed by $I = G \sqcup \{\varrho\}$, with fusion rules
\begin{equation}\label{eq:NGfusion}
    g \otimes h = g + h, \qquad
    g \otimes \varrho = \varrho = \varrho \otimes g, \qquad
    \varrho \otimes \varrho = \bigoplus_{g \in G} g \oplus n\varrho,
\end{equation}
where $0 \in G$ denotes the monoidal unit $\mathbf{1}$, $g^* = -g$, and $\varrho^* = \varrho$. Denoting the spherical dimension of simple object $X \in I$ by $d_X$, we have
\begin{equation}\label{eq:dimension}
    d_g = 1, \qquad d = d_\varrho = \frac{n \pm \sqrt{n^2 + 4n}}{2}, \qquad d^2 = nd + n.
\end{equation}

\subsection{String diagrams and invariant trivalent vertices}\label{subsec:string-conventions}
Let $ X $ be an object in $ \cC $. We use the following notations:
$$
\raisebox{-.5cm}{
\begin{tikzpicture}
\draw (0,-.5)--(0,0) node [right] {\tiny $X$};
\draw[<-] (0,0)--(0,.5);
\end{tikzpicture}}:=
\raisebox{-.5cm}{
\begin{tikzpicture}
\draw (0,-.5)--(0,0) node [right] {\tiny $X$};
\draw  (0,0)--(0,.5);
\end{tikzpicture}}
=\id_{X}
\ \ \text{ and } \ \
\raisebox{-.5cm}{
\begin{tikzpicture}
\draw[->] (0,-.5)--(0,0) node [right] {\tiny $X$};
\draw (0,0)--(0,.5);
\end{tikzpicture}}:=
\raisebox{-.5cm}{
\begin{tikzpicture}
\draw (0,-.5)--(0,0) node [right] {\tiny $X^*$};
\draw (0,0)--(0,.5);
\end{tikzpicture}} = \id_{X^*}.
$$
A group-labelled simple object $g\in G$ is represented by a strand labelled by $g$ and oriented from top to bottom. Reversing the orientation replaces the label by the dual object:
$
g^*=-g.
$
The unique non-invertible simple object $\varrho$ is represented by an unlabeled and unoriented strand.

We have the oriented evaluation and coevaluation maps, see \cite[Section~2.2]{LPR}. Let us depict these morphisms as follows
$$
\text{ev}_{X,+}=
\raisebox{-.4cm}{
\begin{tikzpicture}
\draw (0,0)  arc (-180:-90:.5);
\draw [<-] (.5,-.5)  node [right = .25cm] {\tiny $X$} arc (-90:0:.5);
\end{tikzpicture}}
, \ \
\text{ev}_{X,-}=
\raisebox{-.4cm}{
\begin{tikzpicture}
\draw[->] (0,0)  arc (-180:-90:.5);
\draw (.5,-.5)  node [left = .25cm] {\tiny $X$} arc (-90:0:.5);
\end{tikzpicture}}
\ , \ \
\text{coev}_{X,+}= \hspace{-.2cm}
\raisebox{-.4cm}{
\begin{tikzpicture}
\draw[<-] (0,0) arc (90:0:.5);
\draw (-.5,-.5) node [left = -.05cm] {\tiny $X$} arc (180:90:.5);
\end{tikzpicture}}
\ , \ \
\text{coev}_{X,-}=
\raisebox{-.4cm}{
\begin{tikzpicture}
\draw (0,0) arc (90:0:.5) node [right = -.05cm] {\tiny $X$};
\draw[->] (-.5,-.5)  arc (180:90:.5);
\end{tikzpicture}}.
$$

The following dictionary illustrates the correspondence between categorical string diagrams and planar algebraic diagrams for a morphism $f \in \operatorname{Hom}(X \otimes Y, Z)$ and an invariant vector $u \in \operatorname{Hom}(\mathbf{1}, X \otimes Y \otimes Z)$:
\begin{align*}
  \SD{morphism-f} &\longleftrightarrow \begin{tikzpicture}[scale=0.8,baseline={(current bounding box.center)}]
      \draw[->] (-1,1) -- (-0.5,0.5); \draw (-0.5,0.5) -- (0,0);
  \draw[->] (1,1)  -- (0.5,0.5);  \draw (0.5,0.5)  -- (0,0);
  \draw[->] (0,0)  -- (0,-0.5);   \draw (0,-0.5)   -- (0,-1);
  \node at (-0.5,0.5)[left] {$ X $};
  \node at (0.5,0.5)[right] {$ Y $};
  \node at (0,-0.5)[left] {$ Z $};
  \end{tikzpicture}\\
  \SD{invariant-u} &\longleftrightarrow \begin{tikzpicture}[scale=1.4,baseline={(current bounding box.center)}]
\draw (-.5,0) arc[radius=.5, start angle=180, end angle=0];
\draw (0,.5)--(0,0);
\draw [->](0,.5) arc[radius=.5, start angle=90, end angle=180];
\draw [->](0,.5) arc[radius=.5, start angle=90, end angle=0];
\draw [->](0,.5)--(0,0);
\draw (0,0)--(0,-.2);
\draw (.5,0)--(.5,-.2);
\draw (-.5,0)--(-.5,-.2);
\node at (-.5,-.12)[left] {$X$};
\node at (0,-.12)[right] {$Y$};
\node at (.5,-.12)[right] {$Z$};
\end{tikzpicture}
\end{align*}
Unlike categorical diagrams, which strictly enforce a top-to-bottom reading direction, planar algebraic diagrams can be rotated and interpreted from arbitrary directions via Frobenius reciprocity. Consequently, we can uniformly encode trivalent vertices using the space $\operatorname{Hom}(\mathbf{1}, X \otimes Y \otimes Z)$.

Set $V(a,b,c) = \operatorname{Hom}(\mathbf{1}, a \otimes b \otimes c)$ and $V(a,b,c)^\vee = \operatorname{Hom}(a \otimes b \otimes c, \mathbf{1})$. The spaces $V(a,b,c)$ and $V(a,b,c)^\vee$ are naturally paired via vertical diagrammatic composition. Specifically, for any $u \in V(a,b,c)$ and $v \in V(a,b,c)^\vee$, their composition forms a closed planar diagram identified with a scalar:
\[
\langle v, u \rangle = v \circ u \in \operatorname{End}(\mathbf{1}) \cong K.
\]
We therefore view the canonical bilinear map
\begin{equation}\label{eq:trivalent-pairing}
V(a,b,c)^\vee \times V(a,b,c) \longrightarrow K
\end{equation}
as the fundamental planar pairing. Whenever a basis for $V(a,b,c)$ is chosen, the basis for the opposite trivalent space $V(a,b,c)^\vee$ is taken to be its dual basis with respect to this pairing.

\subsection{The all-\(\varrho\) vertex}
\label{subsec:rho-vertex}

The situation is fundamentally different for trivalent vertices involving three $\varrho$-strands. The fusion rules yield
\[
\dim \operatorname{Hom}(\varrho \otimes \varrho, \varrho) = n.
\]
In Section~\ref{sec:fsymbols}, we establish that this hom-space admits a canonical $G$-indexed basis. Consequently, we decorate a corner of the all-$\varrho$ trivalent vertex with a group element $g \in G$ to specify a basis vector. Recall that individual $\varrho$-strands carry neither object labels nor arrows. The all-$\varrho$ vertex is depicted as
\[
\begin{tikzpicture}[scale=0.8, baseline={(current bounding box.center)}]
    \draw (-1,1) -- (0,0);
    \draw (1,1)  -- (0,0);
    \draw (0,0)  -- (0,-1);
    \node at (0,-0.2)[left] {$g$};
\end{tikzpicture}.
\]
Here, the marked corner is an essential part of the diagrammatic notation. Shifting the mark to an adjacent corner corresponds to applying a $120^\circ$ cyclic rotation to the trivalent vertex.

Throughout this paper, we carefully distinguish between two distinct uses of $G$-labels:
\begin{itemize}
    \item A label $g \in G$ placed \emph{on a strand} specifies an invertible simple object.
    \item A label $g \in G$ placed \emph{at a corner} of an all-$\varrho$ vertex indexes a basis element of $\operatorname{Hom}(\varrho \otimes \varrho, \varrho)$.
\end{itemize}
These two roles carry fundamentally different mathematical meanings and should not be confused. 

\subsection{Tetrahedron and triangular prism graphs}\label{subsec:tetra-coupons}
An admissible invariant vertex has zero, two or three $\varrho$-edges,
and its space has dimension $1$, $1$ or $n$. At an all-invertible
vertex the oriented group labels sum to zero.

We use the labelled spherical graphs of \cite[\S4.3]{LPR}.
The categorical meaning of a tetrahedron is illustrated as follows
\begin{equation}\label{eq:tetra-coupon-definition}
 T(\alpha,\beta,\gamma,\delta)
   =\SD{spherical-tetrahedron-reference}
   =\SD{tetrahedron-coupon-dictionary}.
\end{equation}

We adopt the simplified spherical convention of
\cite[Theorem~4.21 and Figure~1]{LPR} for a triangular prism graph. A prism has edge labels
$X_1,\ldots,X_9\in I$ and vertex vectors
\begin{equation}\label{eq:TP-vertex-spaces}
\begin{aligned}
 \alpha_1&\in V(X_4,X_1,X_6^*),&
 \alpha_2&\in V(X_5,X_2,X_4^*),&
 \alpha_3&\in V(X_6,X_3,X_5^*),\\
 \alpha_4&\in V(X_9^*,X_1^*,X_7),&
 \alpha_5&\in V(X_7^*,X_2^*,X_8),&
 \alpha_6&\in V(X_8^*,X_3^*,X_9).
\end{aligned}
\end{equation}
We denote these data by $\operatorname{TP}(\alpha_1,\ldots,\alpha_6)$,
with the edges understood from the displayed picture.
\begin{figure}[H]
\centering
\begin{equation}\label{eq:prism-coupon-definition}
 \SD{spherical-prism-reference}
   =\SD{prism-coupon-dictionary}.
\end{equation}
\caption{The simplified spherical prism and its categorical meaning}
\label{fig:reference-graphs}
\end{figure}

\subsection{Triangular prism equations}\label{sec: pentagon Equation}
\begin{notation}
For a labelled prism $P$, write $\TPL(P)$ for the contraction into two
tetrahedra, summed over a basis of $V(X_3,X_2,X_1)$ and its bilinear
dual. Write $\TPR(P)$ for its contraction into three tetrahedra, with
internal simple label in
\begin{equation}\label{eq:TP-spectrum}
 \mathcal O(P)=\bigcap_{r=4}^6
 \{X\in I:N_{X_r,X_{r+3}}^X>0\}.
\end{equation}
The latter contraction sums over dual bases of
$V(X_r^*,X,X_{r+3}^*)$, $r=4,5,6$, and carries the factor $d_X$.
With the orientations and corner transports of
\cite[Theorem~4.21]{LPR}, the TPE is
\begin{equation}\label{eq:TPE}
 \TPL(P)=\TPR(P).
\end{equation}
In the pictorial calculations below, the symbol $\tpl$ or $\tpr$ indicates
which of these two TPE-style contractions is being evaluated.
\end{notation}

The two cuts take the simplified form below. Here $(b_s)$ is a basis
of $V(X_3,X_2,X_1)$, and $b_i$ runs independently through a basis of
$V(X_{i+3}^*,X,X_{i+6}^*)$, $i=1,2,3$; primes denote paired duals:
\begin{equation}\label{eq:TPL-coupon-cut}
 \TPL(P)=\sum_s
     \SD{TPE-left-first}\;\SD{TPE-left-second}.
\end{equation}
\begin{equation}\label{eq:TPR-coupon-cut}
 \begin{gathered}
 \TPR(P)=\sum_{X\in\mathcal O(P)}d_X\sum_{b_1,b_2,b_3}
          T_1T_2T_3,\\[2pt]
 \underbrace{\SD{TPE-right-1}}_{T_1}\quad
 \underbrace{\SD{TPE-right-2}}_{T_2}\quad
 \underbrace{\SD{TPE-right-3}}_{T_3}.
 \end{gathered}
\end{equation}

\section{Tetrahedral coefficients}\label{sec:fsymbols}
We determine the tetrahedral coefficients by the number of invertible
edge labels. {Until the final subsection, $\mathcal C$ is a
unitary near-group category with fusion rules {$G+n$}, equipped with
its canonical spherical structure, and $d=d_\varrho>0$. The final
subsection extends the resulting tetrahedral table to either dimension
root without a Hermitian structure.} 

\subsection{Three or more invertible edges}
The mixed trivalent spaces are one-dimensional. We first fix their
phases coherently, since normalizing their lengths alone does not fix
associativity coefficients.

\begin{lemma}[Coherent mixed vertices]\label{lem:mixed-gauge}
There are unitary morphisms
\[
 m_{g,h}:g\otimes h\longrightarrow g+h,\qquad
 L_g:g\otimes\varrho\longrightarrow\varrho,\qquad
 R_g:\varrho\otimes g\longrightarrow\varrho
\]
for which the associativity coefficients on $(g,h,k)$, $(g,h,\varrho)$,
and $(\varrho,g,h)$ are $1$.
\end{lemma}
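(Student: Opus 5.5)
The plan is to fix the mixed vertices in three stages, using at each stage a coboundary freedom to kill one family of associativity scalars. All trivalent spaces involved are one-dimensional, and unitarity lets us rescale each chosen vector to norm one, so the only freedom is a phase in $\mathbb T$. Every associativity coefficient on these triples is therefore a phase, and changing the phases multiplies it by an explicit coboundary.

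\textbf{Stage 1: the triples $(g,h,k)$.} The full subcategory on $G$ is pointed with group $G$. Its associator is a $3$-cocycle $\omega\in Z^3(G,\mathbb T)$, computed with respect to any unitary choice of $m_{g,h}$. The cocycle $\omega$ must be trivial, not merely trivial in cohomology on restricted triples. The source of this is the fusion rule $g\otimes\varrho=\varrho$. Take the $(g,h,\varrho)$ associativity constraint $\lambda(g,h)$, defined by
\[
L_g\circ(\id_g\otimes L_h)=\lambda(g,h)\,L_{g+h}\circ(m_{g,h}\otimes\id_\varrho).
\]
The pentagon on $(g,h,k,\varrho)$ then gives $\omega(g,h,k)=\partial\lambda(g,h,k)$ up to orientation conventions. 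Hence $\omega$ is a coboundary. Replacing $m_{g,h}$ by $\lambda(g,h)^{-1}m_{g,h}$ makes the $(g,h,k)$ coefficients equal to $1$. It also makes the $(g,h,\varrho)$ coefficients equal to $1$ simultaneously, because the new $\lambda$ is identically $1$.

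\textbf{Stage 2: the triples $(\varrho,g,h)$.} Define $\mu(g,h)$ by
\[
R_h\circ(R_g\otimes\id_h)=\mu(g,h)\,R_{g+h}\circ(\id_\varrho\otimes m_{g,h}).
\]
The pentagon on $(\varrho,g,h,k)$, with $m$ now strictly associative, shows that $\mu$ is a $2$-cocycle on $G$. Next I would use the $(g,\varrho,h)$ associativity scalar $\theta(g,h)$, which compares $R_h\circ(L_g\otimes\id_h)$ with $L_g\circ(\id_g\otimes R_h)$. The pentagons on $(g,\varrho,h,k)$ and $(g,h,\varrho,k)$ give bicharacter-type relations. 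These should express $\mu$ as a symmetric cocycle.

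Because $G$ is abelian, a $2$-cocycle is a coboundary exactly when it is symmetric. Under this condition I would write $\mu=\partial\nu$ and rescale $R_g\mapsto\nu(g)^{-1}R_g$. This rescaling does not disturb Stage 1, since only $R$ changes.

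I expect the main obstacle to be showing that $\mu$ is symmetric. The route I would try first is the pentagon on $(g,\varrho,h,k)$. It yields $\theta(g,h+k)=\theta(g,h)\theta(g,k)\cdot(\mu\text{-ratio})$, together with the analogous identity in the first slot. Comparing these with the pentagon on $(g,h,\varrho,k)$ should force $\mu(h,k)=\mu(k,h)$.

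Failing that, there is a fallback. Since $R_g$ is defined only up to the $G$-action on $\varrho$, I could instead transport along the right action: set $R_g:=L_g\circ c$ for a suitable choice via the half-braiding-free identification $\varrho\otimes g\cong\varrho\cong g\otimes\varrho$. That is, use the isomorphism $\Hom(\varrho\otimes g,\varrho)\cong\Hom(g\otimes\varrho,\varrho)$ furnished by Frobenius reciprocity and rotation. This would reduce $\mu$ to $\lambda$ with arguments reversed, and the symmetry would then be inherited from Stage 1.

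Normalization to unitary morphisms is preserved at every stage, since only phases are changed.
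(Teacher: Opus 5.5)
Your Stage 1 matches the paper's first step. You choose $m_{g,h}$ so that the $(g,h,\varrho)$ coefficient is $1$, and the $(g,h,k,\varrho)$ pentagon then forces $\omega\equiv1$. There is one harmless slip: with your definition of $\lambda$, the rescaling should be $m_{g,h}\mapsto\lambda(g,h)\,m_{g,h}$.

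The genuine gap is in Stage 2: the pentagons you plan to use carry no information about $\mu$. In the $(g,\varrho,h,k)$ pentagon, $\alpha_{g\varrho,h,k}$ and $\id_g\otimes\alpha_{\varrho,h,k}$ both contribute $\mu(h,k)$, since naturality identifies $g\otimes\varrho$ with $\varrho$ via $L_g$. The pentagon therefore reads $\mu(h,k)\theta(g,h+k)=\theta(g,h)\theta(g,k)\mu(h,k)$, and $\mu$ cancels. The $(g,h,\varrho,k)$ pentagon involves only $\lambda$ and $\theta$. Together they only say that $\theta$ is a bicharacter.

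This is structural, not a bookkeeping accident. The pentagons containing a single $\varrho$ are precisely the axioms of a $\mathrm{Vec}_G$-bimodule category with one simple object. These are classified by $H^2(G\times G,\mathbb T)\cong H^2(G,\mathbb T)^{\oplus 2}\oplus\Hom(G\otimes G,\mathbb T)$ (Künneth), in which the left class, the right class $[\mu]$ and the middle bicharacter vary independently. For cyclic $G$ this is invisible, since $H^2(G,\mathbb T)=0$. For, e.g., $G=C_2\times C_2$, nothing in your primary route forces $[\mu]=1$, and the lemma is stated for every finite abelian $G$.

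The missing input is rigidity, which your fallback gestures at without pinning down. An arbitrary identification $c\colon\varrho\otimes g\cong g\otimes\varrho$ merely rescales $R_g$ and cannot change $[\mu]$; you need the specific mate.

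The paper proceeds as follows:
\begin{itemize}
\item Take the standard coevaluation $t\colon\mathbf 1\to\varrho\otimes\varrho$.
\item Define $U_g\colon\varrho\to\varrho\otimes g$ as the unique morphism with $(\id_\varrho\otimes L_g)(U_g\otimes\id_\varrho)t=t$, and set $R_g=U_g^\dagger$.
\item Apply this identity for $U_h$ and then $U_g$, using the already trivialized left module relation. This shows that $(\id_\varrho\otimes m_{g,h})(U_g\otimes\id_h)U_h$ satisfies the defining identity of $U_{g+h}$.
\item Uniqueness gives equality, and taking adjoints yields the right module relation with coefficient exactly $1$.
\end{itemize}
Thus $\mu\equiv1$ outright, with no symmetry or cohomology argument.

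Note also that this $R_g$ is a new vector, not a phase change of a previously chosen unitary, so your claim that ``only phases are changed'' does not cover it. Unitarity must be checked: if $U_g^\dagger U_g=s\,\id_\varrho$, taking the squared norm of the defining identity gives $sd=d$, hence $s=1$.
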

\begin{proof}
Choose the $L_g$ unitarily, with $L_0$ the unit constraint. Define
$m_{g,h}$ by requiring the two composites
$(g\otimes h)\otimes\varrho\to\varrho$ to agree after inserting the
associator. This determines a unitary $m_{g,h}$, since the relevant
morphism spaces are one-dimensional. The pentagon on $(g,h,k,\varrho)$
then makes the $m_{g,h}$ associative. Thus the pointed associator and
the left module associator are simultaneously trivialized.

The right maps must also respect the bends. Let
$t:\mathbf1\to\varrho\otimes\varrho$ be the standard coevaluation, so
$t^\dagger t=d$. Define $U_g:\varrho\to\varrho\otimes g$ by
\begin{equation}\label{eq:mixed-mate-normalization}
 (\operatorname{id}_\varrho\otimes L_g)
 (U_g\otimes\operatorname{id}_\varrho)t=t,
 \qquad R_g=U_g^\dagger.
\end{equation}
Rigidity makes the displayed assignment from $U_g$ to its left-hand
side an isomorphism of one-dimensional morphism spaces, so $U_g$
exists uniquely. If $U_g^\dagger U_g=s\operatorname{id}_\varrho$,
taking the squared norm in this equation gives $sd=d$; hence $U_g$
is unitary. Apply the same defining equation successively to $U_h$
and $U_g$, and use the left module relation. Uniqueness gives
\[
 (\operatorname{id}_\varrho\otimes m_{g,h})
 (U_g\otimes\operatorname{id}_h)U_h=U_{g+h}.
\]
Taking adjoints proves the right module relation. This is the
categorical normalization of \cite[Remark~3.2]{Izumi2017}; it fixes
both module associativity and the mixed bends, rather than choosing
their phases independently. Associators have been suppressed in
these formulas by coherence.
\end{proof}

\begin{proposition}[Normalization]\label{pro: normalization}
Let $a,b,c$ be simple objects with
$\operatorname{Hom}(\mathbf1,a\otimes b\otimes c)\ne0$, and suppose
that one is invertible. Choose opposite vertices
$\beta:\mathbf1\to a\otimes b\otimes c$ and
$\alpha:a\otimes b\otimes c\to\mathbf1$ with
\begin{equation}\label{equ: normalisation}
 \NGDiagram{D002}=\theta(a,b,c),\qquad
 \theta(a,b,c)=
 \begin{cases}
 1,&a,b,c\text{ are invertible},\\
 d,&\text{two of }a,b,c\text{ equal }\varrho.
 \end{cases}
\end{equation}
We suppress the vertex label on the resulting mixed vertex
$\NGDiagram{D003}$ and its prescribed rotations. For every simple
$a$ and $g\in G$, the associated fusion and splitting maps satisfy
\begin{equation}\label{eq:mixed-splitting}
 \NGDiagram{D004}=\NGDiagram{D005}.
\end{equation}
\end{proposition}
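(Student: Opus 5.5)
We would prove the normalization statement in two steps. The first is the existence of the normalized vertices. The second is the splitting identity~\eqref{eq:mixed-splitting}. Both rest on Lemma~\ref{lem:mixed-gauge} and on one-dimensionality of the relevant spaces.

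\textbf{Existence of normalized vertices.} Fix an admissible triple $(a,b,c)$ with one invertible entry. Then $V(a,b,c)$ is one-dimensional. We will build $\beta$ from the coherent morphisms $m_{g,h}$, $L_g$, $R_g$ of Lemma~\ref{lem:mixed-gauge}, bending legs with the standard (co)evaluations. Since trivial Frobenius--Schur indicator and strictness are assumed, the bending is unambiguous. We then take $\alpha$ to be the dual vector in $V(a,b,c)^\vee$ under the pairing~\eqref{eq:trivalent-pairing}, rescaled so that the closed theta graph equals $\theta(a,b,c)$. This is possible because the pairing is nondegenerate on one-dimensional spaces.

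To check that $\theta$ is the natural value, close off the diagram. For an all-invertible triple, unitarity of $m_{g,h}$ makes the theta graph equal to a loop on $g+h$, which is $d_{g+h}=1$. For a triple containing two $\varrho$'s, unitarity of $L_g$ (or of $R_g$, $U_g$) makes it a $\varrho$-loop, which is $d$. With this normalization, $\alpha$ is the adjoint of $\beta$, which is consistent with the unitary setting.

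The one genuine subtlety is that the rotations of a mixed vertex must all be compatible with this single choice. For example, the vertex in $V(g,\varrho,\varrho)$ read through $L_g$ must agree with the one read through $R_{-g}$ after rotation. We would derive this from the mate normalization~\eqref{eq:mixed-mate-normalization}, which identifies $R_g=U_g^\dagger$ with the bent $L_g$. Rotating by $120^\circ$ therefore introduces no extra scalar. This is exactly why the vertex label can be suppressed.

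\textbf{The splitting identity.} For simple $a$ and $g\in G$, $a\otimes g$ is simple. Hence $\id_{a\otimes g}$ factors through the single channel $ag$, and
\[
\id_{a\otimes g}=\lambda\,(\text{splitting})\circ(\text{fusion})
\]
for some scalar $\lambda$. We determine $\lambda$ by capping both sides with the fusion vertex.
\begin{itemize}
\item On the left-hand side, capping gives the fusion vertex itself.
\item On the right-hand side, capping gives $\lambda$ times a bubble. By~\eqref{equ: normalisation} the bubble equals $\theta(a,g,(ag)^*)/d_{ag}$.
\end{itemize}
This ratio is $1$ in both cases: $1/1$ when everything is invertible, and $d/d$ when $a=\varrho$. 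Hence $\lambda=1$, which is~\eqref{eq:mixed-splitting}.

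\textbf{Main obstacle.} The hard step is the rotation consistency, that is, making sure that the prescribed rotations of the mixed vertices inherit the phases fixed in Lemma~\ref{lem:mixed-gauge}. We would handle it by reducing each rotation to~\eqref{eq:mixed-mate-normalization} together with the zigzag identities.
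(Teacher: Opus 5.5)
Your argument for \eqref{eq:mixed-splitting} is correct, but it runs in the converse direction to the paper's.

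The paper works in the unitary setting. It chooses the fusion map $f\colon a\otimes g\to ag$ to be unitary and takes the splitting map to be $f^\dagger$, so $f^\dagger f=\id_{a\otimes g}$ holds by construction. Bending one leg then shows that this pair has theta value $\operatorname{tr}(ff^\dagger)=d_{ag}=d_a$, which is $\theta(a,g,(ag)^*)$. So in the paper the normalization is an output.

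You instead take \eqref{equ: normalisation} as input. You write $\id_{a\otimes g}=\lambda\,s\circ f$ and use the trace to get $f\circ s=(\theta/d_{ag})\id_{ag}$, which forces $\lambda=d_{ag}/\theta=1$.

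Each route has its advantage:
\begin{itemize}
\item Your version needs only sphericality and one-dimensionality. It therefore applies directly to every normalized pair $(\alpha,\beta)$, over any field. This includes the root $d_-$, where there is no dagger, which is the setting of the paper's later algebraic transport.
\item The paper's version is shorter. However, it produces only one adjoint pair, and tacitly uses that $s\circ f$ is unchanged under $\beta\mapsto\lambda\beta$, $\alpha\mapsto\lambda^{-1}\alpha$ to cover every normalized pair.
\end{itemize}

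The first half of your proposal does more than the statement requires:
\begin{itemize}
\item Existence of a normalized pair is immediate from nondegeneracy of the pairing on a one-dimensional space.
\item The rotated vertices are fixed by prescription, and they keep their theta value by sphericality.
\item Compatibility with the gauge of Lemma~\ref{lem:mixed-gauge} is never used in proving \eqref{eq:mixed-splitting}.
\end{itemize}
So the rotation-consistency step you flag as the main obstacle is not needed for this proposition.
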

\begin{proof}
The standard trace of an identity on $a\otimes g$ is $d_a$.
Choose its fusion map to the simple object $a\otimes g$ unitarily and
its splitting map as its adjoint. Their composition is the identity,
which gives \eqref{eq:mixed-splitting} (the label $ag$ denotes
$a\otimes g$); bending one leg gives the theta
value $d_a$. This is $1$ or $d$, as in \eqref{equ: normalisation}.
\end{proof}

\begin{lemma}\label{lem: coproduct}
For $g,h,k,g',h',k'\in G$ satisfying
$g+h=g'+h'$ and $k-h=k'-h'$, the coherent mixed vertices give
\begin{align}
 \NGDiagram{D006}&=\NGDiagram{D007},\label{equ: co 6g}\\
 \NGDiagram{D008}&=\NGDiagram{D009}.\label{equ: co 3g}
\end{align}
\end{lemma}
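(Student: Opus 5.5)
We must interpret the diagrams. D006 (O005) consists of two H-shaped diagrams connected by arcs. The top legs are $g$, $h$, $k$, with the $h$ strand running as a cup between the two vertices. The middle edges are labelled $g{+}h$ and $k{-}h$, and the bottom legs are $g'$, $h'$, $k'$. D007 (O006) is a single H diagram with top legs $g$, $k$, middle edge $g{+}k$, and bottom legs $g'$, $k'$. D008 is a pair of H diagrams with middle edges $g$ and $h$, and D009 is a single H with middle edge $g{+}h$. So the identities concern fusing the group-labelled pointed part: composing two "H" (fusion followed by splitting) maps joined along an intermediate strand collapses to a single H.

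The plan is to translate everything into the coherent morphisms $m_{g,h}$ of Lemma~\ref{lem:mixed-gauge} and the normalization of Proposition~\ref{pro: normalization}, and then to use the trivialized associator on $(g,h,k)$.

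\emph{Dictionary.} An all-invertible trivalent vertex normalized with theta value $1$ corresponds to $m_{g,h}$ or to its adjoint $m_{g,h}^\dagger=m_{g,h}^{-1}$, because the spaces are one-dimensional and $m$ is unitary. Bending a leg is implemented by the standard (co)evaluations of the invertible objects. Since the pivotal structure is strict and the Frobenius--Schur indicator is trivial, these bends contribute no extra phases beyond those already fixed by the coherent choice. I would verify this by checking that the rotation of the vertex $m_{g,h}$ equals the corresponding $m$ vertex. The argument is that associativity of $m$ on $(g,h,-h)$ and $(-g,g,h)$ forces the mate of $m_{g,h}$ to coincide with $m_{g+h,-h}$ composed with the cancellation $h\otimes(-h)\to 0$.

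\emph{Identity \eqref{equ: co 3g}.} Read it as $m_{g,h}$, or its inverse, precomposed or postcomposed with a splitting. The left side is the composite of an H with middle edge $g$ and an H with middle edge $h$, connected by an arc. Using associativity of $m$ (pentagon triviality on $(g,h,k)$), the chain of two fusions and splittings is rewritten as a single fusion $g\otimes h\otimes(\cdot)\to g+h+(\cdot)$ followed by splitting. That is exactly the H with middle edge $g{+}h$. The isomorphism condition $m^\dagger m=\mathrm{id}$ from \eqref{eq:mixed-splitting} removes the intermediate bubble.

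\emph{Identity \eqref{equ: co 6g}.} The same argument applies with three input strands. First apply \eqref{eq:mixed-splitting} to cancel the adjacent splitting/fusion pair along the connecting $h$ arc. The cup of $h$ is then absorbed by rotating vertices, and the conditions $g+h=g'+h'$ and $k-h=k'-h'$ guarantee that the labels match. Associativity of $m$ then combines the two H's into one H with middle label $g+k$ (using $(g+h)+(k-h)=g+k$).

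The main obstacle is the phase bookkeeping under rotation and bending. I expect the key step to be a lemma that the rotated $m$-vertices agree with the $m$-vertices themselves, so that the cyclic symmetry of the all-invertible vertex is trivial. I would prove it from the $(g,h,k)$ associativity being $1$ together with the trivial indicator on self-dual order-two elements.
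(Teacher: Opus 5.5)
Your proposal has a genuine gap: it proves \eqref{equ: co 3g} for the wrong diagrams. Your treatment of \eqref{equ: co 6g} is essentially the paper's.

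\textbf{The gap in \eqref{equ: co 3g}.} By the convention of Section~\ref{subsec:string-conventions}, an unlabeled, unoriented strand is a $\varrho$-strand. In both diagrams of \eqref{equ: co 3g}, the only group-labelled strands are the middle edges: $g$ and $h$ on the left, $g+h$ on the right. The outer legs, the cap and the cup are all $\varrho$.

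So the left side is a ladder. Two $\varrho$-rails are joined by rungs $g$ and $h$ at four \emph{mixed} vertices, each with two $\varrho$-legs. The right side is the same ladder with the single rung $g+h$.

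Associativity of $m$ on $(g,h,k)$ involves only all-invertible vertices and says nothing about these mixed vertices. Your argument therefore proves an identity in the pointed part, not the stated one.

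The missing input is the module coherence fixed in Lemma~\ref{lem:mixed-gauge}:
\begin{itemize}
\item the left relation $L_g(\operatorname{id}_g\otimes L_h)=L_{g+h}(m_{g,h}\otimes\operatorname{id}_\varrho)$, which holds with scalar exactly $1$ because $m_{g,h}$ was \emph{defined} from the $L$'s;
\item its right analogue for the $R_g$;
\item the mate normalization \eqref{eq:mixed-mate-normalization}, which identifies the vertices bent through the $\varrho$-cap and $\varrho$-cup with $L$'s and $R$'s without stray phases. Both kinds occur: after a quarter turn, one rail carries $L$-type attachments and the other $R$-type.
\end{itemize}
With these in hand:
\begin{itemize}
\item $m_{g,h}$ composed after the upper half of the ladder is the upper vertex of the one-rung diagram;
\item the lower half composed with $m_{g,h}^\dagger$ is its lower vertex;
\item inserting $m_{g,h}^\dagger m_{g,h}=\operatorname{id}_{g\otimes h}$ (from \eqref{eq:mixed-splitting}) between the two halves gives \eqref{equ: co 3g}.
\end{itemize}
This is what the paper means by ``the module relation for $g,h$.''

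\textbf{On \eqref{equ: co 6g}.} Your outline matches the paper: associativity of $m$ routes each half through the channel $g+k=g'+k'$, and the resulting fusion--splitting pair cancels there. The two vertices on the $h$-cap are both fusions, so the cancellation happens on the $g+k$ channel, not ``along the $h$ arc.''

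\textbf{On your planned rotation lemma.} It does not follow from associativity and the indicator. Associativity only shows that bending $m_{g,h}$ along $h$ gives $\mu_h\,m_{g+h,-h}^{-1}$, where $\mu_h$ is defined by $m_{h,-h}\circ\mathrm{coev}_h=\mu_h$. This scalar compares the pivotal coevaluation with $m$. Neither associativity nor the Frobenius--Schur indicator forces $\mu_h=1$. It is a normalization, attainable by rephasing the $L_g$, and you should impose it explicitly rather than derive it.
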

\begin{proof}
Expand the vertices into the multiplication and module maps fixed in
Lemma~\ref{lem:mixed-gauge}. Cancel each adjacent fusion--splitting
pair by \eqref{eq:mixed-splitting}. In \eqref{equ: co 6g}, the two
remaining products have total label
$g+k=g'+k'$; their equality is associativity of $m$. In
\eqref{equ: co 3g}, the remaining identity is the module relation
for $g,h$, with product $g+h$. These cancellations give the displayed
identities with scalar $1$; one-dimensionality by itself would only
give proportionality.
\end{proof}

\begin{lemma}\label{lem: F-symbol 6 inv}
For $g,h,k\in G$, the coefficients with six or three invertible edges are
\begin{equation}
 \NGDiagram{D010}=1,\qquad \NGDiagram{D011}=d.
\end{equation}
\end{lemma}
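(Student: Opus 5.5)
The strategy is to evaluate each tetrahedron by collapsing it to a theta graph with Lemma~\ref{lem: coproduct}, and then to read off the theta value from the normalization \eqref{equ: normalisation} in Proposition~\ref{pro: normalization}. Since every vertex involved is one of the coherent mixed vertices of Lemma~\ref{lem:mixed-gauge}, no scalar can appear except those coming from theta normalizations and closed loops.

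For the first tetrahedron (all six edges invertible, labelled $g,h,k,g+h,g+k,k-h$), I would split it along the edge labelled $h$ and view it as two trivalent vertices joined by a pair of parallel strands.

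1. Apply \eqref{equ: co 6g} to the configuration around the internal edge. This is the associativity of $m$, with $g+h=g'+h'$ and $k-h=k'-h'$. It replaces the two-vertex subgraph by a single fused vertex and leaves a theta graph on invertible labels.
2. By \eqref{equ: normalisation} with all labels invertible, this theta graph equals $\theta(\cdot,\cdot,\cdot)=1$.
3. Every remaining closed loop is labelled by a group element and so has dimension $d_g=1$. Hence the coefficient is $1$.

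For the second tetrahedron (three invertible edges $g,h,g+h$ meeting at one vertex, with the other three edges $\varrho$):

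1. Use \eqref{equ: co 3g}, the module relation $L_g\circ(\id_g\otimes L_h)=L_{g+h}\circ(m_{g,h}\otimes\id_\varrho)$, to fuse the $g$ and $h$ legs attached to the $\varrho$-triangle into a single $g+h$ leg.
2. What remains is a theta graph with labels $(g+h,\varrho,\varrho)$ on one side, glued to a $(g,h,g+h)$ vertex on the other.
3. Evaluate the two pieces using \eqref{equ: normalisation}. The $(g+h,\varrho,\varrho)$ theta contributes $\theta=d$. The cancelled fusion--splitting pair contributes $1$ by \eqref{eq:mixed-splitting}. The coefficient is therefore $d$.

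The main obstacle is bookkeeping rather than anything conceptual. We have to make sure that the prescribed rotations of the suppressed mixed vertex labels, and the orientation conventions (with $g^*=-g$), match exactly the configurations of Lemma~\ref{lem: coproduct}. In particular, the bends of the right module maps $R_g$ must be identified with rotated $L_g$'s; this is exactly why the mate normalization \eqref{eq:mixed-mate-normalization} was imposed, and if it fails a stray phase could appear. I would check this first by tracing the orientations of each edge in the pictures D010 and D011 against D006–D009. Once the match is confirmed, the scalars are forced to be exactly $1$ and $d$, rather than merely proportional to them.
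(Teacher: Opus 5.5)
Your proposal is correct and is essentially the paper's argument. The all-invertible tetrahedron equals $1$ because associativity of the coherent multiplication $m$ (coefficient exactly $1$ in the gauge of Lemma~\ref{lem:mixed-gauge}) reduces it to an all-invertible theta graph, and the star tetrahedron collapses by the module relation and the cancellation \eqref{eq:mixed-splitting} to a $(g+h,\varrho,\varrho)$ theta graph, i.e.\ a $\varrho$-loop, of value $d$; one wording fix is that the tetrahedron is not itself two vertices joined by parallel strands---the associativity move along the edge $h$ creates the bubbles, and collapsing them leaves that theta graph.
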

\begin{proof}
The first tetrahedron contracts coherent group multiplications and their
inverses, so its value is $1$. For the second, the mixed module
relations collapse the tetrahedron to a $\varrho$-loop, of value $d$.
The closed version of the cancellation is
\begin{equation}
 \NGDiagram{D012}=\NGDiagram{D013}=d^2.
\end{equation}
Here the mixed vertex $\NGDiagram{D014}$ is the one fixed by
Lemma~\ref{lem:mixed-gauge}. In particular, {iterating the module relation $\operatorname{ord}(g)$ times gives the unit constraint for every $g\in G$, since $\operatorname{ord}(g)g=0$}; no further independent phase adjustment is needed.
\end{proof}

\subsection{Two invertible edges}
Interchanging the left and right group actions produces the bicharacter
that controls the remaining coefficients.
\begin{lemma}\label{lem: bicha}
The mixed interchange coefficient is a {nondegenerate symmetric} bicharacter
$\chi:G\times G\to\mathbb T$, defined by
\begin{equation}\label{equ: bicha}
 \NGDiagram{D015}=\chi(g,h)\NGDiagram{D016}.
\end{equation}
{In particular $\chi(g,h)=\chi(h,g)=\chi(-g,-h)$.}
\end{lemma}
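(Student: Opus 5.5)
Both sides of \eqref{equ: bicha} live in the one-dimensional space $\operatorname{Hom}(g\otimes\varrho\otimes h,\varrho)$: the left diagram is $L_g\circ(\operatorname{id}_g\otimes R_h)$ and the right one is $R_h\circ(L_g\otimes\operatorname{id}_h)$ (with associators suppressed by the strictness assumption). The first step is to show that $\chi(g,h)$ is well defined. Both composites are unitary and nonzero, so they are proportional, and the ratio $\chi(g,h)$ lies in $\mathbb T$.

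Next I would prove that $\chi$ is a bicharacter. For multiplicativity in the first variable, I compose the interchange for $g$ with the interchange for $g'$. Pushing $R_h$ past $L_{g'}$ and then past $L_g$ gives the factor $\chi(g,h)\chi(g',h)$. By the left module relation $L_g(\operatorname{id}_g\otimes L_{g'})=L_{g+g'}(m_{g,g'}\otimes\operatorname{id}_\varrho)$ from Lemma~\ref{lem:mixed-gauge}, the same composite equals the single interchange for $g+g'$ precomposed with $m_{g,g'}$. This gives $\chi(g+g',h)=\chi(g,h)\chi(g',h)$. Multiplicativity in the second variable follows in the same way from the right module relation. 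Then $\chi(0,h)=1$ holds because $L_0$ is the unit constraint, and together with multiplicativity this yields $\chi(-g,-h)=\chi(g,h)$.

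For symmetry I would use the mate relation \eqref{eq:mixed-mate-normalization}, which ties $R_g$ to $L_g$ through the coevaluation $t$. Rotating the interchange diagram by $180^\circ$, which is legitimate under the trivial Frobenius--Schur indicator and strict pivotality, turns $L_g$ strands into $R$-type strands with labels transported by duality. After rotation, the picture for $(g,h)$ becomes the picture for $(h,g)$, up to dualizing the labels. By the normalization $R_g=U_g^\dagger$, these bends contribute no extra scalar. This gives $\chi(g,h)=\chi(-h,-g)^{\pm1}$, and combining it with $\chi(-g,-h)=\chi(g,h)$ and unitarity, which converts adjoints to inverses, should give $\chi(g,h)=\chi(h,g)$. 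I expect this sign and orientation bookkeeping to be the main obstacle. I would check it first in the case $h=-g$ by closing the diagram into a $\varrho$-loop.

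For nondegeneracy, suppose $\chi(g,\cdot)\equiv1$ for some $g\ne0$. Then $L_g$ commutes with every right action $R_h$. In the endomorphism algebra of $\varrho^{\otimes2}$, consider the projections onto the $n$-dimensional isotypic component of $\varrho$ and onto the invertible summands. The operator built from $L_g$ on the first strand and the partial trace via $R_h$ acts on the summand $k\subset\varrho^{\otimes 2}$ by the scalar $\chi(g,k)$, computed through \eqref{equ: co 3g}. If this scalar is always $1$, then $L_g$ is trivialized, which contradicts the fact that $g\otimes\varrho\cong\varrho$ distinguishes the $n$ summands $k\in\varrho\otimes\varrho$ (left multiplication by $g$ permutes these summands freely). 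A cleaner alternative would be to compute a theta or loop value $\sum_h\chi(g,h)$ and compare it with a dimension count forced by $d^2=n+nd$.
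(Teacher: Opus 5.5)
Your well-definedness step and your bimultiplicativity argument are sound. The latter is the direct module-coherence comparison that the paper mentions as an alternative to its prism computation, and it needs no TPE. The genuine gap is nondegeneracy. Under the hypothesis $\chi(g,\cdot)\equiv1$, your operator acts by $1$ on every invertible summand $k\subset\varrho\otimes\varrho$, which merely restates the hypothesis. It says nothing about the $\varrho$-isotypic part, does not ``trivialize'' $L_g$, and is not in tension with $g\otimes-$ permuting the summands freely via $k\mapsto g+k$. No argument built only from the facts you invoke can work, because you never use $m=n$. For instance, $\operatorname{Rep}(S_3)$ has near-group fusion rules $\mathbb Z_2+1$ (so $m=n-1$) and trivial Frobenius--Schur indicator. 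It has one-dimensional mixed spaces, module relations, and free permutation of the invertible summands. Yet there the left and right intertwiners $A,B$ realizing $\mathrm{sgn}\otimes V\cong V\cong V\otimes\mathrm{sgn}$ satisfy the same intertwining condition, so they are proportional. Hence $AB=BA$, and the gauge-invariant interchange coefficient is identically $1$.

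What is missing is an input that sees the $n$-dimensional $\varrho$-channel and the irrationality of $d$. Your ``cleaner alternative'' points this way but has to be made precise. Take the rung $X_g=(\id_\varrho\otimes L_g)(U_g\otimes\id_\varrho)\in\operatorname{End}(\varrho\otimes\varrho)$. Its closure consists of two nested $\varrho$-loops joined by a single $g$-strand. The inner loop then lies in $\operatorname{Hom}(\mathbf 1,g)=0$ for $g\neq0$, so $\operatorname{tr}(X_g)=d^2\delta_{g,0}$. On the other hand, write the $k$-channel as $(\id_\varrho\otimes R_k)(t\otimes\id_k)$. Then \eqref{eq:mixed-mate-normalization} and the interchange relation show that $X_g$ acts on $\Hom(k,\varrho\otimes\varrho)$ by $\chi(g,k)^{\pm1}$. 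The module relations of Lemma~\ref{lem:mixed-gauge} show that $f\mapsto X_gf$ is a representation $\pi$ of $G$ on the $n$-dimensional space $\Hom(\varrho,\varrho\otimes\varrho)$. Hence $\sum_k\chi(g,k)^{\pm1}+d\operatorname{Tr}\pi(g)=d^2\delta_{g,0}$. Let $K_0$ be the radical of $\chi$. Averaging over $G$ and using $d^2=n+nd$, the multiplicity of the trivial character in $\pi$ equals $1-(|K_0|-1)/d$. This must be a nonnegative integer, and $d$ is irrational for $n\ge1$, so $K_0=0$. Then $\operatorname{Tr}\pi(g)=0$ for $g\ne0$, i.e.\ $\pi$ is regular, which is the content of the paper's one-line argument.

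Finally, in your symmetry step the exponent is $+1$, not $\pm1$. A planar rotation is linear, and each rotated vertex occurs once on each side, so its transport scalar cancels. The rotated identity is therefore literally the defining identity at $(-h,-g)$. With that, your pivotal rotation argument is valid and slightly more elementary than the paper's use of the spherical symmetry of the closed tetrahedron of value $\chi(g,h)d$. Unitarity and $R_g=U_g^\dagger$ play no role in it.
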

\begin{proof}
The two mixed composites are proportional because their common
morphism space is one-dimensional. They are unitary composites in the
fixed mixed gauge, so their ratio has modulus $1$. Closing the
interchange relation gives
\begin{equation}\label{equ: def of chi}
 \NGDiagram{D017}=\chi(g,h)d.
\end{equation}
The prism
\begin{equation}
 \NGDiagram{D018}
\end{equation}
expands in its two directions as
\begin{equation}\label{eq:bicharacter-prism}
 d\,\NGDiagram{D019}\NGDiagram{D020}
 =\NGDiagram{D021}\NGDiagram{D022}\NGDiagram{D023}.
\end{equation}
Substitution yields
\[
 d^3\chi(g+h,-k)=d^3\chi(g,-k)\chi(h,-k).
\]
Since $d\ne0$, this proves multiplicativity in the first variable.
Interchanging the roles of the coherent left and right module actions
in this calculation proves multiplicativity in the second variable.
Equivalently, compare the two ways of moving $g$ past the product
$h+k$: right module coherence gives
$\chi(g,h+k)=\chi(g,h)\chi(g,k)$.

The nondegeneracy follows from $\chi_g$ form the regular
representation in the irrational-dimensional case; this applies
since $d=(n+\sqrt{n^2+4n})/2$ is irrational for every $n\geq1$.
The spherical property gives $ \chi(g,h) = \chi(h,g) = \chi(-h,-g) $, since
\begin{equation}
 \NGDiagram{D017}=\NGDiagram{D024}=\NGDiagram{D025}.
\end{equation}
\end{proof}

\subsection{One invertible edge}
Set $V=\operatorname{Hom}(\varrho\otimes\varrho,\varrho)$; thus $\dim V=n$.
We freely identify it with its trivalent invariant-space realization
using the fixed standard duality maps, with the corner of each vertex
specifying the identification. Its positive inner product is the trace inner product
$\langle u,v\rangle=\operatorname{tr}(u v^\dagger)$,
linear in the first variable. Thus an orthonormal trivalent basis has
$\operatorname{tr}(v_h v_k^\dagger)=\delta_{h,k}$.
Separately, closing two identically oriented
trivalent vertices gives a nondegenerate \emph{bilinear} theta pairing
$B$; this pairing is not a Hermitian inner product.

\begin{lemma}\label{pro: 3 reps}
The three mixed insertions define unitary representations
$\pi_x,\pi_y,\pi_z$ of $G$ on $V$, satisfying
\begin{align}
 \pi_z(g)\pi_x(h)&=\chi(g,h)\pi_x(h)\pi_z(g),\label{equ: zx}\\
 \pi_x(g)\pi_y(h)&=\chi(g,h)\pi_y(h)\pi_x(g),\label{equ: xy}\\
 \pi_y(g)\pi_z(h)&=\chi(g,h)\pi_z(h)\pi_y(g).
 \label{equ:yz}
\end{align}
\end{lemma}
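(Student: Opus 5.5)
We will define the three representations by inserting an invertible strand at each of the three corners of the all-$\varrho$ trivalent vertex. For $g\in G$ and $v\in V=\operatorname{Hom}(\varrho\otimes\varrho,\varrho)$, we set
\[
\pi_x(g)v=R_g\circ(\operatorname{id}_\varrho\otimes L_g)\circ\ldots
\]
in words, $\pi_x(g)$ attaches a $g$-strand entering across the first corner. The strand is joined to the two adjacent $\varrho$-legs by the coherent mixed vertices of Lemma~\ref{lem:mixed-gauge}, using $R_g$ on one side and $L_{-g}$ or $U_g$ on the other so that the output is again in $V$. The maps $\pi_y$ and $\pi_z$ are defined in the same way at the second and third corners. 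This is the pictorial picture already used for the prescribed rotations of the mixed vertex, and it is well defined because every space involved is one-dimensional apart from $V$.

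First, we check that each $\pi_\bullet$ is a group homomorphism. Two successive insertions $\pi_x(g)\pi_x(h)$ produce two parallel group strands across the same corner. These fuse into the single strand $g+h$ by the coproduct identities \eqref{equ: co 6g}--\eqref{equ: co 3g} of Lemma~\ref{lem: coproduct}, which hold with scalar exactly $1$ in the coherent gauge. The identity $\pi_x(0)=\operatorname{id}$ follows because $L_0$ and $R_0$ are unit constraints. Unitarity holds because each insertion is a composite of the unitary maps $L_g$, $R_g$ and $U_g$, together with partial isometries from the fixed normalization \eqref{eq:mixed-splitting}. The adjoint of the insertion of $g$ is the insertion of $-g$, so $\pi_x(g)^\dagger=\pi_x(-g)=\pi_x(g)^{-1}$ with respect to $\operatorname{tr}(uv^\dagger)$.

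The commutation relations will come from the interchange Lemma~\ref{lem: bicha}. Consider $\pi_z(g)\pi_x(h)v$. The two inserted strands sit at different corners but share a $\varrho$-leg of the vertex; along that leg one acts from the left and the other from the right. Reordering them is precisely the move in \eqref{equ: bicha}, which costs a factor $\chi(g,h)$. This proves \eqref{equ: zx}. Cyclic rotation by $120^\circ$ permutes the corners $x\to y\to z\to x$, so \eqref{equ: xy} and \eqref{equ:yz} follow from the same local move applied to the other shared legs. The symmetry $\chi(g,h)=\chi(h,g)$ removes any ambiguity in the order of the arguments.

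We expect the main obstacle to be bookkeeping of orientations and signs. We must check that the local configuration on each shared leg is exactly the one in \eqref{equ: bicha}, rather than its mirror, which would give $\chi^{-1}$, or a version with reversed strands, which would give $\chi(-g,h)$. To settle this, we will write out the $z$--$x$ case explicitly as a categorical composite and use the bend normalization \eqref{eq:mixed-mate-normalization} to convert the $U$-type vertex into $R$-type. After that, we will reduce the other two cases to it by rotation.
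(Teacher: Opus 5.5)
Your proposal follows the paper's own proof. The paper uses the same three corner insertions, gets the representation law from the module relation of Lemma~\ref{lem: coproduct}, gets unitarity from the fact that reflection reverses the inserted strand (so $\pi_j(g)^\dagger=\pi_j(-g)$), and gets each commutation from the interchange relation \eqref{equ: bicha}. The differences are cosmetic. The paper checks all three commutations pictorially rather than transporting the $zx$ case by rotation; in the $yz$ case it obtains $\chi(-h,-g)$ and rewrites it using $\chi(-h,-g)=\chi(g,h)$. Your displayed composite for $\pi_x(g)v$ is ill-typed as written, but your verbal description of the insertion is the correct one.
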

\begin{proof}
For $\alpha\in V$, define
\begin{equation}
 \pi_x(g):\NGDiagram{D026}\longmapsto\NGDiagram{D027}.
\end{equation}
The coherent mixed relation gives
\begin{equation}
 \NGDiagram{D028}=\NGDiagram{D029},
\end{equation}
and therefore
\begin{align}
 \pi_x(g)\pi_x(h)(\alpha)
 &=\NGDiagram{D030}=\NGDiagram{D031}\notag\\
 &=\NGDiagram{D032}=\pi_x(g+h)(\alpha).
\end{align}
The other two insertions are
\begin{align}
 \pi_y(g):\NGDiagram{D026}&\longmapsto\NGDiagram{D033},\notag\\
 \pi_z(g):\NGDiagram{D026}&\longmapsto\NGDiagram{D034}.
\end{align}
The same mixed cancellation proves their representation laws.
Reflecting an insertion takes its adjoint and reverses the group strand;
hence $\pi_j(g)^\dagger=\pi_j(-g)$ for $j=x,y,z$, proving unitarity.
Finally, the mixed interchange relation gives the three pictorial
commutations
\begin{align}
 \pi_z(g)\pi_x(h)(\alpha)
 &=\NGDiagram{D035}=\chi(g,h)\NGDiagram{D036}\notag\\
 &=\chi(g,h)\pi_x(h)\pi_z(g)(\alpha),\notag\\[2pt]
 \pi_x(g)\pi_y(h)(\alpha)
 &=\NGDiagram{D037}=\chi(g,h)\NGDiagram{D038}\notag\\
 &=\chi(g,h)\pi_y(h)\pi_x(g)(\alpha),\notag\\[2pt]
 \pi_y(g)\pi_z(h)(\alpha)
 &=\NGDiagram{D039}=\chi(-h,-g)\NGDiagram{D040}\notag\\
 &=\chi(g,h)\pi_z(h)\pi_y(g)(\alpha).
\end{align}
\end{proof}

\begin{corollary}[Normal form]\label{cor: rhorhorho}
Let $G$ be a finite abelian group of order $n$. There are an
orthonormal basis $(v_h)_{h\in G}$ and an even quadratic refinement
$a:G\to\mathbb T$ such that
\begin{align}\label{eq:three-actions-normal-form}
 \pi_z(g)v_h&=\chi(g,h)v_h,&
 \pi_x(g)v_h&=v_{h+g},&
 \pi_y(g)v_h&=\frac{a(h)}{a(h-g)}v_{h-g},
\end{align}
where
\begin{equation}\label{equ: def of a}
 a(0)=1,\qquad a(-g)=a(g),\qquad
 a(g+h)\chi(g,h)=a(g)a(h).
\end{equation}
We represent $v_h$ by $\NGDiagram{D041}$. The associated bilinear theta pairing is
$B(v_h,v_k)=B(v_0,v_0)\delta_{h+k,0}a(h)$.
\end{corollary}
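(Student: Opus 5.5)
The plan is to diagonalize $\pi_z$, use $\pi_x$ to move between the weight spaces, and then pin down $\pi_y$ from the commutation relations of Lemma~\ref{pro: 3 reps}.

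\textbf{Step 1: weight spaces of $\pi_z$.} Since $\pi_z$ is a unitary representation of the abelian group $G$, $V$ splits orthogonally into weight spaces $V_\lambda$ indexed by characters $\lambda$ of $G$. By nondegeneracy of $\chi$ (Lemma~\ref{lem: bicha}), every character has the form $\chi(\cdot,h)$ for a unique $h\in G$, so we index the weight spaces by $h$ and write $V_h$.

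\textbf{Step 2: $\pi_x$ shifts weights.} Relation \eqref{equ: zx} says $\pi_x(g)$ maps $V_h$ to $V_{h+g}$. Hence all weight spaces have equal dimension. Since $\dim V=n=|G|$, each $V_h$ is one-dimensional. Choose a unit vector $v_0\in V_0$ and define $v_h:=\pi_x(h)v_0$. Unitarity of $\pi_x$ and orthogonality of the weight spaces make $(v_h)$ orthonormal, and the representation law gives $\pi_x(g)v_h=v_{h+g}$.

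\textbf{Step 3: the form of $\pi_y$.} Relation \eqref{equ:yz} gives $\pi_z(h)\pi_y(g)=\chi(g,h)^{-1}\pi_y(g)\pi_z(h)$, so $\pi_y(g)$ maps $V_h$ into $V_{h-g}$, using symmetry of $\chi$. Write $\pi_y(g)v_h=\mu(g,h)v_{h-g}$ with $|\mu|=1$ by unitarity. Relation \eqref{equ: xy} gives $\mu(g,h+k)=\chi(k,g)\mu(g,h)$, so $\mu(g,h)=\mu(g,0)\chi(g,h)$. The representation law $\pi_y(g)\pi_y(g')=\pi_y(g+g')$ then yields
\[
\mu(g+g',0)=\mu(g,0)\mu(g',0)\chi(g',-g),
\]
which is exactly the refinement equation for $a(g):=\mu(-g,0)$, up to inverting. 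This gives the normal form $\pi_y(g)v_h=\frac{a(h)}{a(h-g)}v_{h-g}$, with $a(0)=1$ and \eqref{equ: def of a} holding.

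\textbf{Step 4: evenness of $a$.} This is the main obstacle. Evenness does not follow from the algebra alone and needs geometric input. I would use rotation: the $120^\circ$ rotation $\rho$ of the corner mark cyclically permutes the three insertions, $\rho\pi_x\rho^{-1}=\pi_y$ and so on. Since $\rho^3=\mathrm{id}$ by sphericality and strictness, the triple $(\pi_x,\pi_y,\pi_z)$ is conjugate to its cyclic shift. Comparing the two Heisenberg-type normal forms forces $a(g)a(-g)^{-1}$ to be a character whose square is trivial, and that character should be absorbed by re-choosing $v_0$. As an alternative route, the reflection adjoint gives $\pi_y(g)^\dagger=\pi_y(-g)$; combining this with the refinement equation yields $a(g)\overline{a(-g)}$ multiplicative. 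Self-duality of $\varrho$ with trivial Frobenius--Schur indicator should then force this character to be trivial, giving $a(-g)=a(g)$.

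\textbf{Step 5: the theta pairing.} Closing two vertices is invariant under the simultaneous insertions, so $B(\pi_z(g)u,\pi_z(g)w)=B(u,w)$. Hence $B(v_h,v_k)=0$ unless $\chi(g,h+k)=1$ for all $g$, that is, unless $h+k=0$. Transporting by $\pi_x$ on one side and the corresponding $\pi_y$ on the other gives $B(v_h,v_{-h})=B(v_0,v_0)\,a(h)$, using the formula for $\pi_y$ from Step 3.
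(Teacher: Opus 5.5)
Your Steps 1--3 follow the paper's route: diagonalize $\pi_z$, translate by $\pi_x$, then force $\pi_y(g)v_h=t(g)\chi(g,h)^{-1}v_{h-g}$. There are sign slips. Relation \eqref{equ: xy} gives $\mu(g,h+k)=\chi(k,g)^{-1}\mu(g,h)$, hence $\mu(g,h)=\mu(g,0)\chi(g,h)^{-1}$ and $\mu(g+g',0)=\mu(g,0)\mu(g',0)\chi(g,g')$. Once these are corrected, $a(g):=\mu(-g,0)^{-1}$ is a refinement with $a(0)=1$. It already gives $\pi_y(g)v_h=\frac{a(h)}{a(h-g)}v_{h-g}$ \emph{without} evenness, which cleanly isolates what remains.

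The genuine gap is Step 4, and neither route you sketch can close it.

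\begin{itemize}
\item The rotation argument imposes no condition on $a$. For \emph{any} refinement $a$, even or not, the matrix $R=cH_-D_a$ (entries $c\,a(g)\chi(g,h)^{-1}$) satisfies $R\pi_z(l)=\pi_x(l)R$ and $R\pi_x(l)=\pi_y(l)R$. Checking this uses only $a(g)a(l)\chi(g,l)^{-1}=a(g+l)$. It also has $R^3=nc^3S_a\,I$ with $|S_a|=\sqrt n\neq0$, so $c$ can be chosen to make $R^3=I$. Cyclic conjugacy therefore says nothing about $a(g)/a(-g)$.
\item No character can be absorbed by re-choosing $v_0$. The space $V_0$ is a line, so rescaling $v_0$ rescales every $v_h$ by the same phase. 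Moreover $a(g)$ is intrinsically the coefficient of $v_0$ in $\pi_y(g)v_g$.
\item The alternative route is empty as stated. The identity $\pi_y(g)^\dagger=\pi_y(-g)$ is just unitarity plus the representation law. Multiplicativity of $a(g)/a(-g)$ already follows from the refinement equation and $\chi(-g,-h)=\chi(g,h)$. Saying the Frobenius--Schur indicator ``should force'' triviality names the right input but supplies no mechanism.
\end{itemize}

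The missing mechanism is already sitting in your Step 5. There you get $b_h:=B(v_h,v_{-h})=B(v_0,v_0)\,a(h)$ from charge selection and strand sliding, and that computation never uses evenness. You should also note that $b_0\neq0$, by nondegeneracy of $B$. What you must add is that the bilinear theta pairing is symmetric, $B(v_h,v_{-h})=B(v_{-h},v_h)$. This is exactly where $\nu_2(\varrho)=1$ enters, through the pivotal pairing formula of \cite[\S3.3]{LPR}. Then $a(h)=b_h/b_0=b_{-h}/b_0=a(-h)$.

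This is precisely the paper's argument. It derives $b_h=t(g)\chi(g,h)^{-1}b_{h-g}$ and combines it with $b_g=b_{-g}$ to obtain $t(g)^2=\chi(g,g)$. Evenness then follows from the cocycle identity at $(g,-g)$. So do Step 5 before evenness, and replace Step 4 by the symmetry of $B$.
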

\begin{proof}
The commuting unitary operators $\pi_z(g)$ have simultaneous weight
spaces indexed by $\widehat G$. The relation
$\pi_z(g)\pi_x(h)=\chi(g,h)\pi_x(h)\pi_z(g)$ permutes these
spaces by multiplication by $\chi(-,h)$. Nondegeneracy of $\chi$
therefore implies that all $n$ weights occur, with the same
multiplicity. Since $\dim V=n$, each weight space is one-dimensional.
Choose a unit vector $v_0$ of trivial weight and put
$v_h=\pi_x(h)v_0$. The first two displayed formulas follow, and the
basis is orthonormal.

Applying $ Eq.~\eqref{equ:yz} $ to $ v_k $. We obtain
\begin{align*}
  \pi_z(h)\pi_y(g)v_k = \chi(h,k-g)\pi_y(g)v_k.
\end{align*}
Thus \(\pi_y(g)v_k\) is a simultaneous eigenvector of $ \pi_z(h) $ with eigenvalue $ \chi(h,k-g) $. Hence,
\begin{align*}
  \pi_y(g)v_k = c(g,k)v_{k-g},
\end{align*}
for some scalar \(c(g,k)\).
Next, applying $ Eq.~\eqref{equ: xy} $ to $ v_k $. We get
\begin{align*}
  c(h,k) = \chi(g,h)c(h,g+k).
\end{align*} 
Multiplying by $ \chi(h,k) $ on both sides, we obtain
\begin{align*}
  \chi(h,k)c(h,k) = \chi(h,g+k)c(h,g+k).
\end{align*}
Thus the quantity \(\chi(h,k)c(h,k)\) is independent of \(k\). Denote its common value by \(t(h)\).
Then we obtain
\[
 \pi_y(g)v_h=t(g)\chi(g,h)^{-1}v_{h-g}.
\]
The representation law for $\pi_y$ yields
\begin{equation}\label{eq:abelian-t-cocycle}
 t(g+h)=t(g)t(h)\chi(g,h).
\end{equation}
Thus $a=t^{-1}$ satisfies the quadratic relation, but evenness still
requires the theta pairing.

For every $l\in G$, the group-loop insertion gives
\begin{equation}\label{eq:theta-selection-chain}
 \NGDiagram{D042}=\NGDiagram{D043}=\NGDiagram{D044}
 =\chi(l,h+k)\NGDiagram{D042}.
\end{equation}
By nondegeneracy,
$B(v_h,v_k)=0$ unless $h+k=0$, or pictorially
\begin{equation}\label{equ: delta hk}
 \NGDiagram{D045}=0\qquad(h+k\ne0).
\end{equation}
Write
$b_h=B(v_h,v_{-h})\ne0$. The mixed fusion--splitting cancellation is
\begin{equation}\label{eq:theta-slide-right}
 \NGDiagram{D046}=\NGDiagram{D047}
 =t(g)\chi(g,h)^{-1}\NGDiagram{D048}.
\end{equation}
Consequently
\[
 b_h=t(g)\chi(g,h)^{-1}b_{h-g}.
\]
The theta pairing is symmetric by the pivotal-pairing formula of \cite[\S3.3]{LPR} and $\nu_2(\varrho)=1$, so $b_g=b_{-g}$. Taking $h=0$ and $h=g$ gives
\[
 b_0=t(g)b_{-g},\qquad b_g=t(g)\chi(g,g)^{-1}b_0.
\]
Hence $t(g)^2=\chi(g,g)$ and $b_g/b_0=t(g)^{-1}=a(g)$.
Equation~\eqref{eq:abelian-t-cocycle} at $(g,-g)$ now gives
$t(-g)=t(g)$, proving evenness. Finally,
$a(h)/a(h-g)=a(g)^{-1}\chi(g,h)^{-1}$, which proves the third
normal-form identity.
\end{proof}

\begin{corollary}\label{cor:bilinear-dual-basis}
Normalize the common phase by $B(v_0,v_0)=1$. The basis bilinearly
dual to $(v_h)$ is
\begin{equation}\label{eq:dual-basis}
 v_h'=a(h)^{-1}v_{-h}.
\end{equation}
Its three group actions are
\begin{align*}
 \pi_z(g)v_h'&=\chi(g,-h)v_h',&
 \pi_x(g)v_h'&=\frac{a(h-g)}{a(h)}v_{h-g}',&
 \pi_y(g)v_h'&=v_{h+g}'.
\end{align*}
\end{corollary}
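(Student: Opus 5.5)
We verify directly that the proposed vectors $v_h'=a(h)^{-1}v_{-h}$ pair with $(v_k)$ to give the Kronecker delta. Then we compute the three actions on them using Corollary~\ref{cor: rhorhorho} and the quadratic relation~\eqref{equ: def of a}.

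\textbf{Duality.} By Corollary~\ref{cor: rhorhorho} with the normalization $B(v_0,v_0)=1$, we have $B(v_h,v_k)=\delta_{h+k,0}\,a(h)$. We need $B(v_h',v_k)=\delta_{h,k}$, reading $B$ as the pairing of the opposite trivalent space against $V$. Substituting gives
\[
B(v_h',v_k)=a(h)^{-1}B(v_{-h},v_k)=a(h)^{-1}\delta_{k,h}\,a(-h)=\delta_{h,k},
\]
where the last equality uses evenness $a(-h)=a(h)$. Because $B$ is symmetric (shown in the proof of Corollary~\ref{cor: rhorhorho} via $\nu_2(\varrho)=1$), it does not matter which slot is regarded as dual. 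Nondegeneracy then makes the dual basis unique.

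\textbf{Group actions.} Each action is linear, so we apply \eqref{eq:three-actions-normal-form} to $v_{-h}$.
\begin{itemize}
\item[(z)] $\pi_z(g)v_h'=a(h)^{-1}\chi(g,-h)v_{-h}=\chi(g,-h)\,v_h'$.
\item[(y)] $\pi_y(g)v_h'=a(h)^{-1}\dfrac{a(-h)}{a(-h-g)}\,v_{-h-g}=a(h+g)^{-1}v_{-(h+g)}=v_{h+g}'$, using evenness twice.
\item[(x)] $\pi_x(g)v_h'=a(h)^{-1}v_{-h+g}=a(h)^{-1}a(h-g)\,v_{h-g}'$, since $v_{-h+g}=a(h-g)\,v'_{h-g}$ by the definition of the dual basis.
\end{itemize}
These are exactly the stated formulas.

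The argument is a short computation with no real obstacle. The one point to check is whether the actions on dual vectors are meant as the same operators $\pi_j(g)$ applied to the elements $v_h'\in V$. We take this reading: the dual basis is realized inside $V$ through the bilinear identification, so the computation applies literally.
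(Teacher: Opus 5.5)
Your computation is correct and is essentially the paper's argument: the pairing formula $B(v_h,v_k)=\delta_{h+k,0}a(h)$ from Corollary~\ref{cor: rhorhorho} identifies the dual basis, and substituting $v_h'=a(h)^{-1}v_{-h}$ into the normal forms \eqref{eq:three-actions-normal-form} gives the three actions, read inside $V$ as you and the paper both do. The one point the paper addresses that you take as given is why a \emph{common phase}, which keeps $(v_h)$ orthonormal, suffices to achieve the normalization: unitary duality writes the theta pairing as the inner product composed with an antiunitary involution that preserves the weight-zero line, so $|B(v_0,v_0)|=1$ before normalizing.
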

\begin{proof}
Standard unitary duality identifies the theta pairing with the positive
inner product composed with an antiunitary involution. Its weight-zero
line is preserved, so $|B(v_0,v_0)|=1$ and a common phase achieves
the normalization. The pairing formula of
Corollary~\ref{cor: rhorhorho} also has the graphical form
\begin{equation}\label{eq:theta-pairing-value}
 \NGDiagram{D046}=\NGDiagram{D051}
 =a(h)\NGDiagram{D052}=a(h).
\end{equation}
It gives the dual basis, and substituting it into the three normal forms
proves the stated actions. Prime denotes the bilinear dual, not the
Hermitian adjoint.
\end{proof}

\subsection{Rotation of the multiplicity space}

The corner occupied by a vertex label is part of the notation.  A cyclic
change of corner acts on the multiplicity space; it does not leave each
basis vector fixed.  In the following pictures, a prime denotes the
bilinear trace-dual basis morphism, with the bends prescribed in Section~\ref{sec: pentagon Equation}.
Put $S_a=\sum_{g\in G}a(g)$.

\begin{lemma}\label{lem: rotation}
There is a nonzero $c\in\mathbb C$ such that
\begin{equation}\label{eq:vertex-rotation}
 \NGDiagram{D053}
 =c a(g)\sum_{h\in G}\chi(g,h)^{-1}\NGDiagram{D054}.
\end{equation}
Moreover,
\begin{equation}\label{eq:rotation-gauss}
 c^3S_a=\frac1n,\qquad |S_a|=\sqrt n,\qquad |c|=\frac1{\sqrt n}.
\end{equation}
\end{lemma}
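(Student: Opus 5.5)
The plan is to regard rotation of the marked corner as a linear operator $\mathcal R$ on $V=\operatorname{Hom}(\varrho\otimes\varrho,\varrho)$ and determine it through its intertwining with the three actions $\pi_x,\pi_y,\pi_z$ of Lemma~\ref{pro: 3 reps}.

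\textbf{Step 1 (intertwining relations).} Moving the mark one corner cyclically permutes the three legs of the all-$\varrho$ vertex. An insertion on one edge before rotation is therefore an insertion on the next edge afterwards. Pictorially this should give
\[
 \mathcal R\,\pi_x(g)=\pi_y(g)\,\mathcal R,\quad
 \mathcal R\,\pi_y(g)=\pi_z(g)\,\mathcal R,\quad
 \mathcal R\,\pi_z(g)=\pi_x(g)\,\mathcal R,
\]
possibly with $g\mapsto -g$ or an identification through the bilinear dual basis, according to the orientation conventions. Since rotation is drawn as a map from $v_g$ (unprimed) to the primed basis, I will compute $\mathcal R v_g$ in the basis $(v_h')$ of Corollary~\ref{cor:bilinear-dual-basis}.

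\textbf{Step 2 (solve for $\mathcal R$).} Write $\mathcal R v_0=\sum_h r_h v_h'$. By Corollary~\ref{cor:bilinear-dual-basis}, $\pi_y$ acts on $(v_h')$ by pure translation, and $v_0$ is fixed by $\pi_z$. Hence the relation carrying $\pi_z$ to the appropriate action on the primed side forces $r_h$ to be constant, say $r_h=c$. Next use $v_g=\pi_x(g)v_0$ and transport $\pi_x(g)$ through $\mathcal R$. Corollary~\ref{cor:bilinear-dual-basis} gives
\[
 \pi_x(g)v_h'=\frac{a(h-g)}{a(h)}\,v_{h-g}',
\]
and the relation \eqref{equ: def of a} together with $\chi$ should convert the resulting coefficient $a(g)\,a(h-g)/a(h)$-type factors into $a(g)\chi(g,h)^{-1}$. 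This yields \eqref{eq:vertex-rotation}. The constant $c$ is nonzero because rotation is invertible.

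\textbf{Step 3 (Gauss sum relations).} Rotating three times is the identity, so $\mathcal R^3=\mathrm{id}$. In matrix form $\mathcal R$ is $c$ times the operator $T$ of \eqref{eq:scalar-fourier-toolkit}, after the dual-basis identification $v_h'=a(h)^{-1}v_{-h}$. Then $T^3=nS_{a^{-1}}I$, combined with $S_aS_{a^{-1}}=n$, gives
\[
 c^3\,nS_{a^{-1}}=1\iff c^3S_a=\frac1n.
\]
In the unitary setting $a$ is $\mathbb T$-valued, so $S_{a^{-1}}=\overline{S_a}$, and $S_aS_{a^{-1}}=n$ yields $|S_a|=\sqrt n$. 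Therefore $|c|^3=1/(n\sqrt n)$, that is, $|c|=n^{-1/2}$. Alternatively, unitarity of rotation with respect to the trace inner product gives $|c|$ directly.

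\textbf{Main obstacle.} The hard step is Step 1: pinning down the exact signs and orientations in the intertwining relations (whether $g$ or $-g$ appears, and primed versus unprimed bases). An error there changes $\chi(g,h)^{-1}$ into $\chi(g,h)$ or shifts the $a(g)$ factor. I would fix these conventions by checking consistency with $\mathcal R^3=\mathrm{id}$ and with the theta-pairing formula $B(v_h,v_k)=\delta_{h+k,0}\,a(h)$.
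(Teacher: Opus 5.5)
Your overall plan (intertwiners, an invariant vector, transport of a group action, then $\mathcal R^3=\mathrm{id}$) matches the paper's, and your Step 1 relations are the right ones; the paper uses $\mathcal R\pi_z=\pi_x\mathcal R$ and $\mathcal R\pi_x=\pi_y\mathcal R$. However, Step 2 does not follow from Step 1, and the computation you sketch gives the wrong matrix.

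\textbf{The target basis.} The right-hand side of \eqref{eq:vertex-rotation} is not in the primed basis. Both pictures are unprimed vertices $v_g$ and $v_h$; they differ only in which corner carries the mark.

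\textbf{Step 2 contradicts Step 1.} Since $\mathcal R\pi_z=\pi_x\mathcal R$, the vector $\mathcal Rv_0$ is $\pi_x$-invariant; the $\pi_y$-translation of $(v_h')$ plays no role. On $(v_h')$, $\pi_x$-invariance forces $r_h=c\,a(h)$, not $r_h=c$. Also $\mathcal Rv_g=\mathcal R\pi_x(g)v_0=\pi_y(g)\mathcal Rv_0$, not $\pi_x(g)\mathcal Rv_0$. Following your recipe literally gives
\[
 \mathcal Rv_g=c\sum_h\frac{a(h-g)}{a(h)}\,v_{h-g}'=c\,a(g)^{-1}\sum_h\chi(g,h)\,v_h'.
\]
This has the inverse phases, and the quadratic relation cannot turn it into $a(g)\chi(g,h)^{-1}$. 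The fix, which is the paper's argument, is to work in the unprimed target basis. There $\pi_x$ acts by translation, so invariance gives $\mathcal Rv_0=c\sum_h v_h$, and then
\[
 \mathcal Rv_g=\pi_y(g)\mathcal Rv_0=c\sum_h\frac{a(h)}{a(h-g)}v_{h-g}=c\sum_h\frac{a(h+g)}{a(h)}v_h=c\,a(g)\sum_h\chi(g,h)^{-1}v_h .
\]

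\textbf{Step 3 inherits the error.} The matrix of $\mathcal R$ is $R_{h,g}=c\,a(g)\chi(g,h)^{-1}$. The operator $T$ of \eqref{eq:scalar-fourier-toolkit} has entries $a(g)^{-1}\chi(h,g)$, the entrywise inverse phases, so $\mathcal R\neq cT$. Moreover, a matrix taking $(v_h)$ to $(v_h')$ cannot be cubed without converting back.

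Your final equivalence is false. Using $S_aS_{a^{-1}}=n$, the equation $c^3nS_{a^{-1}}=1$ means $c^3=S_a/n^2$. This equals $1/(nS_a)$ only when $S_a^2=n$. For $G=C_2$ with $a(1)=i$ (the paper's cyclic data at $n=2$), $S_a=1+i$ and $S_a^2=2i\neq 2$.

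The correct computation uses $a(g)a(h)\chi(g,h)^{-1}=a(g+h)$ and character orthogonality:
\[
 \sum_{g,h}a(g)a(h)\chi(g,h)^{-1}\chi(h,k)^{-1}=nS_a\,\delta_{k,0}.
\]
Equivalently, take the $T^3$ identity with $\chi,a$ replaced by $\chi^{-1},a^{-1}$. Then $\mathcal R^3=\mathrm{id}$ gives $nc^3S_a=1$.

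Your conclusions $|S_a|=\sqrt n$ and $|c|=n^{-1/2}$ survive, because $|S_{a^{-1}}|=|S_a|$. The first identity in \eqref{eq:rotation-gauss}, and the formula \eqref{eq:vertex-rotation} itself, do not follow from your argument as written.
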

\begin{proof}
The two corner conventions give bases
$\{\NGDiagram{D053}:g\in G\}$ and
$\{\NGDiagram{D055}:g\in G\}$.  Thus
\begin{equation}\label{eq:rotate-zero-expand}
 \NGDiagram{D056}=\sum_{h\in G}c_h\NGDiagram{D054}.
\end{equation}
Let $R$ denote this cyclic change of corner.  Transporting the group
strands to the adjacent corner gives
$R\pi_z(l)=\pi_x(l)R$ and $R\pi_x(l)=\pi_y(l)R$.
Since $\pi_z(l)v_0=v_0$, the vector $Rv_0$ is fixed by every
$\pi_x(l)$.  The latter acts by translation in the target basis,
by Corollary~\ref{cor: rhorhorho}; hence all $c_h$ are equal.
Write their common value as $c$.
Sliding a $g$-labelled invertible strand gives
\begin{align}
 \NGDiagram{D053}
 &=\NGDiagram{D057}
   =c\sum_{h\in G}\NGDiagram{D058},\label{eq:rotation-slide}\\
 &=c\sum_{h\in G}\frac{a(h)}{a(h-g)}\NGDiagram{D059},\notag\\
 &=c\sum_{h\in G}\frac{a(h+g)}{a(h)}\NGDiagram{D054}
   =c a(g)\sum_{h\in G}\chi(g,h)^{-1}\NGDiagram{D054}.
   \notag
\end{align}
The last equality uses $a(h+g)\chi(g,h)=a(h)a(g)$.
The scalar $c$ cannot vanish because the change of basis is invertible.

Three successive corner changes are a full cyclic rotation.  Their
pictorial expansion is
\begin{align}
 \NGDiagram{D056}
 &=c\sum_{g\in G}\NGDiagram{D060},\label{eq:rotation-three}\\
 &=c^2\sum_{g,h\in G}a(g)\chi(g,h)^{-1}\NGDiagram{D061},\notag\\
 &=c^3\sum_{g,h,k\in G}
     a(g)a(h)\chi(g,h)^{-1}\chi(h,k)^{-1}\NGDiagram{D062}.
     \notag
\end{align}
The coefficient of the last picture with label $k$ is
\begin{align*}
 c^3\sum_{g,h}a(g)a(h)\chi(g,h)^{-1}\chi(h,k)^{-1}
 &=c^3\sum_{g,h}a(g+h)\chi(h,k)^{-1}\\
 &=nc^3S_a\,\delta_{k,0}.
\end{align*}
Since a full cyclic rotation is the identity, this proves the first
identity in \eqref{eq:rotation-gauss}.  Finally, character orthogonality
gives
\[
 |S_a|^2=\sum_{t,h}a(h+t)\overline{a(h)}
        =\sum_t a(t)\sum_h\chi(t,h)^{-1}=n,
\]
and the asserted modulus of $c$ follows.
\end{proof}

In column convention, the matrix of this corner change is
\begin{equation}\label{eq:rotation-matrix}
 R_{h,g}=c\,a(g)\chi(g,h)^{-1}.
\end{equation}
Thus $R=cHD_a$, where $H_{h,g}=\chi(g,h)^{-1}$ and
$D_a=\operatorname{diag}(a(g))$.  The calculation above gives $R^3=I$.

\subsection{Tetrahedral values with no invertible edge}
We reduce the all-$\varrho$ coefficients to a single function $\beta$ by
a charge-selection rule and an expansion into fusion channels.

\begin{lemma}[Charge selection]\label{lem: ghk}
If $g+h+k\ne0$, then
\begin{equation}\label{eq:triangle-charge}
 \NGDiagram{D063}=0.
\end{equation}
\end{lemma}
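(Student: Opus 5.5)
The plan is to imitate the theta-selection argument of Corollary~\ref{cor: rhorhorho}, equation~\eqref{eq:theta-selection-chain}, now with three all-$\varrho$ vertices around a triangle. The diagram $\NGDiagram{D063}$ is the tetrahedral graph with all six edges labelled $\varrho$. Its three corner-decorated vertices are $v_g$, $v_h$ and $v_k$, placed as in the picture, and these are the basis vectors of Corollary~\ref{cor: rhorhorho} in their respective corner conventions. The goal is to show that the value of this diagram is multiplied by $\chi(l,g+h+k)$ under an operation that leaves the diagram unchanged.

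\textbf{Step 1: insert a loop.} For any $l\in G$, insert a small closed $l$-labelled loop into the inner triangular face bounded by the three vertices. By the normalization of Lemma~\ref{lem: F-symbol 6 inv}, the loop has value $d_l=1$, so the diagram is unchanged.

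\textbf{Step 2: expand the loop onto the edges.} Using the mixed fusion--splitting relation~\eqref{eq:mixed-splitting} together with Lemma~\ref{lem: coproduct}, expand the loop so that it crosses the three inner $\varrho$-edges. The result is three $l$-strands, one near each vertex, running between consecutive edges of the face. This is the pattern already drawn in $\NGDiagram{D065}$, the triangle with red $l$-strands.

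\textbf{Step 3: absorb the strands into the vertices.} Slide each strand into the adjacent vertex. Each strand enters at the decorated corner side, so on $v_g$, $v_h$ and $v_k$ it acts by the one of $\pi_x,\pi_y,\pi_z$ fixed by that vertex's orientation. The conventions are chosen so that this action is $\pi_z(l)$ in each vertex's own corner convention. By Corollary~\ref{cor: rhorhorho}, $\pi_z(l)v_m=\chi(l,m)v_m$. If instead a strand lands on an $x$- or $y$-type corner, first apply the mixed interchange relation of Lemma~\ref{lem: bicha}; its bicharacter phases cancel around the closed loop. After this step the diagram has acquired the factor $\chi(l,g)\chi(l,h)\chi(l,k)=\chi(l,g+h+k)$.

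\textbf{Step 4: conclude.} This gives $T=\chi(l,g+h+k)\,T$ for every $l\in G$. If $g+h+k\neq0$, nondegeneracy of $\chi$ supplies an $l$ with $\chi(l,g+h+k)\ne1$, and therefore $T=0$.

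\textbf{Main obstacle.} The difficulty lies in Step~3: tracking which corner each strand meets and how the orientations of the three inner edges interact. The concern is that a strand might act by $\pi_x$ or $\pi_y$, which shift labels rather than producing a scalar. To check this, trace the face orientation and confirm that every inserted strand sits in the corner occupied by the label mark, where $\pi_z$ acts; this is exactly what the corner marks in $\NGDiagram{D063}$ encode. If some vertex were rotated instead, first apply Lemma~\ref{lem: rotation} to move its mark. The rotation matrix $R$ intertwines $\pi_z$ with $\pi_x$, so this reduces that vertex to the same $\pi_z$ case.
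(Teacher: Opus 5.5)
Your proposal is correct and is essentially the paper's proof. The paper inserts an $l$-loop of dimension $1$ in the triangular face, splits it into three corner arcs by the mixed fusion--splitting relation, absorbs each arc at an inward-marked corner as $\pi_z(l)$ to obtain $\chi(l,g)\chi(l,h)\chi(l,k)$, and concludes by nondegeneracy of $\chi$; your steps match this. One small correction: the diagram in \eqref{eq:triangle-charge} is the open triangle with three external $\varrho$-legs rather than a closed tetrahedron, but your argument is local to the face and applies verbatim, and the fallback cases in Step~3 are unnecessary because all three label marks already point into that face.
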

\begin{proof}
Insert a loop labelled by $l\in G$.  Its dimension is $1$; sliding it
through the three vertices gives
\begin{align}
 \NGDiagram{D063}
 &=\NGDiagram{D064}
  =\NGDiagram{D065},\notag\\
 &=\chi(l,g)\chi(l,h)\chi(l,k)\NGDiagram{D063}.
 \label{eq:charge-loop-slide}
\end{align}
Hence $(1-\chi(l,g+h+k))$ times the original diagram is zero for every
$l$.  Since $\chi$ is nondegenerate and $g+h+k\ne0$, there exists an
$l$ for which this factor is nonzero.  This proves the claim.
\end{proof}

\begin{lemma}\label{pro: real parameter}
There is a function $\beta:G\to\mathbb C$ such that
\begin{equation}\label{equ: real parameter}
 \NGDiagram{D066}
 =\beta(g)\sum_{h\in G}\chi(h,g)^{-1}\NGDiagram{D067}
   +\frac{\delta_{g,0}}{d_\varrho^2}\sum_{h\in G}\NGDiagram{D068}.
\end{equation}
\end{lemma}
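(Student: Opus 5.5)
The identity is a fusion-channel expansion of a specific all-$\varrho$ diagram, so the plan is to decompose it along the middle edge. Reading the left side of \eqref{equ: real parameter}, the shape O045 with corners $0$ and $g$, as a morphism $\varrho\otimes\varrho\to\varrho\otimes\varrho$ (two all-$\varrho$ vertices joined by a horizontal $\varrho$-edge), I resolve the identity on $\varrho\otimes\varrho$ vertically using $\varrho^2=\bigoplus_{k}k\oplus n\varrho$.

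The resolution of the identity reads
\[
 \mathrm{id}_{\varrho\otimes\varrho}=\sum_{k\in G}\frac{1}{\theta(\varrho,\varrho,k)}(\text{$k$-channel})+\sum_{h\in G}\frac{1}{d}(v_h\text{ followed by }v_h').
\]
It uses the mixed vertices normalized in Proposition~\ref{pro: normalization}, with $\theta=d$, together with the bilinear dual basis $v_h'=a(h)^{-1}v_{-h}$ of Corollary~\ref{cor:bilinear-dual-basis}. The $\varrho$-channel summand must carry a factor $1/d$, since the theta value of $v_h$ against $v_h'$ equals $1$ while $d_\varrho=d$.

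Composing the left side with this resolution gives two families of coefficients, each a closed planar diagram.

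\textbf{$\varrho$-channels.} The coefficient of the vertical diagram O046 with corners $h,h'$ is $d_\varrho$ times a tetrahedron with four all-$\varrho$ vertices carrying corners $0,g,h,h'$. By charge selection (Lemma~\ref{lem: ghk}) and Corollary~\ref{cor: rhorhorho}, the group actions force a diagonal dependence. I would first insert loops labelled $l$, sliding $\pi_z,\pi_x,\pi_y$ through the vertices. This shows that the coefficient, viewed as a function of $h$, is an eigenvector of the translation and weight operators exactly when it has the form $\beta(g)\chi(h,g)^{-1}$. The scalar $\beta(g)$ is then \emph{defined} as the coefficient at $h=0$.

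\textbf{Group channels.} For a group label $k$ the coefficient is a tetrahedron with two mixed vertices. Loop insertion (charge conservation) kills every $k$ unless $g=0$. When $g=0$, the invariance of $v_0$ under $\pi_z$ makes all $k$ contribute equally. The value is fixed by capping, i.e.\ by comparing traces with the theta normalization $B(v_0,v_0)=1$, which gives the coefficient $1/d^2$ per channel. This produces the term $\frac{\delta_{g,0}}{d_\varrho^2}\sum_h(\text{O047 with label }h)$.

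The main obstacle is the bookkeeping of corner positions under rotation. A corner sitting at a different vertex introduces the rotation matrix $R=cHD_a$ of Lemma~\ref{lem: rotation}, and this must be tracked carefully so that the phase comes out as exactly $\chi(h,g)^{-1}$ rather than involving $a$ or $c$. I would handle it by expressing each group-slide through the normal forms \eqref{eq:three-actions-normal-form}. I would also check the $g=0$ normalization constant by taking the trace of both sides and using $d^2=nd+n$.
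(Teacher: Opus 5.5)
Your plan is the paper's proof. You expand $H_g$ in the channel basis $E_{h,k}$, $P_l$ and extract coefficients by closing against the opposite channel. You kill $h\neq k$ by charge selection (Lemma~\ref{lem: ghk}), get $\delta_{g,0}/d_\varrho^2$ from the one-invertible-edge closure, obtain $C^g_h=\chi(h,g)^{-1}C^g_0$ by sliding a group strand, and set $\beta(g)=C^g_0$. Three details need correcting.

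First, the $\varrho$-channel weight in your resolution of $\operatorname{id}_{\varrho\otimes\varrho}$ is wrong. It is $d_\varrho/\theta=d$, not $1/d$. The bubble formed by $v_h$ and $v_h'$ on a $\varrho$-strand is $d^{-1}\operatorname{id}_\varrho$, so $E_{h,h}^2=d^{-1}E_{h,h}$ and $\operatorname{id}_{\varrho\otimes\varrho}=\sum_l d^{-1}P_l+\sum_h d\,E_{h,h}$. Your later statement that the coefficient is $d_\varrho$ times a tetrahedron is what follows from this correct weight. The error only affects the normalization of $\beta$, which is a definition anyway.

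Second, $h\mapsto\beta(g)\chi(h,g)^{-1}$ is an eigenvector of the translations, not of the weight operators. What you must prove is $C^g_{h+l}=\chi(l,g)^{-1}C^g_h$. A loop placed in the face through $v_0,v_h,v_h'$, where all three marked corners point inward, only reproduces charge selection. You need the face through $v_g,v_h,v_h'$. There the loop acts on $v_g$ by $\pi_z$, i.e.\ by a phase, and on the two channel vertices by translations whose quadratic factors cancel between $v_h$ and its dual. The paper implements exactly this with an open $h$-strand joining the two right legs of $H_g$. Sliding it across $v_g$ gives $\chi(h,g)^{-1}H_g$, while sliding it across the channel vertices carries $E_{h,h}$ to $E_{0,0}$.

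Third, taking the full trace of both sides does not check the constant $1/d^2$. We have $\operatorname{tr}(H_g)=0$, because each vertex of $H_g$ acquires a tadpole and $\operatorname{Hom}(\mathbf 1,\varrho)=0$. Taking traces therefore yields $n\delta_{g,0}\bigl(\beta(0)+d^{-1}\bigr)=0$, which determines $\beta(0)$ rather than testing the constant. Capping the top legs, which you also propose, is the correct check.
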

\begin{proof}
Resolving the central channel gives the following basis of
$\Hom(\varrho\otimes\varrho,\varrho\otimes\varrho)$:
\begin{equation}\label{eq:rho-rho-matrix-units}
 E_{h,k}:=\NGDiagram{D069}\quad(h,k\in G),
 \qquad P_l:=\NGDiagram{D070}\quad(l\in G).
\end{equation}
Here $E_{h,k}$ has lower label $k$ and upper label $h'$.
Write the horizontal diagram as $H_g:=\NGDiagram{D066}$.  Its expansion
is
\begin{equation}\label{equ: coef}
 H_g=\sum_{h,k\in G}C^g_{h,k}E_{h,k}
       +\sum_{l\in G}f_l^gP_l.
\end{equation}
The trace normalization of the vertices gives
\[
 \operatorname{tr}(E_{h,k}E_{p,q})
    =d_\varrho^{-1}\delta_{k,p}\delta_{h,q}.
\]
Thus the coefficient $C^g_{h,k}$ is obtained by pairing with
$E_{k,h}=\NGDiagram{D071}$, and its closure is
\begin{equation}\label{eq:extract-rho-coefficient}
 \NGDiagram{D072}
 =\operatorname{tr}(E_{k,h}H_g)=\frac{C^g_{h,k}}{d_\varrho}.
\end{equation}
The primed label is the trace-dual vertex; using
$v_{-k}=a(k)v_k'$ with its prescribed bends, the triangular face of
this closure has total charge $h-k$.  Lemma~\ref{lem: ghk} therefore
gives $C^g_{h,k}=0$ whenever $h\ne k$.

For the invertible channels, close
$\NGDiagram{D073}$ against $\NGDiagram{D074}$.
The mixed tetrahedral evaluation and the same trace normalization give
\begin{equation}\label{eq:extract-invertible-coefficient}
 d_\varrho^2 f_l^g=\chi(-l,0)\delta_{g,0}=\delta_{g,0}.
\end{equation}
Consequently, with $C_h^g=C^g_{h,h}$,
\begin{equation}\label{equ: coef1}
 \NGDiagram{D066}
 =\sum_{h\in G}C_h^g\NGDiagram{D067}
   +\frac{\delta_{g,0}}{d_\varrho^2}\sum_{l\in G}\NGDiagram{D070}.
\end{equation}

It remains to compare the diagonal coefficients.  We extract the
zero-diagonal term from the strand-decorated diagram:
\[
 \NGDiagram{D075}\quad\text{in}\quad\NGDiagram{D076}.
\]
Sliding the invertible strand first gives, by
Corollary~\ref{cor: rhorhorho},
\begin{equation}\label{eq:beta-character-slide}
 \NGDiagram{D076}=\chi(h,g)^{-1}\NGDiagram{D066},
\end{equation}
so that coefficient is $\chi(h,g)^{-1}C_0^g$.
Alternatively, expand the horizontal diagram first.  Only its
$h$-diagonal term contributes, because the strand translates both
indices by $-h$.  Its reduction is
\begin{equation}\label{eq:beta-diagonal-slide}
 \NGDiagram{D077}=\NGDiagram{D078}=\NGDiagram{D075}.
\end{equation}
Here the two dual vertex factors cancel by
Proposition~\ref{pro: normalization}; the normalization at $0$ is
$v_0'=v_0$, after the specified bending identification.
The coefficient is therefore $C_h^g$.
It follows that $C_h^g=\chi(h,g)^{-1}C_0^g$.  Defining
$\beta(g)=C_0^g$ proves \eqref{equ: real parameter}.
\end{proof}

\begin{theorem}[Tetrahedral evaluations]\label{thm: F-symbols}
In the preceding normalized gauge, the tetrahedral evaluations are
determined by $d_\varrho,\chi,a,c,\beta$, with the corner-transport map
\eqref{eq:vertex-rotation}.  The following are representatives;
the vertex corners and primed dual labels are as drawn.

For three or more invertible edges, the representatives are
\begin{equation}\label{eq:table-three-invertibles}
 \NGDiagram{D010}=1,\qquad \NGDiagram{D011}=d_\varrho.
\end{equation}
For two invertible edges,
\begin{equation}\label{eq:table-two-invertibles}
 \NGDiagram{D017}=\chi(g,h)d_\varrho.
\end{equation}
For one invertible edge,
\begin{equation}\label{eq:table-one-invertible}
 \NGDiagram{D079}=\chi(g,h)\delta_{h,k}.
\end{equation}
For no invertible edge, abbreviate $p=h-g$ and $r=k-g$ in the
vertex labels:
\begin{equation}\label{eq:table-no-invertible}
 \NGDiagram{D080}
 =\frac{a(h-g)a(k-g)}{a(h)a(k)}\,
    \chi(h,k)^{-1}\frac{\beta(k)}{d_\varrho}.
\end{equation}
\end{theorem}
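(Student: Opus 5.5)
The plan is to treat the four cases of Theorem~\ref{thm: F-symbols} separately, sorted by the number of invertible edges. In each case we reduce the closed tetrahedron to a scalar already fixed earlier in this section. Every other tetrahedron with the same edge labels is obtained from the displayed representative by corner transport. We would first argue that such transport is governed by \eqref{eq:vertex-rotation} at all-$\varrho$ vertices, and by the trivial rotations of the one-dimensional mixed spaces, whose phases are fixed coherently by Lemma~\ref{lem:mixed-gauge}. This gives the claim that $d_\varrho,\chi,a,c,\beta$ determine every evaluation.

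\textbf{Three or more invertible edges, and two invertible edges.} These cases are just Lemma~\ref{lem: F-symbol 6 inv} and the closed interchange relation \eqref{equ: def of chi} from Lemma~\ref{lem: bicha}, so we simply cite them.

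\textbf{One invertible edge.} The $g$-strand sits on one edge of a tetrahedron whose other edges are $\varrho$. Its two endpoints are mixed vertices, so the $g$-edge can be absorbed into the adjacent all-$\varrho$ vertex as an insertion $\pi_\bullet(g)$. Lemma~\ref{pro: 3 reps} and Corollary~\ref{cor: rhorhorho} then turn the diagram into a theta graph between $\pi(g)v_h$ and the dual vertex $v_{k}'$ (primed label $k'$).
\begin{itemize}
\item For the drawn orientation the relevant insertion is $\pi_z$, which acts diagonally by $\chi(g,h)$.
\item The bilinear duality of Corollary~\ref{cor:bilinear-dual-basis} gives $B(v_h,v_k')=\delta_{h,k}$, after fixing $B(v_0,v_0)=1$.
\end{itemize}
Together these yield $\chi(g,h)\delta_{h,k}$. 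The theta normalization $\theta=d$ of the mixed vertex cancels against the factor $d$ produced by collapsing the mixed pair, by Proposition~\ref{pro: normalization}.

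\textbf{No invertible edge.} We start from the expansion of Lemma~\ref{pro: real parameter} for the horizontal diagram $H_0$ with corner label $0$, and close it with two further vertices so that it becomes a tetrahedron. The $E_{h,h}$-terms contribute via the trace formula \eqref{eq:extract-rho-coefficient}, which picks out $\beta(\cdot)/d_\varrho$ times a character. The $P_l$-terms vanish unless a charge vanishes, by Lemma~\ref{lem: ghk}.

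To reach general corner labels $g,h',p,r$, we translate all labels by $g$ using $\pi_x,\pi_y$. Corollary~\ref{cor: rhorhorho} gives the factors $a(h)/a(h-g)$ and $a(k)/a(k-g)$ from the $\pi_y$-actions, with the dual-basis rules of Corollary~\ref{cor:bilinear-dual-basis} for the primed label. The remaining $\chi(h,k)^{-1}$ comes from the $\pi_z$-weight of the corner $h$ in the channel $k$, which is exactly the relation $C_h^g=\chi(h,g)^{-1}C_0^g$.

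\textbf{Main obstacle.} The hard step is the bookkeeping in this last case: the exact orientation and corner conventions, and confirming that the invertible-channel term $\delta_{g,0}/d_\varrho^2$ contributes nothing for the displayed representative. We would try to settle this first by checking the case $g=0$, $h=k$ against the normalization $\beta(0)=-1/d$ and the rotation $R^3=I$. That check guards against an overall factor $a(\cdot)^{\pm1}$ or a complex conjugate.
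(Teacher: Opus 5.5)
Your computations of the four displayed values follow the paper's route. The rows with two or more invertible edges are cited from Lemmas~\ref{lem: F-symbol 6 inv} and~\ref{lem: bicha}. The one-edge row reads the $g$-chord as the diagonal action $\pi_z(g)$ on $v_h$ and pairs with $v_k'$. The all-$\varrho$ row closes the expansion of Lemma~\ref{pro: real parameter}, giving $\chi(h,k)^{-1}\beta(k)/d_\varrho$ at labels $(0,k,h',h)$, and then slides $g$-strands to produce the two quadratic ratios.

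\textbf{The gap: the claim that the evaluations are determined by $d_\varrho,\chi,a,c,\beta$.} Your opening sentence, that every other tetrahedron comes from a representative by corner transport, does not establish this. Two arguments are missing.
\begin{enumerate}
\item Invertible edges in other positions are reached only by global tetrahedral symmetries, not by corner changes at fixed vertices. You therefore need a classification. A vertex with two invertible edges forces its third edge to be invertible, so the invertible subgraph is empty, one edge, two opposite edges, a three-edge star, or all six edges. The orientation-preserving symmetry group $A_4$ then acts transitively on each type.
\item In the fixed corner convention, an all-$\varrho$ tetrahedron has four free labels $(u,v,w',z)$, while the representative \eqref{eq:table-no-invertible} has only three. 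Apply Lemma~\ref{lem: ghk} to the left face: after rewriting $v_w'=a(w)^{-1}v_{-w}$, its charge is $u+z-w$, so the value vanishes unless $w=u+z$. Only then do $g=u$, $h=w$, $k=u+v$ reproduce the drawn labels $g,r,h',p$. Without this step, the table says nothing about the remaining $n^4-n^3$ labellings.
\end{enumerate}

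\textbf{Misuse of Lemma~\ref{lem: ghk}.} You invoke it for the $P_l$ terms, where it neither applies nor is needed. Those terms drop out of the closure identically: each closed $P_l$ term contains a bubble factoring through $\operatorname{Hom}(l,\varrho)=0$, equivalently $\operatorname{tr}(P_lE_{k,h})=0$. This holds for every label, including $g=0$, where the coefficient $d_\varrho^{-2}$ is nonzero and a charge argument would not remove it. So the ``main obstacle'' you flag is not an obstacle.

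\textbf{Bookkeeping.} The inserted $g$-loop must lie in the bottom face. Only the vertices labelled $g$, $r$, $p$ are translated; the top label $h'$ is untouched, and no dual-basis action at the primed vertex enters. Likewise, in the one-edge row the diagram is literally $B(\pi_z(g)v_h,v_k')$, so no cancellation of theta factors $d_\varrho$ is involved.
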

\begin{proof}
The first two rows are Lemmas~\ref{lem: F-symbol 6 inv}
and~\ref{lem: bicha}.  The one-invertible-edge row follows by sliding
its strand and pairing the remaining dual basis vertices.  After this
slide, the two lower corners in the middle picture are bivalent bends;
the picture is a closed theta graph:
\begin{equation}\label{eq:one-edge-table-proof}
 \NGDiagram{D079}
 =\chi(g,h)\NGDiagram{D081}
 =\chi(g,h)\delta_{h,k}.
\end{equation}
For the last row, closing the expansion in
Lemma~\ref{pro: real parameter} gives the value with one zero vertex:
\begin{equation}\label{equ: 6rho fsymbol easy}
 \NGDiagram{D082}=\chi(h,k)^{-1}\frac{\beta(k)}{d_\varrho}.
\end{equation}
Retain the abbreviations $p=h-g$ and $r=k-g$. Insert the three
$g$-labelled strands and slide them to the adjacent vertices.  The normalized group action produces the two ratios of
quadratic factors:
\begin{align}
 \NGDiagram{D080}&=\NGDiagram{D083},\notag\\
 &=\frac{a(h-g)}{a(h)}\frac{a(k-g)}{a(k)}\NGDiagram{D082},\notag\\
 &=\frac{a(h-g)a(k-g)}{a(h)a(k)}\,
    \chi(h,k)^{-1}\frac{\beta(k)}{d_\varrho}.
    \label{eq:no-edge-table-proof}
\end{align}
It remains to check that these representatives cover all admissible
edge types.  At a trivalent vertex, exactly two invertible incident
edges would force the third edge to be invertible.  Thus the
invertible-edge subgraph of a tetrahedron can only consist of all six
edges, a three-edge star, two disjoint edges, one edge, or no edge.
The orientation-preserving tetrahedral symmetry group is transitive
on the configurations of each type.  Its action, with the indicated
corner and duality transports, reduces the first four types to
\eqref{eq:table-three-invertibles}--\eqref{eq:table-one-invertible}.

For the last type, choose the corners used in
\eqref{eq:table-no-invertible}, and write its labels as $u,v,w',z$
at the lower left, lower right, top, and interior vertices,
respectively.  The left triangular face has charge $u+z-w$, since
the primed vertex is $a(w)^{-1}v_{-w}$ after bending.  Its value is
therefore zero unless $w=u+z$.  In the remaining case, put
$g=u$, $h=w$ and $k=u+v$.  The four labels are precisely
$g,r,h',p$, with $r=k-g$ and $p=h-g$, as in \eqref{eq:table-no-invertible}.
This proves completeness in the fixed corner convention; arbitrary
corners are obtained by \eqref{eq:vertex-rotation} and the duality maps.
\end{proof}

\subsection{Compatibility with vertex transport}
Rotation and reflection impose two further identities on $\beta$.
We keep the bilinear and Hermitian operations distinct.

\begin{proposition}\label{prop:beta-vertex-compatibility}
The scalar function $\beta$ satisfies
\begin{align}
 c a(g)\sum_{h\in G}\chi(g,h)^{-1}\beta(h)&=\beta(g),
       \label{eq:beta-rotation} \\
  \overline{\beta(g)}&=\frac{\beta(-g)}{a(g)}.
  \label{eq:beta-reality}
\end{align}
\end{proposition}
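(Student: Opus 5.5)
The plan is to obtain both identities by evaluating the zero-vertex tetrahedral coefficient of Theorem~\ref{thm: F-symbols} in two ways. For \eqref{eq:beta-rotation} the second evaluation uses a change of marked corner. For \eqref{eq:beta-reality} it uses the dagger (Hermitian reflection) of the defining expansion in Lemma~\ref{pro: real parameter}.

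\textbf{Rotation.} We start from the closed diagram \eqref{equ: 6rho fsymbol easy} with $h=0$. After the bends it reads $\beta(k)/d_\varrho$, and its central vertex is the $0$-labelled vertex of the horizontal diagram $H_k$. We then move the corner mark of the vertex carrying the parameter $k$ to the adjacent corner. By Lemma~\ref{lem: rotation} this replaces that vertex by
\[
c\,a(k)\sum_{h\in G}\chi(k,h)^{-1}\,(\text{vertex labelled }h\text{ at the new corner}).
\]
Each resulting closed diagram is again a zero-vertex tetrahedron, now in a position related to the original one by an orientation-preserving tetrahedral symmetry. By the completeness argument at the end of Theorem~\ref{thm: F-symbols} and charge selection (Lemma~\ref{lem: ghk}), it equals $\beta(h)/d_\varrho$. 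The point is that rotating the whole tetrahedron by $120^\circ$ about the vertex where $\beta$ sits returns the same value, so the rotated-corner diagram is read off from the table with the label $h$ in the role of $k$. Multiplying by $d_\varrho$ then gives $c\,a(k)\sum_h\chi(k,h)^{-1}\beta(h)=\beta(k)$, which is \eqref{eq:beta-rotation}.

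The main obstacle is bookkeeping: we must confirm that, after rotation, the other three vertices stay in the normalized positions (labels $0$, primes and bends), so that no extra quadratic factors $a(\cdot)^{\pm1}$ appear. If such factors do appear, the fix is to use Corollary~\ref{cor:bilinear-dual-basis} ($v_0'=v_0$) and verify that they cancel because $a(0)=1$.

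\textbf{Reality.} Here we apply the dagger to \eqref{equ: real parameter}. The dagger reverses vertical composition and reflects diagrams. It sends the orthonormal basis $v_h$ to its adjoints, and it sends the bilinear duals to Hermitian duals, which by Corollary~\ref{cor:bilinear-dual-basis} differ by the factor $a(h)^{-1}$ and a sign flip of the label. The plan is to compute $\overline{\beta(g)}=\overline{C_0^g}$ by taking the complex conjugate of the trace pairing \eqref{eq:extract-rho-coefficient}, using $\overline{\operatorname{tr}(X)}=\operatorname{tr}(X^\dagger)$ and the fact that the spherical trace is a $*$-trace.

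The dagger of the closed $g$-diagram is again a closed zero-vertex tetrahedron. In it the horizontal vertex becomes $v_g^\dagger$, which is identified through the antiunitary duality with $v_{-g}$ times $a(g)^{-1}$, as in the proof of Corollary~\ref{cor:bilinear-dual-basis}, and the remaining vertices stay at label $0$. Using $\chi$ unitary and $a$ even, its value is $\beta(-g)/(a(g)d_\varrho)$, which gives \eqref{eq:beta-reality}. The delicate step is again identifying $v_g^\dagger$ with $a(g)^{-1}v_{-g}$ in the correct corner. This should follow from $B(v_h,v_k)=\delta_{h+k,0}a(h)$ together with orthonormality.
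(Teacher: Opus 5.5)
Your proposal is correct and follows essentially the paper's proof. The rotation identity is the invariance $\ell\circ R=\ell$ of the functional $\ell(v_g)=\beta(g)$, read off from \eqref{equ: 6rho fsymbol easy} at $h=0$, $k=g$, under the order-three symmetry fixing the $g$-vertex; that symmetry permutes the three zero-labelled vertices without coefficient because their marks all point into the opposite face and $v_0'=v_0$. Reality comes from taking the adjoint of the same closed diagram, which turns $v_g$ into its trace-dual $v_g'=a(g)^{-1}v_{-g}$ and leaves the zero-vertices unchanged.

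The corner issue you flag in the reality step is settled in the paper by comparing through the reflection that fixes the $g$-vertex and its marked face (it swaps the two zero-vertices of that face), so no corner change arises there. In any case a residual corner change at the $g$-vertex would be harmless, since the rotation identity you prove first says exactly that $\ell$ is insensitive to it.
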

\begin{proof}
Specialize \eqref{equ: 6rho fsymbol easy} at $h=0$ and $k=g$.
It defines a coefficient functional $\ell$ on the trivalent space:
\begin{equation}\label{eq:beta-zero-face}
 \ell(v_g):=
 d_\varrho\left.\NGDiagram{D082}\right|_{h=0,\,k=g}
 =\beta(g).
\end{equation}
The three vertices of the face opposite the $g$-vertex have label $0$,
with the prime at the top interpreted using $v_0'=v_0$.  Their marked
corners all point into this same triangular face.

Rotate the tetrahedron by the order-three symmetry which fixes the
$g$-vertex and cyclically permutes the opposite face.  The three
zero-labelled vertices, with their inward-facing corners, are
permuted without a coefficient.  At the fixed vertex this symmetry
induces a cyclic change of its marked corner.  It follows that
$\ell\circ R=\ell$ (the opposite choice of cyclic direction gives the
equivalent identity with $R^{-1}$).  Expanding that corner change by
\eqref{eq:vertex-rotation} gives
\[
 \beta(g)=\ell(Rv_g)
 =c a(g)\sum_{h\in G}\chi(g,h)^{-1}\ell(v_h),
\]
which is \eqref{eq:beta-rotation}.

Finally, take the adjoint of the closed diagram in
\eqref{eq:beta-zero-face}, and use the reflection which interchanges
the two zero-labelled vertices in the marked face at the $g$-vertex.
This reflection fixes that vertex and its marked face, so no further
corner rotation occurs there.  Reflection changes each vertex to its
trace-dual vertex.  The three zero-labelled vertices remain unchanged;
the fourth becomes $v_g'$.  The trace-orthonormal normalization and
$v_g'=a(g)^{-1}v_{-g}$ therefore give
\[
 \overline{\beta(g)}=\ell(v_g')
 =a(g)^{-1}\ell(v_{-g})=a(g)^{-1}\beta(-g).
\]
\end{proof}

With the column convention \eqref{eq:rotation-matrix},
\eqref{eq:beta-rotation} is precisely $R^{\mathsf T}\beta=\beta$.
It is a condition on a coefficient functional, distinct from $R^3=I$.
In particular, its $g=0$ instance is
\begin{equation}\label{eq:beta-rotation-at-zero}
 c\sum_{h\in G}\beta(h)=\beta(0).
\end{equation}
This relation fixes the rotation scalar once $\beta$ is known and is
retained in the final parameter system.

\subsection{Algebraic transport of the tetrahedral table}
We now regard the tetrahedral table as numerical data over the coefficient
field of Definition~\ref{thm: equation}. No Hermitian inner product is
used, and $d=d_\varrho$ is any nonzero solution of $d^2=n(d+1)$.
We give the coordinate maps first, then verify proper tetrahedral
transport, and finally describe reflection.

On $V=K^G$, put $D_a=\operatorname{diag}(a(g))$, $Pv_h=v_{-h}$
and $H_\pm=(\chi(h,g)^{\pm1})_{h,g}$. The coordinate maps used below are
\begin{equation}\label{eq:matrix-toolkit}
\begin{gathered}
 X(g)v_h=v_{h+g},\qquad Z(g)v_h=\chi(g,h)v_h,\qquad
 Y(g)v_h=\frac{a(h)}{a(h-g)}v_{h-g},\\
 Q=D_a^{-1}P,\qquad B=Q^{-1},\qquad
 R=cH_-D_a,\qquad A=Q^{-1}R.
\end{gathered}
\end{equation}
Here $Y(g)$ is the algebraic form of $\pi_y(g)$;
$B(v_h,v_k)=a(h)\delta_{h+k,0}$ is the symmetric theta pairing,
also denoted by its Gram matrix, and $Qv_h=v_h'$ is its dual basis.
No conjugation on $K$ is used. Nondegeneracy gives character orthogonality
\begin{equation}\label{eq:character-orthogonality}
 \sum_{h\in G}\chi(g,h)=n\delta_{g,0},\qquad H_-H_+=nI.
\end{equation}
Indeed, for $g\ne0$ translating by $u$ with $\chi(g,u)\ne1$
multiplies the sum by $\chi(g,u)$, forcing it to vanish.
Together with the quadratic relation this gives
\begin{equation}\label{eq:matrix-toolkit-inverses}
\begin{gathered}
 Q^{\mathsf T}=Q,\qquad R^3=I,\qquad
 R^{-1}=\frac1{nc}D_a^{-1}H_+,\qquad
 R^{\mathsf T}B=BR,\\
 A^{\mathsf T}=A,\qquad A^{-1}=R^2Q,
 \qquad B(Rv,R^{-1}w)=B(v,w).
\end{gathered}
\end{equation}
The entries needed in the channel computations are
\begin{equation}\label{eq:explicit-channel-corner}
 A_{h,g}=c\,a(h)a(g)\chi(h,g),\qquad
 (A^{-1})_{h,g}=\frac{\chi(h,g)^{-1}}{nc\,a(h)a(g)}.
\end{equation}
The group/corner transports are
\begin{equation}\label{eq:matrix-toolkit-transports}
 RZ(t)R^{-1}=X(t),\qquad R^{-1}Z(t)R=Y(t),
 \qquad Z(-g)A=AY(g).
\end{equation}
These identities follow by substituting the entries in
\eqref{eq:matrix-toolkit}; the Fourier calculation underlying
\eqref{eq:rotation-three} is valid over $K$ by
\eqref{eq:scalar-fourier-toolkit}.

Let $\tau$ be the parameter involution~\eqref{eq:parameter-reflection}.
Its compatibility with the corner maps is
\begin{equation}\label{eq:algebraic-corner-reflection}
 Q_\tau=Q^{-1},\qquad Q R_\tau=R^{-1}Q,\qquad
 Q^{\mathsf T}BQ=B_\tau.
\end{equation}
Indeed $R_\tau=(nc)^{-1}H_+D_a^{-1}$, and the displayed formulas
give all three identities directly.
The operation is between parameter tuples; it need not arise from a
field automorphism of one particular tuple. On the universal coefficient
ring, the formal semilinear map $J=Q\tau$ satisfies $J^2=I$ and
$JRJ=R^{-1}$. This formal $J$ is distinct from the bent adjoint in the
unitary calculation. The two agree on coefficients precisely under the
Hermitian parameter relations.

\begin{lemma}[Transport of the marked tetrahedra]\label{lem:algebraic-tetra-transport}
The tetrahedral table of Theorem~\ref{thm: F-symbols}, with the
prescribed inverse mixed vertices, is consistent under proper
tetrahedral transport. Reversing the oriented group strands and
reflecting the tetrahedron transports its coefficients by
\eqref{eq:parameter-reflection}, with the bilinear dual changes at
the multiplicity vertices.
\end{lemma}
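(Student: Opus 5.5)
We argue by reducing every claimed transport identity to the coordinate identities already collected in \eqref{eq:matrix-toolkit}--\eqref{eq:matrix-toolkit-transports}, which hold over any field $K$ allowed in Definition~\ref{thm: equation}. The tetrahedral table of Theorem~\ref{thm: F-symbols} is a list of numbers indexed by edge types, corner positions and vertex labels. We regard it as a function on marked tetrahedra. Proper consistency then means that the value assigned to a marked tetrahedron does not depend on how it is moved to a representative. The moves are the orientation-preserving symmetry group $A_4$, together with the corner changes $R$, the dual change $Q$, and the group insertions $X,Y,Z$. Since $A_4$ is generated by two order-three rotations fixing a vertex, it suffices to check two things: that the rotations act consistently on each representative, and that the representative formulas are stable under the group insertions used to reach them.

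We treat the cases by invertible-edge configuration, as in the proof of Theorem~\ref{thm: F-symbols}.

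\emph{Three or more invertible edges.} The values $1$ and $d$ involve only the coherent mixed gauge. Their rotation invariance follows from the one-dimensionality of the mixed spaces and the normalization \eqref{equ: normalisation}. No multiplicity vertex occurs, so no field-dependent input is needed.

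\emph{Two invertible edges.} Rotations permute the two $g$-, $h$-strands. The required invariance is exactly $\chi(g,h)=\chi(h,g)=\chi(-g,-h)$, which holds for any symmetric bicharacter.

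\emph{One invertible edge.} We rotate the marked corner at a multiplicity vertex by $R$ and compare with sliding the group strand. This reduces to the first two transports $RZ(t)R^{-1}=X(t)$ and $R^{-1}Z(t)R=Y(t)$, together with the pairing identity $B(Rv,R^{-1}w)=B(v,w)$.

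\emph{No invertible edge.} This is the main case. We write the closed value as a trilinear-type contraction
\[
 T(u,v,w,z)=d^{-1}\,\ell\bigl(\text{coordinates of }u,v,w,z\bigr),
\]
built from $\beta$ through \eqref{equ: 6rho fsymbol easy} and the quadratic factors of \eqref{eq:table-no-invertible}. An order-three rotation about a vertex applies $R$ at that vertex and cyclically permutes the other three. Rotation about the vertex opposite the zero face is invariance of $\ell$ under $R^{\mathsf T}$, i.e.\ \eqref{eq:beta-rotation}. By Proposition~\ref{prop:cubic-reduction} this is the Fourier equation \eqref{eq:Fourier} with the same $c$, so it holds over $K$. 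For a rotation about another vertex, we first use the group insertions, through $Z(-g)A=AY(g)$ and the transports above, to move labels so that a vertex labelled $0$ becomes the fixed vertex. The identity then reduces again to $R^{\mathsf T}\beta=\beta$. This bookkeeping of corners and of the ratios $a(h-g)/a(h)$ is the expected obstacle. We will handle it by checking the two generating rotations on the single representative in \eqref{eq:table-no-invertible} using the explicit entries \eqref{eq:explicit-channel-corner}, and then applying $R^3=I$ to recover the third.

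For reflection, we reverse every group strand, which replaces $\chi$ by $\chi^{-1}$, and take bilinear duals at the multiplicity vertices, which applies $Q$. By \eqref{eq:algebraic-corner-reflection}, the corner maps, theta pairing and rotation transform exactly as under the parameter involution $\tau$ of \eqref{eq:parameter-reflection}. Substituting $\beta_\tau(g)=\beta(-g)/a(g)$ into \eqref{eq:table-no-invertible}, and the analogous substitutions in the other rows, recovers the reflected table. Lemma~\ref{lem:algebraic-reflection} guarantees that the transported data again satisfy the system, so this gives the second claim.
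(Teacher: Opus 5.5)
Your outline (check generators of $A_4$ on the representatives, then compare reflection with $\tau$ through the corner identities) has the right shape. However, the decisive check for the all-$\varrho$ row is never carried out, and the shortcut you propose for it is not available.

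You verify only the instance in which $A,C,D$ (lower-left, top, interior) carry label $0$. There the rotation fixing the $g$-vertex $B$ is indeed \eqref{eq:beta-rotation}, equivalently the Fourier equation. For general labels you appeal to group insertions and strand sliding. In this lemma, though, the table is bare numerical data over an arbitrary $K$: no category exists yet, and compatibility of \eqref{eq:table-no-invertible} with such moves is part of what transport consistency must establish, not a tool for it. The geometry is also off. In the zero-face instance the fixed vertex is the nonzero one, and a three-cycle fixing $A$, $C$ or $D$ changes marked corners at three or four vertices, so it never collapses to $R^{\mathsf T}\beta=\beta$.

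What is needed is a direct computation for arbitrary labels:
\begin{itemize}
\item Pass to unprimed labels, $W(u,v,w,z)=a(w)T(u,v;-w,z)$.
\item Note that the marked corners of $A,C,D$ lie in face $ACD$ and that of $B$ in $BCD$.
\item Use the generators $(A\,C\,D)$ and $(A\,B)(C\,D)$, which need one and two corner changes respectively.
\item The identity $W(u,v,w,z)=\sum_j(R^2)_{jv}W(z,j,u,w)$, with $(R^2)_{jv}=\chi(j,v)/(nc\,a(j))$, follows from the Fourier equation at $u+v$ plus the quadratic relation on the sector $u+w+z=0$.
\item The identity $W(u,v,w,z)=\sum_{j,k}(R^2)_{jz}R_{kw}W(v,u,j,k)$ needs character orthogonality (the inner sum is $n\,a(v)\chi(w,v)\delta_{u+w+z,0}$) and again the quadratic relation.
\end{itemize}

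There are two further problems.

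\emph{Mixed rows.} Consistency means invariance of each representative under the stabilizer in $A_4$ of its invertible-edge support, not under generators of $A_4$; those move the support to a configuration whose value is defined by transport.
\begin{itemize}
\item For one edge the only nontrivial check is $(A\,B)(C\,D)$, i.e.\ $\frac{a(h)}{a(k)}U(-g;-k,-h)=U(g;h,k)$ with $U(g;h,k)=\chi(g,h)\delta_{h,k}$.
\item For two edges the stabilizer is generated by $(A\,B)(C\,D)$ and $(A\,C)(B\,D)$.
\item For the star and all-group rows, what matters is only that the values $d$ and $1$ are constant and that charge constraints are preserved. ``One-dimensionality of the mixed spaces'' has no meaning over $K$ before reconstruction.
\end{itemize}

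\emph{Reflection.} The assertion is a coefficient identity between the table at the $\tau$-transformed tuple and the original table. Lemma~\ref{lem:algebraic-reflection}, which says $\tau$ preserves solutions, is a different statement and does not prove it. You must fix the reflecting symmetry: $(C\,D)$ after reversing the group arrows, which preserves the marked faces so that no corner change occurs. Then verify $T_\tau(u,v;w,z)=T(-u,-v;z,w)/(a(u)a(v))$. On the common support $w=u+z$ the ratio is $a(u)^2\chi(u,u)=1$. The factors $a(u)^{-1},a(v)^{-1}$ are the dual conversions at $A,B$, and the top and interior vertices exchange to $v_z'$ and $v_w$. The mixed rows give $U_\tau(g;h,k)=U(-g;k,h)$ and $V_\tau(g,h)=V(-g,h)$. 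The all-group row relabels by $(g,h,k)\mapsto(-h,-g,k)$, and the star row, via $(A\,B)$, by $(g,h)\mapsto(g+h,-h)$. Only after these checks does \eqref{eq:algebraic-corner-reflection} extend the comparison to arbitrary marked corners.
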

\begin{proof}
Write $A,B,C,D$ for the lower-left, lower-right, top and interior
vertices, respectively. The all-$\varrho$ coefficient, with labels
$v_u,v_v,v_w',v_z$ at these vertices, is
\begin{equation}\label{eq:algebraic-tetra-tensor}
 T(u,v;w,z)=\delta_{w,u+z}
 \frac{a(z)a(v)}{a(w)a(u+v)}
 \chi(w,u+v)^{-1}\frac{\beta(u+v)}d.
\end{equation}
To check proper transport, use unprimed labels at all four vertices:
\[
 W(u,v,w,z)=a(w)T(u,v;-w,z)
 =\delta_{u+w+z,0}\frac{a(z)a(v)}{d\,a(u+v)}
       \chi(w,u+v)\beta(u+v).
\]
The marked faces are $ACD$ at $A,C,D$ and $BCD$ at $B$.
The generators $(A\,C\,D)$ and $(A\,B)(C\,D)$ of $A_4$ therefore
require, respectively,
\begin{align}
 W(u,v,w,z)&=\sum_j(R^2)_{jv}W(z,j,u,w),
       \label{eq:proper-tetra-three-cycle}\\
 W(u,v,w,z)&=\sum_{j,k}(R^2)_{jz}R_{kw}W(v,u,j,k).
       \label{eq:proper-tetra-double-transposition}
\end{align}
Indeed, $R^2_{jv}=\chi(j,v)/(nc\,a(j))$ reduces the first sum to
\[
 \delta_{u+w+z,0}\frac{a(w)\chi(-z,v)}{nc d}
 \sum_t\chi(u+v,t)\frac{\beta(t)}{a(t)}
 =\delta_{u+w+z,0}\frac{a(w)\chi(-z,v)}d\beta(u+v).
\]
The Fourier equation gives the equality, and the quadratic relation
identifies this with $W$. In the second sum, set $m=u+v$ and first
use the charge constraint. The remaining sum is
\[
 \frac{a(w)a(u)\beta(m)}{nd\,a(m)}
 \sum_j\frac{a(v+j)}{a(j)}
       \chi(j,z)\chi(w,v+j)\chi(j,m).
\]
Its inner sum is
$n a(v)\chi(w,v)\delta_{u+w+z,0}$ by character orthogonality;
the quadratic relation again gives $W$.

For the mixed rows it suffices to check their proper support
stabilizers. With
$U(g;h,k)=\chi(g,h)\delta_{h,k}$ and $V(g,h)=d\chi(g,h)$,
the only nontrivial stabilizer of the single-edge support is
$(A\,B)(C\,D)$. Its coefficient identity is
\[
 \frac{a(h)}{a(k)}U(-g;-k,-h)=U(g;h,k).
\]
The two-edge support has stabilizer generated by
$(A\,B)(C\,D)$ and $(A\,C)(B\,D)$; their identities are
$V(-g,-h)=V(g,h)$ and $V(h,g)=V(g,h)$. The star and all-group
rows have constant values $d$ and $1$. Group charge constraints
are preserved throughout. This proves proper transport for all rows.

For reflection, reverse the group arrows and use $(C\,D)$.
This preserves the marked faces of~\eqref{eq:algebraic-tetra-tensor}
and gives the exact coefficient comparison
\begin{equation}\label{eq:algebraic-tetra-reflection}
 T_\tau(u,v;w,z)=\frac{T(-u,-v;z,w)}{a(u)a(v)}.
\end{equation}
Both sides have support $w=u+z$; their ratio there reduces to
$a(u)^2\chi(u,u)=1$. At $A,B$ the factors are precisely the
conversions $v_u'=a(u)^{-1}v_{-u}$ and
$v_v'=a(v)^{-1}v_{-v}$, while the top and interior labels become
$v_z'$ and $v_w$. For the one-edge and two-edge rows, the same
arrow reversal and permutation give
\[
 U_\tau(g;h,k)=U(-g;k,h),\qquad
 V_\tau(g,h)=V(-g,h).
\]
The all-group parameters change by $(g,h,k)\mapsto(-h,-g,k)$;
for the star use $(A\,B)$ instead, giving
$(g,h)\mapsto(g+h,-h)$. Their constant values remain $1,d$.
Reflection replaces each mixed vertex by its prescribed opposite.
The inverse theta factors already present in dual-basis contractions
are preserved, since $\tau(d)=d$; reflection introduces no additional factor.
Finally, $QR_\tau=R^{-1}Q$ and $Q^{\mathsf T}BQ=B_\tau$
extend these comparisons to every marked corner and its dual
contraction.
\end{proof}

Consequently a prism identity proved for every algebraic scalar
tuple transfers to its reflected family: apply it first at the
$\tau$-transformed tuple, and then use
Lemma~\ref{lem:algebraic-tetra-transport} in each tetrahedron.
Internal changes cancel in the bilinear dual contractions, and
dimension weights are fixed by $\tau$. The changes of external
labels are invertible. This justifies reflection of the scalar TPE
without~\eqref{eq:beta-reality}.

\section{Evaluating the triangular prism equations}\label{sec:solve-TPE}

To organize the derivation of the Triangular Prism Equations (TPE), we classify prism configurations by recording only whether each edge is invertible. Admissible configurations require every vertex to have $0, 2,$ or $3$ $\varrho$-strands. Group labels and vertex basis vectors then range over their allowable values. The resulting labeled prism supports are indexed as follows:
\begin{equation}\label{equ:TPlist}
\begin{array}{c|rrrrrrrr}
\text{Invertible edges} & 0 & 1 & 2 & 3 & 4 & 5 & 6 & 9 \\\hline
\text{Labelled supports} & 1 & 9 & 18 & 10 & 6 & 3 & 2 & 1
\end{array}
\end{equation}
Spherical rotations reduce these configurations into $15$ proper-rotation orbits (or $14$ if orientation-reversing reflections are allowed). The extra proper orbit consists of a reflected skew pair of invertible edges, which is accounted for by the algebraic reflection of Lemma~\ref{lem:algebraic-tetra-transport}.

We divide the resulting equations into two types based on their internal simple labels $\mathcal{O}(P)$:
\begin{itemize}
    \item \textbf{Boundary prisms:} Configurations where the internal simple labels is the full set of simple objects $\mathcal{O}(P) = I = G \sqcup \{\varrho\}$. 
    \item \textbf{Bulk prisms:} Configurations where $\mathcal{O}(P)$ is either a singleton (a unique internal simple object) or empty (contributing zero).
\end{itemize}

For permutations of tetrahedral vertices we retain the names $A,B,C,D$
for the lower-left, lower-right, upper and central vertices of the
simplified tetrahedron, respectively.

\subsection{Bulk equations}
The support list \eqref{equ:TPlist} has fifteen proper-rotation
orbits: eleven bulk and four boundary. The two skew-pair bulk orbits
have invertible supports
\[
 \{\{4,8\},\{5,9\},\{6,7\}\},\qquad
 \{\{4,9\},\{5,7\},\{6,8\}\},
\]
in the numbering of Figure~\ref{fig:reference-graphs}.
Reflection exchanges these two orbits and fixes the remaining types.
By Lemmas~\ref{lem:algebraic-reflection}
and~\ref{lem:algebraic-tetra-transport}, ten bulk representatives
therefore suffice: reflection evaluates the representative at the
transformed scalar tuple and transports its paired corners. Each mixed
dual contributes the inverse theta value, unchanged by $\tau$.
This argument does not require complex-conjugation symmetry.

The following table records the nine routine representatives. The
pictured arrows and vertex labels fix their conventions; every omitted
mixed dual has the inverse theta factor just specified. The last column
gives the identity after substituting the tetrahedral table.
\begingroup
\small
\renewcommand{\arraystretch}{1.1}
\setlength{\tabcolsep}{3.5pt}
\begin{longtable}{c c l l}
\toprule
Invertible edges & Representative & Common contraction & Identity used\\
\midrule
\endfirsthead
\toprule
Invertible edges & Representative & Common contraction & Identity used\\
\midrule
\endhead
$9$ & $\SD{D086}$ & $1$ & Pointed normalization\\\addlinespace[2.5mm]
$6$ & $\SD{D092}$ & $d$ & Mixed theta cancellation\\\addlinespace[2.5mm]
$5$ & $\SD{D098}$ & $d^3$ & Mixed theta cancellation\\\addlinespace[2.5mm]
$4$ & $\SD{D104}$ & $d^3\chi(g+h,-k)$ & Bicharacter multiplication\\\addlinespace[2.5mm]
$3$ & $\SD{D108}$ & $d\chi(g+h,k)\delta_{k,l}$ & Character multiplication\\\addlinespace[2.5mm]
$3$ & $\SD{D112}$ & $d^3\chi(h,k-g)$ & Character multiplication\\\addlinespace[2.5mm]
$2$ & $\SD{D121}$ & $\chi(g+h,k)\delta_{k,l}$ & Kronecker contraction\\\addlinespace[2.5mm]
$2$ & $\SD{D126}$ & $\chi(h,l)\delta_{g+l,k}$ & Kronecker contraction\\\addlinespace[2.5mm]
$2$ & $\SD{D131}$ & $\chi(h,g)\delta_{g+k,l}$ & Character cancellation\\\addlinespace[2.5mm]
\bottomrule
\end{longtable}
\endgroup
For example, the first three-invertible-edge representative has
\[
 \operatorname{TPL}=d\chi(g+h,k)\delta_{k,l},\qquad
 \operatorname{TPR}
 =d\sum_s\chi(g,s)\delta_{s,l}\chi(h,k)\delta_{s,k},
\]
which gives the stated common value. This illustrates the contraction
convention used in every row.

\Needspace{38mm}
\textbf{$\bullet$ Bulk equation with one invertible edge:}
We evaluate both contractions with all vertex labels arbitrary. Put
$H=h_1+h_2+h_3$ and
\[P=\SD{D136}.\]
The five tetrahedral factors, with their displayed corners unchanged, are
\begin{align*}
 A_s&=\SD{D138},&
 B_s&=\SD{D139},\\
 C_s&=\SD{D144},&
 D_{s,t}&=\SD{D145},\\
 E_t&=\SD{D146}.
\end{align*}
Thus the pictorial TPE is
\[
 \operatorname{TPL}(P)=\sum_s A_sB_s,
 \qquad
 \operatorname{TPR}(P)
 =\sum_{s,t}d_\varrho^{-1}C_s(d_\varrho D_{s,t})E_t.
\]
The tetrahedral table and the corner-rotation formula give
\begin{align}
 A_s&=\delta_{H,0}
 \frac{a(h_3)\chi(-s-h_1,h_1)}
      {d_\varrho a(h_1)a(h_3-s)}\,\beta(h_3-s),
 \label{eq:bulk-one-A}\\
 B_s&=\chi(-g,k)\delta_{s,k},\qquad
 C_s=\delta_{s,h_2-g},\qquad
 E_t=\frac{a(h_1)}{a(h_1+g)}\delta_{t,h_1+g},
 \label{eq:bulk-one-mixed}\\
 D_{s,t}&=\delta_{s+t+h_3,0}
 \frac{a(s)\chi(t,h_3-k)}{d_\varrho a(h_3-k)}\,
 \beta(h_3-k).
 \label{eq:bulk-one-D}
\end{align}
For clarity, the outside right corner of $A_s$ contributes
$(RQ)_{j,s}=c\chi(s,j)$, where $Qv_s=v_s'$.
After writing $v_{h_2}=a(h_2)v_{-h_2}'$, its table evaluation is
\[
 A_s=\delta_{H,0}\frac{c a(h_3)}{d_\varrho}
 \sum_j\chi(s,j)\frac{a(j)}{a(h_1+j)}
              \chi(h_2,h_1+j)\beta(h_1+j).
\]
The substitution $u=h_1+j$, the quadratic identity for $a$, and
\eqref{eq:beta-rotation} at $h_3-s$ yield
\eqref{eq:bulk-one-A}. The factors $C_s$ and $E_t$ are respectively
$B(X(-g)v_{h_2},v_s')$ and $B(\pi_y(-g)v_{h_1},v_t')$, with the
actions of \eqref{eq:three-actions-normal-form}. For
\eqref{eq:bulk-one-D}, substitute
$v_k'=a(k)^{-1}v_{-k}$ and $v_t=a(t)v_{-t}'$ into the all-$\varrho$
row; its selection rule becomes $s+t+h_3=0$.
Consequently
\begin{align*}
 \operatorname{TPL}(P)
 &=\delta_{H,0}
 \frac{a(h_3)\chi(-k-h_1,h_1)\chi(-g,k)}
      {d_\varrho a(h_1)a(h_3-k)}\,\beta(h_3-k),\\
 \operatorname{TPR}(P)
 &=\delta_{H,0}
 \frac{a(h_1)a(h_2-g)\chi(h_1+g,h_3-k)}
      {d_\varrho a(h_1+g)a(h_3-k)}\,\beta(h_3-k).
\end{align*}
Both vanish if $H\ne0$. If $H=0$, the quadratic identity gives
\[
 \frac{a(h_3)}{a(h_1)}
 \chi(-k-h_1,h_1)\chi(-g,k)
 =\frac{a(h_1)a(h_2-g)}{a(h_1+g)}
   \chi(h_1+g,h_3-k),
\]
so the two contractions agree. This calculation uses only the
quadratic and rotation identities, and applies to either root of
$d_\varrho^2=n(d_\varrho+1)$.

\begin{theorem}[Bulk equations]
\label{thm:bulk-equations}
{For algebraic scalar data as in Definition~\ref{thm: equation},
the tetrahedra of Theorem~\ref{thm: F-symbols}, with
the bilinear transports of Lemma~\ref{lem:algebraic-tetra-transport},
satisfies every bulk TPE.}
\end{theorem}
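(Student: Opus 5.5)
The proof reduces the bulk TPE to the orbit representatives already computed and then argues that these representatives cover every bulk configuration. By the support list \eqref{equ:TPlist} and the orbit count recorded before the table, there are eleven bulk proper-rotation orbits. Two of them, the skew-pair orbits, are exchanged by reflection. So I first reduce to ten representatives:
\begin{itemize}
\item the nine tabulated ones (with $9,6,5,4,3,3,2,2,2$ invertible edges);
\item the one-invertible-edge prism $P$ treated just above.
\end{itemize}
The reduction from an arbitrary bulk prism to a representative has two parts.

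\begin{itemize}
\item \emph{Proper rotation.} By Lemma~\ref{lem:algebraic-tetra-transport}, each tetrahedral factor in $\TPL$ and $\TPR$ transports consistently under the corner maps $R$, $Q$ and the group transports \eqref{eq:matrix-toolkit-transports}. Since the internal contractions use bilinear dual bases, the changes of basis at internal vertices cancel in pairs; this uses $R^{\mathsf T}B=BR$ and $Q^{\mathsf T}=Q$ from \eqref{eq:matrix-toolkit-inverses}. Only invertible changes of the external labels survive. Hence the TPE for a rotated prism is equivalent to the TPE for its representative.
\item \emph{The second skew-pair orbit.} I apply the reflection principle stated after Lemma~\ref{lem:algebraic-tetra-transport}. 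The representative identity is proved for \emph{every} algebraic scalar tuple, in particular for the $\tau$-transformed tuple of Lemma~\ref{lem:algebraic-reflection}. Reflecting each tetrahedron via \eqref{eq:algebraic-tetra-reflection} then yields the identity for the reflected orbit. This step uses that $d$ is fixed by $\tau$ and that mixed duals carry the $\tau$-invariant inverse theta factor.
\end{itemize}

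For the nine routine representatives, I substitute the table of Theorem~\ref{thm: F-symbols} into both cuts \eqref{eq:TPL-coupon-cut}--\eqref{eq:TPR-coupon-cut}. In the bulk case $\mathcal O(P)$ is a singleton or empty, so $\TPR$ is a single $d_X$-weighted triple product summed over dual bases. Each row then collapses to the common value listed, using four ingredients:
\begin{itemize}
\item pointed normalization and the mixed theta cancellation (Lemmas~\ref{lem: coproduct}, \ref{lem: F-symbol 6 inv});
\item bicharacter multiplicativity (Lemma~\ref{lem: bicha});
\item Kronecker contractions from the one-edge row \eqref{eq:table-one-invertible};
\item character cancellation.
\end{itemize}
I will write one row in full, the three-edge case already shown, and state that the others follow identically. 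The one-edge representative has already been verified above: both sides equal $\delta_{H,0}$ times a multiple of $\beta(h_3-k)/d_\varrho$, and the prefactors agree by \eqref{equ: def of a}. That verification used the rotation identity \eqref{eq:beta-rotation}, which over $K$ is the Fourier equation \eqref{eq:Fourier}. No unitarity is needed, so the check holds for either root $d_\pm$.

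The main obstacle is bookkeeping rather than algebra. I must confirm that every bulk prism, including the ones whose empty $\mathcal O(P)$ makes $\TPR=0$, is either in one of these eleven orbits or has $\TPL=0$ too.
\begin{itemize}
\item For $\mathcal O(P)=\emptyset$, a vanishing rule forces $\TPL=0$: either group charge conservation at mixed vertices, or Lemma~\ref{lem: ghk} applied to the left cut.
\item For the orbits themselves, I will check the orbit enumeration of \eqref{equ:TPlist} directly. I list the invertible-edge subsets compatible with the vertex rule (each vertex has $0$, $2$ or $3$ $\varrho$-strands) and quotient by the prism's rotation group. I then confirm that the four boundary orbits are exactly those with $\mathcal O(P)=I$.
\end{itemize}
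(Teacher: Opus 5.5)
Your proposal is correct and follows essentially the paper's argument. You reduce the eleven bulk proper-rotation orbits to the nine tabulated representatives plus the one-invertible-edge prism. You propagate each identity through its orbit by Lemma~\ref{lem:algebraic-tetra-transport}, with internal corner changes cancelling in the bilinear dual contractions and external ones invertible. You obtain the second skew-pair orbit by evaluating at the $\tau$-transformed tuple of Lemma~\ref{lem:algebraic-reflection} and reflecting. Your extra bookkeeping — re-deriving the orbit count, checking that $\TPL$ vanishes by charge conservation when $\mathcal O(P)=\emptyset$, and noting that \eqref{eq:beta-rotation} is equivalent over $K$ to \eqref{eq:Fourier} — only makes explicit what the paper leaves implicit.
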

\begin{proof}
The graph support
orbits are those of \eqref{equ:TPlist}; equations at the other supports
{are obtained by proper transport and the algebraic
reflection lemma, including the second skew-pair orbit. Multilinearity
and invertibility of the paired corner maps preserve the equations.}
\end{proof}

\subsection{Boundary equations}
We first fix the channel algebra, so that the same calculation supplies
both the boundary equations and the inverse needed for reconstruction.
Set
\[
 \mathcal A=\bigoplus_{x\in G}K e_x\oplus M_n(K),\qquad
 t(z,M)=\sum_xz_x+d\operatorname{Tr}(M),\qquad
 P_x=de_x,\quad E_{hk}=d^{-1}e_{hk}.
\]
Here the $e_x$ are the scalar idempotents and the $e_{hk}$ are ordinary
matrix units. The channel product is the top-to-bottom stacking product
specified after \eqref{eq:rho-rho-matrix-units}, so $E_{hk}$ has exactly
these, rather than transposed, matrix indices. Thus $t(UV)$ is a nondegenerate symmetric bilinear pairing,
with vertical Gram coefficients
\begin{equation}\label{eq:vertical-channel-gram}
 t(P_xP_y)=d^2\delta_{x,y},\qquad
 t(E_{hk}E_{uv})=d^{-1}\delta_{k,u}\delta_{h,v},\qquad
 t(P_xE_{hk})=0.
\end{equation}
The horizontal channels are
\begin{align*}
 W_g&=\bigl((\chi(g,x))_x,a(g)X(g)Z(-g)\bigr),\\
 H_{pq}&=\left(
  \bigl(d^{-1}a(p)\chi(x,q)\delta_{p+q,0}\bigr)_x,
  \bigl(d^{-1}\chi(-h,q)\beta(p+q)\delta_{k,h+p}\bigr)_{h,k}
 \right).
\end{align*}
We now identify these coordinates with the actual marked diagrams.
Closing against $E_{kh}$ and using \eqref{eq:table-no-invertible} gives
its coefficient
\[
 C^{p,q}_{h,k}
 =\delta_{k,h+p}
 \frac{a(h)a(q)}{a(k)a(p+q)}\chi(k,p+q)^{-1}\beta(p+q)
 =\delta_{k,h+p}\chi(-h,q)\beta(p+q).
\]
For the group coefficient, number the lower-left, lower-right, top and
interior vertices $A,B,C,D$. The even permutation $(A\ C)(B\ D)$
takes the central invertible edge to the bottom edge of
\eqref{eq:table-one-invertible}. It puts $p$ at the outside-top corner
and $q$ at the lower-interior corner. Substituting
$v_p=a(p)v_{-p}'$ therefore gives
$a(p)\chi(x,q)\delta_{p+q,0}$; the coefficient extraction divides
this by $d^2$, as in \eqref{eq:extract-invertible-coefficient}.
Consequently
\begin{align}
 H_{pq}&=\sum_{h,k}C^{p,q}_{h,k}E_{hk}+\sum_x f_x^{p,q}P_x,
 &C^{p,q}_{h,k}&=\delta_{k,h+p}\chi(-h,q)\beta(p+q),
 \label{eq:general-horizontal-rho}\\
 &&f_x^{p,q}&=d^{-2}a(p)\chi(x,q)\delta_{p+q,0}.
 \label{eq:general-horizontal-group}
\end{align}

For the opposite cut, the lower $k$-vertex and upper $h'$-vertex
contribute $A_{i,k}$ and $(A^{-1})_{h,j}$ from
\eqref{eq:explicit-channel-corner}. Thus the change of cut is
\begin{equation}\label{eq:algebraic-recut}
 \Phi(z,M)=(z,AM^{\mathsf T}A^{-1}),\qquad
 \Phi(E_{hk})_{ij}=d^{-1}A_{i,k}(A^{-1})_{h,j}.
\end{equation}
It reverses products and preserves $t$. In the annular convention of
the right TPE, the group channel is
$\Phi(W_g)=((\chi(g,x))_x,Z(-g))$: indeed,
\[
 (A^{-1}Z(-g)A)_{i,j}=\frac{a(j)}{a(i)}\delta_{j,i+g},
\]
whose transpose is $a(g)X(g)Z(-g)$.
In particular, put
\[K_t(X;u,v;p,q)=\SD{right-channel}.\]
The actual right-TPE multiplicity tetrahedron has matrix
\begin{equation}\label{eq:one-horizontal-channel-similarity}
 K_{p,q}:=\bigl(K_\varrho(\varrho;v_i,v_j';v_p,v_q)\bigr)_{i,j}
       =d^{-1}A(C^{p,q})^{\mathsf T}A^{-1}.
\end{equation}
For $x\in G$, its scalar component is
$d^{-1}a(p)\chi(x,q)\delta_{p+q,0}$; the mixed dual has theta factor
$d^{-1}$. These formulas identify all marked-corner coefficients once.
The channel matrix $T$, with rows $(P_x;E_{hk})$ and columns
$(W_g;H_{pq})$, is therefore
\begin{align}
 T_{P_x,W_g}&=d^{-1}\chi(g,x),&
 T_{E_{hk},W_g}&=d\,a(g)\chi(-g,k)\delta_{h,k+g},\notag\\
 T_{P_x,H_{pq}}&=d^{-2}a(p)\chi(x,q)\delta_{p+q,0},&
 T_{E_{hk},H_{pq}}&=\chi(-h,q)\beta(p+q)\delta_{k,h+p}.
 \label{eq:algebraic-channel-blocks}
\end{align}

\smallskip
\noindent\textbf{Three or two horizontal invertible edges.}
The quadratic identity gives $W_gW_h=W_{g+h}$.
Character orthogonality and the ordinary trace of $X(g)Z(-g)$ give
$t(W_g)=n(1+d)\delta_{g,0}=d^2\delta_{g,0}$.
For the representative
\[\SD{D153},\]
with arbitrary horizontal labels $g,h,l$, the right contraction is consequently
$t(W_gW_hW_l)=d^2\delta_{g+h+l,0}$, equal to the left contraction.
The group and multiplicity contributions are respectively
$n\delta_{g+h+l,0}$ and $nd\delta_{g+h+l,0}$.
In particular,
\begin{equation}\label{eq:horizontal-W-gram}
 t(W_gW_h)=d^2\delta_{g+h,0}.
\end{equation}

For two horizontal invertible edges use
\[P(g,h;v_p,v_q)=\SD{boundary-two-groups}\]
and put $u=g+h$.
The left contraction is zero because $V(\varrho,h,g)=0$.
The right contraction, by the marked-corner identification above, is
\begin{equation}\label{eq:two-horizontal-complete}
 t(W_uH_{pq})
 =n a(u)\left(\beta(0)+d^{-1}\right)
       \delta_{p,u}\delta_{q,-u}.
\end{equation}
For clarity, the group channel contributes
$(n/d)a(u)\delta_{p,u}\delta_{q,-u}$; in the matrix channel the trace
first forces $p=u$ and the character sum forces $q=-u$, giving
$n a(u)\beta(0)\delta_{p,u}\delta_{q,-u}$.
Thus this entire family is equivalent to $\beta(0)=-d^{-1}$, which
is also forced by the product and cubic equations by
Proposition~\ref{prop:cubic-reduction}.

\smallskip
\noindent\textbf{One horizontal invertible edge.}
For the prism
\begin{equation}\label{eq:one-horizontal-prism-picture}\SD{D154},\end{equation}
first take the edge to be the unit and retain all four vertex labels. The right contraction is $t(H_{pq}H_{rs})$.
Multiplying the two channel components gives
\begin{align}
 \operatorname{TPR}
 &=\frac nd\delta_{p+r,0}\delta_{q+s,0}\chi(p,q)
       \beta(p+q)\beta(-p-q)\notag\\
 &\quad+\frac n{d^2}a(p)^2
       \delta_{p+q,0}\delta_{r,-p}\delta_{s,p},
 \label{eq:unit-horizontal-right}\\
 \operatorname{TPL}
 &=d^{-1}B(v_p,v_r)B(v_q,v_s)
 =d^{-1}a(p)a(q)\delta_{p+r,0}\delta_{q+s,0}.
 \label{eq:unit-horizontal-left}
\end{align}
Taking $p=r=0$, $q=g$ and $s=-g$ shows that equality requires
\begin{equation}\label{equ: ess equ 1}
 \beta(g)\beta(-g)+\frac{\delta_{g,0}}d=\frac{a(g)}n.
\end{equation}
Conversely, substitute this product relation at $g=p+q$.
The ordinary terms agree because
$\chi(p,q)a(p+q)=a(p)a(q)$, and the exceptional terms cancel because
$\chi(p,-p)=a(p)^2$. Hence the full family is equivalent to the product
equation and gives the remaining horizontal Gram coefficient
\begin{equation}\label{eq:algebraic-channel-gram}
 t(H_{pq}H_{rs})=d^{-1}a(p)a(q)
                     \delta_{p+r,0}\delta_{q+s,0}.
\end{equation}
Together with \eqref{eq:horizontal-W-gram} and
\eqref{eq:two-horizontal-complete}, this determines the full horizontal
Gram matrix, including the vanishing mixed entries.

Now let the horizontal edge in \eqref{eq:one-horizontal-prism-picture} have any label
$g\in G$, and put
\[
 p=k_2,\quad q=k_3,\quad r=k_1,\quad s=k_4.
\]
Thus $(p,q)$ are the upper vertices and $(r,s)$ the middle vertices.
Use the channel identification \eqref{eq:one-horizontal-channel-similarity}.
The mixed tetrahedron on the $g$-edge has matrix $Z(-g)$.
Since $Z(-g)A=A\pi_y(g)$, the index contraction is
\[
 d_\varrho\operatorname{Tr}\bigl(K_{p,q}Z(-g)K_{r,s}\bigr)
 =d_\varrho^{-1}\operatorname{Tr}
 \bigl((C^{p,q})^{\mathsf T}\pi_y(g)(C^{r,s})^{\mathsf T}\bigr).
\]
Substituting \eqref{eq:general-horizontal-rho} and the action in
\eqref{eq:three-actions-normal-form}, and adding the group channels,
gives
\begin{align}
 \operatorname{TPR}
 &=\frac{n a(g)}{d_\varrho}
    \delta_{p+r,g}\delta_{q+s,-g}\chi(-p,s)
        \beta(p+q)\beta(-p-q)\notag\\
 &\quad+\frac{n a(p)a(r)}{d_\varrho^2}
    \delta_{p+q,0}\delta_{r+s,0}\delta_{q+s+g,0}.
 \label{eq:one-horizontal-general-right}
\end{align}
Equation \eqref{equ: ess equ 1} cancels the exceptional group term.
Using $s=-g-q$ in the remaining term gives
\begin{equation}\label{eq:one-horizontal-general-value}
 \operatorname{TPR}
 =\frac{a(q)a(p-g)}{d_\varrho}
     \delta_{p+r,g}\delta_{q+s,-g}.
\end{equation}

For completeness, the two tetrahedra on the left give this same value
with their actual marked corners. Number their vertices $A,B,C,D$
as above. In the first tetrahedron the invertible edge is $A\to D$;
the even permutation $(B\ C\ D)$ moves it to the bottom edge
$A\to B$. The $p$- and $r$-vertices then occupy the left-face corners
at the top and interior vertices, respectively. In the outside-top
and lower-interior convention of the one-invertible table their
labels are $R^2v_p$ and $Rv_r$. Its value, including the mixed dual
vertex, is
\[
 d_\varrho^{-1}
 \bigl(R^{\mathsf T}Z(g)Q^{-1}R^2\bigr)_{r,p}
 =d_\varrho^{-1}\bigl(Q^{-1}X(-g)\bigr)_{r,p}
 =d_\varrho^{-1}a(r)\delta_{p+r,g}.
\]
In the second tetrahedron the invertible edge is $D\to B$.
The even permutation $(A\ C\ D)$ gives the bottom edge $A\to B$,
while the $q$- and $s$-vertices occupy the right-face top and interior
corners. The corresponding outside-top and lower-interior labels are
$Rv_q$ and $R^2v_s$. The tetrahedron has value
\[
 \bigl((R^2)^{\mathsf T}Z(g)Q^{-1}R\bigr)_{s,q}
 =\bigl(Q^{-1}\pi_y(-g)\bigr)_{s,q}
 =a(q)\delta_{q+s,-g}.
\]
These matrix identities follow from
$R^{\mathsf T}Q^{-1}=Q^{-1}R$,
$RZ(t)R^{-1}=X(t)$ and
$R^{-1}Z(t)R=\pi_y(t)$.
Multiplying the two tetrahedral values gives
\eqref{eq:one-horizontal-general-value}, since $a(r)=a(p-g)$ on
its support. Thus \eqref{equ: ess equ 1} verifies the complete
one-horizontal boundary family, for every edge and vertex label.

\Needspace{38mm}
\textbf{$\bullet$ Boundary equation with no invertible edge}
For the prism
\[P=\SD{D164}\]
put
$G_0=g_1+g_2+g_3$, $H_0=h_1+h_2+h_3$, and $p_i=g_i+h_i$.
Both contractions vanish unless $G_0=H_0=0$. In that charge sector put
\begin{equation}\label{eq:allrho-transport-factor}
 \kappa=\chi(-g_1,h_2)\chi(g_3,h_3).
\end{equation}
We show that both contractions are $\kappa$ times their values at
left labels $(0,0,0)$ and right labels $(p_1,p_2,p_3)$.
Since $\kappa\ne0$, this proves the required reduction for every
choice of the six vertex labels.

For the left contraction, number the vertices of each cut tetrahedron
as $A$ (left), $B$ (right), $C$ (upper) and $D$ (central).
The even permutation $(A\ C)(B\ D)$ puts each tetrahedron into the
all-$\varrho$ convention of Theorem~\ref{thm: F-symbols}. Its outer-face
corners become the three corners of the left face of that table;
its central lower corner becomes the right-hand corner. No further
corner change is used in this step.
For the internal pair $v_s',v_s$, the two resulting values are
\begin{align}
 L_s&=\delta_{G_0,0}\,
 \frac{a(g_1)\chi(g_3,g_2-s)}{d_\varrho a(g_2-s)}\,
 \beta(g_2-s),\notag\\
 M_s&=\delta_{H_0,0}\,
 \frac{a(h_3)a(s)\chi(h_1,h_2+s)}{d_\varrho a(h_2+s)}\,
 \beta(h_2+s).
 \label{eq:allrho-left-factors}
\end{align}
Here the top vertex in the first tetrahedron is converted by
$v_{g_3}=a(g_3)v_{-g_3}'$ and its right vertex by
$v_s'=a(s)^{-1}v_{-s}$; the analogous top conversion in the second
uses $h_1$. These conversions also give the two charge deltas.
When $G_0=H_0=0$, substitute $k=h_2+s$ into $\sum_sL_sM_s$.
The quadratic identity for $a$ gives
\begin{equation}\label{eq:allrho-left-general}
 \operatorname{TPL}(P)=
 \frac{\kappa a(p_3)}{d_\varrho^2}
 \sum_{k\in G}\chi(p_1,k)
       \frac{\beta(p_2-k)\beta(k)}{a(k)}.
\end{equation}
For example, the constant factor simplifies by
\[
 \frac{a(g_1)a(h_3)a(h_2)}{a(p_2)}\chi(g_3,p_2)
 =\kappa a(p_3),
\]
while the remaining $k$-dependent character is $\chi(p_1,k)$.

Here and below, $\operatorname{Tr}$ denotes the ordinary matrix trace.
For the right contraction use the coefficient matrices
$C^{p,q}$ from \eqref{eq:general-horizontal-rho}. By the common
similarity \eqref{eq:one-horizontal-channel-similarity}, all corner
matrices cancel in the cyclic contraction.
The $\varrho$ channel is
\begin{align*}
 &\frac1{d_\varrho^2}
   \operatorname{Tr}\bigl(C^{g_1,h_1}C^{g_2,h_2}C^{g_3,h_3}\bigr)\\
 &\qquad=\frac{n}{d_\varrho^2}\delta_{G_0,0}\delta_{H_0,0}
       \kappa\prod_{i=1}^3\beta(p_i).
\end{align*}
For the group channels, the scalar component of $\Phi(H_{pq})$ above gives
\[
 \frac{n}{d_\varrho^3}\delta_{H_0,0}
   \prod_{i=1}^3a(g_i)\delta_{p_i,0}.
\]
This also vanishes unless $G_0=0$; when all $p_i=0$ and $G_0=0$,
$\prod_i a(g_i)=\kappa$. Thus, in the sector $G_0=H_0=0$,
\begin{equation}\label{eq:allrho-right-general}
 \operatorname{TPR}(P)=\kappa\left(
   \frac{n}{d_\varrho^2}\prod_{i=1}^3\beta(p_i)
   +\frac{n}{d_\varrho^3}\prod_{i=1}^3\delta_{p_i,0}\right).
\end{equation}
Setting $(p_1,p_2,p_3)=(-h,g,h-g)$ in
\eqref{eq:allrho-left-general}--\eqref{eq:allrho-right-general}, and
cancelling the nonzero common factor $\kappa n/d^2$, gives
\begin{equation}\label{equ: ess equ 2}
 \frac{a(h-g)}n\sum_{k\in G}\chi(h,k)^{-1}
       \frac{\beta(g-k)\beta(k)}{a(k)}
 =\beta(-h)\beta(g)\beta(h-g)+\frac{\delta_{h,0}\delta_{g,0}}d.
\end{equation}
Conversely, every charge-admissible triple $(p_1,p_2,p_3)$ has this
form; outside that charge sector both contractions vanish. Thus the
cubic equation verifies the entire all-$\varrho$ boundary family.
Together with the other three families, this proves the full labelled
TPE for every solution of Definition~\ref{thm: equation}.
The normalization and convolution consequences are already established
in Proposition~\ref{prop:cubic-reduction}.

\subsection{Algebraic reconstruction}
For the remainder, $(d,\chi,a,c,\beta)$ satisfies
Definition~\ref{thm: equation}.

\begin{lemma}[Bilinear channel inverse]\label{lem:algebraic-channel-inverse}
Let $G_V$ and $G_H$ be the trace Gram matrices of $(P_x;E_{hk})$ and
$(W_g;H_{pq})$, respectively. The channel matrix is invertible over $K$,
with
\begin{equation}\label{eq:algebraic-channel-inverse}
 T^{\mathsf T}G_VT=G_H,\qquad
 T^{-1}=G_H^{-1}T^{\mathsf T}G_V.
\end{equation}
In particular $T_{P_0,W_0}=(T^{-1})_{W_0,P_0}=d^{-1}$.
\end{lemma}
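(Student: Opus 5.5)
The plan is to prove the Gram identity $T^{\mathsf T}G_VT=G_H$ directly, entry by entry, and then deduce invertibility and the inverse formula from the nondegeneracy of the two Gram matrices.

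\emph{Step 1: $G_V$ is invertible.} By \eqref{eq:vertical-channel-gram}, $G_V$ is block diagonal. It equals $d^2I$ on the $P_x$ and $d^{-1}$ times the transposition permutation $E_{hk}\leftrightarrow E_{kh}$ on the matrix units. Hence it is invertible over $K$, since $d\ne0$.

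\emph{Step 2: $G_H$ is invertible.} The horizontal Gram matrix $G_H$ has already been computed from the boundary families. Equation~\eqref{eq:horizontal-W-gram} gives $t(W_gW_h)=d^2\delta_{g+h,0}$. Equation~\eqref{eq:two-horizontal-complete} together with $\beta(0)=-1/d$ gives $t(W_uH_{pq})=0$. Equation~\eqref{eq:algebraic-channel-gram} gives $t(H_{pq}H_{rs})=d^{-1}a(p)a(q)\delta_{p+r,0}\delta_{q+s,0}$. So $G_H$ is also block diagonal, with each block a monomial matrix whose nonzero entries are units. It is therefore invertible. The only inputs used here are the product equation \eqref{eq:betapm} and $\beta(0)=-d^{-1}$, both available from Proposition~\ref{prop:cubic-reduction}.

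\emph{Step 3: the identity $T^{\mathsf T}G_VT=G_H$.} By definition, the columns of $T$ are the coordinates of $W_g$ and $H_{pq}$ in the vertical basis $(P_x;E_{hk})$. By bilinearity of $t(UV)$, the entry $(T^{\mathsf T}G_VT)_{U,V}$ is just $t(UV)$ for horizontal channels $U,V$ written in vertical coordinates. That is exactly how the horizontal Gram coefficients in Step 2 were obtained, so the identity holds tautologically once those expansions are accepted.

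The main point to check is consistency between two descriptions of the horizontal channels:
\begin{itemize}
\item the abstract elements $W_g,H_{pq}\in\mathcal A$, whose products were computed in Step 2;
\item the coordinate columns \eqref{eq:algebraic-channel-blocks}, which include the factors $d$ and $d^{-1}$ coming from $P_x=de_x$ and $E_{hk}=d^{-1}e_{hk}$.
\end{itemize}
I would verify this by re-expanding $W_g=\sum_x\chi(g,x)e_x+a(g)X(g)Z(-g)$ in the basis $P_x,E_{hk}$. This recovers $T_{P_x,W_g}=d^{-1}\chi(g,x)$ and $T_{E_{hk},W_g}=d\,a(g)\chi(-g,k)\delta_{h,k+g}$. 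The same expansion applied to $H_{pq}$ recovers the other two entries. I expect this bookkeeping of the normalizations to be the main obstacle. If any mismatch appears, I would recompute the $H$–$H$ Gram entry directly in coordinates as a check; that computation reduces to the product equation exactly as in \eqref{eq:unit-horizontal-right}.

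\emph{Step 4: invertibility and the inverse.} Since $G_H$ is invertible, the identity $T^{\mathsf T}G_VT=G_H$ gives $(G_H^{-1}T^{\mathsf T}G_V)T=I$. Because $T$ is square (both bases have $n+n^2$ elements), this one-sided inverse is a two-sided inverse, so $T^{-1}=G_H^{-1}T^{\mathsf T}G_V$.

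\emph{Step 5: the corner entries.} The entry $T_{P_0,W_0}=d^{-1}\chi(0,0)=d^{-1}$ is read off directly from \eqref{eq:algebraic-channel-blocks}. For the inverse, write
\[(T^{-1})_{W_0,P_0}=\sum_{U}(G_H^{-1})_{W_0,U}\,(T^{\mathsf T}G_V)_{U,P_0}.\]
Only $U=W_0$ contributes, with $(G_H^{-1})_{W_0,W_0}=d^{-2}$, and $(T^{\mathsf T}G_V)_{W_0,P_0}=T_{P_0,W_0}\,d^2=d$. Hence $(T^{-1})_{W_0,P_0}=d^{-2}\cdot d=d^{-1}$, as claimed.
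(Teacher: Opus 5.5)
Your proposal is correct and follows the paper's own argument. Both compute $G_V$ and $G_H$ from the vertical and boundary Gram calculations, observe that they are monomial with unit entries, obtain $T^{\mathsf T}G_VT=G_H$ as a change of coordinates, and deduce the inverse and the vacuum entry. Your explicit re-expansion of $W_g,H_{pq}$ in the basis $(P_x;E_{hk})$ and your computation of $(T^{-1})_{W_0,P_0}$ spell out what the paper leaves implicit.
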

\begin{proof}
The boundary calculations \eqref{eq:vertical-channel-gram},
\eqref{eq:horizontal-W-gram}, \eqref{eq:two-horizontal-complete} and
\eqref{eq:algebraic-channel-gram} give the stated Gram matrices.
Both are monomial matrices with nonzero entries. Change of coordinates
therefore gives the first identity and hence the inverse. Its vacuum entry is $d^{-1}$ by the displayed
blocks of $T$.
\end{proof}

\begin{proposition}[Exact channel singular values]\label{prop:channel-singular-values}
Work over $\mathbb C$. Let $G$ be finite abelian of order $n$, let
$d=(n+\sqrt{n^2+4n})/2$, and assume only
that $\chi$ is nondegenerate symmetric, $a(-g)=a(g)$,
$a(g+h)\chi(g,h)=a(g)a(h)$, $\beta(0)=-d^{-1}$, and
$\beta(g)\beta(-g)=a(g)/n$ for $g\ne0$.
For the channel matrix $T$ in \eqref{eq:algebraic-channel-blocks}, set
\[
 D=\operatorname{diag}(d^2I_n,d^{-1}I_{n^2}),\qquad
 U=D^{1/2}TD^{-1/2}.
\]
Here $T^\dagger$ denotes the conjugate transpose. Then
\[
 T^\dagger DT=\operatorname{diag}(d^2I_n,d^{-1}\Lambda),\qquad
 \Lambda_{(p,q),(p,q)}=
 \begin{cases}1,&p+q=0,\\ n|\beta(p+q)|^2,&p+q\ne0.\end{cases}
\]
In particular, $U$ has singular values $1$ with multiplicity $2n$,
and $\sqrt n|\beta(g)|$ with multiplicity $n$ for every $g\ne0$
(counting coincident singular values with their multiplicities).
They occur in reciprocal pairs. The matrix $U$ is unitary if and
only if the scalar reflection equation holds.
\end{proposition}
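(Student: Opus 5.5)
We compute the Hermitian Gram matrix $T^\dagger DT$ directly from the explicit blocks \eqref{eq:algebraic-channel-blocks}, mimicking the bilinear computation of Lemma~\ref{lem:algebraic-channel-inverse} but with complex conjugation on the left factor. Since $\chi$ and $a$ take values in $\mathbb T$ (unit-modulus characters, with $a^2=\chi(g,g)^{-1}$ forcing $|a|=1$), every conjugated character entry is an inverse character. The work splits into three block computations.

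\textbf{Block $(W_g,W_{g'})$.} The group rows contribute $d^2\cdot d^{-2}\sum_x\overline{\chi(g,x)}\chi(g',x)=n\delta_{g,g'}$. The matrix rows contribute $d^{-1}d^2\sum_{h,k}\delta_{h,k+g}\delta_{h,k+g'}=nd\,\delta_{g,g'}$. The total is $n(1+d)\delta_{g,g'}=d^2\delta_{g,g'}$.

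\textbf{Block $(W_g,H_{pq})$.} This is the Hermitian analogue of \eqref{eq:two-horizontal-complete}. The group part gives $(n/d)\,\overline{a(\cdot)}$-type factors times $\delta_{p+q,0}\delta_{q,-g}$. The matrix part forces $p=g$, and the character sum over $k$ forces $q=-g$, giving the coefficient $\beta(0)$. Together they combine to a multiple of $\beta(0)+d^{-1}=0$, so the off-diagonal block vanishes. One must check that after conjugation the two phases still match; both reduce to $\overline{a(g)}$ because $\overline{\chi(g,k)}$ pairs with $\chi(-h,q)$ exactly as before.

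\textbf{Block $(H_{pq},H_{rs})$.} The matrix rows give
\[
d^{-1}\sum_{h,k}\overline{\chi(-h,q)\beta(p+q)}\chi(-h,s)\beta(r+s)\delta_{k,h+p}\delta_{k,h+r}=\frac nd\delta_{p,r}\delta_{q,s}|\beta(p+q)|^2 .
\]
The group rows give $d^{2}d^{-4}n\,\delta_{q,s}\delta_{p+q,0}\delta_{r+s,0}=(n/d^2)\delta_{(p,q),(r,s)}\delta_{p+q,0}$. On the diagonal with $p+q=0$ the total is $n/d^2+n/d^2\cdot d\cdot$ (more precisely $n|\beta(0)|^2/d+n/d^2=n/d^3+n/d^2=d^{-1}(n+nd)/d^2=d^{-1}$), using $\beta(0)=-d^{-1}$ and $d^2=n+nd$. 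For $p+q\ne0$ the total is $d^{-1}\,n|\beta(p+q)|^2$. This yields $T^\dagger DT=\operatorname{diag}(d^2I_n,d^{-1}\Lambda)$.

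Conjugating, $U^\dagger U=D^{-1/2}T^\dagger DT D^{-1/2}=\operatorname{diag}(I_n,\Lambda)$. The singular values are therefore $1$ from the $n$ group columns and the $n$ pairs with $p+q=0$, giving multiplicity $2n$. Each fixed $g\ne0$ occurs for $n$ pairs $(p,q)$ and contributes $\sqrt n|\beta(g)|$. The product relation $\beta(g)\beta(-g)=a(g)/n$ with $|a|=1$ gives $\sqrt n|\beta(g)|\cdot\sqrt n|\beta(-g)|=1$, which produces the reciprocal pairing between the $g$ and $-g$ blocks.

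Finally, $U$ is unitary iff $\Lambda=I$, i.e.\ iff $|\beta(g)|^2=1/n$ for $g\ne0$. Combined with $\beta(g)\beta(-g)=a(g)/n$, this is equivalent to $\overline{\beta(g)}=\beta(-g)/a(g)$, since $\beta(-g)/a(g)=1/(n\beta(g))$. At $g=0$ the reflection equation holds automatically because $\beta(0)$ is real and $a(0)=1$.

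The main obstacle is the bookkeeping of phases in the two off-diagonal blocks, where conjugation could in principle spoil the cancellation. We plan to handle it by noting that all offending factors are unit-modulus characters, so the bilinear cancellation carries over verbatim.
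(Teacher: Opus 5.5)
Your proposal is correct and is essentially the paper's proof: it multiplies out the four blocks of $T$ against $D$ to obtain $T^\dagger DT$, normalizes by $D^{-1/2}$, and reads off three things: the singular values, the reciprocity from $\beta(g)\beta(-g)=a(g)/n$, and the equivalence of $\Lambda=I$ with scalar reflection. One small correction in the mixed block: the Hermitian pairing does not reproduce the bilinear support of \eqref{eq:two-horizontal-complete}. Both contributions are supported on $p=-g$, $q=g$ and carry the phase $a(g)$, not $\overline{a(g)}$; the matrix part gets it via $\overline{a(g)}\chi(g,g)^{-1}=a(g)$, using $a(g)^2=\chi(g,g)^{-1}$. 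The entry is therefore $n\,a(g)\bigl(d^{-1}+\beta(0)\bigr)\delta_{p,-g}\delta_{q,g}$, which vanishes as you concluded.
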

\begin{proof}
All values of $a$ and $\chi$ have modulus one. Direct multiplication
of the four blocks of $T$ gives the entries of $\Gamma=T^\dagger DT$:
\begin{align*}
 \Gamma_{W_g,W_h}&=d^2\delta_{g,h},\\
 \Gamma_{W_g,H_{pq}}
 &=n a(g)(d^{-1}+\beta(0))\delta_{p,-g}\delta_{q,g}=0,\\
 \Gamma_{H_{pq},H_{rs}}
 &=\delta_{p,r}\delta_{q,s}
 \left(\frac{n}{d}|\beta(p+q)|^2
                 +\frac{n}{d^2}\delta_{p+q,0}\right).
\end{align*}

For the last identity, the matrix-unit supports first force $p=r$,
and the character sum then forces $q=s$. The group components have
the same support. When $p+q=0$, the parenthesis is
$n/d^3+n/d^2=1/d$, using $d^2=n(d+1)$. Thus the displayed Gram
matrix follows without any reflection hypothesis. Its eigenvalues,
after the diagonal normalization by $D$, are the squared singular
values asserted. The product equation gives
$(\sqrt n|\beta(g)|)(\sqrt n|\beta(-g)|)=1$.
For $g\ne0$, the reflection equation is equivalent to
$|\beta(g)|^2=1/n$ by the same product equation; at zero it follows
from $\beta(0)\in\mathbb R$.
\end{proof}

\begin{theorem}[Algebraic reconstruction]\label{thm:algebraic-reconstruction}
  Let $ G $ be a finite abelian group of order $ n $.
Let $K$ be a field in which $n=|G|$ is invertible, and let
$(d_\varrho,\chi,a,c,\beta)$ be a $K$-valued solution of the TPE system for $G$. Then the scalar data reconstruct a spherical fusion category
over $K$ with fusion rules $G+n$.
\end{theorem}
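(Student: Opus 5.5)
The plan is to invoke the reconstruction theorem for spherical fusion categories from triangular prism data, namely \cite[Theorem~4.21]{LPR}. That result says the following: suppose one has prescribed multiplicity spaces $V(a,b,c)$ for the admissible triples, with the dimensions dictated by the fusion rules \eqref{eq:NGfusion}; nondegenerate bilinear theta pairings; a consistent action of tetrahedral symmetries (corner transport and duality); and tetrahedral values satisfying every labelled TPE \eqref{eq:TPE}. Then these data define a strict spherical fusion category. We then verify each of these inputs in turn, working purely over $K$ and without any Hermitian structure.

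\textbf{Step 1 (data).} Set $V(g,h,k)=K$ whenever $g+h+k=0$ and $V(g,\varrho,\varrho)=K$, both with theta value $\theta$ as in \eqref{equ: normalisation}. Set $V(\varrho,\varrho,\varrho)=K^G$ with basis $(v_h)$, bilinear pairing $B$, dual basis $Q v_h=v_h'$, and corner rotation $R=cH_-D_a$. By \eqref{eq:matrix-toolkit-inverses}, $R^3=I$ and $R^{\mathsf T}B=BR$, so $R$ defines a $\mathbb Z/3$ action preserving the pairing; this is the required pivotal and rotational compatibility. The dimensions $d_g=1$ and $d_\varrho=d$ satisfy $d^2=nd+n$ by \eqref{eq:dim_rho}. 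The tetrahedral values are defined by the table of Theorem~\ref{thm: F-symbols}, which is now read as a definition in terms of $(d,\chi,a,c,\beta)$. Lemma~\ref{lem:algebraic-tetra-transport} shows that this table is well defined under proper tetrahedral transport. The Fourier equation \eqref{eq:Fourier} (equivalently \eqref{eq:beta-rotation}) is used exactly here. Reflection is handled by $\tau$ from Lemma~\ref{lem:algebraic-reflection}.

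\textbf{Step 2 (TPE).} The bulk equations hold by Theorem~\ref{thm:bulk-equations}. The boundary families were verified in Section~\ref{sec:solve-TPE}:
\begin{itemize}
\item with two horizontal invertible edges, using $\beta(0)=-1/d$;
\item with one horizontal invertible edge, using \eqref{equ: ess equ 1};
\item with no invertible edge, using \eqref{equ: ess equ 2}, i.e.\ the cubic identity \eqref{eq:two_beta_identity};
\item with three horizontal invertible edges, using only the quadratic data.
\end{itemize}
Proposition~\ref{prop:cubic-reduction} guarantees that all of these scalar identities hold for any solution. Hence every labelled TPE holds.

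\textbf{Step 3 (nondegeneracy and semisimplicity).} Reconstruction also needs the closed-graph evaluation to be nondegenerate, so that the quotient by negligible morphisms does not change the Hom-space dimensions. Equivalently, the vertical and horizontal channel bases of $\operatorname{End}(\varrho\otimes\varrho)$, and the analogous mixed ones, must be related invertibly. Lemma~\ref{lem:algebraic-channel-inverse} supplies exactly this: the channel matrix $T$ is invertible, with monomial Gram matrices having nonzero entries, which uses that $n$, $d$ and every $\beta(g)$ are nonzero. For the mixed channels, the Gram matrices are monomial with entries $\theta^{\pm1}$. Since $n$ is invertible in $K$, the algebra $\mathcal A=K^G\oplus M_n(K)$ is semisimple. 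Hence the reconstructed category is semisimple, with simple objects $G\sqcup\{\varrho\}$ and fusion rules $G+n$. Sphericality follows because the dimensions were assigned and the trace is the closed-diagram evaluation.

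\textbf{Expected obstacle.} The main difficulty is that \cite[Theorem~4.21]{LPR} is stated for $\mathbb C$, or at least in characteristic zero, and perhaps under unitarity assumptions. I would first check that its proof uses only the following: multilinearity of the graphical calculus, invertibility of the dimensions and theta values, and nondegeneracy of the pairings, which holds via the channel inverse. None of these needs positivity or characteristic zero beyond the invertibility of $n$ and $d$. If the cited theorem genuinely requires more, the fallback is to extract $6j$-symbols from the tetrahedra and the channel inverse $T^{-1}$, and then check that the pentagon equations follow from the TPE by the standard equivalence in \cite{LPR}, which holds over any field.
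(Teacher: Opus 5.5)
Your proposal is correct and follows essentially the paper's route. The paper's proof is a one-line appeal to \cite[Theorem~1.1 and Theorem~5.3]{LPR}: the TPE for the tetrahedral table imply the pentagon equations, followed by standard reconstruction, under the strict, trivial Frobenius--Schur conventions of Section~\ref{sec: prelim}. It tacitly rests on exactly the inputs you spell out---the full labelled TPE of Section~\ref{sec:solve-TPE}, the transport lemmas, and the channel inverse of Lemma~\ref{lem:algebraic-channel-inverse}. The remaining differences are cosmetic. The paper uses \cite[Theorem~4.21]{LPR} only for the prism conventions, not as the reconstruction statement. Also, in Step~3 the semisimplicity of $\mathcal A$ does not need $n$ invertible, and the monomial Gram matrices do not need $\beta(g)\ne0$.
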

\begin{proof}
    Under the strict monoidal and pivotal conventions and the trivial Frobenius--Schur indicator assumption of Section (3), the tetrahedral table constructed from a TPE solution satisfies the pentagon equations by \cite[Theorem~1.1 and Theorem~5.3]{LPR}; hence the standard fusion-category reconstruction gives the required spherical near-group fusion category.
\end{proof}

\Needspace{8\baselineskip}
\begin{corollary}[Unitary reconstruction]\label{thm: reconstruction}
Work over $K=\mathbb C$. {Let $G$ be a finite abelian group of order $n$}, $d_\varrho=(n+\sqrt{n^2+4n})/2$. let
$(d_\varrho,\chi,a,c,\beta)$ be a solution of the unitary TPE system for $G$. Then the scalar data reconstruct a unitary fusion
category with fusion rules $ G+n $.
\end{corollary}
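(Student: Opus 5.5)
The plan is to combine Theorem~\ref{thm:algebraic-reconstruction} with the reflection positivity provided by Proposition~\ref{prop:channel-singular-values}. Since $\mathbb C$ is a field in which $n$ is invertible, Theorem~\ref{thm:algebraic-reconstruction} already yields a spherical fusion category $\mathcal C$ over $\mathbb C$ with fusion rules $G+n$. Its tetrahedral table is the one in Theorem~\ref{thm: F-symbols}, with $d_\varrho=d_+>0$. What remains is to equip $\mathcal C$ with a compatible $\dagger$-structure and to show that the resulting Hermitian forms on all morphism spaces are positive definite.

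First I will define the conjugate-linear involution on generators, following Section~\ref{sec:fsymbols} read in reverse. On the mixed vertices, $\dagger$ sends each $L_g,R_g,m_{g,h}$ to its prescribed opposite vertex; these are normalized by theta values $1$ or $d>0$ (Proposition~\ref{pro: normalization}). On $V=\operatorname{Hom}(\varrho\otimes\varrho,\varrho)$, $\dagger$ sends $v_h$ to the trace-dual $v_h'=a(h)^{-1}v_{-h}$, which is antilinear with respect to the basis. I then have to check that $\dagger$ is compatible with the tetrahedral table, i.e.\ that reflecting a tetrahedron conjugates its value. Lemma~\ref{lem:algebraic-tetra-transport} gives the reflected value as the value at the $\tau$-transformed tuple. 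It therefore suffices to show that $\tau(d,\chi,a,c,\beta)$ equals the complex conjugate tuple $(\bar d,\bar\chi,\bar a,\bar c,\bar\beta)$:
\begin{itemize}
\item $|\chi|=|a|=1$ because these take values in the finite group of roots of unity, so $\chi^{-1}=\bar\chi$ and $a^{-1}=\bar a$.
\item The unitary condition gives $\overline{\beta(g)}=\beta(-g)/a(g)$ directly.
\item For $c$, I use $c=\beta(0)/B$ from Proposition~\ref{prop:cubic-reduction} together with the computation of $c_\tau$ in Lemma~\ref{lem:algebraic-reflection}, which gives $1/(nc)=\bar c$, i.e.\ $|c|^2=1/n$. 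This matches Lemma~\ref{lem: rotation}, where $|c|^2=1/n$ is derived from $c^3S_a=1/n$ and $|S_a|=\sqrt n$.
\end{itemize}
Thus the formal semilinear map $J=Q\tau$ of the algebraic-transport subsection coincides with the bent adjoint. This makes $\mathcal C$ a $\dagger$-category in which the spherical structure is compatible with $\dagger$.

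The main obstacle is positivity. By semisimplicity and Frobenius reciprocity, positivity of all Hom spaces reduces to positivity of the theta forms on the trivalent spaces. It also requires that the tree bases obtained by composing vertices stay positive, which amounts to unitarity of the $F$-moves between the two fusion-channel bases of $\operatorname{Hom}(\varrho\otimes\varrho,\varrho\otimes\varrho)$.
\begin{itemize}
\item On the one-dimensional mixed spaces, positivity follows from $\theta\in\{1,d\}$ with $d>0$.
\item On $V$, the form $\langle v_h,v_k\rangle=B(v_h,v_k^\dagger)$ equals $\delta_{h,k}$ by Corollary~\ref{cor:bilinear-dual-basis}.
\item For the $F$-move, I invoke Proposition~\ref{prop:channel-singular-values}: under the unitary hypothesis the reflection equation gives $\sqrt n|\beta(g)|=1$ for $g\ne0$, so the normalized channel matrix $U$ is unitary. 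The vertical and horizontal bases are then orthonormal, after rescaling by $D^{1/2}$, for the same positive form.
\end{itemize}
The remaining $F$-moves involve one-dimensional or group-like spaces. There the coefficients $1$, $d$, $\chi(g,h)$ and $c\,a\,\chi$ have the correct moduli, using $|c|=1/\sqrt n$ for the rotation matrix $R=cHD_a$, which is then unitary. With all associators unitary and all vertex norms positive, the standard argument (a positive form on generators, extended by unitary associators, gives a $C^*$ structure) shows that $\mathcal C$ is a unitary fusion category with fusion rules $G+n$.
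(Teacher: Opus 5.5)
Your proposal is correct and follows essentially the same route as the paper. Both arguments identify the algebraic reflection $\tau$ with complex conjugation using $|\chi|=|a|=1$, $|c|=n^{-1/2}$ and the reflection equation for $\beta$, then use Proposition~\ref{prop:channel-singular-values} ($T^\dagger D T=D$) to make the channel change of basis unitary after rescaling by positive theta norms, with the remaining associators given by the unitary corner matrix $R$, permutations and phases; the only difference is presentational, since you phrase the conclusion as a positive $\dagger$-structure on the category from Theorem~\ref{thm:algebraic-reconstruction}, while the paper phrases it as unitarity of the associators in orthonormal fusion bases.
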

\begin{proof}
The values of $\chi$ and $a$ have modulus $1$, and the Gauss
normalization gives $|c|=n^{-1/2}$. Together with
\eqref{eq:beta-reality}, these identities identify the algebraic
involution $\tau$ with ordinary complex conjugation on the
tetrahedral data.
Proposition~\ref{prop:channel-singular-values} now gives
\begin{equation}\label{eq:reconstruction-unitarity}
 T^\dagger D_{\rm ch}T=D_{\rm ch},\qquad
 D_{\rm ch}=\operatorname{diag}(d_\varrho^2I_n,d_\varrho^{-1}I_{n^2}).
\end{equation}
Since $D_{\rm ch}$ is positive,
rescaling the channel bases by their positive theta norms makes $T$
unitary. A theta norm here is the Hermitian closure $B(v,Jv)$ after
bending, not the bilinear self-pairing $B(v,v)$. These norms are products of trivalent fusion-basis norms,
times the common outer trace factor $d_\varrho$: a rooted vertex with
output $X$ has squared norm equal to its theta value divided by $d_X$.
Thus the normalization is induced by one choice of orthonormal fusion
bases. In these bases the other changes are the unitary corner matrix
$R$, permutation matrices and phases. The resulting fusion
associators are therefore unitary, giving a unitary fusion category.
\end{proof}

\section{Metric criteria and a Galois conjecture for unitarity}\label{sec:unitarity}
\subsection{Metric criteria for unitarity}
We use the channel singular values from
Proposition~\ref{prop:channel-singular-values} to measure failure of
Hermitian reflection. Throughout this subsection the coefficient field is
$\mathbb C$ and $d=d_+=(n+\sqrt{n^2+4n})/2$.

\begin{corollary}[A single nonnegative unitarity defect]\label{cor:unitarity-defect}
Under the assumptions of Proposition~\ref{prop:channel-singular-values}, put
\[
 \mathfrak d(\beta)=\sum_{g\in G}|\beta(g)|^2-(1-d^{-1}).
\]
Then
\[
 \mathfrak d(\beta)=\frac12\sum_{g\in G}
 (|\beta(g)|-|\beta(-g)|)^2\ge0,
 \qquad
 \|U\|_{\mathrm F}^2-(n+n^2)=n^2\mathfrak d(\beta).
\]
The following are equivalent: scalar reflection, $\mathfrak d(\beta)=0$,
and unitarity of $U$. Reflection is automatic at every $g\in G[2]$.
Consequently every positive-root scalar solution on an elementary
abelian $2$-group reconstructs a unitary fusion category.
\end{corollary}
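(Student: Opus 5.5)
The plan is to derive everything from the exact Gram computation in Proposition~\ref{prop:channel-singular-values} together with two scalar identities: the product relation $\beta(g)\beta(-g)=a(g)/n$ for $g\ne0$ and the normalization $\beta(0)=-d^{-1}$. Because $|a(g)|=1$, the product relation gives $|\beta(g)||\beta(-g)|=1/n$ for $g\ne0$.

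First I will rewrite the defect. Set $x_g=|\beta(g)|$. Then
\[
\tfrac12\sum_g (x_g-x_{-g})^2=\sum_g x_g^2-\sum_g x_gx_{-g}.
\]
The term $g=0$ of the last sum is $d^{-2}$. Each of the $n-1$ nonzero terms equals $1/n$. So $\sum_g x_gx_{-g}=d^{-2}+(n-1)/n$.

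It remains to check that this equals $1-d^{-1}$. That is, I need $d^{-2}+d^{-1}=1/n$. Multiplying by $nd^2$, this becomes $n+nd=d^2$, which is \eqref{eq:dim_rho}. This gives the displayed formula for $\mathfrak d(\beta)$, and nonnegativity is immediate from the sum of squares.

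Next I will compute the Frobenius norm. Since $U=D^{1/2}TD^{-1/2}$, we have $U^\dagger U=D^{-1/2}(T^\dagger DT)D^{-1/2}$. By the Proposition this is $\operatorname{diag}(I_n,\Lambda)$. Hence
\[
\|U\|_{\mathrm F}^2=\operatorname{Tr}(U^\dagger U)=n+\sum_{p,q}\Lambda_{(p,q),(p,q)}.
\]
The $n$ pairs with $p+q=0$ each contribute $1$. For each $g\ne0$ there are $n$ pairs with $p+q=g$, each contributing $n x_g^2$. So
\[
\|U\|_{\mathrm F}^2=2n+n^2\sum_{g\ne0}x_g^2.
\]
Writing $\sum_{g\ne0}x_g^2=\mathfrak d+1-d^{-1}-d^{-2}$ and using $n^2(1-d^{-1}-d^{-2})=n^2-n$ from the identity above, this becomes $\|U\|_{\mathrm F}^2=n+n^2+n^2\mathfrak d$.

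For the equivalences, suppose first $\mathfrak d=0$. Then $x_g=x_{-g}$ for all $g$, and with $x_gx_{-g}=1/n$ this forces $x_g^2=1/n$ for $g\ne0$. By the last sentence of Proposition~\ref{prop:channel-singular-values}, this is exactly scalar reflection, which in turn is equivalent to unitarity of $U$. Conversely, unitarity of $U$ gives $\Lambda=I$, so $x_g^2=1/n$ for $g\ne0$, and then $\mathfrak d=0$.

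For $g\in G[2]$ we have $g=-g$, so that term of the defect vanishes trivially. Reflection at $g$ follows directly: for $g\ne0$, $x_g^2=1/n$ gives $|\beta(g)|^2=1/n$, and at $g=0$ it holds because $\beta(0)$ is real. For an elementary abelian $2$-group, $G[2]=G$, so $\mathfrak d=0$ automatically. Corollary~\ref{thm: reconstruction} then yields a unitary category.

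The only point needing care is that Corollary~\ref{thm: reconstruction} is stated with the conjugation relation $\overline{\beta(g)}=\beta(-g)/a(g)$, not just the modulus condition. I would check that, with $\beta(g)\beta(-g)=a(g)/n$, this conjugation relation is equivalent to $|\beta(g)|^2=1/n$. Indeed, $\beta(-g)/a(g)=1/(n\beta(g))$, and $1/(n\beta(g))=\overline{\beta(g)}$ exactly when $|\beta(g)|^2=1/n$. So the modulus condition suffices, and that is the step I expect to be the main obstacle.
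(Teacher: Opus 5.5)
Your proposal is correct and follows essentially the same route as the paper. You expand the squares using $|\beta(g)||\beta(-g)|=1/n$ and $d^{-2}+d^{-1}=1/n$, read off $\|U\|_{\mathrm F}^2$ as the sum of the squared singular values from Proposition~\ref{prop:channel-singular-values}, and apply the product identity at involutions before invoking Corollary~\ref{thm: reconstruction}, with your added check that the conjugation relation is equivalent to the modulus condition matching the last sentence of that proposition.
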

\begin{proof}
Expand the squares using
$|\beta(g)||\beta(-g)|=1/n$ for $g\ne0$ and
$(n-1)/n+d^{-2}=1-d^{-1}$. Summing the squared singular values in
Proposition~\ref{prop:channel-singular-values} gives the norm formula.
At an involution $g\ne0$, the product identity gives
$|\beta(g)|^2=1/n$ directly. Unitary reconstruction then follows from
Corollary~\ref{thm: reconstruction}.
\end{proof}

This automatic reflection statement does not assert existence of scalar
solutions. For $G=(C_2)^r$ with $r\ge3$, such solutions cannot exist:
algebraic reconstruction would contradict
\cite[Theorem~1.1]{Schopieray}. The corollary identifies the Hermitian
condition wherever a positive-root tuple exists.

\begin{theorem}[A uniform gap from the unitary locus]
\label{thm:unitarity-gap}
Let $G$ be a finite abelian group of order $n$, and let
$(d,\chi,a,c,\beta)$ satisfy the TPE system over
$\mathbb C$, with $d=(n+\sqrt{n^2+4n})/2$. Put
\[
 p_g=n|\beta(g)|^2,\qquad t=p_0=\frac1{d+1},\qquad
 q_g=\sum_{k\in G}p_kp_{g-k}.
\]
Then
\begin{equation}\label{eq:positive-convolution}
 p_gp_{-g}=1,\qquad q_g=p_gq_{-g}\qquad(g\ne0).
\end{equation}
Let $\kappa>1$ be the unique root in $(1,\infty)$ of
$X^3-2X^2-X+1=0$, so $\kappa=2.24697960\ldots$.
Either the tuple satisfies Hermitian reflection, or
\begin{equation}\label{eq:unitarity-gap}
 \max_{g\ne0}\sqrt n\,|\beta(g)|>\sqrt\kappa.
\end{equation}
In particular, the coarse inequalities
$\sqrt n\,|\beta(g)|\le\sqrt2$ for all $g\ne0$
already imply Hermitian reflection and unitary reconstruction.
More precisely, every nonunitary tuple must have $n\ge5$ and
\begin{equation}\label{eq:unitarity-gap-finite-n}
 M^{2/(n-2)}<(M-1)(1-M^{-2}),\qquad
 M=\max_{g\ne0}p_g.
\end{equation}
In particular, every positive-root algebraic scalar tuple with
$n\le4$ satisfies Hermitian reflection.
\end{theorem}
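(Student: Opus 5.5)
The plan is to convert the cubic identity~\eqref{eq:two_beta_identity} into an identity among the moduli $p_g$, and then run an extremal argument at a maximizing index.

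\textbf{The product relation.} First, \eqref{eq:betapm} gives $\beta(g)\beta(-g)=a(g)/n$ for $g\ne0$. Since $|a(g)|=1$, this yields $p_gp_{-g}=1$. At $g=0$, Proposition~\ref{prop:cubic-reduction} gives $\beta(0)=-1/d$, so $p_0=n/d^2=1/(d+1)=t$ by $d^2=n(d+1)$.

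\textbf{The convolution relation.} For $q_g=p_gq_{-g}$, fix $g\ne0$ and apply Parseval in $h$ to~\eqref{eq:two_beta_identity}; the exceptional term is absent because $g\ne0$. The left side is $a(h-g)/n$ times the Fourier transform in $h$ of $k\mapsto\beta(g-k)\beta(k)/a(k)$. Since $|a|=|\chi|=1$, character orthogonality gives
\[
\sum_h\bigl|\text{LHS}\bigr|^2=\frac{1}{n^2}\cdot n\sum_k|\beta(g-k)|^2|\beta(k)|^2=\frac{q_g}{n^3}.
\]
On the right side,
\[
\sum_h|\beta(-h)\beta(g)\beta(h-g)|^2=\frac{p_g}{n}\sum_h\frac{p_{-h}p_{h-g}}{n^2}=\frac{p_gq_{-g}}{n^3}.
\]
Equating the two gives~\eqref{eq:positive-convolution}.

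\textbf{Extremal argument.} Suppose Hermitian reflection fails. By Proposition~\ref{prop:channel-singular-values}, reflection is equivalent to $p_g=1$ for all $g\ne0$, so some $p_g\ne1$, and since $p_gp_{-g}=1$ we get $M=\max_{g\ne0}p_g>1$. Fix $g$ with $p_g=M$, so $p_{-g}=1/M$. Split both convolutions into the two boundary terms ($k\in\{0,g\}$ for $q_g$, and $k\in\{0,-g\}$ for $q_{-g}$) and the $n-2$ bulk terms. The boundary terms are $2tM$ and $2t/M$.

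For the lower bound on $q_{-g}$, the bulk factors satisfy
\[
\prod_{k\ne0,-g}p_kp_{-g-k}=\Bigl(\prod_{k\ne0,-g}p_k\Bigr)^2=M^2,
\]
because $\prod_{k\ne0}p_k=1$ by pairing $k$ with $-k$. AM--GM then gives $q_{-g}\ge 2t/M+(n-2)M^{2/(n-2)}$.

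For the upper bound on $q_g$, each bulk term $p_kp_{g-k}$ is bounded using maximality of $M$ together with $p_kp_{-k}=1$. I would pair $k$ with $g-k$ and compare with the reversed pair in $q_{-g}$, aiming for $q_g\le 2tM+(n-2)(M-1)(1-M^{-2})\cdot(\text{matching normalization})$. Inserting both bounds into $q_g=Mq_{-g}$ should give~\eqref{eq:unitarity-gap-finite-n}.

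\textbf{Consequences.} For $n\le4$, $2/(n-2)\ge1$, and checking that $M^{2/(n-2)}\ge(M-1)(1-M^{-2})$ for every $M>1$ rules out nonunitary tuples. Letting $n\to\infty$, where $M^{2/(n-2)}\to1$, the inequality $1<(M-1)(1-M^{-2})$ is equivalent to $M^3-2M^2-M+1>0$, that is $M>\kappa$; this gives~\eqref{eq:unitarity-gap} uniformly in $n$. The coarse criterion follows because $\sqrt2<\sqrt\kappa$, and unitary reconstruction then follows from Corollary~\ref{thm: reconstruction}.

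\textbf{Main obstacle.} The hard step is the sharp upper bound on the bulk part of $q_g$ in terms of $M$ alone: it must produce exactly the factor $(M-1)(1-M^{-2})$ and hence the cubic for $\kappa$. I would first try a convexity/majorization argument on the multiset $\{p_k\}$ subject to $p_kp_{-k}=1$ and $p_k\le M$, showing that the extremal configuration concentrates the values at $M$ and $1/M$. If that fails, I would use a second application of $q_{k}=p_kq_{-k}$ at other indices to control the bulk terms.
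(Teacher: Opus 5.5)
Your product relation, the Parseval derivation of $q_g=p_gq_{-g}$, and the AM--GM bound $q_{-g}\ge 2t/M+(n-2)M^{2/(n-2)}$ are all correct and coincide with the paper.

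The gap is exactly the step you flag. The target you write down for it cannot be right. The bulk of $q_g$ is a sum of $n-2$ terms $p_kp_{g-k}$, each close to $1$ when $M$ is close to $1$. So it is of size about $n-2$, not $(n-2)(M-1)(1-M^{-2})$.

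More to the point, no upper bound on $q_g$ by itself can close the argument. For example, $q_g\le 2tM+(n-2)M^2$, obtained from $p_k\in[M^{-1},M]$, is compatible with $q_g=Mq_{-g}\ge 2t+(n-2)M^{1+2/(n-2)}$. The $O(1)$ and first-order parts of $q_g$ and $Mq_{-g}$ must be cancelled against each other before any estimate is made. Neither the majorization idea nor further uses of $q_k=p_kq_{-k}$, as sketched, does this.

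Your instinct to compare the pair $(k,g-k)$ with the reversed pair is the right one: the bulk terms of $q_{-g}$ are exactly $1/(p_kp_{g-k})$. The paper turns this into an estimate on the \emph{difference} $q_g-q_{-g}=(M-1)q_{-g}$.

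Put $\delta=M-1$, $e_0=0$ and $e_k=p_k-1$ for $k\ne0$, so $|e_k|\le\delta$ and $e_{-k}=-e_k/p_k$. Expanding both convolutions around $\mathbf 1+(t-1)\delta_0$:
\begin{itemize}
\item the terms $2\sum_ke_k$ cancel;
\item the terms $2(t-1)e_{\pm g}$ contribute $-2(1-t)\delta(1+M^{-1})$;
\item the substitution $k\mapsto-k$ in $(e*e)_{-g}$ gives
\[
 (e*e)_g-(e*e)_{-g}=\sum_{k\ne0,g}e_ke_{g-k}\Bigl(1-\frac1{p_kp_{g-k}}\Bigr),
\]
which is third order in $\delta$.
\end{itemize}
Write each term as $\frac{x-1}{\sqrt x}\cdot\frac{y-1}{\sqrt y}\cdot\frac{xy-1}{\sqrt{xy}}$ with $x,y\in[M^{-1},M]$. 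This bounds it by $\delta^2(1-M^{-2})$, so
\[
 \delta q_{-g}+2(1-t)\delta(1+M^{-1})\le(n-2)\delta^2(1-M^{-2}).
\]
Combining this with your AM--GM bound and dividing by $(n-2)\delta$ yields \eqref{eq:unitarity-gap-finite-n}. This is the source of the factor $(M-1)(1-M^{-2})$.

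Two smaller points:
\begin{itemize}
\item The uniform bound needs no limit $n\to\infty$, only $M^{2/(n-2)}>1$ for the given $n$.
\item You should note that $p_g=M\ne1$ forces $g\ne-g$, hence $n\ge3$. The cases $n\in\{1,2\}$, where $2/(n-2)$ is meaningless, are covered because every element is an involution and so $p_g=1$.
\end{itemize}
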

\begin{proof}
Since $\chi$ and $a$ take values of modulus $1$, Equation~\eqref{eq:betapm}
gives
$p_gp_{-g}=1$ for $g\ne0$, and $\beta(0)=-1/d$ gives
$t=n/d^2=1/(d+1)\in(0,1)$.
For fixed $g\ne0$, taking absolute squares of the cubic TPE and summing
over $h$, character orthogonality gives
$$
\frac1n\sum_{k\in G}
|\beta(g-k)|^2|\beta(k)|^2
=
|\beta(g)|^2
\sum_{h\in G}
|\beta(-h)|^2|\beta(h-g)|^2.
$$
Multiplying by $n^3$ yields the second identity
in~\eqref{eq:positive-convolution}.

If every nonzero $p_g$ is one, the quadratic equation gives
$\overline{\beta(g)}=\beta(-g)/a(g)$, so Hermitian reflection holds. Otherwise set $M=\max_{g\ne0}p_g>1$ and choose
$g$ with $p_g=M$. Then $g\ne-g$, hence $n\ge3$.
Put $\delta=M-1$, $e_0=0$, and $e_k=p_k-1$ for $k\ne0$.
Every nonzero $p_k$ lies in $[M^{-1},M]$, and
\[
 |e_k|\le\delta,\qquad e_{-k}=-e_k/p_k.
\]
Expanding $p=\mathbf1+(t-1)\delta_0+e$ gives, for $g\ne0$,
\[
 q_g=n-2+2t+2\sum_k e_k+2(t-1)e_g+(e*e)_g.
\]
Since $q_g=Mq_{-g}$ and $e_g-e_{-g}=\delta(1+M^{-1})$,
\begin{align*}
 (e*e)_g-(e*e)_{-g}
 &=\delta q_{-g}+2(1-t)\delta(1+M^{-1})\\
 &>\delta(n-2)M^{2/(n-2)}.
\end{align*}
To obtain the last inequality, retain the $n-2$ terms of $q_{-g}$
with $k\ne0,-g$. Their product is $M^2$: indeed,
$\prod_{k\ne0}p_k=1$ and removing $p_{-g}=M^{-1}$ leaves
product $M$, in each of the two factors. The arithmetic--geometric
mean inequality now gives $q_{-g}>(n-2)M^{2/(n-2)}$.
On the other hand, reindexing the second convolution gives
\[
 (e*e)_g-(e*e)_{-g}
 =\sum_{k\ne0,g}e_ke_{g-k}
       \left(1-\frac1{p_kp_{g-k}}\right).
\]
For $x,y\in[M^{-1},M]$ one has
\[
 |(x-1)(y-1)(1-(xy)^{-1})|
 \le\delta^2(1-M^{-2}).
\]
Indeed, its left side equals
\[
 \left|\frac{x-1}{\sqrt x}\right|
 \left|\frac{y-1}{\sqrt y}\right|
 \left|\frac{xy-1}{\sqrt{xy}}\right|,
\]
whose first two factors are at most $\delta/\sqrt M$ and whose
third factor is at most $M-M^{-1}$. Consequently the convolution
difference is at most $(n-2)\delta^2(1-M^{-2})$. Combining this
with its strict lower bound proves~\eqref{eq:unitarity-gap-finite-n}.
Its left side is greater than one, so
\[
 1<(M-1)(1-M^{-2}),\qquad M^3-2M^2-M+1>0.
\]
The function $(M-1)(1-M^{-2})$ is strictly increasing for $M>1$,
which gives $M>\kappa$. Since $\kappa>2$, the simpler sufficient
bound follows. For $n=3,4$, the left side
of~\eqref{eq:unitarity-gap-finite-n} is at least $M$, whereas the
right side is less than $M$, a contradiction. For $n=1,2$ all
nonzero elements are involutions and already have $p_g=1$.
\end{proof}

\begin{corollary}[A coarse scalar bound for unitarity]
\label{cor:coarse-defect-bound}
Under the assumptions of Theorem~\ref{thm:unitarity-gap}, either
$\mathfrak d(\beta)=0$ or
\[
 \mathfrak d(\beta)>
 \frac{\kappa+\kappa^{-1}-2}{n}>\frac1{2n}.
\]
Consequently the single bound
\begin{equation}\label{eq:coarse-defect-bound}
 \sum_{g\in G}|\beta(g)|^2\le1-\frac1d+\frac1{2n}
\end{equation}
forces Hermitian reflection and unitary reconstruction. Equivalently,
one may check $\|U\|_{\mathrm F}^2\le n+n^2+n/2$.
\end{corollary}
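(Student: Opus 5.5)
The plan is to derive the corollary directly from Corollary~\ref{cor:unitarity-defect} and Theorem~\ref{thm:unitarity-gap}, with no new TPE input. By Corollary~\ref{cor:unitarity-defect},
\[
 \mathfrak d(\beta)=\tfrac12\sum_{g\in G}(|\beta(g)|-|\beta(-g)|)^2 .
\]
The term at $g=0$ vanishes, and so does every term at an involution. In the notation of Theorem~\ref{thm:unitarity-gap}, $|\beta(g)|=\sqrt{p_g/n}$ with $p_gp_{-g}=1$ for $g\neq0$. Rewriting each nonzero term gives
\[
 (|\beta(g)|-|\beta(-g)|)^2=\frac1n\bigl(p_g+p_g^{-1}-2\bigr).
\]
Since the sum runs over both $g$ and $-g$, the factor $\tfrac12$ is cancelled, and
\[
 \mathfrak d(\beta)=\frac1{2n}\sum_{g\ne0}\bigl(p_g+p_g^{-1}-2\bigr)\ \ge\ \frac1n\bigl(M+M^{-1}-2\bigr).
\]
Here $M=\max_{g\ne0}p_g$. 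To get this bound, keep only the pair $\{g_0,-g_0\}$ with $p_{g_0}=M$; this pair is genuinely a pair, since $g_0\neq -g_0$ when $M>1$. All other summands are nonnegative.

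Next, suppose $\mathfrak d(\beta)\neq0$. Then reflection fails, so Theorem~\ref{thm:unitarity-gap} gives $M>\kappa$. The function $x\mapsto x+x^{-1}-2$ is strictly increasing on $(1,\infty)$, so
\[
 \mathfrak d(\beta)>\frac{\kappa+\kappa^{-1}-2}{n}.
\]
For the numerical comparison, $\kappa>2$ gives $\kappa+\kappa^{-1}-2>2+\tfrac12-2=\tfrac12$. This yields the stated constant.

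For the consequences, note that the hypothesis \eqref{eq:coarse-defect-bound} says exactly $\mathfrak d(\beta)\le \frac1{2n}$, by the definition of $\mathfrak d$. The dichotomy then forces $\mathfrak d(\beta)=0$. By Corollary~\ref{cor:unitarity-defect} this is equivalent to scalar reflection, and Corollary~\ref{thm: reconstruction} then gives unitary reconstruction. The Frobenius-norm version is the same statement, via $\|U\|_{\mathrm F}^2-(n+n^2)=n^2\mathfrak d(\beta)$.

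There is no real obstacle. The only care needed is the bookkeeping of the $\tfrac12$ against the double counting of $g$ and $-g$, and checking that the extremal element is not an involution, so that its pair contributes twice.
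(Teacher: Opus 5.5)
Your proposal is correct and follows essentially the same route as the paper. Like the paper, you take the element realizing $M>\kappa$ from Theorem~\ref{thm:unitarity-gap} and bound the nonnegative defect of Corollary~\ref{cor:unitarity-defect} from below by its reciprocal pair's contribution $(M+M^{-1}-2)/n$. You then use monotonicity of $x+x^{-1}-2$ and the norm identity. Your explicit accounting of the factor $\tfrac12$ against the double count of $g,-g$ simply makes precise a step the paper leaves implicit.
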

\begin{proof}
If reflection fails, choose $g$ realizing $M>\kappa>2$ in
Theorem~\ref{thm:unitarity-gap}. Since $g\ne-g$, its reciprocal pair
contributes $(M+M^{-1}-2)/n$ to the nonnegative defect of
Corollary~\ref{cor:unitarity-defect}. Monotonicity of $x+x^{-1}-2$
for $x>1$ gives both strict bounds; that corollary's norm identity
gives the last formulation.
\end{proof}

\subsection{A unitary Galois conjecture}

Set
\begin{equation}\label{eq:arithmetic-normalization}
 \Delta_n=n(n+4),\qquad
 \varepsilon_n=\frac{n+2+\sqrt{\Delta_n}}2,\qquad
 E_n(k)=s(k)F(k)\quad(0\leq k<n).
\end{equation}
Thus $E_n(0)=\sqrt{\varepsilon_n}>0$ and
$d_-=\varepsilon_n^{-1}-1$, $d_+=\varepsilon_n-1$.
Write $S_2$ for the Shintani double sine and $\Phi_{S,0,0}$ for the
modular quantum dilogarithm of \cite[\S\S2.1--2.3]{RadchenkoWheeler}.
Their positive-fixed-point values are denoted by $F_A^+(\mathbf u)$;
$\operatorname{shin}_{A}^{\mathbf r}$ denotes the Shintani--Faddeev
real-multiplication value of \cite{Kopp,AFK}.

\begin{proposition}[Real-multiplication normalization]
\label{prop:rm-normalization}
Let
\[
 A_n=\begin{pmatrix}n+2&-1\\1&0\end{pmatrix},\qquad
 \mathbf r_{n,k}=\binom{k/n}{k/n},\qquad
 \mu_n=e^{\pi i(n-1)/12}.
\]
For every $0\leq k<n$,
\begin{equation}\label{eq:rm-value}
 E_n(k)=F_{A_n}^+\bigl((0,-k)\bigr).
\end{equation}
For $1\leq k<n$ this value also equals
$\mu_n\operatorname{shin}_{A_n}^{\mathbf r_{n,k}}
(\varepsilon_n)^{-1}$.
\end{proposition}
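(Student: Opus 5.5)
The plan is to reduce both sides to the hyperbolic gamma function $\gamma(z;\omega_1,\omega_2)$ and the Shintani double sine, and then match the normalizations of \cite{RadchenkoWheeler,Kopp}. First identify the real-multiplication data. The matrix $A_n$ has characteristic polynomial $X^2-(n+2)X+1$. Its eigenvalues are therefore $\varepsilon_n^{\pm1}$, and its attracting positive fixed point $\tau$ satisfies $\tau^2-(n+2)\tau+1=0$. From \eqref{eq:periods}, $\omega_1/\omega_2=d_+/(-d_-)$. Using $d_+d_-=-n$ and $d_++d_-=n$, we get $\omega_1/\omega_2=d_+^2/n=\varepsilon_n$, since $d_+^2=n+nd_+$ and $d_+=\varepsilon_n-1$ give $d_+^2/n=1+d_+=\varepsilon_n$. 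So the period ratio is exactly the fixed point $\varepsilon_n$ of $A_n$. This identifies $\gamma(\,\cdot\,;\omega_1,\omega_2)$, after rescaling by $\sqrt{\omega_1\omega_2}=\sqrt n$, with the double sine $S_2(\,\cdot\,\mid 1,\varepsilon_n)$ used in \cite[\S2.1]{RadchenkoWheeler}.

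Next, write $E_n(k)=s(k)F(k)=s(k)/\gamma(\omega_2+k)$ and express it through $S_2$ at the argument $(\omega_2+k)/\omega_2=1+k\varepsilon_n/n\cdot(\omega_2/\omega_1)\cdots$. The natural route is to normalize the argument as $z=\omega_2 u+\omega_1 v$ with $(u,v)$ rational. Since $k=n(1/\omega_2-1/\omega_1)\cdot k/n\cdot\omega_1\omega_2/n\ldots$, i.e. $k=(k/n)(\omega_1-\omega_2)$, we have $\omega_2+k=\omega_2(1-k/n)+\omega_1(k/n)$. This shows the relevant lattice coordinates are $(k/n,k/n)$ up to the shift, which is precisely $\mathbf r_{n,k}$. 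The identification $\mathbf u=(0,-k)$ comes from the convention in \cite{RadchenkoWheeler} that $F_A^+(\mathbf u)$ is $\Phi_{S,0,0}$ evaluated at $\mathbf u\cdot(\tau,1)^{\mathsf T}$ divided by the period. I would then check \eqref{eq:rm-value} by comparing defining formulas: both sides are $\gamma$-values at the same point, times exponential-quadratic prefactors. The phase $s(k)=\exp(\pi i k(n-k)/(2n))$ must be matched against the quadratic exponential in the definition of $\Phi_{S,0,0}$ (Faddeev's $\Phi_b=\exp(\pi i z^2/2+\cdots)\cdot$double-sine). A consistency check is $k=0$: by Proposition~\ref{prop:F-identities}, $E_n(0)=\sqrt n/\omega_2$, and $n/\omega_2^2=\omega_1/\omega_2=\varepsilon_n$ gives $\sqrt{\varepsilon_n}$, as stated.

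For $1\le k<n$, the second equality will use \cite[Theorem~1.1]{Kopp}, which expresses $\operatorname{shin}_A^{\mathbf r}(\tau)$ as a ratio of Faddeev quantum dilogarithms at $\tau$ and $A\tau$ with characteristics $\mathbf r$ and $A\mathbf r$. At the fixed point this collapses to a single value up to an eighth-order-type root of unity from the $\mathrm{SL}_2(\mathbb Z)$ multiplier, the factor $e^{\pi i\,\Phi(A)/12}$ via the Rademacher function. For $A_n$ the Rademacher value is $(a+d)/c=n+2$ minus the Dedekind sum $3\,\mathrm{sign}$ term. This gives $\mu_n=e^{\pi i(n-1)/12}$, and I would verify it by computing $\Phi(A_n)=n+2-3=n-1$. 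The characteristic $\mathbf r_{n,k}$ is fixed by $A_n$ modulo $\mathbb Z^2$ because $A_n\mathbf r_{n,k}=((n+1)k/n,k/n)\equiv\mathbf r_{n,k}$. The Kopp ratio then telescopes to $F^+_{A_n}$ up to inversion, which explains the exponent $-1$.

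The main obstacle is bookkeeping of conventions: the sign and orientation of $\mathbf u$, the inversion, and especially the root-of-unity normalization. I would settle these by a numerical check at $n=1$ and $n=2$, where $E_n(k)$ is known in closed form, before writing the general argument.
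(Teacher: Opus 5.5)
Your outline follows the same route as the paper: rescale the periods to $(\varepsilon_n,1)$, pass from the double sine to the Faddeev function, identify the Radchenko--Wheeler value, then convert to Kopp's $\operatorname{shin}$. The pieces you actually compute are correct: $\omega_1/\omega_2=\varepsilon_n$, $E_n(0)=\sqrt{\varepsilon_n}$, $A_n\mathbf r_{n,k}\equiv\mathbf r_{n,k}$, and the exponent $n-1$.

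However, every identity that constitutes the proof is deferred to ``bookkeeping''. Your plan to settle it numerically at $n=1,2$ cannot work. For $n=1$ there is no $k\ge1$. For $n=2$ the only value is $E_2(1)=s(1)=e^{\pi i/4}$ (since $F(1)^2=1$ and $F(1)>0$), which cannot detect the phase $k(n-k)/(2n)$ or the multiplier for general $n$. For \eqref{eq:rm-value} you still need the following four steps.

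(i) The correct argument is $(\omega_2+k)/\omega_2=1+z_{n,k}$ with $z_{n,k}=k(\varepsilon_n-1)/n$. Your displayed expression reduces to $1+k/n$, and the rescaling is by $\omega_2$, not $\sqrt n$. This gives $F(k)=S_2(1+z_{n,k};\varepsilon_n,1)=\Phi_{S,0,0}(z_{n,k};\varepsilon_n)e^{P_{\varepsilon_n}(1+z_{n,k})}$.

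(ii) You must evaluate the exponent explicitly: $P_{\varepsilon_n}(1+z_{n,k})=\pi i\bigl(\tfrac{n-1}{12}-\tfrac{k(n-k)}{2n}\bigr)$. This rests on $(\varepsilon_n-1)^2=n\varepsilon_n$ and yields $s(k)F(k)=\mu_n\Phi_{S,0,0}(z_{n,k};\varepsilon_n)$. So $\mu_n$ already enters the first equality, through the constant term.

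(iii) $F^+_{A_n}$ is built from $\Phi_{A_n,0,0}$ and $\mu_{A_n}$, not from $\Phi_{S,0,0}$ as you assume. You therefore need $\Phi_{A_n,0,0}=\Phi_{S,0,0}$. This follows from $A_n=T^{n+2}S$: on the upper half-plane both have denominator parameter $e^{-2\pi i/\tau}$ and argument $z/\tau$, and meromorphic continuation carries the identity to $\tau=\varepsilon_n$. You also need $\mu_{A_n}=e^{\pi i(n+2)/12}e^{-\pi i/4}$.

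(iv) You must check that the fixed-point argument for $\mathbf u=(0,-k)$, namely $(u_1\varepsilon_n+u_2)/(\varepsilon_n^{-1}-1)=k\varepsilon_n/(\varepsilon_n-1)$, equals $z_{n,k}$. This again uses $(\varepsilon_n-1)^2=n\varepsilon_n$.

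At $k=0$ you verified only the left side; the right side equals $\sqrt{\varepsilon_n}$ by equation (2) of Radchenko--Wheeler. On the multiplier: for $c=1$ the Dedekind sum vanishes, so $\Phi(A_n)=n+2$. The $-3$ is the $e^{-\pi i/4}$ contributed by $S$, so what you computed is Rademacher's $\Psi$, not $\Phi$.

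For the $\operatorname{shin}$ assertion, your explanation of why $\mathbf r_{n,k}$ appears is wrong. The coordinates of $\omega_2+k$ in the basis $(\omega_2,\omega_1)$ are $(1-k/n,k/n)\equiv(-k/n,k/n)$, not $(k/n,k/n)$. The characteristic enters instead through $(A_n-I)\mathbf r_{n,k}=(k,0)^{\mathsf t}$. This gives $A_n\in\Gamma_{\mathbf r_{n,k}}$ (your congruence) and also ties $\mathbf r_{n,k}$ to $\mathbf u=(0,-k)$. The passage from $F^+_{A}$ to $\operatorname{shin}_A^{\mathbf r}$ is Kopp's Definition~4.18 and Proposition~4.19; his Theorem~1.1 is the zeta-function limit formula, not a formula for $\operatorname{shin}$ as a ratio of Faddeev functions. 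That conversion carries a correction factor, and the missing point is that the factor is $1$ here because the second coordinate of $(I-A_n)\mathbf r_{n,k}$ vanishes. Saying the ratio ``telescopes up to inversion'' leaves precisely this root of unity undetermined, so the second assertion is not yet proved either.
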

\begin{proof}
At zero both sides of \eqref{eq:rm-value} are
$\sqrt{\varepsilon_n}$, by Proposition~\ref{prop:F-identities} and
\cite[equation~(2)]{RadchenkoWheeler}. For $1\leq k<n$, the matrix
$A_n$ fixes $\varepsilon_n$ and satisfies
$(A_n-I)\mathbf r_{n,k}=(k,0)^{\mathsf t}$, so
$A_n\in\Gamma_{\mathbf r_{n,k}}$.
Scaling the gamma periods and using the standard double-sine/Faddeev
conversion \cite[\S2.3]{RadchenkoWheeler} gives, with
$z_{n,k}=k(\varepsilon_n-1)/n$,
\[
 F(k)=S_2(1+z_{n,k};\varepsilon_n,1)
 =\Phi_{S,0,0}(z_{n,k};\varepsilon_n)
   e^{P_{\varepsilon_n}(1+z_{n,k})},
\]
where
\[
 P_\varepsilon(Z)=\frac{\pi i}{2\varepsilon}
 \left(Z^2-(\varepsilon+1)Z+
       \frac{1+3\varepsilon+\varepsilon^2}{6}\right).
\]
Since $(\varepsilon_n-1)^2=n\varepsilon_n$,
\[
 P_{\varepsilon_n}(1+z_{n,k})
 =\pi i\left(\frac{n-1}{12}-\frac{k(n-k)}{2n}\right),
\]
and therefore $s(k)F(k)=\mu_n\Phi_{S,0,0}
(z_{n,k};\varepsilon_n)$.

Write $T=\left(\begin{smallmatrix}1&1\\0&1\end{smallmatrix}\right)$
and $S=\left(\begin{smallmatrix}0&-1\\1&0\end{smallmatrix}\right)$.
Then $A_n=T^{n+2}S$, and the eta transformation laws give
\[
 \mu_{A_n}=e^{\pi i(n+2)/12}e^{-\pi i/4}=\mu_n.
\]
On the upper half-plane the defining products
\cite[equation~(1)]{RadchenkoWheeler} give
$\Phi_{A_n,0,0}(z;\tau)=\Phi_{S,0,0}(z;\tau)$:
both have denominator parameter $e^{-2\pi i/\tau}$ and argument
$z/\tau$. Meromorphic continuation gives the same identity at
$\tau=\varepsilon_n$. For $\mathbf u=(0,-k)$ the fixed-point
argument is
\[
 \frac{u_1\varepsilon_n+u_2}{\varepsilon_n^{-1}-1}
 =z_{n,k},
\]
which proves \eqref{eq:rm-value}. Finally the reciprocal conversion to
$\operatorname{shin}$ has correction factor $1$, because the second
coordinate of $(I-A_n)\mathbf r_{n,k}$ is zero
\cite[Definition~4.18 and Proposition~4.19]{Kopp}.
\end{proof}

\begin{proposition}[A finite coefficient field]
\label{prop:finite-coefficient-field}
The field
\[
 L_n=\mathbb Q\bigl(\zeta_{4n},E_n(k):k\in C_n\bigr)
\]
is a number field, stable under ordinary complex conjugation. It
contains all scalar coefficients of the TPE system
and $K_n=\mathbb Q(\sqrt{\Delta_n})$.
Exactly $[L_n:K_n]$ embeddings $\iota:L_n\hookrightarrow\mathbb C$
send $d_-$ to $d_+$.
\end{proposition}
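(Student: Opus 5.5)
The plan is to prove the four assertions in turn: $L_n$ is a number field, it is conjugation-stable, it contains $K_n$ and all the TPE coefficients, and the embedding count is $[L_n:K_n]$. Algebraicity is the central point. By Proposition~\ref{prop:rm-normalization}, each $E_n(k)$ is a positive-fixed-point value $F_{A_n}^+((0,-k))$. Such values are known to be algebraic, via \cite{RadchenkoWheeler} (algebraicity of $\Phi_{A,\mathbf u}$ at fixed points) and equivalently \cite{Kopp}.

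An alternative, self-contained route avoids citing that result. The construction shows that $\beta(k)=E_n(k)/\sqrt n$, together with $d_-$, $\chi$, $a$ and $c=-1/(d_-B)$, solves the TPE system. Proposition~\ref{prop:cubic-reduction} reduces this system to polynomial equations over $\mathbb Q(\zeta_{4n},\sqrt{\Delta_n})$, and the equations include the product relation $\beta(g)\beta(-g)=a(g)/n$. To get algebraicity from this, I would show the solution variety is zero-dimensional, for instance via Ocneanu rigidity: the reconstructed fusion category (Theorem~\ref{thm:algebraic-reconstruction}) has only finitely many gauge classes, and the normalized gauge fixes $\beta$ up to finitely many group-automorphism choices. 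Either way, every $E_n(k)$ is algebraic. Then $L_n$, being generated over $\mathbb Q$ by finitely many algebraic numbers, is a number field. This step is the main obstacle. I expect to lean on the cited algebraicity of Shintani--Faddeev real-multiplication values rather than on rigidity, since the rigidity route requires careful control of the residual gauge freedom.

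Containment is straightforward. $\zeta_{4n}$ gives $\chi$, $a$ and $s(k)$, because $s(k)=\exp(\pi i k(n-k)/(2n))$ is a power of $\zeta_{4n}$. Next, $E_n(0)^2=\varepsilon_n$ gives $\sqrt{\Delta_n}=2\varepsilon_n-n-2$, so $K_n\subset L_n$, and hence $d_\pm\in L_n$. Also $\sqrt n\in\mathbb Q(\zeta_{4n})$ by Gauss sums, so $\beta(k)=E_n(k)/\sqrt n\in L_n$. Finally $c=-1/(dB)\in L_n$, with $B\neq 0$ by Proposition~\ref{prop:cubic-reduction}.

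For stability under conjugation, $\overline{\zeta_{4n}}=\zeta_{4n}^{-1}$. For $k\neq0$, Proposition~\ref{prop:F-identities} gives that $F(k)$ is real and positive, so $\overline{E_n(k)}=\overline{s(k)}F(k)=s(k)^{-2}E_n(k)\in L_n$, while $E_n(0)$ is real. Thus $\overline{L_n}=L_n$.

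For the embedding count, observe that $d_-$ and $d_+$ are the two roots of $X^2-nX-n$, and $d_+-d_-=\sqrt{\Delta_n}$. An embedding $\iota$ sends $d_-$ to $d_+$ exactly when its restriction to $K_n$ is the nontrivial embedding, which exists because $\Delta_n=(n+2)^2-4$ is not a square. The $[L_n:\mathbb Q]$ embeddings of $L_n$ split evenly over the two embeddings of $K_n$, each fiber having $[L_n:K_n]$ elements. This gives the count.
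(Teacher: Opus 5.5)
Your proposal is correct and follows the paper's proof essentially step for step. The shared steps are algebraicity from \cite[Theorem~1]{RadchenkoWheeler}, $\sqrt{\Delta_n}=2E_n(0)^2-(n+2)$, $s(k)$ and (via the Gauss sum) $\sqrt n$ in $\mathbb Q(\zeta_{4n})$, $c=\beta(0)/\sum_k\beta(k)$, conjugation stability from $F(k)>0$ and $s(k)^2=a(k)$, and the count from extending the nontrivial embedding of $K_n$ in $[L_n:K_n]$ ways. One quibble: \cite{Kopp} does not by itself give unconditional algebraicity, since that route goes through the Stark conjectures, so the Radchenko--Wheeler citation is what carries the argument and the rigidity aside is not needed.
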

\begin{proof}
\cite[Theorem~1]{RadchenkoWheeler} give algebraicity of $E_n(k)$;
at zero this also follows from $E_n(0)^2=\varepsilon_n$.
Moreover
\[
 \sqrt{\Delta_n}=2E_n(0)^2-(n+2)\in L_n.
\]
The phases $s(k)$ lie in $\mathbb Q(\zeta_{4n})$, as does
$\sqrt n$ by
\begin{align*}
  \sum_{g\in C_n}^{} a(g) = \sqrt{n}e^{\pi i(n-1)/4}.
\end{align*} 
Its phase is a power of
$i$ for odd $n$, and $\zeta_8\in\mathbb Q(\zeta_{4n})$ for even $n$.
Hence $\beta(k)=E_n(k)/\sqrt n$, $a(k)$, $\chi(g,h)$, $d_-$,
and $c=\beta(0)/\sum_k\beta(k)$ all belong to $L_n$.
Positivity of $F(k)$ and $s(k)^2=a(k)$ give
$\overline{E_n(k)}=a(k)^{-1}E_n(k)$, proving conjugation stability.
Finally each embedding of $K_n$ has $[L_n:K_n]$ extensions to $L_n$,
because number fields are separable.
\end{proof}

\begin{corollary}\label{cor:all-cyclic-neargroup}
For every $n\geq1$, the fusion ring $C_n+n$ has a spherical
categorification with noninvertible simple dimension $d_-$.
Every embedding $\iota:L_n\hookrightarrow\mathbb C$ with
$\iota(d_-)=d_+$ transports it to a pseudo-unitary spherical
categorification.
\end{corollary}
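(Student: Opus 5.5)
The plan is to assemble three ingredients already proved: the explicit cyclic solution, algebraic reconstruction over an arbitrary field, and transport of scalar data along a field embedding.

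\textbf{Step 1 (the spherical category with $d_-$).} The theorem following Lemma~\ref{lem:S-residue-continuation} shows that $d_-$, the data $\chi,a$ of \eqref{eq:explicit-quadratic-data}, and $\beta(k)=s(k)F(k)/\sqrt n$ satisfy the cubic identity \eqref{eq:two_beta_identity}. Proposition~\ref{prop:F-identities} gives $\beta(0)=F(0)/\sqrt n=1/\omega_2=-1/d_-$. Proposition~\ref{prop:cubic-reduction} then supplies the remaining equations with $c=-1/(d_-B)$, so we have a complete solution of Definition~\ref{thm: equation} over $\mathbb C$. By Proposition~\ref{prop:finite-coefficient-field} all of these scalars lie in the number field $L_n$. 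Theorem~\ref{thm:algebraic-reconstruction}, applied over $K=L_n$ (where $n$ is invertible), yields a spherical fusion category over $L_n$ with fusion rules $C_n+n$ and $d_\varrho=d_-$. Base change to $\mathbb C$ gives the first assertion.

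\textbf{Step 2 (transport along $\iota$).} Every equation in Definition~\ref{thm: equation} is a polynomial identity with rational coefficients in the scalars $d,c,\chi(g,h),a(g),\beta(g)$ and their inverses. The Kronecker deltas and the integer $n$ are fixed by $\iota$. Hence applying $\iota$ entrywise gives another solution over $\iota(L_n)\subset\mathbb C$. We need to check three points:
\begin{itemize}
\item $\iota\circ\chi$ is still a nondegenerate symmetric bicharacter, since $\iota$ is injective on roots of unity.
\item $\iota\circ a$ is still an even quadratic refinement.
\item $\iota(c)\neq0$.
\end{itemize}
The new dimension is $\iota(d_-)=d_+$. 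Reconstructing with Theorem~\ref{thm:algebraic-reconstruction} again gives a spherical category with $d_\varrho=d_+>0$. Its Frobenius--Perron dimension is $d_+$, and the group objects have dimension $1$, so every simple object's dimension equals its FP dimension. The category is therefore pseudo-unitary. Equivalently, the reconstructed category is the Galois conjugate of the first one, because the tetrahedral table of Theorem~\ref{thm: F-symbols} is a rational function of the scalar data.

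\textbf{Main obstacle.} The only delicate point is the existence of at least one embedding $\iota$ with $\iota(d_-)=d_+$. Proposition~\ref{prop:finite-coefficient-field} settles this: $d_\pm$ are the two conjugates of $\sqrt{\Delta_n}$ in $K_n$. Since $\Delta_n=n(n+4)$ is never a perfect square for $n\ge1$, $K_n$ is a genuine quadratic field. The nontrivial automorphism of $K_n$ then extends to $[L_n:K_n]\ge1$ embeddings of $L_n$. One also needs to note that the reconstruction theorem's hypothesis of a trivial Frobenius--Schur indicator is built into the planar-algebra conventions. That hypothesis is therefore automatic for the reconstructed category and is unaffected by $\iota$.
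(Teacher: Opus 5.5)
Your proposal is correct and follows the paper's proof. Both reconstruct over $L_n$ via Theorem~\ref{thm:algebraic-reconstruction}, transport along $\iota$, and deduce pseudo-unitarity because the simple dimensions $1,d_+$ coincide with the Frobenius--Perron dimensions; the only difference is that the paper base-changes the reconstructed category along $\iota$, whereas you re-run the reconstruction on the transported tuple, which is equivalent as you note.
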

\begin{proof}
Apply Theorem~\ref{thm:algebraic-reconstruction} over $L_n$ and then
extend scalars along the chosen embedding. At the positive root the
simple dimensions are $1,d_+$, so the global dimension equals the
Frobenius--Perron dimension
\cite[Definition~9.4.4]{EGNO15}.
\end{proof}

\begin{proposition}[Unitary Galois criterion and sufficient bounds]
\label{prop:unitary-galois-criterion}
\label{cor:unit-circle-reformulation}
\label{cor:coarse-galois-unitarity}
Let $\iota:L_n\hookrightarrow\mathbb C$ satisfy
$\iota(d_-)=d_+$. The transported tuple satisfies the unitary scalar
condition of Corollary~\ref{thm: reconstruction} precisely when
\begin{equation}\label{eq:unitary-galois-criterion}
 \overline{\iota(\beta(g))}
 =\frac{\iota(\beta(-g))}{\iota(a(g))}\qquad(g\in C_n),
\end{equation}
or, equivalently, when
\begin{equation}\label{eq:unit-circle}
 |\iota(F(k))|=1\qquad(1\leq k<n).
\end{equation}
Either of the following weaker bounds already implies these conditions:
\[
 |\iota(F(k))|\leq\sqrt2\quad(1\leq k<n),
 \qquad\text{or}\qquad
 \sum_{k=1}^{n-1}|\iota(F(k))|^2\leq n-\frac12.
\]
In each case the specifically reconstructed positive-root category
admits a unitary structure.
\end{proposition}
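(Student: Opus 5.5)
The plan is to transport everything through $\iota$ and reduce the three formulations to statements about the moduli $|\iota(F(k))|$, then invoke Corollary~\ref{thm: reconstruction}, Theorem~\ref{thm:unitarity-gap} and Corollary~\ref{cor:coarse-defect-bound}.

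First, since $\iota$ is a field embedding, the transported tuple $(\iota(d_-),\iota\chi,\iota a,\iota(c),\iota\beta)$ is again a solution of the TPE system over $\mathbb C$. Its dimension is $d_+$, so the only missing condition for the unitary TPE system, and hence for Corollary~\ref{thm: reconstruction}, is the reflection identity. This is exactly \eqref{eq:unitary-galois-criterion}. The values $\iota(\chi)$ and $\iota(a)$ are roots of unity, because $\chi,a$ take values in $\mu_{4n}\subset\mathbb Q(\zeta_{4n})$ and $\iota$ maps roots of unity to roots of unity. Therefore they still have modulus one and are still a nondegenerate symmetric bicharacter and an even refinement.

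Second, I would translate reflection into \eqref{eq:unit-circle}. By Proposition~\ref{prop:channel-singular-values} (its hypotheses hold for the transported tuple), reflection is equivalent to $n|\iota\beta(g)|^2=1$ for all $g\ne0$. At $g=0$ it is automatic, since $\iota\beta(0)=-1/d_+$ is real. Now $\beta(k)=s(k)F(k)/\sqrt n$. Here $\iota(s(k))$ is a root of unity, and $\iota(\sqrt n)=\pm\sqrt n$, because $\sqrt n\in\mathbb Q(\zeta_{4n})$ is sent to a square root of $n$. Hence $\sqrt n|\iota\beta(k)|=|\iota F(k)|$ for $1\le k<n$, which gives \eqref{eq:unit-circle}.

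Third, I would derive the sufficient bounds. The condition $|\iota F(k)|\le\sqrt2$ is exactly $\sqrt n|\iota\beta(g)|\le\sqrt2$ for $g\ne0$, so Theorem~\ref{thm:unitarity-gap} forces reflection. For the sum bound, I compute
\[
\sum_g|\iota\beta(g)|^2=\frac1{d_+^2}+\frac1n\sum_{k\ge1}|\iota F(k)|^2.
\]
Using $\frac1{d^2}=\frac1n-\frac1d$, which follows from $d^2=n(d+1)$, the hypothesis $\sum_{k\ge1}|\iota F(k)|^2\le n-\tfrac12$ becomes
\[
\sum_g|\iota\beta(g)|^2\le 1-\frac1{d_+}+\frac1{2n}.
\]
This is \eqref{eq:coarse-defect-bound}, and Corollary~\ref{cor:coarse-defect-bound} then applies.

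Finally, Corollary~\ref{thm: reconstruction} gives a unitary category from the transported data. This is the same category as the one in Corollary~\ref{cor:all-cyclic-neargroup}, since both are reconstructed from identical tetrahedral data via Theorem~\ref{thm:algebraic-reconstruction}.

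I expect the main obstacle to be justifying that $\iota$ preserves modulus one on $s(k)$, $a$, $\chi$ and the phase of $\sqrt n$. The potential gap is that $\iota$ need not commute with complex conjugation on $L_n$. The argument avoids needing that commutation because all these quantities lie in the cyclotomic field, where every value in question is a root of unity up to a rational factor.
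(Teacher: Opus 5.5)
Your proposal is correct and follows essentially the same route as the paper. You transport by $\iota$ and note that only reflection is missing for Corollary~\ref{thm: reconstruction}. You then reduce reflection to $|\iota(F(k))|=1$ using $|\iota(s(k))|=1$ and $\iota(\sqrt n)=\pm\sqrt n$, and obtain the two sufficient bounds from Theorem~\ref{thm:unitarity-gap} and Corollary~\ref{cor:coarse-defect-bound} via $d_+^{-2}=n^{-1}-d_+^{-1}$. The only cosmetic difference is how the unit-circle reformulation is derived: you use the product relation, i.e.\ reflection $\Leftrightarrow$ $|\beta(g)|^2=1/n$ as in Proposition~\ref{prop:channel-singular-values}, whereas the paper compares the two sides directly using $s(n-k)=s(k)$, $F(n-k)=F(k)^{-1}$ and $a=s^2$.
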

\begin{proof}
The scalar identities survive under $\iota$. The transported values
of $\chi,a$ are roots of unity, and character orthogonality and the
Gauss equation give $|\iota(c)|=n^{-1/2}$. Thus the algebraic
reflection agrees with ordinary complex conjugation on those
parameters. The remaining condition is
\eqref{eq:unitary-galois-criterion}.
For $1\leq k<n$, set $S_k=\iota(s(k))$, $X_k=\iota(F(k))$
and $r=\iota(\sqrt n)\in\{\sqrt n,-\sqrt n\}$.
Since $s(n-k)=s(k)$, $F(n-k)=F(k)^{-1}$ and $a(k)=s(k)^2$,
the two sides of \eqref{eq:unitary-galois-criterion} are
\[
 r^{-1}S_k^{-1}\overline{X_k}
 \quad\text{and}\quad r^{-1}S_k^{-1}X_k^{-1}.
\]
They agree exactly when $|X_k|=1$. At zero the equation is automatic,
because $\iota(\beta(0))=-d_+^{-1}$ is real.
The first sufficient bound follows from
Theorem~\ref{thm:unitarity-gap}, since
$\sqrt n\,|\iota(\beta(k))|=|\iota(F(k))|$ for $k\ne0$.
For the second, $d_+^{-2}=n^{-1}-d_+^{-1}$ turns the bound into
\eqref{eq:coarse-defect-bound}; apply
Corollary~\ref{cor:coarse-defect-bound}.
Finally use Corollary~\ref{thm: reconstruction}.
\end{proof}

\begin{conjecture}[Cyclic unitary Galois conjecture]
\label{conj:unitary-galois}
For every $n\geq2$ and every embedding
$\iota:L_n\hookrightarrow\mathbb C$ with
$\iota(\sqrt{\Delta_n})=-\sqrt{\Delta_n}$,
\[
 |\iota(E_n(k))|=1\qquad(1\leq k<n).
\]
\end{conjecture}
This is a finite set of embedding assertions by
Proposition~\ref{prop:finite-coefficient-field}, equivalently the same
statement for automorphisms of $\overline{\mathbb Q}$, since every
embedding of a number field extends to such an automorphism.
One common embedding must be used for all $k$; separate choices of
conjugates need not extend to an embedding of $L_n$.
The existence of one suitable embedding for each $n$ already gives a
unitary category by Proposition~\ref{prop:unitary-galois-criterion},
as multiplication by the root of unity $\iota(s(k))$ does not
change a modulus. The every-embedding assertion is, by
Proposition~\ref{prop:rm-normalization}, the cyclic specialization of
\cite[Conjecture~1.36(2)]{AFK}. It follows conditionally from the
order-one abelian Stark conjecture for real quadratic fields by
\cite[Theorem~2.20(2)]{AFK}; see also
\cite{Stark1976,Stark1980,TateStark}. This uses the predicted Artin
action, not merely algebraicity. Part~(1) of that theorem places a
positive power of the real-multiplication value in an abelian extension;
it does not assert this placement for the value itself. The finer
placement of the values themselves in ray class fields of the possibly
nonmaximal order $\mathbb Z[\varepsilon_n]$ uses the stronger monoid
form of the Stark conjecture
\cite[Conjecture~2.9 and Theorem~2.21]{AFK}.

\begin{proposition}\label{prop:small-rank-unitarity}
The unit-circle conclusion of Conjecture~\ref{conj:unitary-galois}
holds for $1\leq n\leq6$. Consequently the positive-root categories
admit unitary structures for these orders.
\end{proposition}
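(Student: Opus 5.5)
The plan is to reduce to small $n$ individually. For $n=1$ there are no nonzero $k$, so there is nothing to check. For $n=2$, $k=1$ satisfies $F(1)=F(2-1)^{-1}=F(1)^{-1}$ by Proposition~\ref{prop:F-identities}, so $F(1)^2=1$. The identity $F(1)^2=1$ is algebraic and therefore survives every embedding, giving $|\iota(E_2(1))|=1$. More generally, any $g\in C_n$ with $2g=0$ is automatic by Corollary~\ref{cor:unitarity-defect}. Moreover, Theorem~\ref{thm:unitarity-gap} shows that every positive-root algebraic tuple with $n\le4$ satisfies Hermitian reflection. So for $n\le4$ I take the transported tuple $\iota(\text{tuple})$, which is a positive-root solution of the TPE system over $\mathbb C$ because $\iota(d_-)=d_+$ and the scalar equations are preserved. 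The theorem then gives reflection, and Proposition~\ref{prop:unitary-galois-criterion} converts reflection into $|\iota(F(k))|=1$.

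For $n=5,6$ I would again use the gap theorem, now in its finite-$n$ form \eqref{eq:unitarity-gap-finite-n}. A nonunitary tuple needs $M=\max p_g>\kappa$ together with $M^{2/(n-2)}<(M-1)(1-M^{-2})$. For $n=5$ this is $M^{2/3}<(M-1)(1-M^{-2})$, and for $n=6$ it is $M^{1/2}<(M-1)(1-M^{-2})$. These hold for all sufficiently large $M$, so the gap theorem alone does not settle these cases. I therefore plan to exploit the structure of the convolution identity \eqref{eq:positive-convolution}, $q_g=p_gq_{-g}$, together with $p_gp_{-g}=1$.

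For $n=5$ the unknowns are $p_1=x$, $p_4=1/x$, $p_2=y$, $p_3=1/y$, with $p_0=t$ known. Writing out $q_1=xq_4$ and $q_2=yq_3$ gives two explicit rational equations in $x,y$. I expect these to force $x=y=1$ among positive reals, possibly after clearing denominators and using AM--GM positivity (all terms of $q$ are positive). For $n=6$, $p_3=1$ is automatic, which leaves two pairs $(p_1,p_5)$ and $(p_2,p_4)$ and a similar two-variable positive system.

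The main obstacle is this positivity argument: showing that the only positive real solution of these symmetric convolution equations is all ones. If the inequalities do not close, the fallback is computational. I would compute the minimal polynomials of $E_n(k)$ over $K_n$ for $n=5,6$, using the algebraic relations from the TPE together with the numerics from the explicit hyperbolic gamma values. Then, for each embedding sending $\sqrt{\Delta_n}\mapsto-\sqrt{\Delta_n}$, I would check directly that the conjugates of $E_n(k)$ lie on the unit circle. Since Proposition~\ref{prop:finite-coefficient-field} makes this a finite check, exact verification is possible, for instance by checking that the relevant minimal polynomial over $K_n$ is self-reciprocal under the conjugate embedding.
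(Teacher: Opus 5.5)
\textbf{Verdict: there is a genuine gap for $n=5,6$, although your own primary route does close it.} Your reduction for $n\le4$ is the paper's argument: apply Theorem~\ref{thm:unitarity-gap} to the transported positive-root tuple, then Proposition~\ref{prop:unitary-galois-criterion}. You are also right that \eqref{eq:unitarity-gap-finite-n} does not exclude $n=5,6$. For those two orders, however, you only say you \emph{expect} the positive system to force $p_g=1$, so the cases carrying the real content are unproved.

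The fallback does not settle them as stated either. Pinning down a minimal polynomial needs exact relations, not numerics. More seriously, self-reciprocity is the wrong test: it only encodes $F(k)F(n-k)=1$, which survives every embedding. For instance, the quartic $P$ the paper uses for $n=5$ is palindromic, yet for the original sign of $\sqrt5$ all four of its roots are real and off the unit circle. What must be shown is that each conjugated trace $\iota(F(k))+\iota(F(k))^{-1}$ is real and lies in $(-2,2)$. The paper checks exactly this, via the Radchenko--Wheeler quartic for $n=5$ and, for $n=6$, via exact traces read off from $\sum_k s(k)F(k)=\sqrt6\,r\,e^{5\pi i/12}$.

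Your primary route closes in a few lines, using only \eqref{eq:positive-convolution} with $t=p_0=\varepsilon_n^{-1}$; all $p_g>0$ since $\beta(g)\ne0$.

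For $n=6$, put $x=p_1$, $y=p_2$, so $p_3=1$. Then $\tfrac{xy}{2}(q_1-xq_5)=(1-x)\bigl((1+x)y^2-txy+x(1+x)\bigr)$. The last factor is positive for $t<4$, because its discriminant in $y$ is $x\bigl(t^2x-4(1+x)^2\bigr)<0$; hence $x=1$. At $x=1$ one has $y^2(q_2-yq_4)=(y-1)\bigl(2ty^2-(y^4+y^3+4y^2+y+1)\bigr)$, which forces $y=1$ because $t<2$.

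For $n=5$, put $x=p_1$, $y=p_2$ and $E=q_1-xq_4=2t(x-1)+2y/x+y^{-2}-2x^2/y-xy^2$. Then $E(1,y)=-(y-1)^3(y+1)/y^2\le0$, and $\partial_xE\le 2t-3\cdot2^{2/3}<0$ for $x\ge1$. So $E<0$ on $\{x>1,\,y\ge1\}$. The automorphisms $g\mapsto mg$ of $C_5$ preserve \eqref{eq:positive-convolution}. They act on $(x,y)$ through the cyclic group generated by $(x,y)\mapsto(y,x^{-1})$. Every $(x,y)\ne(1,1)$ has an image in that region, so $x=y=1$.

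Completed this way, your proof genuinely differs from the paper's. It is purely metric and needs neither the Radchenko--Wheeler input nor explicit algebraic values. It also proves more: every positive-root scalar solution of order $5$ or $6$ satisfies Hermitian reflection, extending the $n\le4$ clause of Theorem~\ref{thm:unitarity-gap}. The paper's arithmetic check, by contrast, exhibits the conjugated values explicitly.
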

\begin{proof}
For $n\leq4$, Theorem~\ref{thm:unitarity-gap} applies to every
positive-root scalar solution.

For $n=5$, Proposition~\ref{prop:rm-normalization} identifies our
solution with the cycle-$(7)$ example of
\cite[\S4.3]{RadchenkoWheeler}. Its nonzero values are products of
roots of unity and roots of
\[
 P(Z)=Z^4-Z^3-\frac{3+3\sqrt5}{2}Z^2-Z+1.
\]
After any embedding reversing $\sqrt5$, divide the conjugate
polynomial by $Z^2$ and put $W=Z+Z^{-1}$. This gives
\[
 W^2-W-\frac{7-3\sqrt5}{2}=0,
 \qquad W=\frac{1\pm\sqrt{15-6\sqrt5}}2.
\]
Both traces lie in $(-2,2)$, so all roots have modulus one.

For $n=6$, put
\[
 r=F(0)=\sqrt{4+\sqrt{15}},\qquad
 X=F(1)+F(5),\qquad Y=F(2)+F(4).
\]
Reciprocity gives $F(5)=F(1)^{-1}$, $F(4)=F(2)^{-1}$ and
$F(3)=1$. Proposition~\ref{prop:cubic-reduction} gives directly
\[
 \sum_{k=0}^5s(k)F(k)=\sqrt6\,r\,e^{5\pi i/12}
 =r+e^{5\pi i/12}X+e^{2\pi i/3}Y+e^{3\pi i/4}.
\]
Equating real and imaginary parts after multiplication by
$e^{-5\pi i/12}$ yields
\begin{equation}\label{eq:n6-exact-traces}
 X=\frac{2+\sqrt{15}+\sqrt3}{2},\qquad
 Y=\frac{(\sqrt3+1)r-\sqrt6}{2},
\end{equation}
using $\sqrt6\,r=3+\sqrt{15}$ in the original embedding.
If $\iota(\sqrt{15})=-\sqrt{15}$, then
\[
 \iota(X)=\frac{2-\sqrt{15}\pm\sqrt3}{2}\in(-2,2).
\]
Also $\iota(r)^2=4-\sqrt{15}>0$, so $\iota(r)$ is real and
every possible choice of the remaining signs satisfies
\[
 |\iota(Y)|\leq
 \frac{(\sqrt3+1)\sqrt{4-\sqrt{15}}+\sqrt6}{2}<2.
\]
Indeed $\sqrt{15}>15/4$, $\sqrt3<7/4$ and $\sqrt6<5/2$
bound the last expression by $31/16<2$.
The conjugate traces of both reciprocal pairs lie in $(-2,2)$,
and hence all four conjugated values lie on the unit circle;
the midpoint remains $\iota(F(3))=1$.
\end{proof}

\begin{remark}[Arithmetic content]\label{rem:arithmetic-content}
\label{rem:galois}
The algebraic scalar equations survive under every field embedding;
ordinary complex conjugation need not. On $\iota(L_n)$ the
transported involution is
$\iota\circ\overline{(\,\cdot\,)}\circ\iota^{-1}$, which can
differ from ordinary conjugation. Likewise the gamma integral
evaluates the original real embedding, not $\iota(F(k))$:
formally conjugating its periods is analytic continuation, during
which the separating contour crosses poles, rather than Galois
transport. The scalar product identity gives only
$\iota(\beta(g))\iota(\beta(-g))=\iota(a(g))/n$ for $g\ne0$;
reflection requires $|\iota(\beta(g))|^2=1/n$.
This is the real/unitary distinction described in
\cite[\S6]{RadchenkoWheeler}.
\end{remark}

\begin{remark}[Hilbert's twelfth problem]\label{rem:hilbert-twelfth}
For $K_n=\mathbb Q(\sqrt{n(n+4)})$, Proposition~\ref{prop:rm-normalization}
identifies $E_n(k)$ for $1\leq k<n$, up to roots of unity and
inversion, with real-multiplication values of the Shintani--Faddeev
modular cocycle. Kopp's limit formula \cite[Theorem~1.1]{Kopp}
relates squares of these values, with explicit root-of-unity factors,
to exponentials of derivatives at $s=0$ of differences of ray class
partial zeta functions. The Stark conjectures predict their
class-field placement and Galois behavior, as explained after
Conjecture~\ref{conj:unitary-galois}; algebraicity alone is already
unconditional by Proposition~\ref{prop:finite-coefficient-field}.
This is the connection with Hilbert's twelfth problem: constructing
abelian extensions through explicit analytic special values
\cite{Stark1976,AFK}. The categorical application uses the predicted
Galois behavior to obtain unitarity; it does not assert that this cyclic
family generates all abelian extensions of $K_n$.
\end{remark}

\begin{ac}
The authors remember Vaughan Jones with gratitude. His questions about
Izumi's equations motivated the categorical approach of this paper.
Zhengwei Liu acknowledges support from the Beijing Natural Science
Foundation Key Program (Grant Z220002) and the Beijing Natural Science
Foundation (Grant Z221100002722017).
Fan Lu acknowledges support from the Beijing Natural Science Foundation
(Grant Z221100002722017).
Sebastien Palcoux acknowledges support from the National Natural
Science Foundation of China (Grant 12471031).
\end{ac}

\textbf{AI-use disclosure.} The AI tool was used as a research assistant.
The authors retain full responsibility for the final version of the manuscript.

\end{document}